\newtheorem{thm}{Theorem}[section]
\newtheorem{lem}[thm]{Lemma}
\newtheorem{prp}[thm]{Proposition}
\newtheorem{cor}[thm]{Corollary}
\theoremstyle{remark}
\newtheorem{rmk}[thm]{Remark}
\newtheorem{exa}[thm]{Example}
\theoremstyle{definition}
\newtheorem{ntn}[thm]{Notation}
\newtheorem{dfn}[thm]{Definition}
\numberwithin{equation}{section}
\newcommand{\Bb}{\mathcal{B}}
\newcommand{\Ff}{\mathcal{F}}
\newcommand{\Hh}{\mathcal{H}}
\newcommand{\Kk}{\mathcal{K}}
\newcommand{\Ll}{\mathcal{L}}
\newcommand{\Mm}{\mathcal{M}}
\newcommand{\Qq}{\mathcal{Q}}
\newcommand{\Ss}{\mathcal{S}}
\newcommand{\Tt}{\mathcal{T}}
\newcommand{\Zz}{\mathcal{Z}}
\newcommand{\CC}{\mathbb{C}}
\newcommand{\NN}{\mathbb{N}}
\newcommand{\RR}{\mathbb{R}}
\renewcommand{\SS}{\mathbb{S}}
\newcommand{\TT}{\mathbb{T}}
\newcommand{\ZZ}{\mathbb{Z}}
\newcommand{\SG}[2][{}]{\Ss^{#1}\!#2}
\newcommand{\Ad}{\operatorname{Ad}}
\newcommand{\Aut}{\operatorname{Aut}}
\newcommand{\lsp}{\operatorname{span}}
\newcommand{\clsp}{\overline{\lsp}}
\newcommand{\coker}{\operatorname{coker}}
\newcommand{\lt}{\operatorname{lt}}
\newcommand{\Id}{\operatorname{Id}}
\newcommand{\supp}{\operatorname{supp}}
\newcommand{\op}{\operatorname{op}}
\newcommand{\id}{\operatorname{id}}
\newcommand{\Me}{\operatorname{Me}}
\title{The suspension of a graph, and associated \texorpdfstring{$C^*$}{C*}-algebras}
\author{Aidan Sims}
\email{asims@uow.edu.au}
\address{School of Mathematics and Applied Statistics\\
University of Wollongong\\
NSW 2522\\
AUSTRALIA}
\subjclass{Primary 46L05}
\keywords{Graph algebra, Cuntz--Krieger algebra, symbolic dynamics, suspension flow}
\thanks{This research was supported by the Australian Research Council.}
\date{\today}
\begin{document}

\begin{abstract}
Given a directed graph $E$, we construct for each real number $l$ a quiver whose vertex
space is the topological realisation of $E$, and whose edges are directed paths of length
$l$ in the vertex space. These quivers are not topological graphs in the sense of
Katsura, nor topological quivers in the sense of Muhly and Tomforde. We prove that when
$l = 1$ and $E$ is finite, the infinite-path space of the associated quiver is
homeomorphic to the suspension of the one-sided shift of $E$. We call this quiver the
suspension of $E$. We associate both a Toeplitz algebra and a Cuntz--Krieger algebra to
each of the quivers we have constructed, and show that when $l = 1$ the Cuntz--Krieger
algebra admits a natural faithful representation on the $\ell^2$-space of the suspension
of the one-sided shift of $E$. For graphs $E$ in which sufficiently many vertices both
emit and receive at least two edges, and for rational values of $l$, we show that the
Toeplitz algebra and the Cuntz--Krieger algebra of the associated quiver are homotopy
equivalent to the Toeplitz algebra and Cuntz--Krieger algebra respectively of a graph
that can be regarded as encoding the $l$\textsuperscript{th} higher shift associated to
the one-sided shift space of $E$.
\end{abstract}

\maketitle

\section{Introduction}

Each finite directed graph $E$ determines a corresponding $1$-sided shift space $(X_E,
\sigma)$. Cuntz--Krieger algebras \cite{CK}, and more generally graph $C^*$-algebras
\cite{EW, KPRR, KPR}, encode the dynamics $(X_E, \sigma)$ $C^*$-algebraically, and there
has been intense interest in these $C^*$-algebras ever since their introduction---see,
for example \cite{BCW, CPR, DT, ET, HS, Sorensen} and the bibliography of
\cite{CBMSbook}.

In addition to the shift space $(X_E, \sigma)$ itself, a directed graph $E$ determines a
family of dynamical systems indexed by the rational numbers. Given a rational number
$m/n$ with $n > 0$, we first form the directed graph $D_n(E)$ obtained by inserting $n-1$
new vertices along every edge of $E$---so that each edge of $E$ corresponds to a path of
length $n$ in $D_n(E)$---and then consider the $m$\textsuperscript{th} higher-power graph
$D_n(E)(0,m)$ of $D_n(E)$, whose vertices are those of $D_n(E)$ and whose edges are paths
of length $m$ in $D_n(E)$. The shift space associated to this graph can be regarded as
encoding the $m/n$\textsuperscript{th} higher shift of $E$ in the sense that the system
$(X_{D_n(E)(0,m)}, \sigma_{D_n(E)(0,m)}^n)$ decomposes into $n$ disjoint copies of $(X_E,
\sigma_E^m)$.

All of these fractional higher shifts associated to a finite directed graph $E$ are
naturally encoded in the suspension of the base shift $X_E$. The suspension of $X_E$ is
the quotient $M(\sigma) := (X_E \times [0, \infty))/{\sim}$ by the equivalence relation
in which $(x, t+m) \sim (\sigma^m(x), t)$ for any positive integer $m$. For $l \in [0,
\infty)$, the map $(x, t) \mapsto (x, t+l)$ induces a continuous map $\lt_l$ on
$M(\sigma)$. For positive rationals $m/n$, the restriction of $\lt_{m/n}$ to $\{[x, t] :
t \in \frac{1}{n}\ZZ\}$ is a copy of $(X_{D_n(E)(0,m)}, \sigma_{D_n(E)(0,m)})$. By
analogy, we can regard the dynamics $\lt_l$ for arbitrary $l$ as corresponding to the
$l$\textsuperscript{th} higher shift of $E$.

In this paper we construct from each locally finite directed graph $E$ with no sources a
family of $C^*$-algebras parameterised by $l \in \RR$, and we prove that for $l = m/n$
rational, and for graphs $E$ in which enough vertices both emit and receive at least two
edges, the corresponding $C^*$-algebra is isomorphic to $C([0,1], C^*(D_n(E)(0,m)))$.

Our approach is to associate to each directed graph $E$ a family of quivers $\SG[l]{E}$,
one for each real parameter $l$. By a quiver here we mean a quadruple $(Q^0, Q^1, r, s)$
where $Q^0$ and $Q^1$ are topological spaces, and $r,s : Q^1 \to Q^0$ are continuous
maps. The vertex space $\SG[l]{E}^0$ of the quiver $\SG[l]{E}$ is the topological
realisation of $E$: it is the quotient of $E^1 \times [0,1]$ by the equivalence relation
under which $(e, 1)$ is glued to $(f, 0)$ if $s(e) = r(f)$. The edge space $\SG[l]{E}^1$
is, roughly speaking, the collection of directed paths of length $l$ in the vertex space.
We link this construction to suspension flows by showing that if $E$ is finite, then the
infinite-path space of $\SG[1]{E}$ is homeomorphic to the suspension flow $M(\sigma)$ of
the one-sided shift associated to $E$. Based on this, we call $\SG[1]{E}$ the
\emph{suspension} of $E$, and denote it $\SG{E}$.

The quivers $\SG[l]{E}$, for $l \not= 0$, are typically not topological quivers in the
sense of Muhly and Tomforde \cite{MT} because neither $r$ nor $s$ is typically an open
map: if $l > 0$, then $s$ is open map if and only if every vertex of $E$ receives exactly
one edge and $r$ is open if and only if each vertex emits exactly one edge; if $l < 0$
then the roles of emitters and receivers are reversed. In particular, our quivers are not
topological graphs in the sense of Katsura, and $C_c(\SG[l]{E}^1)$ does not admit a
natural $C_0(\SG[l]{E}^0)$-valued inner-product. Thus Katsura's modification of Pimsner
theory does not help us to construct a $C^*$-algebra from $\SG[l]{E}$. The source map for
the natural groupoid associated to the shift map on the infinite-path space of
$\SG[l]{E}$ is also typically not open, so this groupoid does not admit a Haar system,
and we cannot employ Renault's theory of groupoid $C^*$-algebras to construct a
$C^*$-algebra from $\SG[l]{E}$.

Instead, we associate a Toeplitz algebra and a Cuntz--Krieger algebra to each $\SG[l]{E}$
by considering natural actions by bounded linear operators of $C_0(\SG[l]{E}^0)$ and of
$C_c(\SG[l]{E}^1)$ on the (nonseparable) Hilbert space $\ell^2(\SG[l]{E}^*)$ with basis
indexed by all finite paths in $\SG[l]{E}$. We define the Toeplitz algebra $\Tt
C^*(\SG[l]{E})$ of $\SG[l]{E}$ to be the $C^*$-algebra generated by all of these
operators. For each vertex $\omega$ of $\SG[l]{E}$, the subspace $\ell^2(\SG[l]{E}^*
\omega)$, with basis indexed by paths whose source is $\omega$, is invariant for $\Tt
C^*(\SG[l]{E})$. Thus, for each $\omega$, we obtain a homomorphism from $\Tt
C^*(\SG[l]{E})$ to the Calkin algebra $\Qq(\ell^2(\SG[l]{E}^*\omega)) =
\Bb(\ell^2(\SG[l]{E}^*\omega))/\Kk(\ell^2(\SG[l]{E}^*\omega))$. We define the
Cuntz--Krieger algebra $C^*(\SG[l]{E})$ to be the image of $\Tt C^*(\SG[l]{E})$ in
$\bigoplus_\omega \Qq(\ell^2(\SG[l]{E}^*\omega))$ under the direct sum of these
homomorphisms. If $E$ has just one vertex $v$ and one edge $e$ (so that $r(e) = s(e) =
v$), then $\SG[l]{E}$ is the topological graph associated to the rotation-by-$l$
homeomorphism of the circle $\RR/\ZZ$, and $C^*(\SG[l]{E})$ is isomorphic to the rotation
algebra $A_l$ (see Example~\ref{exa:rotation}).

Our main results give a complete description of each of $\Tt C^*(\SG[m/n]{E})$ and
$C^*(\SG[m/n]{E})$ for integers $m \in \ZZ$ and $n \ge 1$ provided that enough vertices
of $E$ emit and receive at least two edges (the exact technical hypothesis is
complicated, but a simple sufficient condition is that every vertex both emits and
receives at least two edges). We prove that $\Tt C^*(\SG[m/n]{E})$ is homotopy equivalent
to $\Tt C^*(D_n(E)(0,m))$, and that $C^*(\SG[m/n]{E})$ is isomorphic to $C([0,1],
C^*(D_n(E)(0,m)))$. This suggests that the algebras $C^*(\SG[l]{E})$ for irrational $l$
are potential candidates for the role of the $C^*$-algebras of the
$l$\textsuperscript{th} higher-power shifts of $X_E$.

We obtain our description of $\Tt C^*(\SG[m/n]{E})$ and $C^*(\SG[m/n]{E})$ in stages. We
first reduce the problem to that of describing $\Tt C^*(\SG[m]{E})$ and $C^*(\SG[m]{E})$
for nonnegative integers $m$. To do this, we first eliminate the case $m = 0$ by showing
that $\Tt C^*(\SG[0]{E}) \cong C_0(\SG{E}^0) \otimes \Tt$, where $\Tt$ is the classical
Toeplitz algebra, and that this isomorphism descends to an isomorphism $C^*(\SG[0]{E})
\cong C_0(\SG{E}^0) \otimes C(\TT)$. We then show that $\Tt C^*(\SG[l]{E}) \cong \Tt
C^*(\SG[|l|]{E^{\op}})$ and $C^*(\SG[l]{E}) \cong C^*(\SG[|l|]{E^{\op}})$ for negative
$l$. We then show that for $m > 0$, we have $\Tt C^*(\SG[m/n]{E}) \cong \Tt
C^*(\SG[m]{D_n(E)})$ and $C^*(\SG[m/n]{E}) \cong C^*(\SG[m]{D_n(E)})$. Thus for any graph
$E'$ and any rational $m/n$, we have $\Tt C^*(\SG[m/n]{E'}) \cong \Tt C^*(\SG[|m|]{E})$
and $C^*(\SG[m/n]{E'}) \cong C^*(\SG[|m|]{E})$ for a suitable graph $E$.

The bulk of the technical work in the paper goes into analysing $\Tt C^*(\SG[m]{E})$ and
then $C^*(\SG[m]{E})$ for a locally finite directed graph $E$ with no sinks or sources.
We do this by showing that they are both $C(\SS)$-algebras, where $\SS$ is the circle
$\RR/\ZZ$, and analysing their fibres. We make use of the higher dual graphs $E(1,m+1)$
and $E(0,m)$ studied by Bates \cite{Bates}: $E(1,m+1)$ is the graph with vertices $E^1$
and edges $E^{m+1}$, and with range and source maps given by $\mu_1 \cdots \mu_{m+1}
\mapsto \mu_1$ and $\mu_1 \cdots \mu_{m+1} \mapsto \mu_{m+1}$ respectively; and $E(0,m)$
is the graph with vertices $E^0$ and edges $E^m$ and the usual range and source maps. It
is relatively straightforward to show that the fibre of $\Tt C^*(\SG[m]{E})$ over each $t
\in \SS \setminus \{0\}$ is canonically isomorphic to $\Tt C^*(E(1, m+1))$ and indeed
that the ideal of $\Tt C^*(\SG[m]{E})$ corresponding to $0 \in \SS$ is isomorphic to
$C_0((0,1), \Tt C^*(E(1, m+1))$. The fibre over $0 \in \SS$ is more complicated. Writing
$\SG[m]{E}^*_t$ for the space of paths in $\SG[m]{E}$ whose source lies in the image of
$E^1 \times \{t\}$ in $\SG{E}^0$, we describe natural unitary transformations $U_t :
\ell^2(E(1, m+1)^*) \to \ell^2(\SG[m]{E}^*_t)$ for $t \not= 0$. For each $a \in \Tt
C^*(\SG[m]{E})$ and $t \not= 0$, this allows us to view the image of $a_t$ in the fibre
of $\Tt C^*(\SG[m]{E})$ over $t$ as an operator on $\ell^2(E(1, m+1)^*)$. We prove that
$a_0^- := \lim_{t \nearrow 0} a_t$ and $a_0^+ := \lim_{t \searrow 0} a_t$ exist in
$\Bb(\ell^2(E(1, m+1)))$ and belong to the image of $\Tt C^*(E(1,m+1))$, and that the map
$a \mapsto a_0^+ \oplus a_0^-$ descends to an injective homomorphism $\eta$ from the
fibre of $\Tt C^*(\SG[m]{E})$ over $0$ to $\Tt C^*(E(1,m+1)) \oplus \Tt C^*(E(1,m+1))$.
Following the arguments of \cite{Bates}, we show that there is a canonical injection
$\jmath_{1, m+1} : \Tt C^*(E(0,m)) \to \Tt C^*(E(1, m+1))$ that descends to the
isomorphism $C^*(E(1, m+1)) \cong C^*(E(0,m))$ of \cite[Theorem~3.1]{Bates}. We then show
that if the set of vertices of $E$ that emit at least two edges has hereditary closure
$E^0$ in $E(0,m)$, then the image of $\Tt C^*(\SG[m]{E})_0$ under $\eta$ is precisely
$\Tt C^*(E(1, m+1)) \oplus \jmath_{1, m+1}(\Tt C^*(E(0,m)))$. It is then straightforward
to obtain our description of $\Tt C^*(\SG[m]{E})$, and we prove that it is homotopy
equivalent to $\Tt C^*(\SG{E}(0,m))$, allowing us to compute its $K$-theory. We prove
that the inclusion $\Tt C^*(\SG[m]{E}) \hookrightarrow C([0,1], \Tt C^*(E(1,m+1)))$ takes
$\Tt C^*(\SG[m]{E}) \cap \oplus_{\omega \in \SG[m]{E}^0} \Kk(\ell^2(\SG[m]{E}^* \omega))$
into $C([0,1], \Kk(\ell^2(E(1, m+1)^*)))$. We then deduce that $C^*(\SG[m]{E}) \cong
C([0,1], C^*(E(1, m + 1))$.

By showing that $C^*(D_n(E)(1, m+1))$ is Morita equivalent to $C^*(E(0,m))$ when $m,n$
are coprime, we describe $K_*(C^*(\SG[l]{E}))$ for rational $l$ provided that $E$ is
locally finite and enough vertices of $E$ admit and receive at least two edges. A
sufficient condition is that $E$ is a finite, strongly connected graph that is not a
simple cycle and has period 1 (in the sense of Perron--Frobenius theory).

\smallskip

The paper is organised as follows. We present the background that we will need on
directed graphs and their $C^*$-algebras, on topological graphs, and on $C(X)$-algebras
in Section~\ref{sec:background}. We then present the definition of $\SG{E}$ in
Section~\ref{sec:the graph torus}. In Section~\ref{sec:path spaces of SGE} we describe
the path space and the infinite-path space of $\SG{E}$, and prove that the latter
coincides with $M(\sigma)$ if $E$ is finite. We define the $C^*$-algebras of $\SG{E}$ in
Section~\ref{sec:C*SGE}. In Section~\ref{sec:SG[l]E}, we describe our general
construction of $\SG[l]{E}$ and its $C^*$-algebras, and prove that the case $l = 1$
coincides with our previous definitions of $\SG{E}$ and its $C^*$-algebras, and also that
$l = -1$ corresponds to the suspension of the opposite graph of $E$. This is not the most
efficient order of presentation: we could have simply defined $\SG[l]{E}$ and its
$C^*$-algebras immediately after Section~\ref{sec:background}, and then defined $\SG{E}
:= \SG[1]{E}$. But we feel that $\SG{E}$, which is the key point of contact with
suspension flows, is important enough to warrant separate discussion, and also that the
later definitions of $\SG[l]{E}$ and its $C^*$-algebras are better motivated by first
discussing $\SG{E}$ and its $C^*$-algebras. Section~\ref{sec:SG[l]E} also contains our
reduction of the analysis of the $C^*$-algebras of $\SG[l]{E}$ for rational $l$ to the
analysis of the $C^*$-algebras of $\SG[m]{E}$ for positive integers $m$. Our analyses of
$\Tt C^*(\SG[m]{E})$ and $C^*(\SG[m]{E})$ occupy Sections \ref{sec:TC*
analysis}~and~\ref{sec:C* analysis} respectively.

\section{Background}\label{sec:background}

We recall some background about directed graphs and their $C^*$-algebras, on topological
graphs, and on $C(X)$-algebras.

\subsection{Graphs}
We take our conventions for graph $C^*$-algebras from \cite{CBMSbook}. Throughout this
paper, all of the graphs that we consider are directed graphs in the sense that the edges
have an orientation. We omit the adjective throughout.

A \emph{graph} is a quadruple $E = (E^0, E^1, r, s)$ consisting of finite or countably
infinite sets $E^0$ and $E^1$, and functions $r, s : E^1 \to E^0$. We think of the
elements of $E^0$ as vertices, and draw them as dots, and we regard the elements of $E^1$
as directed edges connecting vertices, and draw each as an arrow from the vertex $s(e)$
the vertex $r(e)$.

It is convenient to think of $E^0$ as the objects and $E^1$ as the indecomposable
morphisms in the countable category $E^*$ of finite paths in $E$ as follows. For $n \ge
2$, we define $E^n := \{\mu_1 \mu_2 \dots \mu_n : \mu_i \in E^1\text{ and } s(\mu_i) =
r(\mu_{i+1})\text{ for all }i\}$, and refer to the elements of $E^n$ as \emph{paths of
length $n$} in $E$. We think of vertices as paths of length 0 and edges as paths of
length 1, and then write $E^* := \bigcup_{n=0}^\infty E^n$ for the path space of $E$. For
$v \in E^0$, we write $r(v) = s(v) = v$, and for $n \ge 1$ and $\mu \in E^n$ we write
$r(\mu) = r(\mu_1)$ and $s(\mu) = s(\mu_n)$. We can concatenate $\mu,\nu \in E^*
\setminus E^0$ to form the path $\mu\nu$ if $r(\nu) = s(\mu)$. For $v \in E^0$ and $\mu
\in E^*$, the concatenation $v \mu$ is defined if $r(\mu) = v$, in which case, we have
$v\mu = \mu$, and similarly the concatenation $\mu v$ is defined if $v = s(\mu)$, in
which case $\mu v = \mu$. If $\mu \in E^n$, we write $|\mu| = n$.

Given $U, V \subseteq E^*$, we define $UV := \{\mu\nu : \mu \in U, \nu \in V, \text{ and
} s(\mu) = r(\nu)\}$. When $U$ is a singleton $U = \{\mu\}$, we write $\mu V$ rather than
$\{\mu\} V$ for the set $\{\mu\nu : \nu \in V\text{ and }r(\nu) = s(\mu)\}$. In
particular, for $v \in E^0$ we have
\[
v E^1 = \{e \in E^1 : r(e) = v\}\qquad\text{ and }\qquad E^1 v = \{e \in E^1 : s(e) = v\}.
\]
We extend this notational convention in the obvious ways, so that for example if $v,w \in
E^0$ then $v E^1 w = v E^1 \cap E^1 w$.

The \emph{adjacency matrix} of the graph $E$ is the integer matrix $A_E$ given by
$A_E(v,w) = |v E^1 w|$. We then have $A_E^n(v,w) = |v E^n w|$ for all $v,w,n$.

We say that $E$ is \emph{finite} if $E^0$ and $E^1$ are both finite sets. We say that $E$
is \emph{locally finite} if each $E^1 v \cup v E^1$ is a finite set; that is, if the
row-sums and column sums of the matrix $A_E$ are finite. A \emph{sink} in $E$ is a vertex
$v$ such that $E^1 v = \emptyset$, and a source is a vertex $v$ such that $vE^1 =
\emptyset$. In this paper, we are concerned exclusively with graphs that are
locally-finite and have no sources.

A \emph{cycle} in $E$ is a path $\mu \in E^* \setminus E^0$ such that $r(\mu) = s(\mu)$.
We say that $\mu$ has an \emph{entrance} if there exists $i \le |\mu|$ such that
$|r(\mu_i)E^1| \ge 2$.

\subsection{Infinite paths} An \emph{infinite path} in $E$ is a string
$x = x_1x_2x_3 \cdots$ of edges of $E$ such that $x_i \in E^1 r(x_{i+1})$ for all $i$. We
write $E^\infty$ for the set of all infinite paths in $E$ and call it the
\emph{infinite-path space} of $E$. For $x \in E^\infty$, we define $r(x) = r(x_1) \in
E^0$; and for $\mu \in E^*$ and $x \in E^\infty$ with $r(x) = s(\mu)$ we write $\mu x$
for the infinite path $\mu_1 \cdots \mu_n x_1 x_2 \cdots$. We write $\mu E^\infty :=
\{\mu x : x \in E^\infty\}$.

We endow $E^\infty$ with the topology that it inherits as a subspace of
$\prod^\infty_{i=1} E^\infty$, a basis for which is the collection $\{\mu E^\infty : \mu
\in E^*\}$. The set $\mu E^\infty$ is called the \emph{cylinder set} of $\mu$ and is
often denoted $Z(\mu)$ elsewhere in the literature. When $E$ is locally finite, the sets
$\mu E^\infty$ are compact open sets in $E^\infty$, and the topology is a locally compact
Hausdorff totally disconnected topology.

The \emph{shift map} $\sigma : E^\infty \to E^\infty$ is defined by $\sigma(x)_i =
x_{i+1}$, so $\sigma(x_1 x_2 x_3 \cdots) = x_2 x_3\cdots$, and $\sigma(ex) = x$ for all
$x \in E^\infty$ and $e \in E^1$ with $s(e) = r(x)$. This $\sigma$ is a local
homeomorphism, as it restricts to a homeomorphism $e E^\infty \to s(e)E^\infty$ for each
$e \in E^1$.

\subsection{Graph \texorpdfstring{$C^*$}{C*}-algebras}\label{sec:graph algs}

Let $E$ be a locally finite graph with no sources. A \emph{Toeplitz--Cuntz--Krieger}
family for $E$ in a $C^*$-algebra $A$ consists of a map $t : E^1 \to A$, written $e
\mapsto t_e$ and a map $q : E^0 \to A$, written $v \mapsto q_v$ such that the elements
$q_v$ are mutually orthogonal projections in $A$, and such that
\begin{itemize}
\item[(TCK1)] $t^*_e t_e = q_{s(e)}$ for all $e \in E^1$, and
\item[(TCK2)] $q_v \ge \sum_{e \in vE^1} t_e t^*_e$ for all $v \in E^0$.
\end{itemize}
A \emph{Cuntz--Krieger} family for $E$ is a Toeplitz--Cuntz--Krieger family $(t,q)$ for
$E$ such that
\begin{itemize}
\item[(CK)] $q_v = \sum_{e \in vE^1} t_e t^*_e$ for all $v \in E^0$.
\end{itemize}

Relations (TCK1)~and~(TCK2) imply that for each $\mu \in E^n$ the element $t_\mu =
t_{\mu_1} t_{\mu_2} \dots t_{\mu_n}$ is a partial isometry. As a notational convenience,
we write $t_v := q_v$ for $v \in E^0$. With this notation, the $C^*$-algebra generated by
the elements $t_e$ and the elements $q_v$ is equal to the closed linear span
\[
C^*(t,q) = \clsp\{t_\mu t^*_\nu : \mu,\nu \in E^*, s(\mu) = s(\nu)\},
\]
and we have $t_\mu t_\nu = \delta_{s(\mu),r(\nu)} t_{\mu\nu}$. An induction shows that
$t^*_\mu t_\mu = t_{s(\mu)}$ for all $\mu \in E^*$, and we have
\[
t_\mu t^*_\nu t_\eta t^*_\zeta
    = \begin{cases}
        t_\mu t^*_{\zeta\nu'} &\text{ if $\nu = \eta \nu'$}\\
        t_{\mu\eta'} t^*_\zeta &\text{ if $\eta = \nu\eta'$}\\
        0 &\text{ otherwise.}
    \end{cases}
\]

There is a $C^*$-algebra $\Tt C^*(E)$ generated by a Toeplitz--Cuntz--Krieger family
$(T,Q)$ that is universal in the sense that given any other Toeplitz--Cuntz--Krieger
family $(t,q)$ in a $C^*$-algebra $A$, there is a homomorphism $\pi_{t,q} : \Tt C^*(E)
\to A$ such that $\pi_{t,q}(T_e) = t_e$ and $\pi_{t,q}(Q_v) = q_v$. The universal
property ensures that there is an action $\gamma : \TT \to \Aut(\Tt C^*(E))$ called the
\emph{gauge action} such that $\gamma_z(T_e) = zT_e$ and $\gamma_z(Q_v) = Q_v$ for all $e
\in E^1$ and $v \in E^0$.

There is a faithful representation $\pi : \Tt C^*(E) \to \Bb(\ell^2(E^*))$ called the
path-space representation, and determined by $\pi(T_e) h_\mu = \delta_{s(e),r(\mu)}
h_{e\mu}$ and $\pi(Q_v) h_\mu = \delta_{v, r(\mu)} h_\mu$. (The existence of $\pi$
follows from the universal property, and injectivity follows from an application of
\cite[Theorem~4.1]{FR}.)

There is also a $C^*$-algebra $C^*(E)$ generated by a Cuntz--Krieger $E$-family $(s,p)$
that is universal in the sense that given any other Cuntz--Krieger family $(s',p')$ there
is a homomorphism of $C^*(E)$ taking each $s_e$ to $s'_e$ and each $p_v$ to $p'_v$. This
$C^*(E)$ is isomorphic to the quotient of $\Tt C^*(E)$ by the ideal $I_E$ generated by
the projections $\Delta_v := Q_v - \sum_{e \in vE^1} T_e T^*_e$ indexed by $v \in E^0$.

Let $E$ be a locally finite graph with no sources and let $(t, q)$ be a Cuntz--Krieger
$E$-family. Since the projections $\Delta_v$ are fixed by the gauge action on $\Tt
C^*(E)$, it descends to an action, also called the gauge action and denoted $\gamma$, on
$C^*(E)$. The gauge-invariant uniqueness theorem \cite{aHR, CBMSbook} states that if
there is an action $\beta : \TT \to \Aut(C^*(t,q))$ such that $\beta_z(t_e) = zt_e$ for
all $e \in E^1$, then $\pi_{q,t} : C^*(E) \to C^*(t, q)$ is injective if and only if each
$q_v$ is nonzero. The Cuntz--Krieger uniqueness theorem \cite{CK, CBMSbook} says that if
every cycle in $E$ has an entrance, then $\pi_{q,t}$ is injective if and only if each
$q_v$ is nonzero.

With a little work, one can check that the path-space representation of $\Tt C^*(E)$
carries the ideal $I_E$ to $\pi(\Tt C^*(E)) \cap \Kk(\ell^2(E^*))$. It follows that there
is a homomorphism from $C^*(E)$ to the Calkin algebra $\Qq(\ell^2(E^*)) :=
\Bb(\ell^2(E^*))/\Kk(\ell^2(E^*))$ given by $p_v \mapsto Q_v + \Kk(\ell^2(E^*))$ and $s_e
\mapsto T_e + \Kk(\ell^2(E^*))$. We call this homomorphism the \emph{Calkin
representation} of $C^*(E)$. An argument using the gauge-invariant uniqueness theorem
shows that the Calkin representation of $C^*(E)$ is injective. The subspaces $\ell^2(E^*
v) \subseteq \ell^2(E^*)$ indexed by $v \in E^0$ are invariant for $\pi$, and we have
$\pi(\Tt C^*(E)) \cap \Kk(\ell^2(E^*)) = \bigoplus_{v \in E^0} \Kk(\ell^2(E^* v))$. So
the Calkin representation can be regarded as an injective homomorphism of $C^*(E)$ into
$\bigoplus_{v \in E^0} \Qq(\ell^2(E^* v))$ given by $s_e \mapsto \bigoplus_v
(T_e|_{\ell^2(E^* v)} + \Kk(\ell^2(E^* v)))$.

A set $H \subseteq E^0$ is called \emph{hereditary} if $s(H E^1) \subseteq H$, or
equivalently $HE^* \subseteq E^* H$.

\subsection{Dual graphs}\label{sec:dual graph}
Given a locally finite graph $E$ with no sources, the \emph{dual graph} $\widehat{E}$ is
the graph $\widehat{E} = (E^1, E^2, \hat{r}, \hat{s})$ where $\hat{r}(ef) = e$ and
$\hat{s}(ef) = f$ for all $ef \in E^2$. The properties of being row-finite or locally
finite and of having no sinks or no sources pass from $E$ to $\widehat{E}$ and vice
versa.

There is a homeomorphism $E^\infty \cong \widehat{E}^\infty$ that carries the infinite
path $x = x_1 x_2 x_3 \cdots$ of $E$ to the infinite path $\hat{x} = (x_1 x_2) (x_2 x_3)
(x_3 x_4) \cdots$ of $\widehat{E}$.

Corollary~2.5 of \cite{BPRS} shows that there is an isomorphism $C^*(\widehat{E}) \cong
C^*(E)$ satisfying $s_{ef} \mapsto s_e s_f s^*_f$ and $p_e \mapsto s_e s^*_e$ for all $ef
\in \widehat{E}^1$ and all $e \in \widehat{E}^0$.

More generally (see \cite{Bates}), we can construct from any pair of integers $0 < p < q$
a new graph $E(p,q)$ from $E$. We define $E(p,q)$ to be the graph with
\[
E(p,q)^0 := E^p \quad\text{ and }\quad E(p,q)^1 := E^q,
\]
with range and source maps given by
\[
r_{p,q}(e_1 \cdots e_q) := \begin{cases}
    e_1 \cdots e_p &\text{if $p \ge 1$}\\
    r(e_1) &\text{if $p = 0$}
    \end{cases}
\quad\text{ and }\quad
s_{p,q}(e_1 \cdots e_q) := \begin{cases}
    e_{q-p+1} \cdots e_q &\text{if $p \ge 1$}\\
    s(e_q) &\text{if $p = 0$.}
    \end{cases}
\]
So $E(0,1) \cong E$, $E(1,2) \cong \widehat{E}$, and $E(0, p)$ is the
$p$\textsuperscript{th} higher-power graph $(E^0, E^p, r, s)$. Bates shows in
\cite[Theorem~3.1]{Bates} that $C^*(E(p+1,q+1)) \cong C^*(E(p,q))$ for all $0 < p < q$,
and hence, by induction, that $C^*(E(p,q)) \cong C^*(E(0,q-p))$ for all $0 < p < q$,
generalising the usual isomorphism $C^*(\widehat{E}) \cong C^*(E)$ of
\cite[Corollary~2.5]{BPRS}. We will need the following analogue of this result for
Toeplitz algebras.

Recall that for any graph $E$, we denote by $I_E$ the ideal of $\Tt C^*(E)$ generated by
the projections $\Delta_v := Q_v - \sum_{e \in vE^1} T_e T^*_e$ indexed by $v \in E^0$,
and that then $C^*(E) = \Tt C^*(E)/I_E$.

\begin{lem}\label{lem:mth Toeplitz injection}
Let $E$ be a row-finite graph with no sources and fix integers $0 < p < q$. For $v \in
E^0$ let $q_v := \sum_{\mu \in v E^p} Q_\mu \in \Tt C^*(E(p,q))$, and for $\mu \in
E^{q-p} = E(0,q-p)^1$, let $t_\mu := \sum_{\nu \in s(\mu)E^p} T_{\mu \nu} \in \Tt
C^*(E(p,q))$. Then there is an injective homomorphism $\jmath_{p,q} : \Tt C^*(E(0, q-p))
\hookrightarrow \Tt C^*(E(p,q))$ such that $\jmath_{p,q}(Q_v) = q_v$ for $v \in E^0$, and
$\jmath_{p,q}(T_e) = t_e$ for $e \in E^1$. We have
\begin{equation}\label{eq:jmath(Delta)}
\jmath_{p,q}(\Delta_v) = \sum_{\mu \in vE^p} \Delta_\mu \quad\text{ for all $v \in E^0$,}
\end{equation}
and $\jmath_{p,q}(I_{E(0, q-p)}) \subseteq I_{E(p,q)}$. There is an isomorphism
$\tilde\jmath_{p,q} : C^*(E(0,q-p)) \to C^*(E(p,q))$ such that $\tilde\jmath_{p,q}(a +
I_{E(0, q-p)}) = \jmath_{p,q}(a) + I_{E(p,q)}$ for all $a \in \Tt C^*(E(0,q-p))$.
\end{lem}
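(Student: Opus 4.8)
The plan is to prove the statement in four stages: (i) check that the proposed $(t_\mu,q_v)$ form a Toeplitz--Cuntz--Krieger family for $E(0,q-p)$, so that the universal property yields $\jmath_{p,q}$; (ii) establish \eqref{eq:jmath(Delta)} and the ideal containment; (iii) prove injectivity of $\jmath_{p,q}$; and (iv) descend to the isomorphism $\tilde\jmath_{p,q}$. For (i) the crucial observation is that for $\alpha,\beta\in E(p,q)^1=E^q$ one has $T_\alpha T_\beta^*=0$ unless $s_{p,q}(\alpha)=s_{p,q}(\beta)$, because $T_\alpha=T_\alpha Q_{s_{p,q}(\alpha)}$ and $T_\beta^*=Q_{s_{p,q}(\beta)}T_\beta^*$. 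Since $s_{p,q}(e\nu)=\nu$ for $e\in E^{q-p}$ and $\nu\in s(e)E^p$, every off-diagonal cross term in $t_e^*t_e$ and in $t_et_e^*$ vanishes. Thus $t_e^*t_e=\sum_{\nu\in s(e)E^p}Q_{s_{p,q}(e\nu)}=\sum_{\nu\in s(e)E^p}Q_\nu=q_{s(e)}$, giving (TCK1), and $t_et_e^*=\sum_{\nu\in s(e)E^p}T_{e\nu}T_{e\nu}^*$. The $q_v$ are mutually orthogonal projections because the $Q_\mu$ are and $vE^p\cap wE^p=\emptyset$ for $v\ne w$.

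For (ii), summing the last identity over $e\in vE^{q-p}$ and reindexing $\alpha=e\nu$ over $vE^q$ gives $\sum_{e\in vE^{q-p}}t_et_e^*=\sum_{\alpha\in vE^q}T_\alpha T_\alpha^*$. Grouping the edges $\alpha\in vE^q$ by their range $r_{p,q}(\alpha)\in vE^p$ and applying the (TCK2) inequalities $Q_\mu\ge\sum_{\alpha\in\mu E(p,q)^1}T_\alpha T_\alpha^*$ at each $\mu\in vE^p$ yields $q_v-\sum_{e\in vE^{q-p}}t_et_e^*=\sum_{\mu\in vE^p}\Delta_\mu\ge 0$. This simultaneously verifies (TCK2), so $\jmath_{p,q}=\pi_{t,q}$ exists, and it is exactly \eqref{eq:jmath(Delta)}. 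Since $I_{E(0,q-p)}$ is generated by the $\Delta_v$ and $\jmath_{p,q}(\Delta_v)=\sum_{\mu\in vE^p}\Delta_\mu\in I_{E(p,q)}$, the containment $\jmath_{p,q}(I_{E(0,q-p)})\subseteq I_{E(p,q)}$ follows at once.

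For (iii) I would invoke the gauge-invariant uniqueness theorem for Toeplitz algebras \cite{FR}. The gauge action $\gamma$ on $\Tt C^*(E(p,q))$ restricts to $C^*(t,q)$ with $\gamma_z(t_e)=z^qt_e$ and $\gamma_z(q_v)=q_v$; as every edge is scaled by the single character $z\mapsto z^q$ with $q\ne 0$, the averaging argument underlying the theorem applies verbatim. The remaining hypothesis is that $q_v-\sum_{e\in F}t_et_e^*\ne 0$ for all $v$ and all finite $F\subseteq vE^{q-p}$. Each such gap projection dominates $q_v-\sum_{e\in vE^{q-p}}t_et_e^*=\sum_{\mu\in vE^p}\Delta_\mu$, which is nonzero: in the path-space representation of $\Tt C^*(E(p,q))$ it is the projection onto $\clsp\{h_\mu:\mu\in vE^p\}$, and $vE^p\ne\emptyset$ because $E$ has no sources. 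Hence $\jmath_{p,q}$ is injective.

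For (iv) the containment from (ii) forces the composite of $\jmath_{p,q}$ with the quotient map onto $C^*(E(p,q))$ to annihilate $I_{E(0,q-p)}$, producing $\tilde\jmath_{p,q}$. Writing $\bar s_e,\bar p_v$ for the images of $t_e,q_v$, the relation $q_v-\sum_et_et_e^*=\sum_\mu\Delta_\mu$ becomes $\bar p_v=\sum_{e\in vE^{q-p}}\bar s_e\bar s_e^*$, so $(\bar s,\bar p)$ is a genuine Cuntz--Krieger family and $\tilde\jmath_{p,q}=\pi_{\bar s,\bar p}$; since the vertex projections $p_\mu$ are nonzero in $C^*(E(p,q))$ and $vE^p\ne\emptyset$, we get $\bar p_v=\sum_{\mu\in vE^p}p_\mu\ne 0$, and the gauge-invariant uniqueness theorem gives injectivity of $\tilde\jmath_{p,q}$. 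Surjectivity is the main obstacle: one must recover the individual $p_\mu$ from the lumped projections $\bar p_v$ and the $\bar s_e$. When $q-p\ge p$ the range $r_{p,q}(e\nu)$ is independent of $\nu$ and one checks $p_\mu=\sum_{\eta\in s(\mu)E^{q-2p}}\bar s_{\mu\eta}\bar s_{\mu\eta}^*$ and then $s_\alpha=\bar s_e\,p_\nu$ for $\alpha=e\nu\in E^q$, so $\tilde\jmath_{p,q}$ is onto; when $q-p<p$ the range vertex depends on $\nu$, and the separation of vertices requires composing several of the $\bar s_e$, equivalently following the inductive dual-graph reduction of \cite{Bates} and identifying $\tilde\jmath_{p,q}$ on generators with the iterate of the isomorphisms $C^*(E(p+1,q+1))\cong C^*(E(p,q))$ of \cite[Theorem~3.1]{Bates}. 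Everything apart from this generation step is a routine manipulation of the Toeplitz--Cuntz--Krieger relations.
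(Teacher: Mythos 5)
Your proposal is correct and follows essentially the same route as the paper: verify the Toeplitz--Cuntz--Krieger relations to obtain $\jmath_{p,q}$ from the universal property, identify the gap $q_v - \sum_{e} t_e t_e^*$ with $\sum_{\mu \in vE^p}\Delta_\mu$ to obtain \eqref{eq:jmath(Delta)} and the ideal containment, deduce injectivity from the nonvanishing of these gap projections in the path-space representation via \cite[Theorem~4.1]{FR}, and identify $\tilde\jmath_{p,q}$ on generators with the isomorphism coming from \cite{Bates}. The only blemish is the claim that the gauge action scales $t_e$ by $z^q$ --- since $t_e$ is a sum of single edge generators $T_{e\nu}$ of $\Tt C^*(E(p,q))$, it is scaled by $z$ --- but this is immaterial to the argument.
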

\begin{proof}
The elements $q_v$ are mutually orthogonal projections in $\Tt C^*(E(p,q))$ because
$\{Q_\mu : \mu \in E(p,q)^0\}$ is a collection of mutually orthogonal projections. For
$\mu,\nu \in E^{q-p}$, we have
\[
t_\mu^* t_\nu = \sum_{\alpha \in s(\mu)E^p, \beta \in s(\nu)E^p} T_{\mu\alpha}^* T_{\nu\beta}
    = \sum_{\alpha \in s(\mu)E^p, \beta \in s(\nu)E^q} \delta_{\mu\alpha, \nu\beta} Q_\alpha
    = \delta_{\mu,\nu} \sum_{\alpha \in s(\mu)E^p} Q_\alpha
    = \delta_{\mu,\nu} q_{s(\mu)}.
\]
Hence $(t,q)$ satisfies~(TCK1), and the partial isometries $\{t_\mu : \mu \in E^{q-p}\}$
have mutually orthogonal range projections. For $v \in E^0$ and $\mu \in vE^{q-p}$, we
have
\[
q_v t_\mu
    = \sum_{\nu \in vE^{q-p}} Q_\nu \sum_{\alpha \in s(\mu)E^p} T_{\mu\alpha}
    = \sum_{\nu \in vE^{q-p}, \alpha \in s(\mu)E^p} \delta_{\mu,\nu} T_{\mu\alpha}
    = \sum_{\alpha \in s(\mu) E^p} T_{\mu\alpha}
    = t_\mu.
\]
In particular, each $t_\mu t^*_\mu \le q_{r(\mu)}$, and since the $t_\mu t^*_\mu$ are
mutually orthogonal, it follows that $(q, t)$ satisfies~(TCK2). So the universal property
of $\Tt C^*(E(0, q-p))$ gives a homomorphism $\jmath_{p,q} : \Tt C^*(E(0, q-p)) \to \Tt
C^*(E(p,q))$ such that $\jmath_{p,q}(Q_v) = q_v$ for $v \in E^0$, and $\jmath_{p,q}(T_e)
= t_e$ for $e \in E^1$.

To see that this homomorphism is injective, observe that for $v \in E^0$, we have
\begin{equation}\label{eq:gap computation}
\begin{split}
q_v - \sum_{\nu \in vE^{q-p}} t_\nu t^*_\nu
    &= \sum_{\mu \in v E^p} Q_\mu - \sum_{\nu \in vE^{q-p}} \sum_{\alpha \in s(\nu)E^p} T_{\nu\alpha} T^*_{\nu\alpha}\\
    &= \sum_{\mu \in v E^p} Q_\mu - \sum_{\lambda \in vE^q} T_\lambda T^*_\lambda
    = \sum_{\mu \in v E^p} \Big(Q_\mu - \sum_{\beta \in s(\mu)E^{q-p}} T_{\mu\beta} T^*_{\mu\beta}\Big).
\end{split}
\end{equation}
Let $\pi : \Tt C^*(E(p,q)) \to \Bb(\ell^2(E(p,q)^*))$ be the path-space representation.
For each $\mu \in vE^p$, we have $\pi(Q_\mu - \sum_{\alpha \in s(\mu)E^{q-p}}
T_{\mu\alpha} T^*_{\mu\alpha}) = \theta_{h_\mu, h_\mu} \in \Kk(\ell^2(E(p,q)^*))$, and
hence $\pi(Q_\mu - \sum_{\alpha \in s(\mu)E^{q-p}}) \not= 0$. Hence $q_v - \sum_{\mu \in
vE^{q-p}} t_\mu t^*_\mu \not= 0$. Theorem~4.1 of \cite{FR} therefore shows that
$\jmath_{p,q}$ is injective.

The calculation~\eqref{eq:gap computation} establishes~\eqref{eq:jmath(Delta)}, and since
$\jmath_{p,q}(I_E)$ is an ideal of $\jmath(\Tt C^*(E(0,q-p)))$, it follows that it is
contained in the ideal of $\Tt C^*(E(p,q))$ generated by the $\Delta_\mu$, which is
$I_{E(p,q)}$ by definition. So $\jmath_{p,q}$ descends to a homomorphism
$\tilde\jmath_{p,q} : C^*(E(0, q-p)) \to C^*(E(p,q))$ satisfying
\begin{align*}
\tilde\jmath_{p,q}(p_v) &= \tilde\jmath_{p,q}(Q_v + I_{E(0,q-p)}) = \jmath_{p,q}(Q_v) + I_{E(p,q)} = \sum_{\mu \in vE^p} p_\mu,\text{ and}\\
\tilde\jmath_{p,q}(s_\nu) &= \tilde\jmath_{p,q}(T_\nu + I_{E(0,q-p)}) = \jmath_{p,q}(T_\nu) + I_{E(p,q)} = \sum_{\alpha \in s(\nu)E^p} s_{\nu\alpha}.
\end{align*}
This is an isomorphism because it agrees on generators with that obtained from
\cite[Corollary~3.3]{Bates}.
\end{proof}

\subsection{Topological graphs and their
\texorpdfstring{$C^*$}{C*}-algebras}\label{sec:topgraphs}

We present here a very brief introduction to those parts of Katsura's theory of
topological graphs and their $C^*$-algebras that we will need later. For a more
comprehensive overview, see \cite[Chapter~8]{CBMSbook}; for full details, see \cite{Kat1,
Kat2, Kat3, Kat4}.

As defined by Katsura \cite{Kat1}, a \emph{topological graph} is a quadruple $E = (E^0,
E^1, r, s)$ where $E^0$ and $E^1$ are locally compact Hausdorff spaces, $r : E^1 \to E^0$
is a continuous map, and $s : E^1 \to E^0$ is a local homeomorphism.

The associated \emph{graph bimodule} is defined as follows. The space $C_c(E^1)$ is a
$C_0(E^0)$-bimodule with respect to the actions $\big(a \cdot \xi\big)(e) =
a(r(e))\xi(e)$ and $\big(\xi \cdot a\big)(e) = \xi(e) a(s(e))$ for $a \in C_0(E^0)$ and
$\xi \in C_c(E^1)$. The formula $\langle \xi \mid \eta\rangle_{C_0(E^0)}(v) := \sum_{s(e)
= v} \overline{\xi(e)}\eta(e)$ defines a $C_0(E^0)$-valued inner-product on $C_c(E^1)$,
and the completion $X(E)$ of $C_c(E^1)$ in the norm $\|\xi\| = \|\langle \xi,
\xi\rangle_A\|^{1/2}$ is a Hilbert $C_0(E^0)$-bimodule called the \emph{graph
correspondence} of $E$. Katsura proves that $X(E)$ is equal to the space $C_d(E^1)$ of
functions
\begin{equation}\label{eq:CdX}
C_d(E^1) := \Big\{\xi \in C_0(E^1) : \Big(v \mapsto \sum_{s(e) = v} |\xi(e)|^2\Big) \in C_0(E)^0\Big\}.
\end{equation}

The left action of $C_0(E^0)$ on $X(E)$ determines a homomorphism $\phi : C_0(E^0) \to
\Ll(X(E))$ by $\phi(a)\xi = a \cdot \xi$. If $r : E^1 \to E^0$ is a proper map in the
sense that the preimages of compact sets are compact, then $\phi$ takes values in
$\Kk(X(E))$.

A representation of $X(E)$ in a $C^*$-algebra $A$ is a pair $(\pi, \psi)$ such that $\pi
: C_0(E^0) \to A$ is a homomorphism, $\psi : X(E) \to A$ is a linear map, and we have
$\pi(a)\psi(\xi) = \psi(a \cdot \xi)$, $\psi(\xi)\pi(a) = \psi(\xi \cdot a)$ and
$\psi(\xi)^* \psi(\eta) = \pi(\langle \xi, \eta\rangle_{C_0(E^0)})$ for all $\xi, \eta
\in X(E)$. The pair $(\pi, \psi)$ induces a homomorphism $\psi^{(1)} : \Kk(X(E)) \to A$
such that $\psi^{(1)}(\theta_{\xi,\eta}) = \psi(\xi)\psi(\eta)^*$ for all $\xi,\eta \in
X(E)$ (see \cite[Lemma~3.2]{Pimsner}). The representation $(\psi, \pi)$ is
\emph{Cuntz--Pimsner covariant} if we have $\psi^{(1)}(\phi(a)) = \pi(a)$ whenever
$\phi(a) \in \Kk(X(E))$; in particular, if $r : E^1 \to E^0$ is proper, then $(\psi,
\pi)$ is Cuntz--Pimsner covariant if $\psi^{(1)} \circ \phi = \pi$.

The topological graph $C^*$-algebra, denoted here by $C^*(E)$ is the $C^*$-algebra
generated by a universal Cuntz--Pimsner covariant representation of $E$. Its Toeplitz
algebra $\Tt C^*(E)$ is the $C^*$-algebra generated by a universal representation of $E$.

If $E$ is a graph and $Y$ is a locally compact Hausdorff space then the topological graph
$E \times Y$ is defined by $(E \times Y)^0 := E^0 \times Y$, $(E \times Y)^1 := E^1
\times Y$ and $r(e, y) := (r(e), y)$ and $s(e, y) := (s(e), y)$. If $E$ is locally
finite, then $r : (E \times Y)^1 \to (E \times Y)^0$ is a proper map. The universal
properties of $C^*(E \times Y)$ and of the tensor product $C^*(E) \otimes C_0(Y)$ imply
(see \cite[Proposition~7.7]{Kat2}) that there is an isomorphism $C^*(E) \otimes C_0(Y)
\cong C^*(E \times Y)$ that carries $s_e \otimes h$ to $\psi\big((f, y) \mapsto
\delta_{e,f} h(y)\big)$ and carries $p_v \otimes h$ to $\pi\big((w, y) \mapsto
\delta_{v,w} h(y)\big)$ for all $e \in E^1$, $v \in E^0$ and $h \in C_0(Y)$.

\subsection{\texorpdfstring{$C(X)$}{C(X)}-algebras}\label{sec:C(X)-algebras}

We need only the bare bones of the theory of $C(X)$-algebras here. For details, see
\cite[Appendix~C]{Williams}.

Let $X$ be a locally compact Hausdorff space. A $C^*$-algebra $A$ is called a
$C(X)$-algebra if there exists a nondegenerate homomorphism $\iota : C(X) \to \Zz \Mm(A)$
of $C(X)$ into the centre of the multiplier algebra of $A$. For each $x \in X$, the
maximal ideal $J_x = \{f \in C(X) : f(x) = 0\}$ of $C(X)$ generates an ideal $I_x =
\iota(J_x) A$ of $A$. We define $A_x := A/I_x$ to be the corresponding quotient. For $a
\in A$, the map $x \mapsto \|a + I_x\|$ is upper semicontinuous. There is a unique
topology on $\mathcal{A} := \bigsqcup_{x \in X} A_x$ under which the functions $x \mapsto
a + I_x$ are all continuous. In this topology,
\begin{equation}\label{eq:norm limit zero}
\lim_{n \to \infty} \|a + I_{x_n}\| = 0
    \qquad\Longrightarrow\qquad
    \lim_{n \to \infty} a + I_{x_n} = 0.
\end{equation}
So $\mathcal{A}$ is an upper-semicontinuous bundle of $C^*$-algebras over $X$, and each
$a \in A$ determines a section $\gamma_a : x \mapsto a + I_x$ of $\mathcal{A}$ that
vanishes at infinity in the sense that for each $\varepsilon > 0$ there exists a compact
set $K \subseteq X$ such that $\|\gamma_a(x)\| < \varepsilon$ for all $x \not\in K$. The
map $a \mapsto \gamma_a$ is an isomorphism of $A$ onto the algebra $\Gamma_0(X,
\mathcal{A})$ of continuous sections of $\mathcal{A}$ that vanish at infinity. So every
$C(X)$-algebra is the algebra of sections of an upper-semicontinuous bundle of
$C^*$-algebras over $X$. Conversely, if $\mathcal{A}$ is an upper-semicontinuous bundle
over $X$ then there is a nondegenerate homomorphism $\iota : C(X) \to \Zz \Mm(\Gamma_0(X,
\mathcal{A}))$ characterised by $(\iota(f)\gamma)(x) = f(x)\gamma(x)$.

\section{The suspension of a graph}\label{sec:the graph torus}

In this section we define the suspension $\SG{E}$ of a graph $E$ and describe its basic
properties. Our motivation is the relationship between the infinite-path space of
$\SG{E}$ and the suspension flow of the shift-space associated to $E$, which we establish
in Section~\ref{sec:path spaces of SGE}. The constructions in this section will be
subsumed by the more-general construction of the quivers $\SG[l]{E}$ parameterised by $l
\in \RR$ in Section~\ref{sec:SG[l]E}.

Let $E$ be a locally finite graph with no sources. Let $\sim$ denote the smallest
equivalence relation on $((E^* \setminus E^0) \times [0,1]) \sqcup E^*$ such that
\begin{equation}\label{eq:simdef}
(\mu f, 0) \sim \mu \text{ and } (e \mu,1) \sim \mu \quad\text{ for all $\mu \in E^*$, $e \in E^1 r(\mu)$ and $f \in s(\mu)E^1$.}
\end{equation}
Observe that $\sim$ restricts to an equivalence relation on $(E^{n+1} \times [0,1]) \cup
E^n$ for each $n \ge 0$.

For $\mu \in E^* \setminus E^0$ and $t \in [0,1]$, we write $[\mu,t]$ for the
equivalence-class of $(\mu,t)$ under $\sim$, and for $\mu \in E^*$ we write $[\mu]$ for
the equivalence class of $\mu$ under $\sim$. Putting $\mu = v \in E^0$
in~\eqref{eq:simdef}, we see that $[e, 1] \sim [f,0]$ whenever $s(e) = r(f)$. For $n \in
\NN = \{0, 1, 2, \dots\}$, we write
\[
    \SG{E}^n := \big((E^{n+1} \times [0,1]) \sqcup E^n\big)/{\sim},
\]
and we define
\[
    \SG{E}^* := \big(((E^* \setminus E^0) \times [0,1]) \sqcup E^*\big)/{\sim} = \bigsqcup_n \SG{E}^n.
\]

It is straightforward to check that there are well-defined maps $r,s : \SG{E}^* \to
\SG{E}^0$ such that $r([e\mu,t]) := [e,t]$ and $s([\mu f,t]) := [f,t]$ for $\mu \in E^*$
and $e \in E^1 r(\mu)$ and $f \in s(\mu)E^1$. These maps satisfy $r([\mu]) = [r(\mu)]$
and $s([\mu]) = [s(\mu)]$ for $\mu \in E^*$.

\begin{dfn}
Let $E$ be a locally finite graph with no sources. We call the quadruple $\SG{E} :=
(\SG{E}^0, \SG{E}^1, r, s)$ the \emph{suspension} of $E$.
\end{dfn}

The following lemma will make it easier to work with the suspension of a graph.

\begin{lem}\label{lem:normal form}
Let $E$ be a locally finite graph with no sources. For $\mu, \nu \in E^*$ and $s,t \in
[0,1]$, we have $(\mu,s) \sim (\nu,t)$ if and only if one of the following holds:
\begin{enumerate}
    \item\label{it:normal form1} $\mu = \nu$ and $s=t$;
    \item\label{it:normal form2} $s = t = 0$ and $\mu_1 \cdots \mu_{|\mu|-1} = \nu_1
        \cdots \nu_{|\nu|-1}$, or $s = t = 1$ and $\mu_2 \cdots \mu_{|\mu|} = \nu_2
        \cdots \nu_{|\nu|}$; or
    \item\label{it:normal form3} $s = 1, t = 0$ and $\mu_2 \cdots \mu_{|\mu|} = \nu_1
        \cdots \nu_{|\nu|-1}$, or $s = 0, t = 1$ and $\mu_1 \cdots \mu_{|\mu|-1} =
        \nu_2 \cdots \nu_{|\nu|}$.
\end{enumerate}
Each $\alpha \in \SG{E}^*$ has a representative of the form $(\mu,t)$ where $\mu \in E^*$
and $t \in [0,1)$. If $\alpha \not\in \{[\mu] : \mu \in E^*\}$ then this representative
is unique. The map $\mu \mapsto [\mu]$ is an injection of $E^*$ into $\SG{E}^*$.
\end{lem}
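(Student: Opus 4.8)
The plan is to exhibit a single map $\rho$ on the disjoint union $((E^*\setminus E^0)\times[0,1])\sqcup E^*$ that is a \emph{complete invariant} for $\sim$, and then read off all four assertions from it. Define $\rho(\nu):=\nu$ for $\nu\in E^*$, and, for $\mu\in E^*\setminus E^0$ and $t\in[0,1]$, set $\rho(\mu,t):=\mu_1\cdots\mu_{|\mu|-1}$ if $t=0$, set $\rho(\mu,t):=\mu_2\cdots\mu_{|\mu|}$ if $t=1$, and set $\rho(\mu,t):=(\mu,t)$ if $0<t<1$; here the two truncations are read as $r(\mu)$ and $s(\mu)$ respectively when $|\mu|=1$. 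I take the codomain of $\rho$ to be the disjoint union $E^*\sqcup\{(\mu,t):\mu\in E^*\setminus E^0,\ 0<t<1\}$, so that a ``truncation'' value lying in $E^*$ is never equal to an ``interior'' value.

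First I would check that $\rho$ is constant on both generating families. For the generator $(\mu f,0)\sim\mu$, truncating $\mu f$ drops its last edge $f$ and returns $\mu=\rho(\mu)$; for $(e\mu,1)\sim\mu$, truncating $e\mu$ drops its first edge $e$ and again returns $\mu$. Since $\{(x,y):\rho(x)=\rho(y)\}$ is an equivalence relation containing both generating families, minimality of $\sim$ gives the implication $x\sim y\Rightarrow\rho(x)=\rho(y)$. For the converse I would show that each fibre of $\rho$ lies inside a single $\sim$-class. The fibre over $\nu\in E^*$ is exactly $\{\nu\}\cup\{(\nu f,0):f\in s(\nu)E^1\}\cup\{(e\nu,1):e\in E^1r(\nu)\}$, and every element of it is $\sim$-equivalent to $\nu$ by a single application of a generator; the fibre over an interior point $(\mu,t)$ with $0<t<1$ is the singleton $\{(\mu,t)\}$, because no generator involves a time in $(0,1)$. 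Hence $\rho(x)=\rho(y)\iff x\sim y$; equivalently $\rho$ descends to an injection of $\SG{E}^*$ into the codomain.

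All four assertions then follow. The biconditional is a case analysis according to whether each of $s,t$ equals $0$, lies in $(0,1)$, or equals $1$: comparing the corresponding values of $\rho$ reproduces (1) whenever a time lies in $(0,1)$ (which forces $\mu=\nu$ and $s=t$), the two alternatives of (2) when $s=t\in\{0,1\}$, and the two alternatives of (3) when $\{s,t\}=\{0,1\}$; and when exactly one of $s,t$ lies in $(0,1)$ the two values of $\rho$ lie on opposite sides of the disjoint union, so none of (1)--(3) can hold, matching non-equivalence. Injectivity of $\mu\mapsto[\mu]$ is immediate because $\rho$ restricts to the identity on $E^*$, whence $[\mu]=[\nu]$ forces $\mu=\nu$. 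For the representative claim, an interior class is the singleton $\{(\mu,t)\}$ with $0<t<1$, which is itself a representative of the required form and, being the entire class, is the unique such representative---this yields uniqueness precisely when $\alpha\notin\{[\mu]:\mu\in E^*\}$; while a bare class $[\nu]$ contains $(\nu f,0)$ for any $f\in s(\nu)E^1$, and such an $f$ exists because $E$ has no sources, giving a representative with $t=0\in[0,1)$.

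The only genuinely substantive step is the completeness direction $\rho(x)=\rho(y)\Rightarrow x\sim y$, and its content is the twin observation that every interval endpoint is joined to its unique truncation by a single generator whereas interior points are joined to nothing. Once this is in place, transitivity of $\sim$ is supplied for free by the fibre structure, and the remaining verifications are routine bookkeeping.
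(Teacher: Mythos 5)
Your proof is correct, and all four assertions of the lemma do follow from it as you claim. The route is genuinely different in organisation from the paper's, though the underlying content is the same. The paper writes down the candidate relation $R_0$ defined by conditions (\ref{it:normal form1})--(\ref{it:normal form3}) directly, checks by hand that $R_0$ is transitive (a case analysis over the possible combinations of (\ref{it:normal form2}) and (\ref{it:normal form3})), observes that $R_0 \subseteq {\sim}$, and then proves ${\sim} \subseteq R_0$ by decomposing an arbitrary equivalence into a chain of generating moves and noting that each consecutive link is of one of the listed forms. Your normal-form map $\rho$ packages the same information as a complete invariant: transitivity of the candidate relation comes for free because $\ker\rho$ is automatically an equivalence relation, and the chain analysis is replaced by the one-line observation that a generator-invariant map is $\sim$-invariant by minimality of $\sim$. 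The price is that you must verify that each fibre of $\rho$ lies in a single $\sim$-class, which is exactly the content of the paper's ``$R_0 \subseteq {\sim}$'' step; your explicit description of the fibre over $\nu \in E^*$ as $\{\nu\} \cup \{(\nu f,0) : f \in s(\nu)E^1\} \cup \{(e\nu,1) : e \in E^1 r(\nu)\}$ is correct and is where the generators are actually used. The two conventions you state explicitly---that the truncation of a length-one path is the appropriate vertex, and that the codomain is a genuine disjoint union so that truncated values never collide with interior values---are both necessary and correctly handled, and your deductions of the representative and injectivity claims (including the use of ``no sources'' to produce a $t=0$ representative of a bare class $[\nu]$) match the paper's.
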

\begin{proof}
Consider the relation $R_0$ on $(E^* \setminus E^0) \times [0,1]$ given by
(\ref{it:normal form1})--(\ref{it:normal form3}). Then $R_0 \subseteq {\sim}$. It is
clear that $R_0$ is reflexive and symmetric, and a quick case-by-case check of the
possible combinations of (\ref{it:normal form2})~and~(\ref{it:normal form3}) shows that
it is transitive. If $(\mu,s) \sim (\nu,t)$, then there is a sequence $(\mu,s) = (\mu_0,
s_0) \sim \nu_1 \sim (\mu_1, s_1) \sim \nu_2 \sim \cdots \sim \nu_k \sim (\mu_k, s_k) =
(\nu,t)$ where each of the equivalences in the chain is one of the forms appearing
in~\eqref{eq:simdef}. It then follows that each equivalence $(\mu_i, s_i) \sim
(\mu_{i+1}, s_{i+1})$ is of one of the forms appearing in (\ref{it:normal
form2})~or~(\ref{it:normal form3}). Thus $\sim$ is contained in $R_0$, and the two are
equal.

If $\alpha \in \SG{E}^*$ then by definition we have either $\alpha = [\mu]$ for some $\mu
\in E^*$ or $\alpha = [\nu,t]$ for some $(\nu,t) \in (E^* \setminus E^0) \times [0,1]$.
If $\alpha = [\mu]$ then, since $E$ has no sources, we can find $e \in E^1$ with $r(e) =
s(\mu)$, and then $\alpha = [\mu e,0]$. Likewise if $\alpha = [\nu,1]$ then we can find
$f \in E^1$ with $r(f) = s(\nu)$, and then $\alpha = [\nu f,0]$. Otherwise $\alpha =
[\nu,t]$ already has the desired form. If $\alpha \not\in \{[\mu] : \mu \in E^*\}$, then
$\alpha = [\nu,t]$ for some $t \in (0,1)$ and $\nu \in E^* \setminus E^0$, and then if we
also have $\alpha = [\nu',s]$ then in particular $(\nu,t) \sim (\nu',s)$ with $t \not=
\{0,1\}$. In particular, this equivalence does not appear in (\ref{it:normal
form2})~or~(\ref{it:normal form3}), and we deduce that it is of the form~(\ref{it:normal
form1}), so $\nu = \nu'$ and $t = s$.

Suppose that $\mu,\nu \in E^*$ satisfy $\mu \sim \nu$. We can write $[\mu] = [\mu e, 0]$
and $[\nu] = [\nu f, 0]$ for any $e \in s(\mu)E^1$ and $f \in s(\nu)E^1$. We then have
$(\mu e,0) \sim (\nu f,0)$ which means that the equivalence is of the
form~(\ref{it:normal form1}) or the first of the forms appearing in~(\ref{it:normal
form2}), each of which forces $\mu = \nu$. So $\mu \mapsto [\mu]$ is an injection.
\end{proof}

We call elements of $\SG{E}^*$ \emph{paths} in $\SG{E}$, and elements of
\[
\SG{E}^n := \big((E^{n+1} \times [0,1]) \sqcup E^n\big)/{\sim}
\]
\emph{paths of length $n$} in $\SG{E}$.

\begin{ntn}
Using Lemma~\ref{lem:normal form}, we regard $E^*$ as a subset of $\SG{E}^*$. In
particular, for $n \in \NN$, we write
\[
\SG{E}^n \setminus E^n = \{[\mu,t] : \mu \in E^{n+1}\text{ and }t \in (0,1)\}.
\]
\end{ntn}

There is a partially defined composition map on $\SG{E}^*$ as follows: the pair $([\mu,
t], [\nu, s])$, is composable if and only if $s([\mu, t]) = r([\mu, s])$, which in
particular forces $t = s$. If $s([\mu, t]) = r([\nu, t])$, and if these representatives
have been chosen with $t \not= 1$ as above, then the composition is defined as
\[
[\mu, t][\nu,t] = [\mu_1 \cdots \mu_{|\mu|-1}\nu_1 \cdots \nu_{|\nu|}, t].
\]
It then follows that for $\mu,\nu \in E^*$ with $s(\mu) = r(\nu)$, we have $[\mu][\nu] =
[\mu\nu]$. We also have $r([\mu,t][\nu,t]) = r([\mu,t])$ and $s([\mu,t][\nu,t]) =
s([\nu,t])$ whenever $[\mu,t][\nu,t]$ makes sense. Under this concatenation, $\SG{E}^*$
is a small category with objects $\SG{E}^0$.

Given $\alpha = [\mu, t] \in \SG{E}^*$ with $t \in [0,1)$, we can write $\mu = \mu_1
\cdots \mu_{n+1}$ with each $\mu_i \in E^1$. Defining $\alpha_i := [\mu_i\mu_{i+1}, t]
\in \SG{E}^1$ for $i \le n$, we obtain a factorisation $\alpha = \alpha_1 \cdots
\alpha_n$. This is the unique factorisation of $\alpha$ as a composition of edges of
$\SG{E}$.

As with directed graphs, given subsets $U,V \subseteq \SG{E}^*$, we write $UV :=
\{\alpha\beta : \alpha \in U, \beta \in V\text{ and } s(\alpha) = r(\beta)\}$. If $U$ is
a singleton $U = \{\alpha\}$, then we write $\alpha V$ and $V\alpha$ in place of
$\{\alpha\}V$ and $V\{\alpha\}$. In particular, for $\omega \in \SG{E}^0$ and a subset $U
\subseteq \SG{E}^*$ we have $\omega U = U \cap r^{-1}(\omega)$ and $U\omega = U \cap
s^{-1}(\omega)$.

Throughout the paper, we write $\SS$ for the circle $\RR/\ZZ$. Each element of $\SS$ has
a unique representative in $[0,1)$. We often abuse notation slightly and regard elements
of $[0,1]$ as elements of $\SS$ (so $1$ and $0$ are equal as elements of $\SS$).

\begin{lem}\label{lem:surjection}
Let $E$ be a locally finite graph with no sources. There is a continuous map $\varpi :
\SG{E}^* \to \SS$ such that $\varpi([\mu, t]) = t$ for $\mu \in E^* \setminus E^0$ and $t
\in [0,1)$.
\end{lem}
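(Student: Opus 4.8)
The plan is to build $\varpi$ one length at a time and then assemble the pieces across the summands of $\SG{E}^* = \bigsqcup_n \SG{E}^n$. Fix $n \in \NN$ and, \emph{before} passing to the quotient, define a map $\tilde\varpi_n$ on the disjoint union $(E^{n+1} \times [0,1]) \sqcup E^n$ by setting $\tilde\varpi_n(\mu, t) := t \in \SS$ for $(\mu, t) \in E^{n+1} \times [0,1]$ and $\tilde\varpi_n(\mu) := 0$ for $\mu \in E^n$ (using the paper's convention of regarding $t \in [0,1]$ as an element of $\SS$). This $\tilde\varpi_n$ is continuous: the sets $E^{n+1}$ and $E^n$ are discrete, so on each copy $\{\mu\} \times [0,1]$ the map is the composite of the quotient map $[0,1] \to \SS$ with the inclusion, while on the discrete piece $E^n$ it is constant.

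The reason for mapping into $\SS = \RR/\ZZ$ rather than into $[0,1]$ or $\RR$ is precisely that this renders $\tilde\varpi_n$ constant on the generating relations~\eqref{eq:simdef}. Indeed, both $(\mu f, 0)$ and $\mu$ are sent to $0$, while $(e\mu, 1)$ is sent to $1$, which equals $0$ in $\SS$ and so again agrees with the image of $\mu$. Since $\sim$ restricts to an equivalence relation on $(E^{n+1} \times [0,1]) \cup E^n$ that is generated by these relations, any two $\sim$-equivalent points are joined by a finite chain of generators, along each link of which $\tilde\varpi_n$ is constant; hence $\tilde\varpi_n$ is constant on $\sim$-classes. (Alternatively, one could verify this directly against the explicit description of $\sim$ furnished by Lemma~\ref{lem:normal form}.) By the universal property of the quotient topology, $\tilde\varpi_n$ therefore descends to a continuous map $\varpi_n : \SG{E}^n \to \SS$.

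Finally, since $\SG{E}^*$ carries the disjoint-union topology, a map out of it is continuous exactly when its restriction to each summand $\SG{E}^n$ is continuous, so the maps $\varpi_n$ combine to a single continuous map $\varpi : \SG{E}^* \to \SS$. For $\mu \in E^* \setminus E^0$ and $t \in [0,1)$ we have $\varpi([\mu, t]) = \tilde\varpi_{|\mu|-1}(\mu, t) = t$ in $\SS$ by construction, as required.

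I expect no serious obstacle here; the only point demanding genuine care is the verification that $\tilde\varpi_n$ respects $\sim$, and that step is exactly where the identification $1 = 0$ in $\SS$ is indispensable. Everything else is a routine application of the universal properties of the quotient and coproduct topologies.
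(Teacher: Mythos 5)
Your proof is correct and follows essentially the same route as the paper: well-definedness is checked against the relation $\sim$ (you verify invariance on the generating relations directly, where the paper cites Lemma~\ref{lem:normal form}), and continuity is reduced to continuity of the map on the pre-quotient space, which is exactly what the paper's computation of $q^{-1}(\varpi^{-1}(U))$ for basic open $U \subseteq \SS$ amounts to. The length-by-length decomposition is a harmless organisational choice, since $\sim$ preserves the summands.
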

\begin{proof}
Lemma~\ref{lem:normal form} shows that $[\mu, t] \mapsto t$ is well-defined from
$\SG{E}^*$ to $\SS$. To see that it is continuous, let $q : \big((E^* \setminus E^0)
\times [0,1]) \to \SG{E}^0$ be the quotient map. For $0 < a < b < 1$, we have
$q^{-1}\big(\varpi^{-1}((a,b))\big) = (E^* \setminus E^0) \times (a,b)$, which is open.
So $\varpi^{-1}((a,b))$ is open. Likewise, for $\varepsilon < 1/2$, we have
$q^{-1}\big(\varpi^{-1}((-\varepsilon, \varepsilon))\big) = (E^* \setminus E^0) \times
([0,\varepsilon) \cup (1-\varepsilon, 1])$, which again is open. So
$\varpi^{-1}((-\varepsilon, \varepsilon))$ is open.
\end{proof}

Observe that the map $\varpi$ of Lemma~\ref{lem:surjection} satisfies $\varpi(\alpha) =
\varpi(s(\alpha)) = \varpi(r(\alpha))$ for all $\alpha \in \SG{E}^*$.

\begin{ntn}\label{ntn:path-space fibres}
For $t \in \SS$, we define
\[
    \SG{E}^*_t = \varpi^{-1}(t) = \{[\mu,t] : \mu \in E^*\},
\]
and for $n \in \NN$, we define
\[
    \SG{E}^n_t := \SG{E}^n \cap \SG{E}^*_t = \{[\mu, t] : \mu \in E^{n+1}\}.
\]
We then have $\SG{E}^n_t = (\SG{E}^n) (\SG{E}^0_t) = (\SG{E}^0_t) (\SG{E}^n)$ for all
$n,t$. With this notation, $\SG{E}^*_1 = \SG{E}^*_0 = \{[\mu] : \mu \in E^*\}$.
\end{ntn}

We aim to construct a $C^*$-algebra from $\SG{E}$. The following lemma shows that we
cannot employ Katsura's theory of topological-graph $C^*$-algebras, or Muhly and
Tomforde's theory of topological-quiver $C^*$-algebras: to get off the ground, both
theories require at least that the source map $s$ is an open map.

\begin{lem}\label{lem:cts structure}
Let $E$ be a finite graph with no sources. The maps $s, r : \SG{E}^1 \to \SG{E}^0$ are
continuous maps, and restrict to local homeomorphisms from $\SG{E}^1 \setminus E^1$ to
$\SG{E}^0 \setminus E^0$. For $e \in E^1$, the map $s$ is open at $[e] \in \SG{E}^1$ if
and only if $|E^1 s(e)| = 1$, and the map $r$ is open at $[e]$ if and only if $|r(E)E^1|
= 1$. In particular, $s$ is an open map if and only if $|E^1 v| = 1$ for all $v \in E^0$,
and $r$ is an open map if and only if $|vE^1| = 1$ for all $v \in E^0$.
\end{lem}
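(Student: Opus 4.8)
The plan is to work entirely with the quotient topologies on $\SG{E}^0$ and $\SG{E}^1$ and to write down explicit neighbourhood bases at the points that matter. Since $\SG{E}^0 = ((E^1 \times [0,1]) \sqcup E^0)/{\sim}$, a description of the saturated open sets shows that a point $[f,t]$ with $t \in (0,1)$ has the interval $\{[f,u] : |u-t| < \delta\}$ as a basic neighbourhood, while a vertex $[v]$ has the ``star''
\[
N_\delta([v]) = \{[v]\} \cup \{[f,u] : f \in vE^1,\ u \in [0,\delta)\} \cup \{[h,u] : h \in E^1 v,\ u \in (1-\delta,1]\}
\]
as a basic neighbourhood (using $[f,0] = [v] = [h,1]$). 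The same description applied to $\SG{E}^1 = ((E^2 \times [0,1]) \sqcup E^1)/{\sim}$ gives, at a point $[e]$ with $e \in E^1$, the basic neighbourhood
\[
M_\delta([e]) = \{[e]\} \cup \{[ef,u] : f \in s(e)E^1,\ u \in [0,\delta)\} \cup \{[ke,u] : k \in E^1 r(e),\ u \in (1-\delta,1]\}.
\]

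For continuity and the local-homeomorphism claim I would argue as follows. Continuity of $s$ and $r$ follows from the universal property of the quotient: precomposing with the quotient map $(E^2 \times [0,1]) \sqcup E^1 \to \SG{E}^1$ gives $(gf,u) \mapsto [f,u]$ (resp.\ $[g,u]$) on the product part and $g \mapsto [s(g)]$ (resp.\ $[r(g)]$) on $E^1$, each of which is manifestly continuous into $\SG{E}^0$. On the complements, $\SG{E}^1 \setminus E^1$ is the disjoint union of the open arcs $\{[gf,u] : u \in (0,1)\}$ indexed by $gf \in E^2$, and $\SG{E}^0 \setminus E^0$ is the disjoint union of the open arcs $\{[f,u] : u \in (0,1)\}$ indexed by $f \in E^1$; the formula $s([gf,u]) = [f,u]$ shows that $s$ carries each arc homeomorphically onto an arc, and likewise $r([gf,u]) = [g,u]$, so both restrict to local homeomorphisms $\SG{E}^1 \setminus E^1 \to \SG{E}^0 \setminus E^0$.

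The heart of the matter is the openness analysis at the exceptional points $[e]$, and here the neighbourhood bases do the work. Computing the image of $M_\delta([e])$ under $s$, and using $s([ef,u]) = [f,u]$ together with $s([ke,u]) = [e,u]$, gives
\[
s(M_\delta([e])) = \{[s(e)]\} \cup \{[f,u] : f \in s(e)E^1,\ u \in [0,\delta)\} \cup \{[e,u] : u \in (1-\delta,1]\}.
\]
Writing $w = s(e)$, the parameter-$0$ family here runs over all of $wE^1$ and so supplies every arc meeting $[w]$ at parameter $0$; but the parameter-$1$ family supplies only the single arc $e$, which is one of the $|E^1 w|$ arcs meeting $[w]$ at parameter $1$. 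Since every open neighbourhood of $[w]$ must contain a germ $\{[h,u] : u \in (1-\delta',1]\}$ of each arc $h \in E^1 w$, the set $s(M_\delta([e]))$ contains a neighbourhood of $[w]$ if and only if $E^1 w = \{e\}$, that is, $|E^1 s(e)| = 1$. As $\{M_\delta([e])\}_{\delta > 0}$ is a neighbourhood basis at $[e]$, this establishes that $s$ is open at $[e]$ iff $|E^1 s(e)| = 1$. The argument for $r$ is symmetric: now $r([ef,u]) = [e,u]$ and $r([ke,u]) = [k,u]$, so the roles of the two ends reverse, the parameter-$0$ side collapses to the single arc $e$, and the obstruction becomes $|r(e)E^1| = 1$.

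Finally, the global statements follow by combining the pieces. Since $s$ and $r$ are already open on $\SG{E}^1 \setminus E^1$, each is an open map if and only if it is open at every $[e]$ with $e \in E^1$. For $r$ this reads $|r(e)E^1| = 1$ for all $e$; because $E$ has no sources, $\{r(e) : e \in E^1\} = E^0$, so the condition is exactly $|vE^1| = 1$ for all $v \in E^0$. For $s$ it reads $|E^1 s(e)| = 1$ for all $e$, where $s(e)$ ranges over the vertices that emit an edge, and these exhaust $E^0$ (so that the condition becomes $|E^1 v| = 1$ for all $v$) precisely when every vertex emits. I expect the main obstacle to be purely organisational: keeping straight which of the two ends of the star at $[e]$ is carried onto which end of the star at its image under each of $s$ and $r$, and verifying cleanly that the image genuinely fails to be a neighbourhood on the obstructed side. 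The latter rests on the elementary fact that a neighbourhood of a vertex in the topological realisation must contain a germ of every incident arc, which is immediate from the description of the saturated open sets defining $N_\delta$.
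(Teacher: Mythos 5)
Your proposal is correct and follows essentially the same route as the paper's proof: both compute explicit basic open neighbourhoods of $[e]$ in $\SG{E}^1$, observe that the image under $s$ (resp.\ $r$) captures all arcs incident to $s([e])$ on one side of the vertex but only the single arc $e$ on the other, and conclude that openness at $[e]$ holds exactly when that other side consists of $e$ alone. Your remark that the passage from ``$|E^1 s(e)|=1$ for all $e$'' to ``$|E^1 v|=1$ for all $v$'' uses that every vertex emits an edge is a careful touch the paper elides, but the substance of the argument is the same.
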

\begin{proof}
The range and source maps are continuous by construction. If $0 < t < 1$ and $ef \in
E^2$, then for any $\varepsilon$ such that $(t - \varepsilon, t+ \varepsilon) \subseteq
(0,1)$, the restrictions of $s, r$ to $\{[ef, s] : |t - s| < \varepsilon\}$ are
homeomorphisms onto open sets.

Fix $e \in E^1$. To see that $s$ is open at $[e]$ if and only if $|E^1 s(e)| = 1$, first
suppose that $|E^1 s(e)| = 1$, so $E^1 s(e) = \{e\}$. Then the sets
\[
U_\varepsilon := \{[fe, t] : f \in E^1 r(e), t > 1-\varepsilon\} \cup \{[ef, t] : f \in s(e)E^1, t < \varepsilon\}
\]
indexed by $\varepsilon \in (0,1/2)$ form a neighbourhood base at $[e]$ and we have
$s(U_\varepsilon) = \{[e, t] : t > 1-\varepsilon\} \cup \{[f,t] : f \in s(e)E^1, t <
\varepsilon\}$. Since $E^1 s(e) = \{e\}$, we deduce that, writing $v = s(e)$,
\[
s(U_\varepsilon) = \{[f, t] : f \in E^1 v, t > 1-\varepsilon\} \cup \{[f,t] : f \in vE^1, t <
\varepsilon\},
\]
which is a basic open neighbourhood of $[v]$. So $s$ is open at $[e]$.

Now suppose that $|E^1 s(e)| \ge 2$, say $f \in E^1 s(e) \setminus \{e\}$, and write $v
:= s(e)$. Consider the set
\[
U := \{[he,t] : h \in E^1 r(e), 1/2 < t \le 1\} \cup \{[eh,t] : h \in vE^1, 0\le t < 1/2\}.
\]
This set is open in the quotient topology. We have
\[
s(U) = \{[e,t] : 1/2 < t \le 1\} \cup \{[h,t] : h \in vE^1, 0\le t < 1/2\}.
\]
In particular, we have $[v] = [e,1] \in s(U)$, but the sequence $([f,
(n-1)/n])^\infty_{n=1}$ is contained in the complement of $s(U)$ and converges to $[v]$.
So $s$ is not open at $[e]$.

This completes the proof that $s$ is open if and only if each $|E^1v| = 1$. A symmetric
argument (or the same argument applied to the opposite graph) shows that $r$ is open at
$[e]$ if and only if $r(e) E^1$ is a singleton.

The first statement implies in particular that $r,s$ are open at $\alpha$ for every
$\alpha \in \SG{E}^1 \setminus E^1$, so the final statement follows immediately.
\end{proof}

\begin{exa}
Consider the simplest example of a finite graph with no sources: $E^0 = \{v\}$ and $E^1 =
\{e\}$, so $r(e) = s(e) = v$. Then the map $\varpi$ of Lemma~\ref{lem:surjection}
restricts to a homeomorphism $\varpi : [e,t] \mapsto t$ from $\SG{E}^0$ to $\SS$. For
each $w \in \SG{E}^0$ there is a unique $f_w \in \SG{E}^1$ with $r(f_w) = s(f_w) = w$,
and then $\SG{E}^1 = \{f_w : w \in \SG{E}^0\} \cong \SS$, and $s$ and $r$ are
homeomorphisms.
\end{exa}

\begin{exa}
Now consider the finite graph such that $E^0 = \{v\}$ and $E^1 = \{e,f\}$, so $r(e) =
s(e) = r(f) = s(f) = v$. So $\SG{E}^0$ is equal to the union $\{e,f\} \times \SS$ of two
circles glued at a point by gluing $(e,0)$ to $(f,0)$. For $g \in E^1$ and $t \in \SS$,
consider the vertex $[g,t] \in \SG{E}^0$. We have $\SG{E}^1 [g,t] = \{[eg,t], [fg,t]\}$,
and the ranges of these edges are $[e,t]$ and $[f,t]$ respectively. So as a set, we have
$\SG{E}^1 \cong \{e,f\} \times \SG{E}^0$. The sequences $([ee, 1-1/n])^\infty_{n=1}$ and
$([fe, 1-1/n])^\infty_{n=1}$ converge in $\SG{E}^1$ to $[ee,1] = [fe,1]$, and $s([ee,
1-1/n]) = [e, 1 - 1/n] = s([fe, 1-1/n])$ for all $n$, so $s$ is not a local homeomorphism
at $[ee, 1]$.
\end{exa}

\section{The infinite-path space of the suspension of a graph}\label{sec:path spaces of SGE}

In this section we describe the infinite-path space of the suspension of $E$, and we show
that if $E$ is finite, then $\SG{E}^\infty$ is homeomorphic to the one-sided suspension
flow of the shift space of the graph. In Section~\ref{sec:C*SGE} we will define the
Toeplitz algebra $\Tt C^*(\SG{E})$ and the Cuntz--Krieger algebra $C^*(\SG{E})$ of the
suspension of a graph $E$ using analogues of the path-space representation and Calkin
representation of a graph $C^*$-algebra. We will then link this to symbolic dynamics by
showing that $C^*(\SG{E})$ has a natural representation on $\ell^2(\SG{E}^\infty)$.

An \emph{infinite path} in $\SG{E}$ is a sequence $\alpha_1\alpha_2\alpha_3\cdots$ of
edges $\alpha_i \in \SG{E}^1$ such that $r(\alpha_{i+1}) = s(\alpha_i)$ for all $i$. We
write $\SG{E}^\infty$ for the set of all infinite paths in $\SG{E}$.

\begin{lem}\label{lem:identification}
Let $E$ be a locally finite graph with no sources. There is a bijection $\theta^\infty :
E^\infty \times [0,1) \to \SG{E}^\infty$ such that $\theta^\infty (e_1e_2e_3\cdots, t) =
[e_1e_2,t] [e_2e_3,t] [e_3e_4,t] \cdots$ for all $e_1 e_2 \cdots \in E^\infty$ and $t \in
[0,1)$.
\end{lem}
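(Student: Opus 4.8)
The plan is to verify in turn that $\theta^\infty$ lands in $\SG{E}^\infty$, that it is injective, and that it is surjective, extracting the parameter $t$ from a path by means of the map $\varpi$ of Lemma~\ref{lem:surjection}, and extracting the underlying edges of $E$ by means of the normal-form description of Lemma~\ref{lem:normal form}.

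First I would check that $\theta^\infty$ is well defined, that is, that for $x = e_1e_2\cdots \in E^\infty$ and $t \in [0,1)$ the sequence $\theta^\infty(x,t)$ really is an infinite path in $\SG{E}$. Writing $\alpha_i := [e_ie_{i+1}, t]$, the source and range formulas $s([\mu f, t]) = [f,t]$ and $r([e\mu, t]) = [e,t]$ give $s(\alpha_i) = [e_{i+1}, t] = r(\alpha_{i+1})$, so the composability condition $r(\alpha_{i+1}) = s(\alpha_i)$ holds for every $i$ and $\theta^\infty(x,t) \in \SG{E}^\infty$.

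For injectivity, suppose $\theta^\infty(x,t) = \theta^\infty(y,s)$ with $x = e_1e_2\cdots$ and $y = f_1f_2\cdots$. Applying $\varpi$ to the first edges and using that $\varpi([e_1e_2,t]) = t$ for $t \in [0,1)$ gives $t = s$. With $t = s$ fixed, I would recover the edges from the identities $[e_ie_{i+1}, t] = [f_if_{i+1}, t]$: for $t \in (0,1)$ the representative $(e_ie_{i+1}, t)$ is unique by Lemma~\ref{lem:normal form}, forcing $e_ie_{i+1} = f_if_{i+1}$ and hence $x = y$; for $t = 0$ the relation~\eqref{eq:simdef} collapses $[e_ie_{i+1},0]$ to $[e_i]$, so the equality reads $[e_i] = [f_i]$, and injectivity of $\mu \mapsto [\mu]$ (Lemma~\ref{lem:normal form}) again gives $x = y$.

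For surjectivity, given $\beta = \beta_1\beta_2\cdots \in \SG{E}^\infty$, I would first note, using $\varpi(\alpha) = \varpi(s(\alpha)) = \varpi(r(\alpha))$ together with composability, that all the edges $\beta_i$ share a common value $t := \varpi(\beta_i) \in [0,1)$, so that each $\beta_i$ lies in the fibre $\SG{E}^1_t$ of Notation~\ref{ntn:path-space fibres}. When $t \in (0,1)$, each $\beta_i$ has a unique normal form $[\lambda^{(i)}, t]$ with $\lambda^{(i)} = \lambda^{(i)}_1\lambda^{(i)}_2 \in E^2$; composability forces $\lambda^{(i)}_2 = \lambda^{(i+1)}_1$, so setting $e_i := \lambda^{(i)}_1$ yields $x := e_1e_2\cdots \in E^\infty$ with $\theta^\infty(x,t) = \beta$. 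The main obstacle is the boundary fibre $t = 0$, where the normal form degenerates: here $\SG{E}^1_0 = \{[e] : e \in E^1\}$, so each $\beta_i = [e_i]$ for a unique $e_i \in E^1$, and composability of the $\beta_i$ translates into $s(e_i) = r(e_{i+1})$, giving $x := e_1e_2\cdots \in E^\infty$ with $\theta^\infty(x,0) = [e_1][e_2]\cdots = \beta$. Handling this degenerate fibre, and checking throughout that the edge-collapsing built into~\eqref{eq:simdef} is compatible with the two descriptions, is where the only real care is needed.
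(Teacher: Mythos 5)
Your proposal is correct and follows essentially the same route as the paper's proof: both arguments reduce everything to the normal-form description of Lemma~\ref{lem:normal form}, split into the cases $t \in (0,1)$ and $t = 0$, and use the collapse $[e_ie_{i+1},0] = [e_i]$ together with injectivity of $\mu \mapsto [\mu]$ to handle the boundary fibre. The only cosmetic difference is that you extract the common parameter via $\varpi$, where the paper deduces it directly from the normal forms; this changes nothing of substance.
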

\begin{proof}
To see that $\theta^\infty$ is surjective, fix $\xi = \alpha_1 \alpha_2 \cdots$ in
$\SG{E}^\infty$. Write each $\alpha_i = (e_if_i, t_i)$ with $t_i \in [0,1)$ as in
Lemma~\ref{lem:normal form}. Then $t_i = t_1$ for all $i$. Let $t := t_1$. If $t \not=
0$, then $s(\alpha_i) = r(\alpha_{i+1})$ forces $[f_i, t] = [e_{i+1}, t]$, and hence
Lemma~\ref{lem:normal form} gives $f_i = e_{i+1}$, for all $i$. Thus $x = e_1e_2e_3\cdots
\in E^\infty$ and we have $\xi = \theta^\infty(x, t)$. If $t = 0$, then $s(\alpha_i) =
r(\alpha_{i+1})$ forces $s(e_i) = r(e_{i+1})$ for all $i$, and then since $t = 0$ we have
$\alpha_i = [e_ie_{i+1}, 0]$ for each $i$. So again, $x = e_1e_2\cdots$ belongs to
$E^\infty$ and $\xi = \theta^\infty(x,t)$. For injectivity, suppose that
$\theta^\infty(x, s) = \theta^\infty(y,t) = \alpha_1\alpha_2\cdots$. Put write $x =
e_1e_2\cdots$ and $y = f_1f_2\cdots$. Then $[e_1e_2, s] = \alpha_1 = [f_1f_2,t]$, and
since $s,t \not=1$, Lemma~\ref{lem:normal form} forces $s = t$. The definition of
$\approx$ shows that if $s,t \not= 1$ and $[ef,s] = [gh,t]$ then $e = g$. Since
$[e_ie_{i+1}, s] = \alpha_i = [f_i f_{i+1}, s]$ for all $i$, we deduce that $e_i = f_i$
for all $i$, and so $\theta^\infty$ is injective.
\end{proof}

We next describe a natural topology on $\SG{E}^\infty$.

\begin{lem}\label{lem:topology}
Let $E$ be a locally finite graph with no sources. For $\mu \in E^*$ and $0 < a < b < 1$,
let
\[
Z(\mu, (a,b)) := \{\theta^\infty(x,t) : x \in \mu E^\infty\text{ and }t \in (a,b)\},
\]
and for $\mu \in E^*$ and $0 < \varepsilon < \frac12$, let
\begin{align*}
Z(\mu, \varepsilon) := \{\theta^\infty(e\mu x, t) : {}&e \in E^1 r(\mu), x \in s(\mu) E^\infty,\text{ and } t \in (1-\varepsilon, 1)\} \\
    &{}\cup \{\theta^\infty(\mu x, t) : x \in s(\mu) E^\infty \text{ and } t \in [0,\varepsilon)\}
\end{align*}
Then there is a second-countable Hausdorff topology on $\SG{E}^\infty$ with basis
\begin{align*}
\mathcal{B} = \{Z(\mu,(a,b)) : {}&\mu \in E^*\text{ and }0 < a < b < 1\} \\
    &{}\cup \{Z(\mu, \varepsilon) : \mu \in E^*\text{ and } 0 < \varepsilon < \frac12\}.
\end{align*}
\end{lem}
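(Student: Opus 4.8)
The plan is to verify that the proposed collection $\mathcal{B}$ is in fact a basis for a topology --- i.e.\ that it covers $\SG{E}^\infty$ and that intersections of basic sets are unions of basic sets --- and then separately establish second-countability and the Hausdorff property. Since $\theta^\infty$ is a bijection by Lemma~\ref{lem:identification}, every element of $\SG{E}^\infty$ has the form $\theta^\infty(x,t)$ with $x \in E^\infty$ and $t \in [0,1)$; choosing $\mu = r(x) \in E^0$ gives a basic set $Z(r(x), \varepsilon)$ containing $\theta^\infty(x,t)$ when $t$ is small, and for $t \in (0,1)$ one uses a set of the form $Z(\mu,(a,b))$ with $\mu$ an initial segment of $x$ and $a < t < b$. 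This shows $\mathcal{B}$ covers the whole space.

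First I would pin down exactly which points lie in each basic set by pushing everything through $\theta^\infty$. A set of the form $Z(\mu,(a,b))$ corresponds under $(\theta^\infty)^{-1}$ to $\mu E^\infty \times (a,b) \subseteq E^\infty \times (0,1)$, so these behave just like the product-topology boxes away from the ``seam'' $t = 0$. The sets $Z(\mu,\varepsilon)$ are the ones that genuinely wrap around the seam: under $(\theta^\infty)^{-1}$ they consist of the pairs $(e\mu x, t)$ with $t \in (1-\varepsilon,1)$ together with the pairs $(\mu x, t)$ with $t \in [0,\varepsilon)$, reflecting the gluing $[e\mu x, 1] = [\mu x, 0]$ built into $\sim$. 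Once this description is in hand, checking the basis condition reduces to case analysis on whether the relevant sets meet the seam. For two sets $Z(\mu,(a,b))$ and $Z(\nu,(c,d))$ bounded away from the seam, their intersection is empty unless one of $\mu,\nu$ extends the other, in which case it is again a set of the same type (take the longer path and the intersected interval $(a,b)\cap(c,d)$). The mixed and seam-crossing cases are analogous but require tracking the edge $e \in E^1 r(\mu)$ that is prepended on the $t$-near-$1$ side; the key point is that the compatibility condition $s(e) = r(\mu)$ and the cylinder-set containments on $E^\infty$ combine to give another basic set or a union of basic sets.

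For second-countability, I would note that $E^*$ is countable (as $E$ is locally finite) and that we may restrict the intervals $(a,b)$ and the parameters $\varepsilon$ to rational values without shrinking the generated topology, since any basic set is a union of basic sets with rational parameters; this yields a countable subfamily of $\mathcal{B}$ generating the same topology. For the Hausdorff property I would take two distinct points $\theta^\infty(x,s) \neq \theta^\infty(y,t)$ and separate on cases: if $s \neq t$ in $\SS$, separate the $t$-coordinates using disjoint arcs in $\SS$ (being careful near the seam, where $Z(\cdot,\varepsilon)$-type sets must be used); if $s = t$ but $x \neq y$, let $k$ be the first index where $x_k \neq y_k$ and separate using cylinder sets $Z(x_1\cdots x_k,\cdot)$ and $Z(y_1\cdots y_k,\cdot)$ over a common interval.

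The main obstacle will be the bookkeeping at the seam $t=0 \sim 1$: the sets $Z(\mu,\varepsilon)$ have a two-piece description, so verifying that $\mathcal{B}$ is closed under the basis-intersection condition requires carefully matching the prepended-edge piece near $t=1$ of one set against the plain piece near $t=0$ of another, and confirming that the result is covered by basic sets of the allowed forms. I expect the Hausdorff verification for two points with the same small $t$-coordinate but lying on opposite sides of the seam (one with $t$ near $0$, one with $t$ near $1$) to need the most care, since there the natural separating neighbourhoods are themselves seam-crossing sets.
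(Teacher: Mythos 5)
Your proposal is correct and follows essentially the same route as the paper's proof: verify the covering and basis-intersection conditions by explicit case analysis on whether sets cross the seam $t=0\sim 1$ (the paper packages this with a $\mu\vee\nu$ notation for comparing cylinders), obtain second-countability by restricting $a$, $b$, $\varepsilon$ to rationals, and prove Hausdorffness by splitting on whether the $t$-coordinates agree. No gaps; the bookkeeping you flag at the seam is exactly the content of the paper's displayed intersection formulas.
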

\begin{proof}
To see that $\mathcal{B}$ is a basis, first observe that for $t \not= 0$ any element of
the form $\theta^\infty(x,t)$ belongs to $Z(r(x), (a,b))$ for any $0 < a < t < b < 1$;
and any element of the form $\theta^\infty(x, 0)$ belongs to $Z(x_1, \frac12)$ for any $e
\in E^1$ with $s(e) = r(x)$, so $\bigcup\Bb = \SG{E}^\infty$.

Given $\mu,\nu \in E^*$, we define
\[
\mu \vee \nu := \begin{cases}
    \mu &\text{ if $\mu = \nu\mu'$}\\
    \nu &\text{ if $\nu = \mu\nu'$}\\
    \infty &\text{ otherwise,}
\end{cases}
\]
where $\infty$ is used here purely as a formal symbol. As a notational convenience, we
define $Z(\infty, (a,b)) = \emptyset = Z(\infty,\varepsilon)$ for any $a, b,
\varepsilon$. We then have
\begin{align*}
    Z(\mu, (a,b)) \cap Z(\nu, (c,d)) &= Z(\mu \vee \nu, (\max\{a,c\}, \min\{b,d\})), \\
    Z(\mu, \varepsilon) \cap Z(\nu, \delta) &=  Z(\mu\vee\nu, \min\{\varepsilon, \delta\}),\quad\text{ and}\\
    Z(\mu, (a,b)) \cap Z(\nu,\varepsilon) &= \Big(\bigcup_{e \in E^1 r(\nu)} Z\big(\mu\vee e\nu, (a,b)\cap(1-\varepsilon, 1)\big)\Big) \\
                                                &\hskip9em{}\cup Z\big(\mu \vee \nu, (a,b) \cap [0, \varepsilon)\big).
\end{align*}
So $\mathcal{B}$ is a base for a topology on $\SG{E}^\infty$.

This topology is second countable because restricting the values of $a$, $b$, and
$\varepsilon$ to rational values in the definition of $\mathcal{B}$ yields a countable
base for the same topology.

To see that this topology is Hausdorff, fix distinct elements $\theta^\infty(x, s)$ and
$\theta^\infty(y,t)$ of $\SG{E}^\infty$. First suppose that $s = t$. Then $x \not= y$ so
we can find $\mu,\nu \in E^* \setminus E^0$ such that $x \in \mu E^\infty$, $y \in \nu
E^\infty$ and $\mu \vee \nu = \infty$. If $s \not= 0$ then for any $0 < a < s < b < t$,
the sets $Z(\mu, (a,b))$ and $Z(\nu, (a,b))$ are disjoint neighbourhoods of
$\theta^\infty(x, s)$ and $\theta^\infty(y, t)$. If $s = 0$, then that $\mu,\nu \not\in
E^0$ implies that $e\mu \vee f \nu = \infty$ for any $e \in E^1 r(\mu)$ and $f \in E^1
r(\nu)$. We already have $\mu \vee \nu = \infty$, so we deduce that $Z(\mu, \frac12)$ and
$Z(\nu, \frac12)$ are disjoint neighbourhoods of $\theta^\infty(x, s)$ and
$\theta^\infty(y, t)$. Now suppose that $s \not= t$; without loss of generality, $s \not=
0$. Fix $\mu$ with $x \in \mu E^\infty$ and $y \in \nu E^\infty$. Choose $0 < a < s < b <
1$ such that $t \not\in (a,b)$. If $t = 0$, then for any $\varepsilon < \min\{a, 1-b\}$
and any $f \in E^1$ with $s(f) = r(\nu)$, the sets $Z(\mu, (a,b))$ and $Z(f\nu,
\varepsilon)$ are disjoint neighbourhoods of $\theta^\infty(x, s)$ and $\theta^\infty(y,
t)$; and if $t \not= 0$ then for any $0 < c < t < d < 1$ such that $(a, b) \cap (c, d) =
\emptyset$, the sets $Z(\mu, (a,b))$ and $Z(\nu, (c,d))$ are disjoint neighbourhoods of
$\theta^\infty(x, s)$ and $\theta^\infty(y, t)$.
\end{proof}

Let $\sim_\sigma$ be the equivalence relation on $E^\infty \times [0,1]$ defined by $(x,
s) \sim_\sigma (y, t)$ if and only if either $x = y$ and $s = t$ or $y = \sigma(x)$, $s =
1$ and $t = 0$ or $x = \sigma(y)$, $s = 0 $ and $t = 1$; that is, the smallest
equivalence relation such that $(x, 1) \sim_\sigma (\sigma(x), 0)$ for all $x \in
E^\infty$. The suspension of $(E^\infty, \sigma)$ is the topological quotient space
\[
M(\sigma) := (E^\infty \times [0,1])/{\sim_\sigma}.
\]
For $t \in [0,1]$, we write
\[
M(\sigma)_t := \{[x,t] : x \in E^\infty\} \subseteq M(\sigma).
\]
So $M(\sigma)_1 = M(\sigma)_0 \cong E^\infty$.

\begin{rmk}
We can identify $M(\sigma)$ with the quotient space $(E^\infty \times
[0,\infty))/{\sim_\sigma}$ where $(x, s) \sim_\sigma (y, t)$ if and only if $s - \lfloor
s \rfloor = t - \lfloor t \rfloor$ and $\sigma^{\lfloor s\rfloor}(x) = \sigma^{\lfloor t
\rfloor}(y)$. The identification sends $[x,t] \in M_\sigma$ to the corresponding class
$\llbracket x, t\rrbracket$ in $(E^\infty \times \RR)/{\sim_\sigma}$; the inverse sends
$\llbracket x,t \rrbracket$ to $[\sigma^{\lfloor t\rfloor}(x), t - \lfloor t\rfloor]$.
\end{rmk}

\begin{prp}\label{prp:homeomorphism}
Let $E$ be a finite graph with no sources. The suspension $M(\sigma)$ is a compact
Hausdorff space, and the map $\theta^\infty : E^\infty \times [0,1) \to \SG{E}^\infty$
described in Lemma~\ref{lem:identification} induces a homeomorphism of $M(\sigma)$ onto
$\SG{E}^\infty$.
\end{prp}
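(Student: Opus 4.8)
The plan is to establish three things: that $M(\sigma)$ is compact Hausdorff, that $\theta^\infty$ descends to a well-defined continuous bijection $\widetilde{\theta}^\infty : M(\sigma) \to \SG{E}^\infty$, and that this bijection is a homeomorphism. Compactness of $M(\sigma)$ is immediate: since $E$ is finite, $E^\infty$ is compact, so $E^\infty \times [0,1]$ is compact, and $M(\sigma)$ is the continuous image of a compact space under the quotient map. For the bijection, I would first check that $\theta^\infty$ respects the equivalence relation $\sim_\sigma$, so that it factors through the quotient. The only nontrivial identification is $(x,1) \sim_\sigma (\sigma(x), 0)$; but $\theta^\infty$ is only defined on $E^\infty \times [0,1)$, so I would instead directly define $\widetilde{\theta}^\infty([x,t]) := \theta^\infty(x,t)$ for $t \in [0,1)$ and $\widetilde{\theta}^\infty([x,1]) := \theta^\infty(\sigma(x),0) = \theta^\infty([\sigma(x),0])$, then verify this agrees on the overlap $M(\sigma)_1 = M(\sigma)_0$. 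With this definition, bijectivity of $\widetilde{\theta}^\infty$ follows from bijectivity of $\theta^\infty$ on $E^\infty \times [0,1)$, which is Lemma~\ref{lem:identification}, together with the observation that each point of $M(\sigma)$ has a unique representative with $t \in [0,1)$.

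Next I would prove continuity of $\widetilde{\theta}^\infty$. The clean approach is to show directly that the composite $E^\infty \times [0,1] \to M(\sigma) \xrightarrow{\widetilde{\theta}^\infty} \SG{E}^\infty$ is continuous, since $M(\sigma)$ carries the quotient topology, so continuity out of $M(\sigma)$ is equivalent to continuity of this composite. Using the basis $\mathcal{B}$ from Lemma~\ref{lem:topology}, it suffices to check that the preimage of each basic open set $Z(\mu,(a,b))$ and $Z(\mu,\varepsilon)$ is open in $E^\infty \times [0,1]$. For $Z(\mu,(a,b))$ the preimage is $\mu E^\infty \times (a,b)$ (a basic open set since $\mu E^\infty$ is open in $E^\infty$), and for $Z(\mu,\varepsilon)$ the preimage unpacks to $(\mu E^\infty \times [0,\varepsilon)) \cup \bigl((E^1 r(\mu))\mu E^\infty \times (1-\varepsilon,1]\bigr)$, where on the upper piece the identification $(x,1)\sim_\sigma(\sigma(x),0)$ accounts for the shift appearing in the definition of $Z(\mu,\varepsilon)$; this is again open. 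I would verify these two preimage computations carefully, as they encode exactly how the gluing relation matches the two forms of basic open set.

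Finally, to upgrade the continuous bijection to a homeomorphism, I would invoke the standard topological fact that a continuous bijection from a compact space to a Hausdorff space is automatically a homeomorphism. We have just shown $M(\sigma)$ is compact, and Lemma~\ref{lem:topology} provides that $\SG{E}^\infty$ is Hausdorff, so $\widetilde{\theta}^\infty$ is a closed map and hence a homeomorphism. This reduces all the work to the continuity check, and sidesteps having to construct and verify continuity of the inverse directly.

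The main obstacle I anticipate is the preimage computation for $Z(\mu,\varepsilon)$, specifically handling the boundary $t=1$ and the shift. The subtlety is that $Z(\mu,\varepsilon)$ is defined using paths $\theta^\infty(e\mu x, t)$ with a prepended edge $e$ for $t$ near $1$, but $\theta^\infty(e\mu x, t)$ for $t \nearrow 1$ should match up, under the gluing $(y,1)\sim_\sigma(\sigma(y),0)$, with $\theta^\infty(\mu x, 0)$ where $\sigma(e\mu x) = \mu x$; getting the indexing of the shift to line up correctly between the definition of $\theta^\infty$ (which reads consecutive pairs $[e_i e_{i+1},t]$) and the suspension relation is where I expect the bookkeeping to require genuine care rather than being routine. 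Everything else is a direct application of the quotient-topology universal property and the compact-to-Hausdorff principle.
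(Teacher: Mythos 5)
Your proposal is correct and follows essentially the same route as the paper: compactness of $M(\sigma)$ from that of $E^\infty \times [0,1]$, the induced bijection via the section $E^\infty \times [0,1)$, continuity checked on the basic open sets $Z(\mu,(a,b))$ and $Z(\mu,\varepsilon)$ using the universal property of the quotient topology, and the compact-to-Hausdorff principle to conclude. The one claim you state but never argue is that $M(\sigma)$ is Hausdorff (the paper establishes this directly by noting that the $\sim_\sigma$-classes are finite and discrete); in your arrangement it is recovered for free, since a continuous injection into the Hausdorff space $\SG{E}^\infty$ forces the domain to be Hausdorff, so no genuine gap results.
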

\begin{proof}
The equivalence classes for $\sim_\sigma$ in $E^\infty \times [0,1]$ are finite: if $t
\not\in \{0,1\}$ then the equivalence class of $(x,t)$ is a singleton; if $t = 0$ then
the equivalence class of $(x,t)$ is $\{(x,0)\} \cup \{(ex, 1) : e \in E^1 r(x)\}$, and if
$t = 1$ then the equivalence class of $(x,t)$ is $\{(\sigma(x),0)\} \cup \{(e\sigma(x),
1) : e \in E^1 r(\sigma(x))\}$. In particular, the $\sim_\sigma$-equivalence classes in
$E^\infty \times [0,1]$ are discrete, and so the quotient topology on $M(\sigma)$ is
Hausdorff. Since $E^\infty \times [0,1]$ is compact, so is $M(\sigma)$.

The quotient map $q$ from $E^\infty \times [0,1]$ to $M(\sigma)$ restricts to a bijection
from $E^\infty \times [0,1)$ to $M(\sigma)$, and so $\theta^\infty$ induces a bijection
$\tau$ from $M(\sigma)$ to $\SG{E}^\infty$ such that $\tau(q(x, t)) = \theta^\infty(x,
t)$ for $x \in E^\infty$ and $t \in [0,1)$. Since $\SG{E}^\infty$ is Hausdorff and
$M(\sigma)$ is compact, to see that $\tau$ is a homeomorphism, it suffices to show that
it is continuous.

For this, observe that for $\mu \in E^*$ and $0 < a < b < 1$, we have
$q^{-1}\big(\tau^{-1}\big(Z(\mu, (a,b))\big)\big) = \mu E^\infty \times (a,b)$, and so
$\tau^{-1}\big(Z(\mu, (a,b))\big)$ is open by definition of the quotient topology.
Similarly, $q^{-1}\big(\tau^{-1}(Z(e\mu, \varepsilon))\big) = \big(e\mu E^\infty \times
(1 - \varepsilon, 1]\big) \cup \big(\mu E^\infty \times [0, \varepsilon)\big)$, which
again is open in $E^\infty \times [0,1]$; so again by definition of the quotient
topology, $\tau^{-1}(Z(e\mu, \varepsilon))$ is open, and hence $\tau$ is continuous.
\end{proof}

\section{The \texorpdfstring{$C^*$}{C*}-algebras of the suspension of a graph}\label{sec:C*SGE}

In this section we define two $C^*$-algebras associated to the suspension of a graph $E$.
We define the first of these algebras in terms of a concrete representation on a
non-separable Hilbert space, for which we use the following notational convention.

\begin{ntn}\label{ntn:ell2}
Throughout the rest of the paper, given any set $X$, we write
\[
\ell^2(X) := \{f : X \to \CC : f^{-1}(\CC \setminus \{0\})\text{ is countable, and }
    \sum_{x \in X} |v(x)|^2 < \infty\},
\]
which is a Hilbert space with inner product given by $\langle f, g\rangle = \sum_{x \in
X} v(x)\overline{w(x)}$. We denote the canonical basis elements of $\ell^2(X)$ by $\{h_x
: x \in X\}$; so $h_x(y) = \delta_{x,y}$ for $x,y \in X$.
\end{ntn}

\begin{rmk}\label{rmk:path spaces}
Using Notation~\ref{ntn:path-space fibres}, the set $\SG{E}^n_0 = \SG{E}^n_1$ is the
canonical copy $\{[\mu] : \mu \in E^n\}$ of $E^n$ in $\SG{E}^*$. In particular, there is
a unitary $U_0 : \ell^2(E^*) \to \ell^2(\SG{E}^*_0)$ satisfying
\[
U_0 h_\mu = h_{[\mu]}.
\]
For $t \in (0,1)$, and for each $n \in \NN$, there is a bijection $\SG{E}^n_t \to
\widehat{E}^n$ given by $[\mu,t] \mapsto (\mu_1\mu_2)(\mu_2\mu_3) \cdots
(\mu_n\mu_{n+1})$. In particular, for each $t \in (0,1)$ there is a unitary $U_t :
\ell^2(\widehat{E}^*) \to \ell^2(\SG{E}^*_t)$ given by
\[
U_t h_{(\mu_1\mu_2) (\mu_2\mu_3)\cdots(\mu_{|\mu|-1}\mu_{|\mu|})} = h_{[\mu,t]}.
\]
\end{rmk}

\begin{lem}\label{lem:rhoinfty,psiinfty}
Let $E$ be a locally finite graph with no sources. There is an injective nondegenerate
representation $\rho : C_0(\SG{E}^0) \to \Bb(\ell^2(\SG{E}^*)$ such that
\[
\rho(a)h_{\alpha} = a(r(\alpha)) h_{\alpha}\quad
    \text{ for all $a \in C_0(\SG{E}^0)$ and $\alpha \in \SG{E}^*$}.
\]
There is a linear map $\psi : C_c(\SG{E}^1) \to \Bb(\ell^2(\SG{E}^*))$ such that
\begin{equation}\label{eq:psiinfty def}
\psi(\xi)h_{\alpha} = \sum_{\beta \in \SG{E}^1 r(\alpha)} \xi(\beta) h_{\beta\alpha}\quad
    \text{ for all $\xi \in C_c(\SG{E}^1)$ and $\alpha \in \SG{E}^*$}.
\end{equation}
We have $\|\psi(\xi)\| \le \|\xi\|_\infty \cdot |\{ef \in E^2 : \xi([ef, t]) \not=
0\text{ for some }t \in [0,1]\}|$.
\end{lem}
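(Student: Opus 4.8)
The plan is to verify the three assertions in turn, reserving the bulk of the effort for the norm estimate, which is the substantive point.

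First I would dispatch $\rho$. Since $\rho(a)$ acts diagonally on the orthonormal basis $\{h_\alpha\}$ with eigenvalues $a(r(\alpha))$, it is bounded with $\|\rho(a)\| \le \|a\|_\infty$, and $a \mapsto \rho(a)$ is visibly a $*$-homomorphism because diagonal operators multiply and conjugate entrywise. For injectivity I would use that every vertex $\omega \in \SG{E}^0$ is itself a path of length $0$ with $r(\omega) = \omega$; thus $\rho(a) = 0$ forces $a(\omega) = a(r(\omega)) = 0$ for all $\omega$, whence $a = 0$. For nondegeneracy, given a basic vector $h_\alpha$ I would use local compactness of $\SG{E}^0$ to choose $a \in C_0(\SG{E}^0)$ with $a(r(\alpha)) = 1$, so that $\rho(a)h_\alpha = h_\alpha$; equivalently, an approximate unit for $C_0(\SG{E}^0)$ is carried to a net converging strongly to the identity. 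The map $\psi$ is well defined and linear in $\xi$ because, for fixed $\alpha$, the index set $\SG{E}^1 r(\alpha)$ is finite: writing $r(\alpha) = [g,t]$ in the normal form of Lemma~\ref{lem:normal form}, these edges are the $[eg,t]$ with $e \in E^1 r(g)$ (degenerating at $t = 0$ to the boundary edges $[e]$ with $s(e) = r(g)$), and local finiteness of $E$ bounds their number. Hence $\psi(\xi)h_\alpha$ is a finite linear combination of basis vectors, and boundedness will be subsumed by the norm estimate.

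For the estimate I would use Lemma~\ref{lem:normal form} to write each $\alpha$ as $[\mu,t]$ with $t \in [0,1)$, so that prepending an edge takes, uniformly in $t \in [0,1)$, the form
\[
\psi(\xi) h_{[\mu,t]} = \sum_{e \in E^1 r(\mu_1)} \xi([e\mu_1,t])\, h_{[e\mu,t]}.
\]
I would then decompose $\psi(\xi)$ over the edge-type $e\mu_1 \in E^2$ being prepended: for each $ef \in E^2$ define $S_{ef}$ by $S_{ef}h_{[\mu,t]} = \xi([ef,t])\,h_{[e\mu,t]}$ when $\mu_1 = f$ and $S_{ef}h_{[\mu,t]} = 0$ otherwise, so that $\psi(\xi) = \sum_{ef \in E^2} S_{ef}$. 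Each $S_{ef}$ factors as a partial isometry $W_{ef} \colon h_{[\mu,t]} \mapsto h_{[e\mu,t]}$ (for $\mu_1 = f$, and $0$ otherwise), which is a partial isometry because $\mu \mapsto e\mu$ and $t \mapsto t$ send the relevant basis vectors injectively to distinct basis vectors, followed by the diagonal multiplier with entries $\xi([ef,\varpi(\cdot)])$ of norm at most $\|\xi\|_\infty$. Hence $\|S_{ef}\| \le \|\xi\|_\infty$, and $S_{ef} = 0$ unless $ef \in \{ef \in E^2 : \xi([ef,t]) \neq 0 \text{ for some } t\}$, whose cardinality is the claimed bound (the inequality being trivial should this cardinality be infinite). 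Thus $\psi(\xi)$ is a finite sum of operators each of norm at most $\|\xi\|_\infty$, and the triangle inequality gives the estimate.

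The one genuinely delicate step is the behaviour at $t = 0$, where the normal form degenerates: there $[\mu,0] = [\mu_1\cdots\mu_{|\mu|-1}]$ and $[ef,0] = [e]$, so distinct edge-types $ef, ef'$ collapse to the same boundary edge $[e]$. I would confirm that the displayed formula still holds at $t = 0$ — reading $[e\mu_1,0]$ as $[e]$ and $[e\mu,0]$ as $[e\mu]$ — via the relation $(\mu f,0) \sim \mu$, and that the prepend maps $[\mu] \mapsto [e\mu]$ remain injective by the injectivity of $\mu \mapsto [\mu]$ from Lemma~\ref{lem:normal form}. The point that prevents over-counting is that for a fixed basis vector $h_{[\mu]}$ only the terms $S_{ef}$ with $f = \mu_1$ act; so the decomposition $\psi(\xi) = \sum_{ef} S_{ef}$ survives the collapse at the boundary with no collision on the domain side, and the $t = 0$ fibre contributes nothing extra to the count.
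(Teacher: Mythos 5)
Your argument is correct and takes essentially the same route as the paper: both establish the norm estimate by writing $\psi(\xi)$ as a finite sum, indexed by $E^2$, of partial isometries scaled by values of $\xi$ and applying the triangle inequality, and both treat $\rho$ by the same diagonal/direct-sum observations. The only cosmetic difference is that the paper first splits $\ell^2(\SG{E}^*)$ into the fibres $\ell^2(\SG{E}^*_t)$ and recognises each fibrewise operator as an element of the path-space representation of $\Tt C^*(E)$ (resp.\ of the dual graph), whereas you verify the partial-isometry property of each global summand $S_{ef}$ directly from Lemma~\ref{lem:normal form}, including the collapse at $t=0$.
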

\begin{proof}
The representation $\rho$ is the direct-sum of the representations $a \mapsto a(\omega)
\Id_{\ell^2(\omega\SG{E}^*)}$ indexed by $\omega \in \SG{E}^0$. It is nondegenerate
because for each $\alpha \in \SG{E}^*$ and any $a \in C_0(\SG{E}^0)$ with $a(r(\alpha)) =
1$ we have $\rho(a)h_{\alpha} = h_{\alpha}$. To see that it is injective, note that
$\|\rho(a)\| \ge \sup_{\omega \in \SG{E}^0} \|\rho(a)h_\omega\| = \|a\|_\infty$.

To see that there is a linear map $\psi$ as claimed, let $\pi$ be the path-space
representation of $\Tt C^*(E)$. For $t \in (0,1)$, the operator
\[
A_t := \sum_{ef \in \widehat{E}^1} \xi([ef, t]) \pi(t_e t_f t^*_f) \in \pi(\Tt C^*(E)) \subseteq \Bb(\ell^2(E^*))
\]
satisfies $\|A_t\| \le \sum_{ef \in \widehat{E}^1} |\xi([ef,t])| \le |\xi|_\infty \cdot
|\{ef \in E^2 : \xi([ef,t]) \not= 0\text{ for some }t\}|$ because the $\pi(t_e t_f
t^*_f)$ are partial isometries. Similarly, the operator
\[
A_0 := \sum_{e \in E^1} \xi([e]) \pi(t_e)
\]
satisfies $\|A_0\| \le \sum_{e \in E^1} |\xi([e])| \le |\xi|_\infty \cdot |\{ef \in E^2 :
\xi([ef,t]) \not= 0\text{ for some }t\}|$. Let $U_t : \ell^2(\SG{E}^*_t) \to
\ell^2(\widehat{E}^*)$, $0 < 1 < t$, and $U_0 : \ell^2(\SG{E}^*_0) \to \ell^2(E^*)$ be
the unitaries of Remark~\ref{rmk:path spaces}. Let $U := \bigoplus U_t : \ell^2(\SG{E}^*)
\to \bigoplus_{t \in [0,1)} \ell^2(E^\infty)$. Then
\[
\psi(\xi) := U^* \Big(\bigoplus_{t \in [0,1)} A_t\Big) U \in \Bb(\ell^2(\SG{E}^*))
\]
satisfies~\eqref{eq:psiinfty def}. The map $\psi_\infty$ thus defined is clearly linear,
and satisfies the desired norm estimate because the $A_t$ all do.
\end{proof}

We are now ready to define the Toeplitz algebra of $\SG{E}$.

\begin{dfn}
Let $E$ be a locally finite graph with no sources. We define $\Tt C^*(\SG{E})$ to be the
$C^*$-subalgebra of $\Bb(\ell^2(\SG{E}^*))$ generated by $\rho(C_0(\SG{E}^0))$ and
$\psi(C_c(\SG{E}^1))$.
\end{dfn}

To define $C^*(\SG{E})$ we first need to observe that the operators $\rho(a)$ and
$\psi(\xi)$ above respect the fibration of $\ell^2(\SG{E}^*)$ over $\SS$.

\begin{lem}\label{lem:invariant subspaces}
Let $E$ be a locally finite graph with no sources. For each $\omega \in \SG{E}^0$, the
subspace $\ell^2(\SG{E}^*\omega) \subseteq \ell^2(\SG{E}^*)$ is invariant for $\Tt
C^*(\SG{E})$. In particular, each $\ell^2(\SG{E}^*_t)$ is invariant for $\Tt
C^*(\SG{E})$. For each $t \in \SS$ and each $\omega \in \SG{E}^0_t$, we have $\Tt
C^*(\SG{E})|_{\ell^2(\SG{E}^* \omega)} \cap \Kk(\ell^2(\SG{E}^*_t)) =
\Kk(\ell^2(\SG{E}^*\omega))$.
\end{lem}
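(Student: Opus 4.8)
The plan is to establish the three assertions in order, each building on the previous. First I would prove that each $\ell^2(\SG{E}^*\omega)$ is invariant by examining how the generators $\rho(a)$ and $\psi(\xi)$ act on a basis vector $h_\alpha$ with $s(\alpha) = \omega$. The operator $\rho(a)$ is diagonal, so it manifestly preserves $\ell^2(\SG{E}^*\omega)$. For $\psi(\xi)$, the defining formula \eqref{eq:psiinfty def} shows that $\psi(\xi)h_\alpha$ is a linear combination of vectors $h_{\beta\alpha}$ with $\beta \in \SG{E}^1 r(\alpha)$; since prepending an edge $\beta$ does not change the source, we have $s(\beta\alpha) = s(\alpha) = \omega$, so each $h_{\beta\alpha}$ again lies in $\ell^2(\SG{E}^*\omega)$. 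As both families of generators preserve the subspace and it is closed, the whole generated $C^*$-algebra $\Tt C^*(\SG{E})$ preserves it.

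The statement that each $\ell^2(\SG{E}^*_t)$ is invariant follows because $\SG{E}^*_t$ is the disjoint union of the sets $\SG{E}^*\omega$ over $\omega \in \SG{E}^0_t$: since $\varpi(s(\alpha)) = \varpi(\alpha)$ by the remark following Lemma~\ref{lem:surjection}, every path $\alpha$ with $\varpi(\alpha) = t$ has source in $\SG{E}^0_t$, and conversely. Hence $\ell^2(\SG{E}^*_t) = \bigoplus_{\omega \in \SG{E}^0_t} \ell^2(\SG{E}^*\omega)$ is an orthogonal sum of invariant subspaces and is therefore itself invariant.

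The third assertion, the identification of the compact operators, is where the real work lies, and I expect it to be the main obstacle. The inclusion $\Kk(\ell^2(\SG{E}^*\omega)) \subseteq \Tt C^*(\SG{E})|_{\ell^2(\SG{E}^*\omega)} \cap \Kk(\ell^2(\SG{E}^*_t))$ should come from exhibiting enough rank-one operators $\theta_{h_\alpha, h_\beta}$ inside the restricted algebra: by transporting the problem through the unitary $U_t$ of Remark~\ref{rmk:path spaces}, the restriction of $\Tt C^*(\SG{E})$ to $\ell^2(\SG{E}^*_t)$ is (for $t \neq 0$) modelled on the path-space representation $\pi(\Tt C^*(\widehat{E}))$, inside which the gap projections $Q_v - \sum_e T_e T_e^*$ map to rank-one projections $\theta_{h_\mu,h_\mu}$, exactly as in the proof of Lemma~\ref{lem:mth Toeplitz injection}. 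Products $T_\mu(Q_v - \sum T_e T_e^*)T_\nu^*$ then realise all rank-one operators between basis vectors sharing the source $\omega$, and their closed span is all of $\Kk(\ell^2(\SG{E}^*\omega))$. For the reverse inclusion, one uses that the restriction lands in $\pi(\Tt C^*(\widehat{E}))$ (respectively $\pi(\Tt C^*(E))$ when $t = 0$), and invokes the fact recalled in Section~\ref{sec:graph algs} that $\pi(\Tt C^*(\cdot)) \cap \Kk = \bigoplus_v \Kk(\ell^2(\cdot\, v))$, so that the compact part already decomposes along the sources and its restriction to $\ell^2(\SG{E}^*\omega)$ is contained in $\Kk(\ell^2(\SG{E}^*\omega))$. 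The delicate point will be handling the exceptional fibre $t = 0$, where the relevant unitary is $U_0 : \ell^2(E^*) \to \ell^2(\SG{E}^*_0)$ and the model algebra is $\pi(\Tt C^*(E))$ rather than $\pi(\Tt C^*(\widehat{E}))$; I would verify that the generators $\psi(\xi)$ restrict at $t=0$ to the $A_0 = \sum_e \xi([e])\pi(T_e)$ appearing in the proof of Lemma~\ref{lem:rhoinfty,psiinfty}, so that the same compact-operator computation applies with $E$ in place of $\widehat{E}$.
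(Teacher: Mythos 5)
Your overall strategy is sound and, once unpacked, rests on the same mechanism as the paper's proof, but as written there are two places where a step is asserted rather than proved. First, in the invariance argument you check only that the generators $\rho(a)$ and $\psi(\xi)$ preserve $\ell^2(\SG{E}^*\omega)$ and then conclude that the generated $C^*$-algebra does. That inference is invalid in general: a closed subspace invariant for a set of operators need not be invariant for their adjoints, hence not for the $C^*$-algebra they generate (consider the range of the unilateral shift). You must also check the adjoints. This is easy --- a computation with inner products gives $\psi(\xi)^*h_\alpha = \overline{\xi(\alpha_1)}\,h_{\alpha_2\cdots\alpha_{|\alpha|}}$ for $|\alpha|\ge 1$ and $\psi(\xi)^*h_\omega = 0$, and the truncated path still has source $s(\alpha)$ --- but it has to be said, and the paper records exactly this formula for exactly this reason.

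Second, and more substantively: your route to $\Kk(\ell^2(\SG{E}^*\omega)) \subseteq \Tt C^*(\SG{E})|_{\ell^2(\SG{E}^*\omega)}$ transports the restriction to $\ell^2(\SG{E}^*_t)$ through $U_t$ into $\pi(\Tt C^*(\widehat{E}))$ and then quotes the standard fact that the gap projections generate the compacts there. For this to produce operators lying in the \emph{restricted algebra}, you need the restriction map $x \mapsto U_t^* x|_{\ell^2(\SG{E}^*_t)}U_t$ to be \emph{onto} $\pi(\Tt C^*(\widehat{E}))$ (or at least to hit each $\pi(Q_e)$ and $\pi(T_{ef})$ individually), not merely to land inside it; containment alone gives nothing in the forward direction, and ``modelled on'' elides this. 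The justification is that $\SG{E}^0_t$ and $\SG{E}^1_t$ are discrete subsets of $\SG{E}^0$ and $\SG{E}^1$, so one can choose $a \in C_0(\SG{E}^0)$ and $\xi \in C_c(\SG{E}^1)$ restricting to point masses on the fibre; this is the observation the whole proof turns on and it is absent from your write-up. Once you add it, your argument goes through. Note, though, that the paper skips the detour through $\widehat{E}$ entirely: it uses these point-mass functions directly to write $\theta_{h_\omega,h_\omega} = \big(\rho(a) - \sum_{\alpha\in\omega\SG{E}^1}\psi(\xi_\alpha)\psi(\xi_\alpha)^*\big)\big|_{\ell^2(\SG{E}^*_t)}$ and then builds the general $\theta_{h_\alpha,h_\beta}$ by multiplying by further $\psi(\xi_i)$'s and their adjoints; this treats $t=0$ and $t\neq 0$ uniformly and avoids the case distinction you anticipate being delicate. (The reverse containment is immediate on either approach, since an operator supported on $\ell^2(\SG{E}^*\omega)$ that is compact on $\ell^2(\SG{E}^*_t)$ is compact on $\ell^2(\SG{E}^*\omega)$; your appeal to $\pi(\Tt C^*(\cdot))\cap\Kk = \bigoplus_v \Kk(\ell^2(\cdot\,v))$ is more machinery than is needed there.)
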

\begin{proof}
For $a \in C_0(\SG{E}^0)$, $\xi \in C_c(\SG{E}^1)$ and $\alpha \in \SG{E}^*$, the element
$\rho(a) h_\alpha$ is a scalar multiple of $h_\alpha$. We have $\psi(\xi)h_\alpha =
\sum_{\beta \in \SG{E}^1 r(\alpha)} \xi(\beta) h_{\beta\alpha}$, and a quick calculation
using inner-products shows that
\[
\psi(\xi)^* h_\alpha
    = \begin{cases}
        \xi(\alpha_1)h_{\alpha_2 \cdots \alpha_{|\alpha|}} &\text{ if $|\alpha| \ge 1$}\\
        0 &\text{ otherwise.}
    \end{cases}
\]
In particular, $\rho(a)h_\alpha$, $\psi(\xi) h_\alpha$ and $\psi(\xi)^*h_\alpha$ all
belong to $\ell^2(\SG{E}^* s(\alpha))$. Since the elements $\rho(a)$ and $\psi(\xi)$
generate $\Tt C^*(\SG{E})$, it follows that each $\ell^2(\SG{E}^*\omega)$ is invariant.
Consequently each $\ell^2(\SG{E}^*_t) = \bigoplus_{\omega \in \SG{E}^0_t}
\ell^2(\SG{E}^*_\omega)$ is invariant as well.

To prove the final statement, we first show that $\theta_{\omega, \omega} \in \Tt
C^*(\SG{E})|_{\ell^2(\SG{E}^*_t)}$ for each $t \in \SS$ and $\omega \in \SG{E}^0_t$. For
this, fix $\omega \in \SG{E}^0$. Fix $a \in C_0(\SG{E}^0)$ such that $a|_{\SG{E}^0_t} =
\delta_\omega$ (this is possible since $\SG{E}^0_t$ is a discrete subset of $\SG{E}^0$).
Likewise, the set $\SG{E}^1_t$ is discrete in $\SG{E}^1$, so for each $\alpha \in
\omega\SG{E}^1$ we can choose $\xi_\alpha \in C_c(\SG{E}^1)$ such that
$\xi_\alpha|_{\SG{E}^1_t} = \delta_\alpha$. Using the formulas described in the preceding
paragraph for the actions of $\rho(a)$ and the $\psi(\xi_\alpha)$ and their adjoints on
basis elements, we see that
\[
\theta_{h_\omega, h_\omega}
    = \Big(\rho(a) - \sum_{\alpha \in \omega\SG{E}^1} \psi(\xi_\alpha)\psi(\xi_\alpha)^*\Big)\Big|_{\ell^2(\SG{E}^*_t)}
    \in \Tt C^*(\SG{E})|_{\ell^2(\SG{E}^*_t)}.
\]

Now fix $t \in \SS$, $\omega \in \SG{E}^0_t$, and $\alpha \in \SG{E}^*\omega
\setminus\{\omega\}$. Factor $\alpha = \alpha_1 \cdots \alpha_m$ where each $\alpha_i \in
\SG{E}^1$. Using that $\SG{E}^1_t$ is discrete, we choose $\xi_1, \dots, \xi_m \in
C_c(\SG{E}^1)$ such that $\xi_i|_{\SG{E}^1_t} = \delta_{\alpha_i}$. Again calculating
with basis vectors, we see that $\theta_{h_\alpha, h_\omega} =
\psi(\xi_1)\cdots\psi(\xi_m)\theta_{h_\omega, h_\omega} \in \Tt
C^*(\SG{E})|_{\ell^2(\SG{E}^*_t)}$. Since each ${\ell^2(\SG{E}^*\omega)}$ is invariant
for $\Tt C^*(\SG{E})$, we deduce that $\theta_{h_\omega, h_\alpha} = \theta_{h_\omega,
h_\omega}(\psi(\xi_1)\cdots\psi(\xi_m))^* \in \Tt C^*(\SG{E})|_{\ell^2(\SG{E}^*_t)}$. Now
for arbitrary $\alpha,\beta \in \SG{E}^*\omega$ we have $\theta_{h_\alpha, h_\beta} =
\theta_{h_\alpha, h_\omega}\theta_{h_\omega, h_\beta} \in \Tt
C^*(\SG{E})|_{\ell^2(\SG{E}^*_t)}$.

It follows that $\Kk(\ell^2(\SG{E}^*_\omega)) \subseteq \Tt
C^*(\SG{E})|_{\ell^2(\SG{E}^*_t)} \cap \Kk(\ell^2(\SG{E}^*_t))$ for each $t \in \SS$ and
$\omega \in \SG{E}^0_t$. Since we also know that each $\ell^2(\SG{E}^*\omega) \subseteq
\ell^2(\SG{E}^*_t)$ is invariant for $\Tt C^*(\SG{E})$, we have the reverse containment
as well.
\end{proof}

Given a Hilbert space $\Hh$, we write $\Qq(\Hh)$ for the Calkin algebra
$\Bb(\Hh)/\Kk(\Hh)$.

\begin{dfn}
Let $E$ be a locally finite graph with no sources. We define $\tilde\rho : C_0(\SG{E}^0)
\to \bigoplus_{t \in \SS} \Qq(\ell^2(\SG{E}^*_t))$ by
\[
\tilde\rho(a) = \bigoplus_{t \in \SS} \rho(a)|_{\ell^2(\SG{E}^*)_t} + \Kk(\ell^2(\SG{E}^*_t)),
\]
and for $\xi \in C_c(\SG{E}^1)$, we define
\[
\tilde\psi(\xi) = \bigoplus_{t \in \SS} \psi(\xi)|_{\ell^2(\SG{E}^*)_t} + \Kk(\ell^2(\SG{E}^*_t)).
\]
We define $C^*(\SG{E})$ to be the $C^*$-subalgebra of $\bigoplus_{t \in \SS}
\Qq(\ell^2(\SG{E}^*_t))$ generated by $\tilde\rho(C_0(\SG{E}^0))$ and
$\tilde\psi(C_c(\SG{E}^1))$.
\end{dfn}

\begin{rmk}
For each $t \in \SS$ and each $\omega \in \SG{E}^0_t$, the subspace
$\ell^2(\SG{E}^*\omega) \subseteq \ell^2(\SG{E}^*_t)$ is invariant for $\Tt C^*(\SG{E})$.
It follows that there is an injective homomorphism from $C^*(\SG{E})$ to
$\bigoplus_{\omega \in \SG{E}^0} \Qq(\ell^2(\SG{E}^*\omega))$ that carries $\bigoplus_{t
\in \SS} \big(a|_{\ell^2(\SG{E}^*_t)} + \Kk(\ell^2(\SG{E}^*_t))\big)$ to
$\bigoplus_{\omega \in \SG{E}^0} \big(a|_{\ell^2(\SG{E}^*\omega)} +
\Kk(\ell^2(\SG{E}^*\omega))\big)$ for all $a \in \Tt C^*(\SG{E})$.
\end{rmk}

We link our definition of $C^*(\SG{E})$ to the suspension of one-sided shift of $E$ using
the infinite-path space of $\SG{E}$ described in the preceding section.

\begin{prp}
Let $E$ be a locally finite graph with no sources, and suppose that every cycle in $E$
has an entrance. Then there is a faithful representation $\Theta : C^*(\SG{E}) \to
\ell^2(M(\sigma))$ such that for $x \in E^\infty$ and $t \in [0,1)$
\[
\Theta(\tilde\rho(a))h_{[x,t]} = a([x_1, t]) h_{[x,t]}\text{ for all $a \in C_0(\SG{E}^0)$}
\]
and
\[
\Theta(\tilde\psi(\xi))h_{[x,t]} = \sum_{e \in E^1 r(x)} \xi([e,t]) h_{[ex, t]}\text{ for all $\xi \in C_c(\SG{E}^1)$.}
\]
\end{prp}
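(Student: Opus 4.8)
The plan is to build $\Theta$ fibrewise over $\SS$ and to deduce faithfulness from faithfulness of the fibres. Write $\Hh_t := \clsp\{h_{[x,t]} : x \in E^\infty\}$, so that $\ell^2(M(\sigma)) = \bigoplus_{t \in [0,1)} \Hh_t$. By definition $C^*(\SG{E})$ is a subalgebra of $\bigoplus_{t \in \SS} \Qq(\ell^2(\SG{E}^*_t))$, and the coordinate projections onto the summands restrict to homomorphisms $\ev_t : C^*(\SG{E}) \to \Qq(\ell^2(\SG{E}^*_t))$, with images $C^*(\SG{E})_t$, that are jointly injective. I will produce for each $t \in [0,1)$ a representation $\Theta_t : C^*(\SG{E})_t \to \Bb(\Hh_t)$ and set $\Theta := \bigoplus_{t \in [0,1)} \Theta_t \circ \ev_t$. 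Since each $\Theta_t$ and each $\ev_t$ is contractive, this is a well-defined homomorphism into $\Bb\big(\bigoplus_t \Hh_t\big) = \Bb(\ell^2(M(\sigma)))$.

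To identify the fibres I use the unitaries $U_t$ of Remark~\ref{rmk:path spaces}. For $t \in (0,1)$, conjugation by $U_t$ carries $\rho(a)|_{\ell^2(\SG{E}^*_t)}$ to the diagonal operator $h_{\hat\mu} \mapsto a([\mu_1,t]) h_{\hat\mu}$ and carries $\psi(\xi)|_{\ell^2(\SG{E}^*_t)}$ to $\sum_{\alpha \in \widehat{E}^1} \xi([\alpha,t])\, \pi(T_\alpha)$, where $\pi$ is the path-space representation of $\Tt C^*(\widehat{E})$; as $a$ and $\xi$ vary these generate $\pi(\Tt C^*(\widehat{E}))$, so passing to the Calkin quotient and using that the Calkin representation of $C^*(\widehat{E})$ is faithful (Section~\ref{sec:graph algs}) identifies $C^*(\SG{E})_t$ with $C^*(\widehat{E})$. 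I take $\Theta_t$ to be this identification followed by the infinite-path representation of $C^*(\widehat{E})$ on $\ell^2(\widehat{E}^\infty)$, transported to $\Hh_t$ via $h_{[x,t]} \leftrightarrow h_{\hat x}$. For $t = 0$ the same computation, now using the collapse $[ef,0] = [e]$, carries the $0$-components of $\rho$ and $\psi$ to $\sum_{v} a([v])\, \pi(Q_v)$ and $\sum_{e} \xi([e])\, \pi(T_e)$ in the path-space representation of $\Tt C^*(E)$, so that $C^*(\SG{E})_0 \cong C^*(E)$; I take $\Theta_0$ to be the infinite-path representation of $C^*(E)$ on $\ell^2(E^\infty) \cong \Hh_0$. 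A direct check on the generators $\tilde\rho(a)$ and $\tilde\psi(\xi)$ then shows that the resulting $\Theta$ satisfies the two displayed formulas.

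It remains to prove that $\Theta$ is faithful; since the $\ev_t$ are jointly injective, it suffices that each $\Theta_t$ is injective. Under the isomorphism $C^*(\widehat{E}) \cong C^*(E)$ of \cite[Corollary~2.5]{BPRS} together with the homeomorphism $E^\infty \cong \widehat{E}^\infty$, the infinite-path representation of $C^*(\widehat{E})$ is unitarily equivalent to the infinite-path representation of $C^*(E)$, so for every $t$ the injectivity of $\Theta_t$ reduces to faithfulness of the single representation $\pi^\infty : C^*(E) \to \Bb(\ell^2(E^\infty))$. Its vertex projection at $v$ is the projection onto $\ell^2(vE^\infty)$, which is nonzero precisely when $vE^\infty \neq \emptyset$, matching the vanishing of $p_v$ in $C^*(E)$. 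Since every cycle of $E$ has an entrance, the Cuntz--Krieger uniqueness theorem (Section~\ref{sec:graph algs}) shows that $\pi^\infty$ is faithful, and hence that $\Theta$ is faithful.

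The main obstacle is this final faithfulness step: once the fibres are identified, everything hinges on the infinite-path representations of $C^*(E)$ and $C^*(\widehat{E})$ being faithful, which is exactly where the hypothesis that every cycle has an entrance enters, via Cuntz--Krieger uniqueness. The fibre identifications themselves---and in particular the bookkeeping at the degenerate fibre $t = 0$, where each edge $[ef,0]$ collapses to $[e]$ and the fibre algebra becomes $C^*(E)$ rather than $C^*(\widehat{E})$---are routine but must be carried out with care.
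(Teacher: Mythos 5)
Your proposal is correct and follows essentially the same route as the paper: both identify the fibre over $t\neq 0$ with $C^*(\widehat{E})$ and the fibre over $0$ with $C^*(E)$ via the unitaries $U_t$ and the faithfulness of the Calkin representations, then compose with the infinite-path representations, whose faithfulness is exactly where the entrance condition enters via the Cuntz--Krieger uniqueness theorem. The paper packages this as conjugation by $U = \bigoplus_t U_t$ applied to $\theta \oplus \bigl(\bigoplus_{t\neq 0}\hat\theta\bigr)$ with $\theta = \pi_\infty\circ\pi_\Qq^{-1}$ and $\hat\theta = \hat\pi_\infty\circ\hat\pi_\Qq^{-1}$, which is your fibrewise construction written in one line.
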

\begin{proof}
Since every cycle in $E$ has an entrance, the Cuntz--Krieger uniqueness theorem
\cite[Theorem~3.7]{KPR} shows that the infinite-path-space representations $\pi_\infty :
C^*(E) \to \Bb(\ell^2(E^\infty))$ and $\hat\pi_\infty : C^*(\widehat{E}) \to
\Bb(\ell^2(\widehat{E}^\infty))$ are both faithful. As discussed in
Section~\ref{sec:graph algs}, the Calkin representations $\pi_\Qq : C^*(E) \to
\Qq(\ell^2(E^*))$ and $\hat\pi_\Qq : C^*(\widehat{E}) \to \Qq(\ell^2(\widehat{E}^*))$ are
also faithful. Hence $\theta := \pi_\infty \circ \pi_\Qq^{-1} : \pi_\Qq(C^*(E)) \to
\pi_\infty(C^*(E))$ is an isomorphism that carries $Q_v + \Kk(\ell^2(E^*))$ to $P_v :=
\operatorname{proj}_{\ell^2(vE^\infty)}$ and carries $T_e + \Kk(\ell^2(E^*))$ to $S_e :
h_x \mapsto \delta_{s(e), r(x)} h_{ex}$. Similarly, $\hat{\theta} := \hat\pi_\infty \circ
\hat\pi_\Qq^{-1} : \hat\pi_\Qq(C^*(\widehat{E})) \to \hat\pi_\infty(C^*(\widehat{E}))$ is
an isomorphism carrying $Q_e + \Kk(\ell^2(\widehat{E}^*))$ to $P_e :=
\operatorname{proj}_{\ell^2(e\widehat{E}^\infty)}$ and carrying $T_{ef} +
\Kk(\ell^2(\widehat{E}^*))$ to $S_{ef} : h_x \mapsto \delta_{f, r(x)} h_{(ef)x}$. It
follows that $\theta \oplus (\bigoplus_{t \in \SS \setminus\{0\}} \hat\theta) :
C^*(\SG{E}) \to \ell^2(E^\infty) \oplus \big(\bigoplus_{t \in \SS \setminus \{0\}}
\ell^2(\widehat{E}^\infty)\big)$ is a faithful representation.

As in the proof of Lemma~\ref{lem:rhoinfty,psiinfty}, let $U_t : \ell^2(\SG{E}^*_t) \to
\ell^2(\widehat{E}^*)$, $0 < 1 < t$, and $U_0 : \ell^2(\SG{E}^*_0) \to \ell^2(E^*)$ be
the unitaries of Remark~\ref{rmk:path spaces}, and let $U := \bigoplus U_t :
\ell^2(\SG{E}^*) \to \bigoplus_{t \in [0,1)} \ell^2(E^\infty)$. Then $\Theta(x) := U^*
\big(\theta \oplus (\bigoplus_{t \in \SS \setminus\{0\}} \widehat \theta)\big)(x) U$
defines a faithful representation of $C^*(\SG{E})$ on $\bigoplus_{t \in \SS}
\ell^2(\SG{E}^\infty_t) = \ell^2(\SG{E}^\infty)$. Direct calculation using the
definitions of $\tilde\rho$, $\tilde\psi$, $\theta$ and $\hat\theta$ shows that $\Theta$
satisfies the prescribed formulae.
\end{proof}

\section{Fractional higher-power graphs and their \texorpdfstring{$C^*$}{C*}-algebras}\label{sec:SG[l]E}

In this section, given a graph $E$, we generalise the construction of
Section~\ref{sec:the graph torus} by constructing, for each real number $l$ a quiver
$\SG[l]{E}$ in such a way that $\SG[1]{E} = \SG{E}$ and $\SG[-1]{E} = \SG{E^{\op}}$. To
each $\SG[l]{E}$ we associate a Toeplitz algebra $\Tt C^*(\SG[l]{E})$ and a $C^*$-algebra
$C^*(\SG[l]{E})$ so that $\Tt C^*(\SG[1]{E}) \cong \Tt C^*(\SG{E})$ and $\Tt
C^*(\SG[-1]{E}) \cong \Tt C^*(\SG{E^{\op}})$, and similarly at the level of
Cuntz--Krieger algebras. We show that $\Tt C^*(\SG[0]{E}) \cong C_0(\SG{E}^0) \otimes
\Tt$ and $C^*(\SG[0]{E}) \cong C_0(\SG{E}^0) \otimes C(\TT)$.

We then show that if $l = \frac{m}{n}$ is rational, then there is a graph $F$, closely
related to $E$, such that $\Tt C^*(\SG[l]{E}) \cong \Tt C^*(\SG[|m|]{F})$, and this
isomorphism descends to an isomorphism $C^*(\SG[l]{E}) \cong C^*(\SG[|m|]{F})$. This
motivates the next section, in which we give a concrete description of each of $\Tt
C^*(\SG[m]{E})$ and $C^*(\SG[m]{E})$ for a large class of graphs $E$ and all positive
integers $m$. Combining this result with those of the current section yields an explicit
description of $\Tt C^*(\SG[l]{E})$ and $C^*(\SG[l]{E})$ for all rational $l$ and for a
large class of graphs $E$.

We use the following notation: given integers $0 \le m \le n \le L$, if $\mu = \mu_1
\mu_2 \dots \mu_L \in E^L$, then
\[
\mu(m,n) = \begin{cases}
    \mu_{m+1}\cdots \mu_n &\text{ if $n > m$}\\
    r(\mu_{m+1}) &\text{ if $n = m$.}
    \end{cases}
\]

It will be convenient in this section to use the following alternative description of
$\SG{E}^0$: it is homeomorphic to the quotient of $\bigsqcup_{n \ge 0} E^n \times [0, n]$
by the equivalence relation $(\mu,s) \sim (\nu,t)$ if $s - \lfloor s\rfloor = t - \lfloor
t \rfloor$ and $\mu(\lfloor s \rfloor, \lceil s \rceil) = \nu(\lfloor t \rfloor, \lceil t
\rceil)$ (see \cite{KKQS}).

\begin{dfn}
Let $E$ be a locally finite graph with no sources. Fix $l \in \RR$. Let $\approx_l$ be
the equivalence relation on $\{(\mu,t) : \mu \in E^*\text{ and }t, t+l \in [0, |\mu|]\}$
given by $(\mu,s) \approx_l (\nu,t)$ if and only if $s - \lfloor s \rfloor = t - \lfloor
t \rfloor$ and
\[
\begin{cases}
\mu(\lfloor s \rfloor, \lceil s + l\rceil) = \nu(\lfloor t \rfloor, \lceil t + l\rceil)
    &\text{ if $l \ge 0$,}\\
\mu(\lfloor s+l \rfloor, \lceil s\rceil) = \nu(\lfloor t+l \rfloor, \lceil t\rceil)
    &\text{ if $l \le 0$.}\\
\end{cases}
\]
We define
\[
\SG[l]{E}^1 := \{(\mu,t) : \mu \in E^*\text{ and }t, t+l \in [0, |\mu|]\}/{\approx_l}.
\]
We write $[\mu, t]_l$ for the equivalence class of $(\mu, t)$ under $\approx_l$. Define
$r_l, s_l : \SG{E}^l \to \SG{E}^0$ by $r_l([\mu,t]_l) := [\mu, t]_l$ and $s_l([\mu,t]_l)
= [\mu, t + l]_l$. The quadruple $(\SG{E}^0, \SG[l]{E}^1, r, s)$ is denoted $\SG[l]{E}$.
\end{dfn}

\begin{rmk}\label{rmk:special cases}
The special case $l = 1$ coincides with the space $\SG{E}^1$ and range and source maps
$r,s : \SG{E}^1 \to \SG{E}^0$ of Section~\ref{sec:the graph torus}. When $l = -1$ there
is a homeomorphism from the space $\SG[-1]{E}^1$ to the space $(\SG{E^{\op}})^1$
associated to the opposite graph of $E$ satisfying $[ef, t] \mapsto [f^{\op} e^{\op},
1-t]$, and this homeomorphism intertwines $r_{-1}$ with $r : (\SG{E^{\op}})^1 \to
(\SG{E^{\op}})^0$ and intertwines $s_{-1}$ with $s : (\SG{E^{\op}})^1 \to
(\SG{E^{\op}})^0$. When $l = 0$, we see that $\SG[0]{E}^1$ is a copy of the vertex space,
and $r_0, s_0$ are both just the identity map $\SG{E}^0 \to \SG{E}^0$.
\end{rmk}

\begin{ntn}
We will frequently just write $r,s$ in place of $r_l, s_l$, and $[\mu, t]$ in place of
$[\mu, t]_l$, when the parameter $l$ is clear from context.
\end{ntn}

As with $\SG{E}$, the quivers $\SG[l]{E}$ are not usually topological graphs, except if
$l = 0$.

\begin{lem}
Let $E$ be a locally finite graph with no sources. The range and source maps $s_0$ and
$r_0$ on $\SG[0]{E}$ are homeomorphisms. Fix $l \in \RR \setminus \{0\}$. Then $r_l$ and
$s_l$ are continuous maps. If $l > 0$, then the map $s_l$ is an open map if and only if
each $|E^1 v| = 1$ and $r_l$ is an open map if and only if each $|vE^1| = 1$. If $l < 0$,
then the map $s_l$ is open if and only if each $|vE^1| = 1$, and $r_l$ is open if and
only if each $|E^1 v| = 1$.
\end{lem}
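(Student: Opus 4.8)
The statement concerns the continuity and openness of the range and source maps $r_l, s_l$ for the quivers $\SG[l]{E}$. I would first dispose of the case $l = 0$, where by Remark~\ref{rmk:special cases} both maps are the identity on $\SG{E}^0$ and hence trivially homeomorphisms. For $l \neq 0$, I would then reduce the analysis to the case $l > 0$ via the homeomorphism $\SG[l]{E}^1 \cong (\SG[{-l}]{E^{\op}})^1$ of Remark~\ref{rmk:special cases} (or rather its obvious generalisation to arbitrary $l$): passing to the opposite graph swaps emitters and receivers and interchanges $r_l$ with $s_{-l}$, so the $l < 0$ statements follow from the $l > 0$ statements applied to $E^{\op}$, using that $|E^1 v| = |v (E^{\op})^1|$ and $|vE^1| = |(E^{\op})^1 v|$.

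\emph{Continuity.} For $l > 0$, continuity of $r_l$ and $s_l$ I would verify directly from the quotient description. Writing $q \colon \{(\mu,t) : t, t+l \in [0,|\mu|]\} \to \SG[l]{E}^1$ for the quotient map, a map out of $\SG[l]{E}^1$ is continuous precisely when its composite with $q$ is continuous; since $r_l([\mu,t]_l) = [\mu,t]$ and $s_l([\mu,t]_l) = [\mu,t+l]$ are given on representatives by the continuous assignments $(\mu,t) \mapsto (\mu,t)$ and $(\mu,t) \mapsto (\mu,t+l)$ into $\bigsqcup_n E^n \times [0,n]$ followed by the quotient onto $\SG{E}^0$, continuity is immediate from the universal property of the quotient topology.

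\emph{Openness.} This is the substantive part, and I expect it to run closely parallel to the proof of Lemma~\ref{lem:cts structure}, which treats exactly the case $l = 1$. I would first observe that $s_l$ is automatically open at every point whose source lies in $\SG{E}^0 \setminus E^0$ (equivalently, at points $[\mu,t]_l$ where neither $t$ nor $t+l$ is an integer): near such points $r_l$ and $s_l$ restrict to homeomorphisms onto open sets, exactly as in the interior-parameter case of Lemma~\ref{lem:cts structure}. The crux is the behaviour at points whose source is a vertex $[v]$ of $E$, i.e. where $t + l \in \ZZ$. Exactly as for $l = 1$, if $|E^1 v| = 1$ then the image under $s_l$ of a basic neighbourhood is again a basic neighbourhood of $[v]$, so $s_l$ is open there; whereas if $|E^1 v| \geq 2$, I would exhibit an open set $U$ and a sequence in the complement of $s_l(U)$ converging to a point of $s_l(U)$, showing $s_l$ fails to be open. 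The obstacle, and the only place real care is needed, is bookkeeping: for general $l > 0$ the integer $t+l$ differs from $t$, so the gluing combinatorics at the source vertex must be tracked through the $\approx_l$-relation rather than the $\approx_1$-relation, and one must correctly identify which edges of $E$ emanate from the vertex in question. Once the neighbourhood bases are written down explicitly, however, the open-ness criterion reduces to the same dichotomy $|E^1 v| = 1$ versus $|E^1 v| \geq 2$ as in Lemma~\ref{lem:cts structure}, and the symmetric statement for $r_l$ follows by the same argument with $vE^1$ in place of $E^1 v$. The conclusion for $l < 0$ is then read off from the opposite-graph reduction, which swaps the two conditions as stated.
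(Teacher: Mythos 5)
Your proposal is correct and takes essentially the same route as the paper: the paper's own proof simply cites Remark~\ref{rmk:special cases} for the $l=0$ case and then states that the remaining claims follow by an argument ``very similar to that of Lemma~\ref{lem:cts structure}'', which is precisely the adaptation you outline (your opposite-graph reduction for $l<0$ mirrors the parenthetical remark in the proof of Lemma~\ref{lem:cts structure}). The only quibble is that the source of $[\mu,t]_l$ lies in $E^0$ exactly when $t+l\in\ZZ$ (not when $t$ or $t+l$ is an integer), but this does not affect your argument.
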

\begin{proof}
The final statement of Remark~\ref{rmk:special cases} shows that $s_0$ and $r_0$ are
homeomorphisms. The proof of the remaining statements is very similar to that of
Lemma~\ref{lem:cts structure}.
\end{proof}

A \emph{path} in $\SG[l]{E}$ is a sequence $[\mu_1, t_1][\mu_2, t_2][\mu_3, t_3] \cdots
[\mu_k, t_k]$ of elements of $\SG[l]{E}^1$ such that $s_l([\mu_i, t_i]) = r_l([\mu_{i+1},
t_{i+1}])$ for all $1 \le i < k$. We write $\SG[l]{E}^*$ for the space
$\bigsqcup_{i=0}^\infty \SG[l]{E}^{i}$ of all paths in $\SG[l]{E}$, including the
vertices, which are regarded as paths of length 0. So $\SG[l]{E}^0 = \SG{E}^0$. We define
$r(v) = s(v) = v$ for each $v \in \SG{E}^0$, and for
\[
\alpha = [\mu_1, t_1][\mu_2, t_2][\mu_3, t_3] \cdots [\mu_k, t_k] \in
\SG[l]{E}^{k}
\]
we define $r(\alpha) = r([\mu_1, t_1])$ and $s(\alpha) = s([\mu_k, t_k])$.

\begin{rmk}\label{rmk:m path spaces}
For each $t \in \SS$ we write $\SG[l]{E}^n_t$ for $\{\alpha \in \SG[l]{E}^n :
\varpi(s(\alpha)) = t\} =  (\SG[l]{E}^n)(\SG{E}^0_t)$. We have $r(\SG[l]{E}^n_t) =
\SG{E}^0_{t-l}$, where addition on the subscript is modulo $\ZZ$. If $l = m \in \ZZ$
\emph{is} an integer, then as in Remark~\ref{rmk:path spaces}, the space $\SG[m]{E}^n_0 =
\SG[m]{E}^n_1$ is the canonical copy $\{[\mu] : \mu \in E^{nm}\}$ of $E^{nm}$ in
$\SG[m]{E}^*$. In particular there is a unitary $U_0 : \ell^2(E(0,m)^*) \to
\ell^2(\SG[m]{E}^*_0)$ satisfying
\[
U_0 h_\mu = h_{[\mu]}.
\]
For $m \ge 0$, $t \in (0,1)$, and each $n \in \NN$, there is a bijection $\SG[m]{E}^n_t
\to E(1,m+1)^n$ such that $[\mu,t] \mapsto (\mu_1\cdots\mu_{m+1})(\mu_m \cdots\mu_{2m+1})
\cdots (\mu_{(n-1)m} \cdots \mu_{nm+1})$. Thus for each $t \in (0,1)$ there is a unitary
$U_t : \ell^2(E(1, m+1)^*) \to \ell^2(\SG[m]{E}^*_t)$ given by
\[
U_t h_{(\mu_1\cdots\mu_{m+1})(\mu_m \cdots\mu_{2m+1}) \cdots (\mu_{(n-1)m} \cdots \mu_{nm+1})} = h_{[\mu,t]}
\]
\end{rmk}

The space $\SG[l]{E}^1$ is endowed with the quotient topology inherited from
$\SG{E}^{\lceil l \rceil + 1} \times [0,1]$.

\begin{lem}\label{lem:psil,rhol defs}
Let $E$ be a locally finite graph with no sources and fix $l \in \RR$. Let $\{h_{\alpha}
: \alpha \in \SG[l]{E}^{*}\}$ denote the canonical orthonormal basis for
$\ell^2(\SG[l]{E}^{*})$. There is an injective nondegenerate representation $\rho_l :
C_0(\SG{E}^0) \to \Bb(\ell^2(\SG[l]{E}^{*})$ such that
\[
\rho_l(a)h_{\alpha} = a(r(\alpha)) h_{\alpha}\quad
    \text{ for all $a \in C_0(\SG{E}^0)$ and $\alpha \in \SG[l]{E}^{*}$},
\]
and there is a linear map $\psi_l : C_c(\SG[l]{E}^1) \to \Bb(\ell^2(\SG[l]{E}^{*}))$ such
that
\[
\psi_l(\xi)h_{\alpha} = \sum_{\beta \in \SG[l]{E}^1 r(\alpha)} \xi(\beta) h_{\beta\alpha}\quad
    \text{ for all $\xi \in C_c(\SG[l]{E}^1)$ and $\alpha \in \SG[l]{E}^{*}$}.
\]
We have
\begin{equation}\label{eq:psil norm est}
 \|\psi_l(\xi)\| \le \|\xi\|_\infty \cdot |\{\mu \in E^{\lceil |l|\rceil + 1}
    : \xi([\mu, t]) \not= 0\text{ for some }t \in [0,1]\}|.
\end{equation}
For each $\omega \in \SG{E}^0$, the subspace $\ell^2(\SG[l]{E}^{*}\omega) \subseteq
\ell^2(\SG[l]{E}^{*})$ is invariant for the $C^*$-subalgebra of
$\Bb(\ell^2(\SG[l]{E}^{*}))$ generated by the images of $\rho_l$ and $\psi_l$.
\end{lem}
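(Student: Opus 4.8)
The plan is to follow the template set by Lemmas~\ref{lem:rhoinfty,psiinfty} and~\ref{lem:invariant subspaces}, adapting the fibred bookkeeping to account for the fact that, for $l \ne 1$, an edge of $\SG[l]{E}$ shifts the $\varpi$-coordinate of its source relative to that of its range by $l$ rather than fixing it. For $\rho_l$, the prescribed formula $\rho_l(a)h_\alpha = a(r(\alpha))h_\alpha$ defines a diagonal operator with diagonal entries bounded by $\|a\|_\infty$, so it is a bounded $*$-representation; it is nondegenerate because $\rho_l(a)h_\alpha = h_\alpha$ for any $a$ with $a(r(\alpha)) = 1$, and injective because $\|\rho_l(a)\| \ge \sup_{\omega \in \SG{E}^0}|a(\omega)| = \|a\|_\infty$, each vertex $\omega$ occurring as the range of the length-zero path $\omega$. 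This is verbatim the argument for $\rho$ in Lemma~\ref{lem:rhoinfty,psiinfty}.

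The substance is the boundedness and estimate~\eqref{eq:psil norm est} for $\psi_l$. I would write the candidate operator as $\psi_l(\xi) = \sum_{\beta \in \SG[l]{E}^1}\xi(\beta)V_\beta$, where $V_\beta h_\alpha = \delta_{s(\beta),r(\alpha)}h_{\beta\alpha}$ is the partial isometry that prepends $\beta$; for fixed $\alpha$, local finiteness of $E$ makes $\{\beta : s(\beta) = r(\alpha)\}$ finite, so the sum is well defined on each basis vector. To control the norm I would fibre $\ell^2(\SG[l]{E}^*)$ over $\SS$ by the range-coordinate: for $u \in \SS$ set $H_u = \clsp\{h_\alpha : \varpi(r(\alpha)) = u\}$, so that $\ell^2(\SG[l]{E}^*) = \bigoplus_u H_u$. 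The key structural observation is that $\psi_l(\xi)$ carries $H_u$ into $H_{u-l}$: if $\varpi(r(\alpha)) = u$ and $s(\beta) = r(\alpha)$, then $\varpi(r(\beta\alpha)) = \varpi(r(\beta)) = \varpi(s(\beta)) - l = u - l$. Hence the blocks $\psi_l(\xi)|_{H_u} \colon H_u \to H_{u-l}$ have mutually orthogonal domains and mutually orthogonal ranges, so $\psi_l(\xi)$ is their orthogonal direct sum and $\|\psi_l(\xi)\| = \sup_u \|\psi_l(\xi)|_{H_u}\|$. Within one block, $\psi_l(\xi)|_{H_u} = \sum_\beta \xi(\beta)V_\beta$ runs only over the edges $\beta$ with $\varpi(s(\beta)) = u$; these all have a representative $(\nu,t)$ with a single fixed $t \in [0,1)$ determined by $u$ and $l$, and with $\nu \in E^{\lceil t+l\rceil}$. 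Extending each such $\nu$ forward to length $\lceil|l|\rceil + 1$ injects $\{\beta : \varpi(s(\beta)) = u,\ \xi(\beta)\ne 0\}$ into $\{\mu \in E^{\lceil|l|\rceil+1} : \xi([\mu,t])\ne 0 \text{ for some } t\}$, so each block is a sum of at most that many weighted partial isometries and the triangle inequality gives the bound~\eqref{eq:psil norm est}.

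The invariance statement then follows exactly as in Lemma~\ref{lem:invariant subspaces}. The operator $\rho_l(a)$ is diagonal, so $\rho_l(a)h_\alpha$ is a scalar multiple of $h_\alpha$; $\psi_l(\xi)h_\alpha$ is supported on paths $\beta\alpha$, all of which satisfy $s(\beta\alpha) = s(\alpha)$; and a direct inner-product computation shows that $\psi_l(\xi)^* h_\alpha$ is a scalar multiple of $h_{\alpha_2 \cdots \alpha_{|\alpha|}}$ (which again has source $s(\alpha)$) when $|\alpha| \ge 1$, and is $0$ otherwise. Thus all of $\rho_l(a)$, $\psi_l(\xi)$ and $\psi_l(\xi)^*$ preserve each $\ell^2(\SG[l]{E}^*\omega)$, and since these operators generate the algebra in question, the subspace $\ell^2(\SG[l]{E}^*\omega)$ is invariant.

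I expect the main obstacle to be the per-block combinatorics in the second paragraph: verifying cleanly that for each fixed range-fibre $u$ the active edges inject into $E^{\lceil|l|\rceil+1}$. The length $\lceil t+l\rceil$ of the minimal representative of an edge jumps between $\lceil|l|\rceil$ and $\lceil|l|\rceil+1$ as $t$ crosses a threshold determined by $l$, so the short representatives must be padded out to full length (using that $E$ has no sinks) before one can count them against $E^{\lceil|l|\rceil+1}$. Everything else is a routine transcription of the $l = 1$ arguments, with the orthogonal direct-sum decomposition over the range-coordinate playing the role that the unitaries $U_t$ of Remark~\ref{rmk:m path spaces} play when $l$ is an integer.
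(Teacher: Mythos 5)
Your proof is correct. For the record, the paper's own proof of this lemma is a two-sentence deferral to Lemmas \ref{lem:rhoinfty,psiinfty}~and~\ref{lem:invariant subspaces}, so you have effectively supplied the details it omits; your treatment of $\rho_l$ and of the invariance of the subspaces $\ell^2(\SG[l]{E}^*\omega)$ is a verbatim transcription, as you say. The one step that does \emph{not} transcribe literally is the norm estimate for $\psi_l(\xi)$: in Lemma~\ref{lem:rhoinfty,psiinfty} the restriction of $\psi(\xi)$ to a fibre $\ell^2(\SG{E}^*_t)$ is a single finite sum $\sum_{ef}\xi([ef,t])\,\pi(t_et_ft_f^*)$ of weighted partial isometries, because for $l=1$ an edge and its source lie over the same point of $\SS$. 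For non-integer $l$ the source-fibres $\ell^2(\SG[l]{E}^*_t)$ are still invariant, but the edges prepended to length-$n$ paths in such a fibre lie over $t-nl$, so the restriction to a single source-fibre is no longer a finite sum indexed by the edges over one parameter value. Your fix --- grading by the range coordinate, observing that $\psi_l(\xi)$ maps $H_u$ into $H_{u-l}$, and using that an orthogonal sum of blocks with mutually orthogonal domains \emph{and} mutually orthogonal ranges has norm $\sup_u\|\psi_l(\xi)|_{H_u}\|$ --- is exactly the right generalisation, and is precisely the detail that the paper's ``almost identical'' glosses over; an equivalent fix is to refine each invariant source-fibre by path length, since $\psi_l(\xi)$ raises length by one. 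Two small points: the padding of a minimal representative $(\nu,t)$ with $\nu\in E^{\lceil t+l\rceil}$ out to length $\lceil|l|\rceil+1$ appends edges at the source end of $\nu$, so it uses that $E$ has no \emph{sources}, not that it has no sinks; and it is worth saying explicitly that the per-block index set $\{\beta:\varpi(s(\beta))=u,\ \xi(\beta)\neq0\}$ is finite because it is the intersection of the compact set $\supp(\xi)$ with a discrete subset of $\SG[l]{E}^1$, exactly as in the discreteness arguments of Lemma~\ref{lem:invariant subspaces}.
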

\begin{proof}
The first two statements follow from an argument almost identical to that of
Lemma~\ref{lem:rhoinfty,psiinfty}. The final statement follows from an argument nearly
identical to the proof of the first statement of Lemma~\ref{lem:invariant subspaces}.
\end{proof}

\begin{dfn}
Let $E$ be a locally finite graph with no sources.
\begin{enumerate}
\item We define $\Tt C^*(\SG[l]{E}) := C^*\big(\rho_l(C_0(\SG{E}^0)) \cup
    \psi_l(C_c(\SG[l]{E}^1))\big)$, the $C^*$-subalgebra of
    $\Bb(\ell^2(\SG[l]{E}^{*}))$ generated by the images of $\rho_l$ and $\psi_l$.
\item We define $\tilde\rho_l : C_0(\SG{E}^0) \to \oplus_{\omega \in \SG{E}^0}
    \Qq(\ell^2(\SG[l]{E}^{*}\omega))$ by
    \[
    \tilde\rho_l(a) = \bigoplus_{\omega \in \SG{E}^0} \Big(\rho(a)|_{\ell^2(\SG[l]{E}^{*}\omega)} + \Kk(\ell^2(\SG[l]{E}^{*}\omega))\Big)
    \]
    and we define $\tilde\psi_l : C_c(\SG[l]{E}^1) \to \oplus_{\omega \in \SG{E}^0}
    \Qq(\ell^2(\SG[l]{E}^{*}\omega))$ by
    \[
    \tilde\psi_l(\xi) = \bigoplus_{\omega \in \SG{E}^0} \Big(\psi(\xi)|_{\ell^2(\SG[l]{E}^{*}\omega)} + \Kk(\ell^2(\SG[l]{E}^{*}\omega))\Big).
    \]
\item We define $C^*(\SG[l]{E})$ to be the subalgebra of $\bigoplus_{\omega \in
    \SG{E}^0} \Qq(\ell^2(\SG[l]{E}^{*}\omega))$ generated by
    $\tilde\rho_l(C_0(\SG{E}^0))$ and $\tilde\psi_l(C_c(\SG[l]{E}^1))$.
\end{enumerate}
\end{dfn}

\begin{exa}\label{exa:rotation}
If $E$ consists of a single vertex $v$ and a single edge $e$, then each $\SG[l]{E}$ is a
copy of the topological graph $F_l := (\SS, \SS, t \mapsto t - l, \operatorname{id})$
determined by the rotation homeomorphism $t \mapsto t - l$ of $\SS$. We have
$C_c(\SG[l]{E}^1) = C(\SG[l]{E}^1) = C(F_l^1) = X(F_l)$, the topological-graph bimodule
of $F_l$. It is routine to verify that $(\rho_l, \psi_l)$ is a representation of $X(F_l)$
in the sense of Section~\ref{sec:topgraphs}. So there is a surjective homomorphism
$\psi_l \times \rho_l : \Tt C^*(F_l) \to \Tt C^*(\SG[l]{E})$ that carries
$i_{X(F_l)}(\xi)$ to $\psi_l(\xi)$ and carries $i_{C(\SS)}(a)$ to $\rho_l(a)$. The space
$\SG[l]{E}^{*}$ can be identified with $\SS \times \NN$ by the map that sends $\alpha \in
\SG[l]{E}^{n}$ to $(s(\alpha), n)$. Under this identification, the map $a \mapsto
\rho_l(a)(1 - \psi_l(1)\psi_l(1)^*)$ is the canonical faithful representation of $C(\SS)$
on $\ell^2(\SS \times \{1\}) \cong \ell^2(\SS)$, so the uniqueness theorem
\cite[Theorem~2.1]{FR} shows that $\psi_l \times \rho_l$ is injective. Therefore $\Tt
C^*(\SG[l]{E})$ is isomorphic to the Toeplitz algebra $\Tt C^*(F_l)$. Since $1 -
\psi_l(1)\psi_l(1)^*$ belongs to $\bigoplus_{t \in \SS} \Kk(\SG[l]{E}^{*} t)$, we see
that $\tilde{\psi}_l(1)\tilde{\psi}_l(1)^* = 1$ in $C^*(\SG[l]{E})$. For any $a \in
C(\SG[l]{E}^0)$ the left action of $a$ on $F_l$ is given by $a \cdot \xi = \theta_{a,
1}(\xi)$, and so we see that if $\phi : C(\SG[l]{E}^0) \to \Ll(F_l)$ denotes the
homomorphism implementing the left action, we have
\begin{align*}
\tilde{\rho}_l(a) - \tilde{\psi}_l^{(1)}(\phi(a))
    = \tilde{\rho}_l(a) - \tilde{\psi}_l(a)\tilde{\psi}_l(1)^*
    = \tilde{\rho}_l(a)(\tilde{\rho}_l(1) - \tilde{\psi}_l(1)\tilde{\psi}_l(1)^*) = 0.
\end{align*}
Hence $(\tilde\psi_l, \tilde\rho_l)$ is a covariant representation of $F_l$, and
therefore induces a homomorphism $\tilde\psi_l \times \tilde\rho_l : C^*(F_l) \to
C^*(\SG[l]{E})$. The action of $\TT$ on $\Bb(\ell^2(\SG[l]{E}^{*}))$ determined by
conjugation by the unitaries $\{W_z : z \in \TT\}$ given by $W_z(h_{t, n}) = z^n h_{t,n}$
induces an action on $\bigoplus_{t \in \SS} \Qq(\ell^2(\SG[l]{E}^{*} t))$, and it is
routine to check that $\tilde\psi_l \times \tilde\rho_l$ is equivariant for this action
and the gauge action on $C^*(F_l)$. Since $\tilde\rho_l : a \mapsto \bigoplus_\omega
a(\omega) 1_{\Qq(\ell^2(\SG[l]{E}^*\omega))}$ is injective, it follows from the
gauge-invariant uniqueness theorem \cite[Theorem~4.5]{Kat1} that $\tilde\psi \times
\tilde\rho$ is injective. So $C^*(\SG[l]{E}) \cong C^*(F_l)$. By
\cite[Proposition~10.5]{Kat2} there is an isomorphism of the rotation algebra $A_l$---the
universal $C^*$-algebra generated by unitaries $U, V$ such that $UV = e^{2\pi i l}
VU$---onto $C^*(F_l)$ that carries $U$ to $i_{C(\SS)}(t \mapsto e^{2\pi i t})$ and
carries $V$ to $i_{X(F_1)}(1)$. So we deduce that there is an isomorphism $A_l \cong
C^*(\SG[l]{E})$ that carries $U$ to $\tilde\rho_l(t \mapsto e^{2\pi i t})$ and carries
$V$ to $\tilde\psi_l(1_{\SG[l]{E}^1})$.
\end{exa}

\begin{rmk}\label{rmk:univalent}
More generally, if $E$ is a row-finite graph with no sources and satisfying $|E^1v| = 1$
for all $v$, then for each $l > 0$ the quadruple $F := (\SG{E}^0, \SG{E}^l, r, s)$ is a
topological graph in the sense of Katsura, and an analysis like that of
Example~\ref{exa:rotation} shows that $\Tt C^*(\SG[l]{E})$ and $C^*(\SG[l]{E})$ coincide
with the topological-graph $C^*$-algebras $\Tt C^*(F)$ and $C^*(F)$ respectively.
\end{rmk}

In the next few sections we will give a recipe for describing both $\Tt C^*(\SG[l]{E})$
and $C^*(\SG[l]{E})$ for rational values of $l$ provided that sufficiently many vertices
of $E$ both emit and receive at least two edges. We do not yet have a concrete
description of $C^*(\SG[l]{E})$ for arbitrary $l \in \RR$ and an arbitrary graph $E$.

The following theorem relates the constructions described in this section with those of
Section~\ref{sec:the graph torus} and Section~\ref{sec:C*SGE}. For the following result,
we denote by $\Tt$ the classical Toeplitz algebra generated by a non-unitary isometry
$S$.

\begin{thm}\label{thm:C* at 1,0,-1}
Let $E$ be a locally finite graph with no sources.
\begin{enumerate}
\item\label{it:C* at 1,0,-1 i} There is an isomorphism $\Tt C^*(\SG[1]{E}) \cong \Tt
    C^*(\SG{E})$ that carries $\rho_1(a)$ to $\rho(a)$ for $a \in C_0(\SG{E}^0)$ and
    carries $\psi_1(\xi)$ to $\psi(\xi)$ for $\xi \in C_c(\SG[1]{E}^1)$. This
    isomorphism descends to an isomorphism $C^*(\SG[1]{E}) \cong C^*(\SG{E})$.
\item\label{it:C* at 1,0,-1 ii} There is an isomorphism $\Tt C^*(\SG[0]{E}) \cong
    C_0(\SG{E}^0) \otimes \Tt$ that carries $\rho_0(a)$ to $a \otimes 1$ and carries
    $\psi_0(\xi)$ to $(\xi \circ r^{-1}_0) \otimes S$ for $\xi \in C_c(\SG[0]{E}^1) =
    C_c(\SG{E}^0)$, and this isomorphism descends to an isomorphism $C^*(\SG{E}^0)
    \cong C_0(\SG{E}^0) \otimes C(\TT)$.
\item\label{it:C* at 1,0,-1 iii} There is an isomorphism $\Tt C^*(\SG[-1]{E}) \cong
    \Tt C^*(\SG{E^{\op}})$ that carries $\rho_{-1}(a)$ to the element $\rho([e^{\op},
    t] \mapsto a([e, 1-t]))$ for $a \in C_0(\SG{E}^0)$ and carries $\psi_{-1}(\xi)$
    to $\psi\big([ef, t] \mapsto \xi([f^{\op}e^{\op}, (1-t)])\big)$ for $\xi \in
    C_c(\SG[-1]{E}^{1})$. This isomorphism descends to an isomorphism
    $C^*(\SG[-1]{E}) \cong C^*(\SG{E^{\op}})$.
\end{enumerate}
\end{thm}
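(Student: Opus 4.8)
The plan is to prove each of the three assertions at the level of the concrete representations on the $\ell^2$-path spaces: in parts (i) and (iii) I exhibit a unitary carrying the generators of one Toeplitz algebra to those of the other, and in each case the descent to Cuntz--Krieger algebras is automatic because the unitary respects the fibrations over $\SG{E}^0$ that define $\tilde\rho_l$ and $\tilde\psi_l$. Parts (i) and (iii) are essentially bookkeeping built on Remark~\ref{rmk:special cases}, while part (ii) carries the real content. For part (i), Remark~\ref{rmk:special cases} tells us that for $l = 1$ the quadruple $\SG[1]{E}$ coincides with $\SG{E}$: the edge spaces agree and $r_1, s_1$ agree with $r, s$. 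Hence the path categories $\SG[1]{E}^*$ and $\SG{E}^*$ are literally equal, so $\ell^2(\SG[1]{E}^*) = \ell^2(\SG{E}^*)$, and comparing the defining formulas of Lemma~\ref{lem:psil,rhol defs} with those of Lemma~\ref{lem:rhoinfty,psiinfty} shows that $\rho_1 = \rho$ and $\psi_1 = \psi$ verbatim. Thus $\Tt C^*(\SG[1]{E})$ and $\Tt C^*(\SG{E})$ are the same subalgebra of the same $\Bb(\ell^2(\SG{E}^*))$, the identity map is the asserted isomorphism, and since the two Cuntz--Krieger algebras are built from identical fibrewise Calkin data, the identity descends as well.

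For part (iii), I would take the edge homeomorphism $\phi \colon \SG[-1]{E}^1 \to (\SG{E^{\op}})^1$, $[ef,t] \mapsto [f^{\op}e^{\op}, 1-t]$, together with the vertex homeomorphism $\phi^0 \colon [e,t] \mapsto [e^{\op}, 1-t]$ supplied by Remark~\ref{rmk:special cases}. Because $\phi$ intertwines $r_{-1}$ with $r$ and $s_{-1}$ with $s$ through $\phi^0$, it is range- and source-preserving, so it extends composable pair by composable pair, and without reversing the order of a path, to a bijection $\SG[-1]{E}^* \to (\SG{E^{\op}})^*$; let $V$ be the induced unitary of $\ell^2$-spaces. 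A direct check on basis vectors then gives $V\rho_{-1}(a)V^* = \rho(a \circ (\phi^0)^{-1})$ and $V\psi_{-1}(\xi)V^* = \psi(\xi \circ \phi^{-1})$, and unwinding $(\phi^0)^{-1}$ and $\phi^{-1}$ reproduces exactly the formulas in the statement. Since $V$ maps each $\ell^2(\SG[-1]{E}^*\omega)$ onto $\ell^2((\SG{E^{\op}})^*\phi^0(\omega))$, it intertwines the two fibrewise Calkin representations, so conjugation by $V$ descends to the claimed isomorphism $C^*(\SG[-1]{E}) \cong C^*(\SG{E^{\op}})$.

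The heart of the theorem is part (ii). By Remark~\ref{rmk:special cases}, for $l = 0$ we have $r_0 = s_0 = \id_{\SG{E}^0}$, so $\SG[0]{E}$ is a disjoint union of single loops, one at each vertex $\omega \in \SG{E}^0$. Consequently $\SG[0]{E}^*\omega$ contains exactly one path of each length, giving $\ell^2(\SG[0]{E}^*\omega) \cong \ell^2(\NN)$ and an overall identification $\ell^2(\SG[0]{E}^*) \cong \ell^2(\SG{E}^0) \otimes \ell^2(\NN)$. Under this identification I would compute from Lemma~\ref{lem:psil,rhol defs} that $\rho_0(a) = M_a \otimes 1$, where $M_a$ denotes multiplication by $a$ on $\ell^2(\SG{E}^0)$, and that $\psi_0(\xi) = M_{\xi \circ r_0^{-1}} \otimes S$, where $S$ is the unilateral shift on $\ell^2(\NN)$. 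Writing $\pi_0$ for the standard faithful representation of $\Tt$ on $\ell^2(\NN)$ with $\pi_0(S) = S$, the representation $M \otimes \pi_0$ of $C_0(\SG{E}^0) \otimes \Tt$ is faithful, and its image is generated by the operators $M_a \otimes 1$ and $M_\eta \otimes S$; the plan is to identify this image with $\Tt C^*(\SG[0]{E})$ and read off the stated isomorphism, carrying $\rho_0(a)$ to $a \otimes 1$ and $\psi_0(\xi)$ to $(\xi \circ r_0^{-1}) \otimes S$.

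The main obstacle is to show that these generators produce exactly $C_0(\SG{E}^0) \otimes \Tt$, and that the descent yields precisely $C_0(\SG{E}^0) \otimes C(\TT)$. For surjectivity onto the tensor product one checks that the $C^*$-algebra generated by $\{a \otimes 1\} \cup \{\eta \otimes S\}$ contains every $a \otimes S^n (S^*)^m$, using that finite products $\eta_1 \cdots \eta_k$ are dense in $C_0(\SG{E}^0)$ (an approximate-identity, or Cohen factorisation, argument); since $\clsp\{a \otimes S^n (S^*)^m\} = C_0(\SG{E}^0) \otimes \Tt$, faithfulness of $M \otimes \pi_0$ then gives $\Tt C^*(\SG[0]{E}) \cong C_0(\SG{E}^0) \otimes \Tt$. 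For the descent, the fibre of $C_0(\SG{E}^0) \otimes \Tt$ over $\omega$ is $\Tt$ via evaluation $a \otimes b \mapsto a(\omega)b$, and the defining quotient of $\Bb(\ell^2(\SG[0]{E}^*\omega))$ by $\Kk(\ell^2(\SG[0]{E}^*\omega))$ restricts on this fibre to the canonical quotient $\Tt \to \Tt/\Kk \cong C(\TT)$ sending $S$ to the unitary generator. The delicate point is to verify that the fibrewise quotient maps assemble to $\id \otimes q \colon C_0(\SG{E}^0) \otimes \Tt \to C_0(\SG{E}^0) \otimes C(\TT)$ and that the resulting map into $\bigoplus_\omega \Qq(\ell^2(\SG[0]{E}^*\omega))$ is injective with image $C^*(\SG[0]{E})$; granting this, the isomorphism $C^*(\SG[0]{E}) \cong C_0(\SG{E}^0) \otimes C(\TT)$ follows.
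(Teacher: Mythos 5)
Your proposal follows essentially the same route as the paper's proof: parts (i) and (iii) via the unitaries induced by the identifications of Remark~\ref{rmk:special cases} (the paper, slightly more carefully than your ``literally equal,'' induces a unitary $U_1$ from the edge-space homeomorphism rather than invoking the identity map, since $\SG{E}^n$ is defined as a quotient of $E^{n+1}\times[0,1]$ while $\SG[1]{E}^n$ consists of strings of edges), and part (ii) via the same unitary $h_{[\mu,t]} \mapsto h_{[\mu_1,t]} \otimes h_{|\mu|-1}$ identifying $\ell^2(\SG[0]{E}^*)$ with $\ell^2(\SG{E}^0)\otimes\ell^2(\NN)$. The ``delicate point'' you flag at the end is handled in the paper in one line and is routine: conjugation by $U_0$ carries the kernel of the quotient map $\Tt C^*(\SG[0]{E}) \to C^*(\SG[0]{E})$ onto $C_0(\SG{E}^0)\otimes\Kk(\ell^2(\NN))$, so the descended isomorphism is $(C_0(\SG{E}^0)\otimes\Tt)/(C_0(\SG{E}^0)\otimes\Kk) \cong C_0(\SG{E}^0)\otimes C(\TT)$, which is exactly your fibrewise quotient assembled all at once.
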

\begin{proof}
The proofs of the first and third statements are almost identical. For the first
statement, observe that the identification $\SG[1]{E}^1 \cong \SG{E}^1$ of
Remark~\ref{rmk:special cases} intertwines $r,s$ with $r_1$ and $s_1$, and so induces a
homeomorphism $\SG[1]{E}^* \cong \SG{E}^*$, which induces a unitary $U_1 :
\ell^2(\SG[1]{E}^*) \cong \ell^2(\SG{E}^*)$. This unitary intertwines $\rho$ and $\rho_1$
and intertwines $\psi$ and $\psi_1$, and so $\Ad_{U_1}$ restricts to the desired
isomorphism $\Tt C^*(\SG[1]{E}) \cong \Tt C^*(\SG{E})$. Since $U_1$ carries each
$\ell^2(\SG[1]{E}^* \omega)$ to $\ell^2(\SG{E}^* \omega)$, the map $\Ad_{U_1}$ carries
each $\Kk(\ell^2(\SG[1]{E}^* \omega))$ to $\Kk(\ell^2(\SG{E}^* \omega))$, and so descends
to an isomorphism $C^*(\SG[1]{E}) \cong C^*(\SG{E})$ as claimed. For the third statement,
we argue exactly the same way, using the homeomorphism $\SG[-1]{E}^1 \cong
(\SG{E^{\op}})^1$ of Remark~\ref{rmk:special cases} to induce a unitary $U_{-1} :
\ell^2(\SG[-1]{E}^*) \cong \ell^2((\SG{E^{\op}})^*)$.

For the second statement, first identify $\ell^2(\SG[0]{E}^*)$ with $\ell^2(\SG{E}^0)
\otimes \ell^2(\NN)$ by the unitary $U_0$ that carries $h_{[\mu,t]}$ to $h_{[\mu_1, t]}
\otimes h_{|\mu|-1}$. Let $\pi : C_0(\SG{E}^0) \to \Bb(\ell^2(\SG{E}^0))$ be the
canonical faithful representation $\pi(a) h_\omega = a(\omega)h_\omega$. Direct
calculation shows that $U_0 \rho_0(a) U^*_0 = \pi(a) \otimes \id$, and $U_0 \psi_0(\xi)
U^*_0 = \pi(\xi \circ r_0^{-1}) \otimes S$ for $a \in C_0(\SG{E}^0)$ and $\xi \in
C_c(\SG[0]{E}^1)$. So $\Ad_{U_0}$ carries $\Tt C^*(\SG[0]{E})$ onto the subalgebra of
$\Bb(\ell^2(\SG{E}^0) \otimes \ell^2(\NN))$ generated by products of the form $(\pi(a)
\otimes \id)(\id \otimes S)$. This is precisely the tensor product $C_0(\SG{E}^0) \otimes
\Tt$. Moreover, $\Ad_{U_0}$ carries each $\Kk(\ell^2(\SG[0]{E}^* \omega))$ to $\Kk(\CC
h_{\omega} \otimes \ell^2(\NN))$, and so it carries the kernel of the quotient map $\Tt
C^*(\SG[0]{E}) \to C^*(\SG[0]{E})$ to $C_0(\SG{E}^0) \otimes \Kk(\ell^2(\NN)) \lhd
C_0(\SG{E}^0) \otimes \Tt$. It therefore descends to an isomorphism
\[
C^*(\SG[0]{E}) \cong (C_0(\SG{E}^0) \otimes \Tt) / (C_0(\SG{E}^0) \otimes \Kk)
     \cong C_0(\SG{E}^0) \otimes (\Tt/\Kk)
     \cong C_0(\SG{E}^0) \otimes C(\TT).\qedhere
\]
\end{proof}

The remainder of the section is devoted to reducing the study of the $C^*$-algebras $\Tt
C^*(\SG[l]{E})$ and $C^*(\SG[l]{E})$ for rational values of $l$ to the study of the
$C^*$-algebras $\Tt C^*(\SG[m]{F})$ and $C^*(\SG[m]{F})$ for nonnegative integers $m$ and
appropriate graphs $F$. We will analyse these latter in the next two sections.

Our first step is to show that we need only consider $l \ge 0$ by showing that $\Tt
C^*(\SG[l]{E}) \cong \Tt C^*(\SG[|l|]{E^{\op}})$ for $l \in (-\infty, 0)$.

\begin{lem}\label{lem:opposite}
Let $E$ be a locally finite graph with no sources. Fix $l \in (-\infty, 0)$. There is an
isomorphism $\Tt C^*(\SG[l]{E}) \cong \Tt C^*(\SG[|l|]{E^{\op}})$ that carries
$\rho_{l}(a)$ to $\rho_{|l|}\big([e^{\op}, t] \mapsto a([e, 1-t])\big)$ for $a \in
C_0(\SG{E}^0)$ and carries $\psi_{l}(\xi)$ to $\psi_{|l|}\big([\mu^{\op}, t] \mapsto
\xi([\mu, |\mu|-t])\big)$ for $\xi \in C_c(\SG[|l|]{E}^{1})$. This isomorphism descends
to an isomorphism $C^*(\SG[l]{E}) \cong C^*(\SG[|l|]{E^{\op}})$.
\end{lem}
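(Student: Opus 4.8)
The plan is to adapt the strategy used for the first and third parts of Theorem~\ref{thm:C* at 1,0,-1}: I would construct an explicit \emph{reflection} homeomorphism between the path spaces of $\SG[l]{E}$ and $\SG[|l|]{E^{\op}}$, promote it to a unitary between the associated $\ell^2$-spaces, and then check that conjugation by this unitary intertwines the generating representations. For a path $\mu = \mu_1 \cdots \mu_n \in E^n$, write $\mu^{\op} := \mu_n^{\op} \cdots \mu_1^{\op} \in (E^{\op})^n$ for its reversal in the opposite graph. The candidate map at the level of representatives is $(\mu, s) \mapsto (\mu^{\op}, |\mu| - s)$. First I would check that this descends to a homeomorphism $\Phi^0 \colon \SG{E}^0 \to \SG{E^{\op}}^0$ of vertex spaces (using the description of $\SG{E}^0$ as a quotient of $\bigsqcup_n E^n \times [0,n]$) and to a homeomorphism $\Phi^1 \colon \SG[l]{E}^1 \to \SG[|l|]{E^{\op}}^1$ of edge spaces; for $l = -1$ this recovers the maps of Remark~\ref{rmk:special cases}. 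Continuity and invertibility are routine, since both maps are induced by manifestly continuous involutive maps on the pre-quotient spaces, so the real content is well-definedness on equivalence classes.

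The crux of well-definedness is the arithmetic identity that, writing $s' = |\mu| - s$ and using $l \le 0$,
\[
\mu^{\op}\big(\lfloor s'\rfloor,\, \lceil s' + |l|\rceil\big) = \big(\mu(\lfloor s + l\rfloor,\, \lceil s \rceil)\big)^{\op},
\]
which follows from $\lfloor |\mu| - s\rfloor = |\mu| - \lceil s\rceil$ and $\lceil |\mu| - s + |l|\rceil = |\mu| - \lfloor s + l\rfloor$, together with the fact that reversing a subpath and passing to the opposite graph commutes with equality of subpaths; one also checks $\{s\} = \{t\}$ if and only if $\{s'\} = \{t'\}$. Since $\approx_l$ for $l \le 0$ is defined exactly by equality of these subpaths and of fractional parts, this shows $(\mu, s) \approx_l (\nu, t)$ if and only if $(\mu^{\op}, |\mu| - s) \approx_{|l|} (\nu^{\op}, |\nu| - t)$, even when $|\mu| \ne |\nu|$, giving $\Phi^1$; the same identity with $l = 0$ gives $\Phi^0$. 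I would then verify $\Phi^0 \circ r_l = r_{|l|} \circ \Phi^1$ and $\Phi^0 \circ s_l = s_{|l|} \circ \Phi^1$: reversing the path already absorbs the reversal of orientation, so ranges go to ranges and sources to sources. Concretely, the range and source of $[\mu, s]_l$ sit at times $s$ and $s+l$, and under $\Phi^1$ these map to the vertices at times $|\mu| - s$ and $|\mu| - s - l = |\mu| - s + |l|$, which are precisely the range and source of $[\mu^{\op}, |\mu| - s]_{|l|}$.

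Having established this, $\Phi^0$ and $\Phi^1$ assemble into a length-, range-, and source-preserving isomorphism of path categories $\SG[l]{E}^* \cong \SG[|l|]{E^{\op}}^*$, hence into a unitary $U \colon \ell^2(\SG[l]{E}^*) \to \ell^2(\SG[|l|]{E^{\op}}^*)$ with $U h_\alpha = h_{\Phi(\alpha)}$. A direct computation on basis vectors, parallel to the one in Theorem~\ref{thm:C* at 1,0,-1}, then shows that $\Ad_U$ carries $\rho_l(a)$ to $\rho_{|l|}\big([e^{\op}, t] \mapsto a([e, 1-t])\big)$ and $\psi_l(\xi)$ to $\psi_{|l|}\big([\mu^{\op}, t] \mapsto \xi([\mu, |\mu| - t])\big)$, using that the transported functions again lie in $C_0(\SG{E^{\op}}^0)$ and $C_c(\SG[|l|]{E^{\op}}^1)$ because $\Phi^0, \Phi^1$ are homeomorphisms. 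Thus $\Ad_U$ restricts to the claimed isomorphism $\Tt C^*(\SG[l]{E}) \cong \Tt C^*(\SG[|l|]{E^{\op}})$.

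For the descent to Cuntz--Krieger algebras, I would note that, because $\Phi$ preserves sources, $U$ carries each invariant subspace $\ell^2(\SG[l]{E}^* \omega)$ onto $\ell^2(\SG[|l|]{E^{\op}}^* \Phi^0(\omega))$, and hence $\Kk(\ell^2(\SG[l]{E}^* \omega))$ onto $\Kk(\ell^2(\SG[|l|]{E^{\op}}^* \Phi^0(\omega)))$. Reindexing the summands of $\bigoplus_{\omega} \Qq(\ell^2(\SG[l]{E}^* \omega))$ along the bijection $\omega \mapsto \Phi^0(\omega)$, the fibrewise isomorphisms induced by $\Ad_U$ on Calkin algebras assemble into an isomorphism carrying $\tilde\rho_l, \tilde\psi_l$ to $\tilde\rho_{|l|}, \tilde\psi_{|l|}$ of the transported functions, which restricts to $C^*(\SG[l]{E}) \cong C^*(\SG[|l|]{E^{\op}})$, exactly as in the proof of the third part of Theorem~\ref{thm:C* at 1,0,-1}. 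I expect the main obstacle to be the very first step: confirming that the reflection genuinely matches $\approx_l$ with $\approx_{|l|}$, since the floor/ceiling bookkeeping is sign-sensitive and must be handled carefully at integer times and for representatives of differing lengths. Everything downstream is a routine transcription of arguments already carried out in the excerpt.
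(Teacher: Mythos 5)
Your proposal is correct and follows essentially the same route as the paper's proof, which likewise defines the reflection $[\mu,t]\mapsto[\mu^{\op},|\mu|-t]$ as a range- and source-intertwining homeomorphism of path spaces, conjugates by the induced unitary, and descends to the Cuntz--Krieger quotients via the invariance of the subspaces $\ell^2(\SG[l]{E}^*\omega)$. The floor/ceiling bookkeeping you carry out to match $\approx_l$ with $\approx_{|l|}$ is exactly the detail the paper leaves implicit, and your computation of it is right.
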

\begin{proof}
This follows the argument of Theorem~\ref{thm:C* at 1,0,-1}(3): The map $[\mu, t] \mapsto
[\mu^{\op}, |\mu| - t]$ defines homeomorphism $\tau_l$ of $\SG[l]{E}^*$ onto
$(\SG[|l|]{E^{\op}})^*$ that intertwines the range and source maps. This homeomorphism
determines a unitary $U : \ell^2(\SG[l]{E}^*) \to \ell^2(\SG[|l|]{E^{\op}}$, conjugation
by which implements an isomorphism $\Tt C^*(\SG[l]{E}) \cong \Tt C^*(\SG[|l|]{E^{\op}})$
that satisfies the desired formulae. Since $U(\ell^2(\SG[l]{E}^* \omega)) =
\ell^2(\SG[|l|]{E^{\op}}\tau_l(\omega))$ for each $\omega \in \SG{E}^0$, we have
\[
U \Kk(\ell^2(\SG[l]{E}^* \omega)) U^* = \Kk(\ell^2(\SG[|l|]{E^{\op}}\tau_l(\omega)))
\]
for each $\omega$. Hence $\Ad_U$ descends to the desired isomorphism $C^*(\SG[l]{E})
\cong C^*(\SG[|l|]{E^{\op}})$.
\end{proof}

In the remainder of the section we must show that if $m \in \NN$ and $n \in \NN \setminus
\{0\}$, then there is a graph $F$ such that $\Tt C^*(\SG[\frac{m}{n}]{E}) \cong \Tt
C^*(\SG[m]{F})$ and similarly at the level of Cuntz--Krieger algebras.

Given a graph $E$ and an integer $n \ge 1$, the $n$\textsuperscript{th} \emph{delay} of
$E$ is the graph $D_n(E)$ described as follows. We set
\begin{align*}
D_n(E)^0 &:= E^0 \sqcup \{w_{e,j} : e \in E^1, 1 \le j \le n-1\}\qquad\text{ and}\\
D_n(E)^1 &:= \{f_{e, j} : e \in E^1, 1 \le j \le n\}.
\end{align*}
The range and source maps are given by
\[
r(f_{e,j}) = \begin{cases}
        w_{e, j-1} &\text{ if $j \ge 2$}\\
        r(e) &\text{ if $j = 1$}
    \end{cases}
    \qquad\text{ and }\qquad
s(f_{e,j}) = \begin{cases}
        w_{e, j} &\text{ if $j < n$}\\
        s(e) &\text{ if $j = n$.}
    \end{cases}
\]
In words, $D_n(E)$ is the graph obtained by inserting $n-1$ new vertices along each edge
of $E$. The example below pictures a graph $E$ on the left and the delayed graph $D_3(E)$
on the right.
\[
\begin{tikzpicture}[>=stealth]
    \node[circle, fill=black, inner sep=1.5pt] (v0) at (0,0) {};
    \node[anchor=south] at (v0.north) {$v$};
    \draw[->] (v0) .. controls +(1.5, 2) and +(1.5, -2) .. (v0) node[pos=0.5, right] {$e$};
    \draw[->] (v0) .. controls +(-1.5, 2) and +(-1.5, -2) .. (v0) node[pos=0.5, left] {$g$};
    \node at (0, -2) {$E$};
\begin{scope}[xshift=7cm]
    \node[circle, fill=black, inner sep=1.5pt] (v) at (0,0) {};
    \node[anchor=south] at (v.north) {\small$v$};
    \node[circle, fill=black, inner sep=1.5pt] (e1) at (2, 1) {};
    \node[anchor=south west, inner sep=0pt] at (e1.north east) {\small$w_{e,2}$};
    \node[circle, fill=black, inner sep=1.5pt] (e2) at (2, -1) {};
    \node[anchor=north west, inner sep=0pt] at (e2.south east) {\small$w_{e,1}$};
    \node[circle, fill=black, inner sep=1.5pt] (f1) at (-2, 1) {};
    \node[anchor=south east, inner sep=0pt] at (f1.north west) {\small$w_{g,2}$};
    \node[circle, fill=black, inner sep=1.5pt] (f2) at (-2, -1) {};
    \node[anchor=north east, inner sep=0pt] at (f2.south west) {\small$w_{g,1}$};
    \draw[->, in=150, out=60] (v) to node[pos=0.7, anchor=south east, inner sep=1pt] {\small$f_{e, 3}$} (e1);
    \draw[->, in=40, out=320] (e1) to node[pos=0.5, right, inner sep=1pt] {\small$f_{e, 2}$} (e2);
    \draw[->, in=300, out=210] (e2) to node[pos=0.3, anchor=north east, inner sep=0pt] {\small$f_{e, 1}$} (v);
    \draw[->, in=30, out=120] (v) to node[pos=0.7, anchor=south west, inner sep=1pt] {\small$f_{g, 3}$}  (f1);
    \draw[->, in=140, out=220] (f1) to node[pos=0.5, left, inner sep=1pt] {\small$f_{g, 2}$}  (f2);
    \draw[->, in=240, out=330] (f2) to node[pos=0.3, anchor=north west, inner sep=1pt] {\small$f_{g, 1}$}  (v);
    \node at (0, -2) {$D_3(E)$};
\end{scope}
\end{tikzpicture}
\]

We will prove that for $m \ge 0$ and $n > 0$, the graph $\SG[\frac{m}{n}]{E}$ is
isomorphic to $\SG[m](D_n(E))$.

Observe that there is a range and source preserving map $D_n^* : E^* \to \bigcup_{k =
0}^\infty D_n(E)^{kn}$ given by
\[
D_n^*(e_1 \dots e_k) = f_{e_1,1}\dots f_{e_1,n}\,f_{e_2,1} \cdots f_{e_2,n} \cdots f_{e_k,1} \cdots f_{e_k,n}.
\]

\begin{lem}\label{lem:rational SG identification}
Let $E$ be a locally finite graph with no sources, and fix integers $n \ge 1$ and $m \ge
0$. There are homeomorphisms $\SG{D_n}^0 : \SG[\frac{m}{n}]{E}^0 \to \SG[m]{D_n(E)}^0$
and $\SG{D_n}^1 : \SG[\frac{m}{n}]{E}^1 \to \SG[m]{D_n(E)}^1$ such that for $j \in \{1,
\dots, n\}$ and $t \in [0,1]$,
\[
\SG{D_n}^0\Big(\Big[e, \frac{j - 1 + t}{n}\Big]\Big) = [f_{e, j}, t]
\]
for all $e \in E^1$, and
\[
\SG{D_n}^1\Big(\Big[\mu, \frac{j-1+t}{n}\Big]\Big) = [D_n^*(\mu), j-1+t]
\]
for all $\mu \in E^*$ such that $0 \le \frac{j-1+t}{n}, \frac{j-1+m+t}{n} \le |\mu|$. We
have $\SG{D_n}^0(s(\alpha)) = s(\SG{D_n}^1(\alpha))$ and $\SG{D_n}^0(r(\alpha)) =
r(\SG{D_n}^1(\alpha))$ for all $\alpha \in \SG[m]{D_n(E)}^1$.
\end{lem}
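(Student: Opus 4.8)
The plan is to reduce the whole statement to the single fact that both homeomorphisms scale the time coordinate by the factor $n$ (through $D_n^*$), together with the observation that the range and source maps of $\SG[l]{E}$ merely read off that coordinate. First I would record the structure maps in a convenient form. Fix an edge $\alpha = [\mu, \tau]_{m/n} \in \SG[\frac{m}{n}]{E}^1$ and write $\tau = \frac{j-1+t}{n}$ with $t \in [0,1)$ as in the parametrisation of the statement, so that the edge homeomorphism reads $\SG{D_n}^1(\alpha) = [D_n^*(\mu), n\tau]_m$ (using $n\tau = j-1+t$). The definitions of $r_{m/n}$ and $s_{m/n}$ give $r(\alpha) = [\mu, \tau]$ and $s(\alpha) = [\mu, \tau + \frac{m}{n}]$ as points of $\SG[\frac{m}{n}]{E}^0 = \SG{E}^0$, whereas on the target side $r([\nu, \kappa]_m) = [\nu, \kappa]$ and $s([\nu, \kappa]_m) = [\nu, \kappa + m]$ in $\SG[m]{D_n(E)}^0$. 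Thus, once the two homeomorphisms are in hand, the statement becomes a bookkeeping identity about the time coordinate.

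The key intermediate step is to upgrade the single-edge description of $\SG{D_n}^0$ to the global formula
\[
\SG{D_n}^0([\mu, \tau]) = [D_n^*(\mu), n\tau] \qquad\text{for all } \mu \in E^* \text{ and } \tau \in [0, |\mu|].
\]
Since the statement specifies $\SG{D_n}^0$ only on single edges, I would derive this by reducing $[\mu, \tau]$ to a single-edge representative: in the realisation $\SG{E}^0$ one has $[\mu, \tau] = [\mu_i, \tau - (i-1)]$ whenever $\tau \in [i-1, i]$, and $D_n^*$ is concatenative, so that the $(n(i-1)+j')$-th edge of $D_n^*(\mu)$ is $f_{\mu_i, j'}$. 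Writing $\tau - (i-1) = \frac{j'-1+t'}{n}$ with $j' \in \{1, \dots, n\}$ and $t' \in [0,1)$, the single-edge formula gives $\SG{D_n}^0([\mu_i, \tau-(i-1)]) = [f_{\mu_i, j'}, t']$; and since $n\tau = n(i-1) + (j'-1) + t'$ lands in the time interval occupied by the edge $f_{\mu_i, j'}$ of $D_n^*(\mu)$, at local time $t'$, this point is exactly $[D_n^*(\mu), n\tau]$.

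Granting the global formula, both identities drop out in a single line each. For the range,
\[
\SG{D_n}^0(r(\alpha)) = \SG{D_n}^0([\mu, \tau]) = [D_n^*(\mu), n\tau] = r\big([D_n^*(\mu), n\tau]_m\big) = r(\SG{D_n}^1(\alpha)).
\]
For the source, using $n(\tau + \frac{m}{n}) = n\tau + m$,
\[
\SG{D_n}^0(s(\alpha)) = \SG{D_n}^0\big([\mu, \tau + \tfrac{m}{n}]\big) = [D_n^*(\mu), n\tau + m] = s\big([D_n^*(\mu), n\tau]_m\big) = s(\SG{D_n}^1(\alpha)).
\]

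I expect the only delicate point to be the integer values of $n\tau$, where the target point lands on a vertex of $D_n(E)$ — an original vertex of $E$ or an inserted vertex $w_{e,j}$ — and the class $[D_n^*(\mu), \cdot]$ admits several $\approx$-representatives. Because the global vertex map and the edge homeomorphism have already been shown to be well defined in the earlier parts of the lemma, both sides of each identity collapse to the same vertex independently of the representative chosen; nonetheless I would verify this boundary case explicitly, since the indexing correspondence $j \leftrightarrow f_{e,j}$ between the $j$-th subinterval and the $j$-th inserted edge is precisely where an off-by-one slip could hide.
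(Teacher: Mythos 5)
Your verification of the final claim (that $\SG{D_n}^0$ intertwines the range and source maps with $\SG{D_n}^1$) is correct, and your route to it --- extending the single-edge formula to the global identity $\SG{D_n}^0([\mu,\tau]) = [D_n^*(\mu), n\tau]$ and then observing that $r$ and $s$ merely shift the time coordinate by $0$ and by $l$ respectively --- is exactly what the paper dismisses in one sentence as ``a simple comparison of formulas.'' The problem is that this is the easy part of the lemma. The substantive content is the \emph{existence} of the two homeomorphisms, and your proposal assumes it throughout (``once the two homeomorphisms are in hand,'' ``the global vertex map and the edge homeomorphism have already been shown to be well defined in the earlier parts of the lemma''). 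There are no earlier parts: the displayed formulas are part of what must be proved, namely that they descend to well-defined maps on the quotients $\SG[\frac{m}{n}]{E}^0$ and $\SG[\frac{m}{n}]{E}^1$ (one must check compatibility with the gluings $(e,0)\sim r(e)$ and $(e,1)\sim s(e)$, and with $\approx_{m/n}$ versus $\approx_m$), and that these maps are bijective, continuous, and --- crucially --- open.

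Openness is the genuinely delicate step and cannot be waved away. The spaces involved are quotients, $E$ is only locally finite so $\SG{E}^0$ need not be compact, and Lemma~\ref{lem:cts structure} shows that in precisely this setting continuous bijections between such quotients routinely fail to be open; so ``continuous bijection'' does not upgrade itself to ``homeomorphism'' for free. The paper's proof spends essentially all of its effort here: it exhibits explicit neighbourhood bases at the three kinds of points of $\SG[\frac{m}{n}]{E}^0$ --- interior points $[e,\frac{j-1+t}{n}]$ with $t\in(0,1)$, points $[e,\frac{j-1}{n}]$ corresponding to the inserted vertices $w_{e,j-1}$ of $D_n(E)$, and the original vertices $[v]$ --- and checks that $\SG{D_n}^0$ carries each basic neighbourhood onto an open set (the last case is where the no-sources hypothesis and the gluing of all edges of $D_n(E)$ adjacent to $v$ enter). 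Your closing remark about the ``delicate boundary case'' at integer values of $n\tau$ gestures at the right place, but only in the context of the range/source identity, where well-definedness already disposes of it; the real issue at those points is openness of the map itself, which your proposal never addresses.
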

\begin{proof}
Define $\overline{\SG{D_n}}^0 : E^0 \sqcup (E^1 \times [0,1]) \to \SG{D_n(E)}^0 =
\SG[m]{D_n(E)}^0$ by $\overline{\SG{D_n}}^0(v) := [v]$ for $v \in E^0$, and
$\overline{\SG{D_n}}^0\Big(\Big(e, \frac{j - 1 + t}{n}\Big)\Big) = [f_{e, j}, t]$ for $e
\in E^1$, $1 \le j \le n$ and $t \in [0,1]$. Then
\[
\overline{\SG{D_n}}^0((e,0))
    = [f_{e, 0}, 0] = [r(f_{e,0})] = [r(e)] = \overline{\SG{D_n}}^0(r(e)),
\]
and similarly, $\overline{\SG{D_n}}^0((e,1)) = \overline{\SG{D_n}}^0(s(e))$, so
$\overline{\SG{D_n}}^0$ descends to a map $\SG{D_n}^0 : \SG[\frac{m}{n}]{E}^0 \to
\SG[m]{D_n(E)}^0$. Since $\overline{\SG{D_n}}^0$ is continuous, so is $\SG{D_n}^0$. It is
routine using Lemma~\ref{lem:normal form} to see that $\SG{D_n}^0$ is injective, and it
is clearly surjective. To see that it is open, fix $\omega \in \SG{E}^0$. If $\omega =
[e, \frac{j-1+t}{n}]$ with $t \in (0,1)$, then the sets $\{[e, \frac{j-1+t+s}{n}] : s \in
(-\delta, \delta)\}$ indexed by sufficiently small $\delta$ form a neighbourhood basis at
$\omega$ and are carried to the open sets $\{[f_{e, j}, t + s] : s \in (-\delta,
\delta)\}$. If $\omega = [e, \frac{j-1}{n}]$ for some $1 < j \le n$, then the sets $\{[e,
\frac{j-1+s}{n}] : s \in (-\delta, \delta)\}$ indexed by $\delta \in (0, \frac{1}{2})$
form a neighbourhood base at $\omega$ and are carried to the open sets $\{[f_{e, j-1}, t]
: t > 1-\delta\} \cup \{[f_{e, j}, t] : t < \delta\}$. Finally, if $\omega = [v]$ for $v
\in E^0$, then the sets $\{[e, s] : e \in E^1v : s > 1-\delta\} \cup \{[e,s] : e \in vE^1
: s < \delta\}$ indexed by $\delta < \frac{1}{n}$ are a neighbourhood base at $E$, and
are carried to the open sets $\{[f_{e, n} , t] : e \in E^1 v, t > 1-n\delta\} \cup
\{[f_{e,1}, t] : e \in vE^1, t < n\delta\}$. Therefore $\SG{D_n}^0$ is an open map, and
therefore a homeomorphism.

A similar argument shows that $\SG{D_n}^1$ exists and is a homeomorphism, and a simple
comparison of formulas shows that $\SG{D_n}^0$ and $\SG{D_n}^1$ are compatible with the
range and source maps as claimed.
\end{proof}

\begin{cor}\label{cor:rational C*-identification}
Let $E$ be a locally finite graph with no sources. Fix integers $n \ge 1$ and $m \ge 0$.
There is a unitary $U_{m,n} : \ell^2(\SG[\frac{m}{n}]{E}^*) \to \ell^2(\SG[m]{D_n(E)}^*)$
such that
\[
U_{m,n} \rho_{m/n}(a) U_{m,n}^* = \rho_m(a \circ \SG{D_n}^0)\quad\text{ for $a \in C_0(\SG[m]{D_n(E)}^0)$}
\]
and
\[
U_{m,n} \psi_{m/n}(\xi) U_{m,n}^* = \psi_m(\xi \circ \SG{D_n}^0)\quad\text{ for $\xi \in C_c(\SG[m]{D_n(E)}^1)$}.
\]
Conjugation by $U_{m,n}$ restricts to an isomorphism $\Theta_{m,n} : \Tt
C^*(\SG[\frac{m}{n}]{E}) \to \Tt C^*(\SG[m]{D_n(E)})$. This $\Theta_{m,n}$ induces an
isomorphism $\widetilde{\Theta}_{m,n} : C^*(\SG[\frac{m}{n}]{E}) \to C^*(\SG[m]{D_n(E)})$
such that
\[
\widetilde{\Theta}_{m,n}(\tilde\rho_{m/n}(a)) = \tilde\rho_m(a \circ \SG{D_n}^0)\text{ for $a \in C_0(\SG[m]{D_n(E)}^0)$}
\]
and
\[
\widetilde{\Theta}_{m,n}(\tilde\psi_{m/n}(\xi)) = \tilde\psi_m(\xi \circ \SG{D_n}^0)\text{ for $\xi \in C_c(\SG[m]{D_n(E)}^1)$}.
\]
\end{cor}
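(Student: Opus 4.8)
The plan is to transport the entire structure along the homeomorphisms of Lemma~\ref{lem:rational SG identification}, so that the corollary becomes bookkeeping once those homeomorphisms are in hand. First I would promote the pair $(\SG{D_n}^0, \SG{D_n}^1)$ to a single map on path spaces: since $\SG{D_n}^1$ is a homeomorphism intertwining the range and source maps, applying it coordinatewise to a composable string $\alpha_1 \cdots \alpha_k \in \SG[\frac{m}{n}]{E}^*$ produces a composable string $\SG{D_n}^1(\alpha_1) \cdots \SG{D_n}^1(\alpha_k) \in \SG[m]{D_n(E)}^*$, and this yields a bijection $\SG{D_n}^* : \SG[\frac{m}{n}]{E}^* \to \SG[m]{D_n(E)}^*$ that agrees with $\SG{D_n}^0$ on vertices, satisfies $r(\SG{D_n}^*(\alpha)) = \SG{D_n}^0(r(\alpha))$ and $s(\SG{D_n}^*(\alpha)) = \SG{D_n}^0(s(\alpha))$, and respects concatenation in the sense that $\SG{D_n}^*(\beta\alpha) = \SG{D_n}^1(\beta)\,\SG{D_n}^*(\alpha)$ whenever $\beta \in \SG[\frac{m}{n}]{E}^1$ and $\beta\alpha$ is defined. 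Because $\SG{D_n}^*$ is a bijection of the index sets of the canonical bases, the formula $U_{m,n} h_\alpha := h_{\SG{D_n}^*(\alpha)}$ determines a unitary $U_{m,n} : \ell^2(\SG[\frac{m}{n}]{E}^*) \to \ell^2(\SG[m]{D_n(E)}^*)$.

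Next I would verify the two intertwining identities by a direct computation on basis vectors. For $\rho$, I would compute $U_{m,n}\rho_{m/n}(a)U_{m,n}^* h_\beta$ for $\beta \in \SG[m]{D_n(E)}^*$ and use $r(\SG{D_n}^*(\alpha)) = \SG{D_n}^0(r(\alpha))$ to see that it equals $\rho_m$ of the function $a$ transported along $\SG{D_n}^0$. For $\psi$, I would combine the concatenation identity with the fact that $\SG{D_n}^1$ restricts to a bijection from $\SG[\frac{m}{n}]{E}^1 r(\alpha)$ onto $\SG[m]{D_n(E)}^1 r(\SG{D_n}^*(\alpha))$; reindexing the defining sum of Lemma~\ref{lem:psil,rhol defs} along this bijection converts $\psi_{m/n}(\xi)$ into $\psi_m$ of the function $\xi$ transported along the edge homeomorphism $\SG{D_n}^1$, which lies in $C_c(\SG[m]{D_n(E)}^1)$ precisely because $\SG{D_n}^1$ is a homeomorphism. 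Since conjugation by $U_{m,n}$ is a $*$-isomorphism $\Bb(\ell^2(\SG[\frac{m}{n}]{E}^*)) \to \Bb(\ell^2(\SG[m]{D_n(E)}^*))$ carrying the generators $\rho_{m/n}(a)$, $\psi_{m/n}(\xi)$ of $\Tt C^*(\SG[\frac{m}{n}]{E})$ to the generators $\rho_m(\cdot)$, $\psi_m(\cdot)$ of $\Tt C^*(\SG[m]{D_n(E)})$, it restricts to the asserted isomorphism $\Theta_{m,n}$.

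Finally I would pass to the Cuntz--Krieger algebras. The essential observation is that $\SG{D_n}^*$ preserves sources, so $U_{m,n}$ maps $\ell^2(\SG[\frac{m}{n}]{E}^*\omega)$ \emph{onto} $\ell^2(\SG[m]{D_n(E)}^*\,\SG{D_n}^0(\omega))$ for every $\omega \in \SG{E}^0$. Consequently $\Ad_{U_{m,n}}$ carries $\Kk(\ell^2(\SG[\frac{m}{n}]{E}^*\omega))$ onto $\Kk(\ell^2(\SG[m]{D_n(E)}^*\,\SG{D_n}^0(\omega)))$, and since $\SG{D_n}^0$ is a bijection of vertex sets it induces an isomorphism $\bigoplus_{\omega}\Qq(\ell^2(\SG[\frac{m}{n}]{E}^*\omega)) \to \bigoplus_{\omega'}\Qq(\ell^2(\SG[m]{D_n(E)}^*\omega'))$ intertwining the two quotient maps from the respective Toeplitz algebras. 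As $C^*(\SG[l]{E})$ is by definition the image of $\Tt C^*(\SG[l]{E})$ in this direct sum of Calkin algebras, $\Theta_{m,n}$ descends to the isomorphism $\widetilde\Theta_{m,n}$, and the formulas for $\widetilde\Theta_{m,n}$ on $\tilde\rho_{m/n}$ and $\tilde\psi_{m/n}$ read off immediately from those already established for $\Theta_{m,n}$ on $\rho_{m/n}$ and $\psi_{m/n}$.

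With Lemma~\ref{lem:rational SG identification} in hand the argument is essentially all bookkeeping; the two points that genuinely need care are checking that $\SG{D_n}^*$ respects concatenation (so that $U_{m,n}$ intertwines $\psi$ and not merely $\rho$) and that it respects the decomposition of the path space into source fibres. I expect the latter to be the main thing to pin down, since it is exactly what forces $\Ad_{U_{m,n}}$ to preserve the relevant ideals of compacts and hence what makes $\Theta_{m,n}$ descend to $\widetilde\Theta_{m,n}$ in the final step.
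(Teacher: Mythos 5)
Your proposal is correct and follows essentially the same route as the paper: it builds the length-preserving bijection $\SG{D_n}^*$ coordinatewise from the edge homeomorphism of Lemma~\ref{lem:rational SG identification}, lets $U_{m,n}$ permute the canonical basis accordingly, checks the intertwining relations on basis vectors, and observes that $U_{m,n}$ carries each $\ell^2(\SG[\frac{m}{n}]{E}^*\omega)$ onto $\ell^2(\SG[m]{D_n(E)}^*\,\SG{D_n}^0(\omega))$, so that $\Ad_{U_{m,n}}$ matches up the ideals of compacts and hence descends to the Cuntz--Krieger quotients. The paper's proof is exactly this argument, stated more tersely.
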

\begin{proof}
For $k \ge 1$ there is a bijection $\SG{D_n}^k : \SG[\frac{m}{n}]{E}^k \to
\SG[m]{D_n(E)}^k$ given by $\SG{D_n}^k(\alpha_1 \cdots \alpha_k) = \SG{D_n}^1(\alpha_1)
\cdots \SG{D_n}^1(\alpha_k)$. Combining these bijections $\SG{D_n}^k$ for $k > 0$ with
the bijection $\SG{D_n}^0$ we obtain a length-preserving bijection $\SG{D_n}^* :
\SG[\frac{m}{n}]{E}^* \to \SG[m]{D_n(E)}^*$, which induces a unitary
\[
U_{m,n} : \ell^2(\SG[\frac{m}{n}]{E}^*) \to \ell^2(\SG[m]{D_n(E)}^*).
\]
This $U_{m,n}$ intertwines $\rho_{m/n}$ and $a \mapsto \rho_{m}(a \circ \SG{D_n}^0)$ and
intertwines $\psi_{m,n}$ with $\xi \mapsto \psi_{m}(\xi \circ \SG{D_n}^1)$. This proves
the first statement. Since the unitary $U_{m,n}$ carries
$\ell^2(\SG[\frac{m}{n}]{E}^*\omega)$ to $\ell^2(\SG[m]{D_n(E)}^*\SG{D_n}^0(\omega))$ for
each $\omega \in \SG[\frac{m}{n}]{E}^0$, we have
\[
\Ad_{U_{m,n}}\Big(\bigoplus_{\omega \in \SG[\frac{m}{n}]{E}^0} \Kk(\ell^2(\SG[\frac{m}{n}]{E}^*\omega)\Big)
    = \bigoplus_{\omega \in \SG[m]{D_n(E)}^0} \Kk(\ell^2(\SG[m]{D_n(E)}^*\omega).
\]
Hence $\Theta_{m,n}$ carries the kernel of the quotient map $\Tt C^*(\SG[\frac{m}{n}]{E})
\to C^*(\SG[\frac{m}{n}]{E})$ to the kernel of the quotient map $\Tt C^*(\SG[m]{D_n(E)})
\to C^*(\SG[m]{D_n(E)})$. It follows that $\Theta_{m,n}$ descends to the desired
isomorphism $\widetilde{\Theta}_{m,n}$.
\end{proof}

\section{Analysis of \texorpdfstring{$\Tt C^*(\SG[m]{E})$}{TC*(S[m]E)}}\label{sec:TC*
analysis}

We now analyse the $C^*$-algebras $\Tt C^*(\SG[m]{E})$ and $C^*(\SG[m]{E})$ for integers
$m \ge 1$ (both $\Tt C^*(\SG[0]{E})$ and $C^*(\SG[0]{E})$ are described by part~(2) of
Theorem~\ref{thm:C* at 1,0,-1}). This will complete our analysis of the $C^*$-algebras
$\Tt C^*(\SG[l]{E})$ and $C^*(\SG[l]{E})$ for rational $l$.

To analyse $\Tt C^*(\SG[m]{E})$ we will first establish that the ideal of $\Tt
C^*(\SG[m]{E})$ generated by the image of $C_0(\SG{E}^0 \setminus E^0)$ is isomorphic to
$\Tt C^*(E(1, m+1)) \otimes C_0((0,1))$, and show that $\Tt C^*(\SG[m]{E})$ itself is a
$C(\SS)$-algebra. We begin with some preliminary structural results. Given a locally
compact Hausdorff space $X$, we write $C_b(X)$ for the algebra of bounded continuous
complex-valued functions on $X$.

\begin{lem}\label{lem:nondegenerate actions}
Let $E$ be a locally finite graph with no sources and fix $m \in \NN \setminus \{0\}$.
The space $C_c(\SG[m]{E}^1)$ is a $C_b(\SG{E}^0)$-bimodule with respect to the actions
$(a \cdot \xi)(\alpha) = a(r(\alpha)) \xi(\alpha)$ and $(\xi \cdot a)(\alpha) =
\xi(\alpha) a(s(\alpha))$. For each $\xi \in C_c(\SG[m]{E}^1)$ there exists $a \in
C_c(\SG{E}^0)$ such that $a \cdot \xi = \xi = \xi \cdot a$.
\end{lem}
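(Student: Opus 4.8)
The plan is to dispatch the bimodule assertion by a direct pointwise verification, and then to produce the local unit $a$ by applying Urysohn's lemma to the compact set $r(\supp\xi) \cup s(\supp\xi)$.

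First I would confirm that the two formulas genuinely take values in $C_c(\SG[m]{E}^1)$. For $a \in C_b(\SG{E}^0)$ and $\xi \in C_c(\SG[m]{E}^1)$, the function $a \cdot \xi$ is continuous because $r_m : \SG[m]{E}^1 \to \SG{E}^0$ is continuous (this is the lemma preceding Remark~\ref{rmk:special cases}, applied with $l = m \neq 0$), so that $\alpha \mapsto a(r(\alpha))\,\xi(\alpha)$ is a product of continuous functions. Its support is contained in $\supp\xi$, hence compact, so $a \cdot \xi \in C_c(\SG[m]{E}^1)$; the right action is identical, using continuity of $s_m$. The associativity and distributivity axioms, together with the compatibility $(a \cdot \xi)\cdot b = a \cdot (\xi \cdot b)$ that makes this a genuine bimodule, all follow at once by evaluating both sides at an arbitrary $\alpha$, since each reduces to multiplication of scalars $a(r(\alpha))$, $\xi(\alpha)$ and $b(s(\alpha))$.

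For the local unit, I would set $K := \supp\xi$, a compact subset of $\SG[m]{E}^1$. Because $r$ and $s$ are continuous, $L := r(K) \cup s(K)$ is a compact subset of $\SG{E}^0$. Since $\SG{E}^0$ is locally compact Hausdorff---the standing hypothesis that underlies the use of $C_0(\SG{E}^0)$ and $C_c(\SG{E}^0)$ throughout, and which follows from local finiteness of $E$---Urysohn's lemma furnishes $a \in C_c(\SG{E}^0)$ with $0 \le a \le 1$ and $a|_L = 1$. Then for every $\alpha$ with $\xi(\alpha) \neq 0$ we have $\alpha \in K$, so $r(\alpha), s(\alpha) \in L$ and hence $a(r(\alpha)) = a(s(\alpha)) = 1$, giving $(a \cdot \xi)(\alpha) = \xi(\alpha) = (\xi \cdot a)(\alpha)$; for $\alpha$ with $\xi(\alpha) = 0$ both sides vanish. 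Thus $a \cdot \xi = \xi = \xi \cdot a$.

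The only substantive ingredient is the availability of Urysohn's lemma, which rests on $\SG{E}^0$ being locally compact Hausdorff; this is not really an obstacle, being the ambient assumption already in force. The one point that wants genuine care is the support bookkeeping: it is essential to demand $a = 1$ on the compact set $r(\supp\xi) \cup s(\supp\xi)$ rather than merely on $r(\{\xi \neq 0\})$, and it is precisely the continuity of $r$ and $s$ that keeps this enlarged set compact, so that a \emph{compactly supported} $a$ exists. Beyond that the argument is routine.
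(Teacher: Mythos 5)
Your proof is correct and follows essentially the same route as the paper's: verify continuity and compact support of $a\cdot\xi$ pointwise, note the bimodule axioms are routine, and obtain the local unit by choosing $a \in C_c(\SG{E}^0)$ identically $1$ on the compact set $r(\supp\xi)\cup s(\supp\xi)$ (the paper invokes Tietze's theorem where you invoke Urysohn's lemma, an immaterial difference). No gaps.
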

\begin{proof}
For $a \in C_b(\SG{E}^0)$ and $\xi \in C_c(\SG[m]{E}^1)$, the function $a \cdot \xi$ is
the pointwise product of $a \circ r$ and $\xi$ and therefore a continuous function. Since
its support is contained in that of $\xi$ it belongs to $C_c(\SG[m]{E}^1)$. Similarly
$\xi \cdot a \in C_c(\SG[m]{E}^1)$. It is routine that these actions make
$C_c(\SG[m]{E}^1)$ into a $C_b(\SG{E}^0)$-bimodule. For the final assertion, fix $\xi \in
C_c(\SG[m]{E}^1)$. Since $r,s : \SG[m]{E}^1 \to \SG{E}^0$ are continuous, $K :=
r(\supp(\xi)) \cup s(\supp(\xi)) \subseteq \SG{E}^0$ is compact, so Tietze's theorem
yields a function $a \in C_c(\SG{E}^0)$ such that $a|_K \equiv 1$. We then have $a \cdot
\xi = \xi = \xi \cdot a$ by definition of the actions of $C_0(\SG{E}^0)$ on
$C_c(\SG[m]{E}^1)$.
\end{proof}

To analyse the ideal of $\Tt C^*(\SG[m]{E})$ generated by $C_0(\SG{E}^0 \setminus E^0)$,
we first observe that the subgraph of $\SG[m]{E}$ with vertex set $\SG{E}^0 \setminus
E^0$ is a topological graph in the sense of Katsura.

\begin{lem}\label{lem:subgraph}
Let $E$ be a locally finite graph with no sources and fix $m \in \NN \setminus \{0\}$.
Then $(\SG{E}^0 \setminus E^0, \SG[m]{E}^1 \setminus E^1, r, s)$ is a topological graph
isomorphic to the product $E(1, m+1) \times (0,1)$.
\end{lem}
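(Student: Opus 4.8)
The plan is to produce an isomorphism of topological graphs between the product $E(1, m+1) \times (0,1)$ and the subquiver $(\SG{E}^0 \setminus E^0, \SG[m]{E}^1 \setminus E^1, r, s)$. Here I read $\SG[m]{E}^1 \setminus E^1$ as $\{[\mu, t]_m : \mu \in E^{m+1},\ t \in (0,1)\}$, the edges of $\SG[m]{E}$ not lying over $0 \in \SS$ (consistent with the convention $\SG{E}^1 \setminus E^1 = \{[\mu, t] : \mu \in E^2,\ t \in (0,1)\}$ fixed in Section~\ref{sec:the graph torus}), and likewise $\SG{E}^0 \setminus E^0 = \{[e, t] : e \in E^1,\ t \in (0,1)\}$. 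Since $E$ is locally finite, $E(1, m+1)$ is a locally finite combinatorial graph, which I view as a topological graph with discrete vertex and edge spaces $E^1$ and $E^{m+1}$; its source map is then a local homeomorphism, and the product construction of Section~\ref{sec:topgraphs} makes $E(1, m+1) \times (0,1)$ a topological graph with vertex space $E^1 \times (0,1)$, edge space $E^{m+1} \times (0,1)$, and maps $r(e_1 \cdots e_{m+1}, t) = (e_1, t)$, $s(e_1 \cdots e_{m+1}, t) = (e_{m+1}, t)$ (local finiteness moreover makes $r$ proper). Thus producing such an isomorphism \emph{simultaneously} establishes that the subquiver is a topological graph.

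The maps are the obvious ones: $\Phi^0(e, t) := [e, t]$ and $\Phi^1(\mu, t) := [\mu, t]_m$ for $e \in E^1$, $\mu \in E^{m+1}$, and $t \in (0,1)$. That $\Phi^0$ is a well-defined bijection onto $\SG{E}^0 \setminus E^0$ is exactly the uniqueness-of-representatives statement of Lemma~\ref{lem:normal form} applied to length-one paths, the removed classes $[v]$ being precisely those at integer $t$. For $\Phi^1$ I would first record the analogue of that statement for $\approx_m$: for $s, t \in (0,1)$ one has $\lfloor s \rfloor = \lfloor t \rfloor = 0$, so $(\mu, s) \approx_m (\nu, t)$ forces $s = t$ and $\mu(0, m+1) = \nu(0, m+1)$; restricting to length-$(m+1)$ representatives therefore makes $\approx_m$ trivial, so $\Phi^1$ is a bijection onto $\SG[m]{E}^1 \setminus E^1$.

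Next I would check that $\Phi$ intertwines range and source by a floor/ceiling computation. For $\mu \in E^{m+1}$ and $t \in (0,1)$ one has $\lfloor t \rfloor = 0$, $\lceil t \rceil = 1$, $\lfloor t + m \rfloor = m$, and $\lceil t + m \rceil = m+1$, whence $r([\mu, t]_m) = [\mu, t] = [\mu_1, t] = \Phi^0(\mu_1, t)$ and $s([\mu, t]_m) = [\mu, t + m] = [\mu_{m+1}, t] = \Phi^0(\mu_{m+1}, t)$. As $r_{1, m+1}(\mu) = \mu_1$ and $s_{1, m+1}(\mu) = \mu_{m+1}$ in $E(1, m+1)$, this is precisely compatibility with the range and source of the product graph.

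The substantive step, and the one I expect to be the main obstacle, is upgrading the continuous bijections $\Phi^0$ and $\Phi^1$ to homeomorphisms, i.e. checking that the subspace topologies inherited from the defining quotients agree with the product topologies on $E^1 \times (0,1)$ and $E^{m+1} \times (0,1)$. Continuity is immediate from the definition of the quotient topology. For openness the decisive feature is that every identification built into $\sim$ and into $\approx_m$ takes place over the integer fibres, so over $\SS \setminus \{0\}$ distinct components are never glued; concretely, the $\approx_m$-saturation of a basic set $\{\mu\} \times (a, b)$ is $\{(\nu, s) : s - \lfloor s \rfloor \in (a, b),\ \nu(\lfloor s \rfloor, \lceil s \rceil + m) = \mu\}$, a union of open $s$-bands and hence open in the total space, so $\Phi^1(\{\mu\} \times (a, b))$ is open; the argument for $\Phi^0$ is the same but simpler. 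This is exactly the openness-via-saturation mechanism already used in Lemma~\ref{lem:surjection} and in the openness computations of Lemma~\ref{lem:cts structure}, and I would model the bookkeeping on those proofs. With $\Phi^0$ and $\Phi^1$ homeomorphisms intertwining $r$ and $s$, the pair $\Phi = (\Phi^0, \Phi^1)$ is an isomorphism of topological graphs, which completes the proof.
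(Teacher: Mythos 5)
Your proposal is correct and follows essentially the same route as the paper, whose proof is a one-line appeal to Lemma~\ref{lem:normal form}: the quotient maps from $E^1\times[0,1]$ and $E^{m+1}\times[0,1]$ restrict to range- and source-preserving homeomorphisms over $(0,1)$. You have simply written out the details (uniqueness of representatives, the floor/ceiling check for $r$ and $s$, and openness via saturation) that the paper leaves implicit.
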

\begin{proof}
Lemma~\ref{lem:normal form} shows that the quotient maps from $E^1 \times [0,1]$ to
$\SG{E}^0$ and from $E^{m+1} \times [0,1]$ to $\SG[m]{E}^1$ restrict range and source
preserving homeomorphisms from $E(1, m+1)^0 \times (0,1)$ to $\SG{E}^0 \setminus E^0$ and
from $E(1,m+1)^1 \times (0,1)$ to $\SG[m]{E}^1 \setminus E^1$.
\end{proof}

We can now describe the ideal of $\Tt C^*(\SG[m]{E})$ generated by $C_0(\SG{E}^0
\setminus E^0)$. In the following proof, given $e \in E^1$ and $g \in C_0((0,1))$, we
denote by $1_e \times g$ the element of $C_0(\SG{E}^0 \setminus E^0) \subseteq
C_0(\SG{E}^0)$ given by
\[
(1_e \times g)([f,t]) := \delta_{e,f} g(t)\quad\text{ for all $f \in E^1$ and $t \in (0,1)$},
\]
and likewise for $\mu \in E^{m+1}$ and $g \in C_0((0,1))$, we write $1_\mu \times g$ for
the element of $C_0(\SG[m]{E}^1 \setminus E^m) \subseteq C_0(\SG[m]{E}^1)$ given by
\[
(1_\mu \times g)([\nu,t]) := \delta_{\mu,\nu} g(t)\quad\text{ for all $\nu \in E^{m+1}$ and $t \in (0,1)$}.
\]

\begin{lem}\label{lem:quotient of tensor}
Let $E$ be a row-finite graph with no sources and fix $m \in \NN \setminus \{0\}$. Let
$J$ be the ideal of $\Tt C^*(\SG[m]{E})$ generated by $\rho_m(C_0(\SG{E}^0))$. There is
an isomorphism $\kappa_0 : C_0((0,1)) \otimes \Tt C^*(E(1, m+1)) \to J$ such that
\[
\kappa_0(g \otimes Q_e) = \pi(1_e \times g)
    \qquad\text{ and }\qquad
\kappa_0(g \otimes T_\mu) = \psi(1_{\mu} \times g)
\]
for all $g \in C_0((0,1))$, all $e \in E(1, m+1)^1 = E^1$ and all $\mu \in E(1,m+1)^1 =
E^{m+1}$.
\end{lem}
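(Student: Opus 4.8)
The plan is to realise $J$ as the Toeplitz algebra of the topological subgraph $G := (\SG{E}^0 \setminus E^0, \SG[m]{E}^1 \setminus E^1, r, s)$, which by Lemma~\ref{lem:subgraph} is isomorphic to $E(1, m+1) \times (0,1)$. By the tensor-product identification for topological graphs recalled in Section~\ref{sec:topgraphs} (which holds equally for the Toeplitz algebras), $\Tt C^*(G) \cong C_0((0,1)) \otimes \Tt C^*(E(1, m+1))$, and under this isomorphism $g \otimes Q_e$ and $g \otimes T_\mu$ correspond to the vertex function $1_e \times g \in C_0(G^0)$ and the edge function $1_\mu \times g \in C_c(G^1)$. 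First I would check that the pair $(\rho_m|_{C_0(\SG{E}^0 \setminus E^0)}, \psi_m|_{C_c(\SG[m]{E}^1 \setminus E^1)})$ is a Toeplitz representation of the graph correspondence $X(G)$. The bimodule relations $\rho_m(a)\psi_m(\xi) = \psi_m(a \cdot \xi)$ and $\psi_m(\xi)\rho_m(a) = \psi_m(\xi \cdot a)$ are immediate from the formulas in Lemma~\ref{lem:psil,rhol defs}, and a direct computation on basis vectors gives $\psi_m(\xi)^*\psi_m(\eta) = \rho_m(\langle \xi, \eta\rangle)$; this matches the graph-correspondence inner product, which sums over the source fibre, precisely because $\psi_m(\eta)$ sums over $\beta \in \SG[m]{E}^1 r(\alpha) = s^{-1}(r(\alpha))$. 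Since $\rho_m$ is isometric on $C_0(\SG{E}^0 \setminus E^0)$, the identity $\|\psi_m(\xi)\|^2 = \|\rho_m(\langle \xi, \xi\rangle)\| = \|\xi\|_{X(G)}^2$ shows $\psi_m$ extends isometrically to $X(G)$, so the universal property of $\Tt C^*(G)$ supplies the homomorphism $\kappa_0$ with the asserted values on generators.

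Next I would show $\image \kappa_0 = J$. Writing $B := \image \kappa_0$, each generator $\rho_m(1_e \times g)$ lies in $\rho_m(C_0(\SG{E}^0 \setminus E^0)) \subseteq J$, and for $\xi = 1_\mu \times g$ Lemma~\ref{lem:nondegenerate actions} yields $a \in C_c(\SG{E}^0 \setminus E^0)$ with $\psi_m(\xi) = \rho_m(a)\psi_m(\xi) \in J$; hence $B \subseteq J$. For the reverse inclusion I would verify that $B$ is an ideal. Since $\SG{E}^0 \setminus E^0 \cong E^1 \times (0,1)$ with $E^1$ discrete, $C_0(\SG{E}^0 \setminus E^0)$ is the closed span of the functions $1_e \times g$, so $\rho_m(C_0(\SG{E}^0 \setminus E^0)) \subseteq B$. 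The crucial point is that for integral $m$ both $r$ and $s$ preserve $\varpi$, so multiplying an ambient generator $\rho_m(b)$ or $\psi_m(\eta)$ (for arbitrary $b \in C_0(\SG{E}^0)$, $\eta \in C_c(\SG[m]{E}^1)$) into a generator of $B$ produces, via the representation relations, either $\rho_m$ or $\psi_m$ of a function still supported over $\varpi \in (0,1)$: for instance $\psi_m(\eta)\rho_m(a) = \psi_m(\eta \cdot a)$ with $\eta \cdot a$ supported on $\SG[m]{E}^1 \setminus E^1$, while $\psi_m(\eta)^*\psi_m(\xi) = \rho_m(\langle \eta, \xi\rangle)$ with $\langle\eta,\xi\rangle$ supported on $\SG{E}^0 \setminus E^0$. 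Thus $\{x : xB \subseteq B\}$ is a closed subalgebra containing all generators and their adjoints, hence all of $\Tt C^*(\SG[m]{E})$; by self-adjointness of $B$ the same holds on the right, so $B$ is an ideal containing the generators of $J$, giving $J \subseteq B$.

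Finally I would prove $\kappa_0$ injective using the fibration over $\SS$. For $t \in (0,1)$ the unitary $U_t : \ell^2(E(1, m+1)^*) \to \ell^2(\SG[m]{E}^*_t)$ of Remark~\ref{rmk:m path spaces} satisfies, by direct calculation, $U_t^* \rho_m(1_e \times g) U_t = g(t)\, \pi(Q_e)$ and $U_t^* \psi_m(1_\mu \times g) U_t = g(t)\, \pi(T_\mu)$, where $\pi$ is the path-space representation of $\Tt C^*(E(1, m+1))$. Since $\rho_m$ and $\psi_m$ of functions supported over $(0,1)$ annihilate $\ell^2(\SG[m]{E}^*_0)$ and preserve each $\ell^2(\SG[m]{E}^*_t)$, conjugating by $\bigoplus_{t \in (0,1)} U_t$ identifies $\kappa_0$ with the representation of $C_0((0,1)) \otimes \Tt C^*(E(1, m+1)) = C_0((0,1), \Tt C^*(E(1, m+1)))$ that sends $x$ to $\bigoplus_{t \in (0,1)} \pi(x(t))$. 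As $\pi$ is faithful (background, via \cite[Theorem~4.1]{FR}), this operator has norm $\sup_{t \in (0,1)} \|x(t)\| = \|x\|$, so $\kappa_0$ is isometric, hence injective.

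I expect the main obstacle to be the verification in the second paragraph that $B$ is an ideal: it rests entirely on the fact that for integral $m$ both $r$ and $s$ preserve the coordinate $\varpi$, so that every product of an ambient generator with a generator of $B$ remains supported over $(0,1)$. Keeping careful track of supports under the bimodule actions and the inner product is where the bookkeeping lies, although each individual identity is routine once the Toeplitz relations of the first paragraph are in hand.
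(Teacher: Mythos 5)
Your proof is correct and follows the same overall skeleton as the paper's --- identify the subgraph on $\SG{E}^0 \setminus E^0$ with $E(1,m+1)\times(0,1)$ via Lemma~\ref{lem:subgraph}, invoke the tensor-product description of its Toeplitz algebra, and then match image and kernel --- but two of your technical components differ. For the existence of the representation, the paper routes through the $s$-section machinery of \cite{LPS} (Proposition~4.12 there) rather than verifying the correspondence relations by hand; your direct check of $\psi_m(\xi)^*\psi_m(\eta)=\rho_m(\langle\xi,\eta\rangle)$ on basis vectors is equally valid and more self-contained, resting as it does on uniqueness of the factorisation of paths in $\SG[m]{E}$ into edges. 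For injectivity, the paper applies the uniqueness theorem \cite[Theorem~2.1]{FR} after observing that $\rho_m$ is faithful on the subspace $\ell^2(\SG{E}^0\setminus E^0)$, which is orthogonal to the ranges of the $\psi_m(\xi)$; your fibrewise conjugation by the unitaries $U_t$, identifying $\kappa_0$ with $x\mapsto\bigoplus_t\pi(x(t))$, is more concrete and shows outright that $\kappa_0$ is isometric. Your second paragraph is also more explicit than the paper about why the $C^*$-algebra generated by the $\rho_m(1_e\times g)$ and $\psi_m(1_\mu\times g)$ is an ideal (the paper compresses this into a single sentence). One small refinement you should make: for $g\in C_0((0,1))$ that does not vanish near the endpoints, $r(\supp(1_\mu\times g))$ need not be a compact subset of $\SG{E}^0\setminus E^0$, so Lemma~\ref{lem:nondegenerate actions} does not directly hand you $a\in C_c(\SG{E}^0\setminus E^0)$ with $a\cdot\xi=\xi$; instead factor $g=g_1g_2$ with $g_i\in C_0((0,1))$ and write $\psi_m(1_\mu\times g)=\rho_m(1_{\mu_1}\times g_1)\psi_m(1_\mu\times g_2)$, which also supplies the factorisation you need to reduce products of two $\psi_m$'s to the cases you listed in your ideal check.
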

\begin{proof}
By Lemma~\ref{lem:subgraph}, we have $(\SG{E}^0 \setminus E^0, \SG[m]{E}^1 \setminus E^m,
r, s) \cong E(1, m+1) \times (0,1)$ as topological graphs. Recall from \cite{LPS} that an
$s$-section in a topological graph $F$ is an open set $U \subseteq F^1$ such that $s : U
\to s(U)$ is a homeomorphism onto an open subset of $F^0$. The collection
\[
    \Bb := \big\{\{[\nu, t] : t \in (a,b)\} : \nu \in E(1, m)^1\text{ and }0 < a < b < 1\big\}
\]
is a basis of open $s$-sections for the topology on $\SG[m]{E}^1 \setminus E^m$. Let
$\mathcal{F} := \{1_{\nu} \times g : \nu \in E(1,m+1)^1\text{ and }g \in C_0((0,1))\}$.
Then this $\Ff$ and $\Bb$ satisfy \cite[Equation~(4.4)]{LPS}. Direct computation with
basis elements on each $\ell^2(\SG[m]{E}_t)$ show that $(\rho_m, \psi_m|_{\lsp \Ff})$
satisfy \cite[Equation~(4.5)]{LPS} and \cite[Equation~(4.6)]{LPS}. Thus
\cite[Proposition~4.12]{LPS} shows that $(\rho_m|_{C_0(\SG{E}^0 \setminus E^0)},
\psi_m|_{C_c(\SG[m]{E}^1 \setminus E^1)})$ is a representation of the topological graph
$E(1, m+1) \times (0,1)$.

The range of $\rho_m|_{C_0(\SG{E}^0 \setminus E^0)}$ belongs to $J$ by definition. The
image of $\psi_m|_{C_c(\SG[m]{E}^1 \setminus E^m)}$ belongs to $J$ by the argument of
Lemma~\ref{lem:nondegenerate actions}. These elements generate $J$ because $C_0(\SG{E}^0
\setminus E^0) \cdot C_c(\SG[m]{E}^1)$ is contained in the space $C_d(\SG[m]{E}^1
\setminus E^m)$ described at~\eqref{eq:CdX}. Thus \cite[Theorem~2.4]{LPS} shows that
there is a surjective homomorphism $(\rho| \times \psi|) : \Tt C^*(E(1,m+1) \times (0,1))
\to J$ such that $(\rho| \times \psi|) \circ i_{E(1, m+1)^0 \times (0,1)} =
\rho_m|_{C_0(\SG{E}^0 \setminus E^0)}$ and $(\rho| \times \psi|) \circ i_{E(1, m+1)^1
\times (0,1)} = \psi_m|_{C_c(\SG[m]{E}^1 \setminus E^m)}$. To see that this homomorphism
is injective, recall that $J$ is a subalgebra of $\Bb(\ell^2(\SG[m]{E}^* \setminus
E(0,m)^*))$, and observe that
\[
\ell^2(\SG{E}^0 \setminus E^0) \subseteq \overline{\{\psi_m(\xi)v : \xi \in C_c(\SG[m]{E}^1 \setminus E^m), v \in \ell^2(\SG[m]{E}^*)\}}^\perp.
\]
For $a \in C_0(\SG{E}^0 \setminus E^0)$, the restriction of $\rho_m(a)$ to
$\ell^2(\SG{E}^0 \setminus E^0)$ is given by $\rho_m(a) h_\omega = a(\omega)h_\omega$,
and so the reduction of $\rho_m$ to this subspace is faithful. Hence
\cite[Theorem~2.1]{FR} shows that $(\rho| \times \psi|)$ is injective.

The argument of \cite[Proposition~7.7]{Kat2} shows that $\Tt C^*(E(1, m+1) \times (0,1))
\cong C_0((0,1)) \otimes \Tt C^*(E(1, m+1))$, so we obtain a surjective representation of
$C_0((0,1)) \otimes \Tt C^*(E(1, m+1))$ in $J$ that carries $g \otimes Q_e$ to $\pi(1_e
\times g)$ and $g \otimes T_{\nu}$ to $\psi(1_{\nu} \times g)$.
\end{proof}

We observe next that the actions of $C_b(\SG{E}^0)$ on $C_c(\SG[m]{E}^1)$ induce a
central action of $C(\SS)$ via the surjection $\SG{E}^0 \to \SS$ of
Lemma~\ref{lem:surjection}.

\begin{cor}\label{cor:C(S) action}
Let $E$ be a locally finite graph with no sources and fix $m \in \NN \setminus 0$. There
are left and right actions of $C(\SS)$ on $C_c(\SG[m]{E}^1)$ given by $(g \cdot
\xi)([\mu, t]) = g(t)\xi([\mu,t]) = (\xi \cdot g)([\mu,t])$ for $g \in C(\SS)$ and $\xi
\in C_c(\SG[m]{E}^1)$.
\end{cor}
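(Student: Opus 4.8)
The plan is to realise both actions as pull-backs of the $C_b(\SG{E}^0)$-bimodule structure of Lemma~\ref{lem:nondegenerate actions} along the continuous map $\varpi$ of Lemma~\ref{lem:surjection}. First I would note that, since $\SS$ is compact, every $g \in C(\SS)$ is bounded, so $g \circ \varpi$ is a bounded continuous function on $\SG{E}^0$; hence $g \mapsto g \circ \varpi$ is a unital homomorphism from $C(\SS)$ into $C_b(\SG{E}^0)$. Composing this homomorphism with the left and right $C_b(\SG{E}^0)$-actions furnished by Lemma~\ref{lem:nondegenerate actions} gives candidate actions $g \cdot \xi := (g \circ \varpi) \cdot \xi$ and $\xi \cdot g := \xi \cdot (g \circ \varpi)$ of $C(\SS)$ on $C_c(\SG[m]{E}^1)$. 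That these are genuine (left and right) module actions is then automatic, since they are obtained by composing a homomorphism with the module axioms already verified in Lemma~\ref{lem:nondegenerate actions}.

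Next I would compute both actions on an edge $\alpha = [\mu, t] \in \SG[m]{E}^1$. By definition of the $C_b(\SG{E}^0)$-actions, $(g \cdot \xi)([\mu,t]) = g\big(\varpi(r([\mu,t]))\big)\, \xi([\mu,t])$ and $(\xi \cdot g)([\mu,t]) = \xi([\mu,t])\, g\big(\varpi(s([\mu,t]))\big)$. At this point I would invoke the fact that $\varpi$ is constant on range-source pairs: as recorded after Lemma~\ref{lem:surjection}, $\varpi(r(\beta)) = \varpi(s(\beta))$ for every edge, and here concretely $r([\mu,t]) = [\mu,t]$ and $s([\mu,t]) = [\mu, t+m]$, whose $\SS$-coordinates agree because $t + m \equiv t \pmod 1$. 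Thus $\varpi(r([\mu,t])) = \varpi(s([\mu,t])) = t$ in $\SS$, and substituting, together with commutativity of scalar multiplication, shows that both $(g\cdot\xi)([\mu,t])$ and $(\xi\cdot g)([\mu,t])$ equal $g(t)\xi([\mu,t])$. This is the claimed formula, and in particular the left and right actions coincide.

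The only step requiring any care --- and the closest thing to an obstacle --- is the verification that $r_m$ and $s_m$, although distinct as maps into $\SG{E}^0$, carry each edge to vertices with the same $\SS$-coordinate under $\varpi$. This is what forces the left and right $C(\SS)$-actions to agree, and it rests on the source formula $s_m([\mu,t]) = [\mu, t+m]$ together with $t+m \equiv t \pmod 1$. Everything else is a formal consequence of pulling back the bimodule structure of Lemma~\ref{lem:nondegenerate actions} along the homomorphism $g \mapsto g \circ \varpi$.
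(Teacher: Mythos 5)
Your proof is correct and follows essentially the same route as the paper: pull back the $C_b(\SG{E}^0)$-bimodule structure of Lemma~\ref{lem:nondegenerate actions} along the homomorphism $g \mapsto g \circ \varpi$ induced by Lemma~\ref{lem:surjection}, and observe that the left and right actions coincide because $\varpi \circ r = \varpi \circ s$ on edges (here since $t + m \equiv t \pmod{1}$). The only difference is that you spell out this last point, which the paper compresses into the remark that ``the definition of $\varpi$'' gives $g \cdot \xi = \xi \cdot g$.
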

\begin{proof}
The surjection $\varpi : \SG{E}^0 \to \SS$ of Lemma~\ref{lem:surjection} induces an
injection $\varpi^* : C(\SS) \to C_b(\SG{E}^0)$ given by $\varpi^*(g)([e,t]) =
g(\varpi([e,t])) = g(t)$. So $g \cdot \xi := \varpi^*(g) \cdot \xi$ and $\xi \cdot g :=
\xi \cdot \varpi^*(g)$ satisfy the formulae given for the desired action. The definition
of $\varpi$ shows that $g \cdot \xi = \xi \cdot g$.
\end{proof}

\begin{prp}\label{prp:bimodule maps}
Let $E$ be a locally finite graph with no sources and fix $m \in \NN \setminus 0$. The
pair $(\rho_m, \psi_m)$ is a bimodule homomorphism in the sense that
$\rho_m(a)\psi_m(\xi) = \psi_m(a \cdot \xi)$ and $\psi_m(\xi)\rho_m(a) = \psi_m(\xi \cdot
a)$ for all $a \in C_0(\SG{E}^0)$ and $\xi \in C_c(\SG[m]{E}^1)$. Writing $\bar\rho_m$
for the extension of $\rho_m$ to $C_b(\SG{E}^0) = \Mm(C_0(\SG{E}^0))$ and $\varpi_* :
C(\SS) \to C_b(\SG{E}^0)$ for the homomorphism induced by the map $\varpi : \SG{E}^0 \to
\SS$ of Lemma~\ref{lem:surjection}, and writing $\iota_m := \bar\rho_m \circ \varpi_*$,
the action of $C(\SS)$ on $C_c(\SG{E}^1)$ of Corollary~\ref{cor:C(S) action} satisfies
\[
\iota_m(g)\psi_m(\xi) = \psi_m(g \cdot \xi) = \psi_m(\xi)\iota_m(g)
    \quad\text{ for all $g \in C(\SS)$ and $\xi \in C_c(\SG[m]{E}^1)$.}
\]
In particular, $\iota_m$ is an injective unital inclusion of $C(\SS)$ in $\Zz\Mm(\Tt
C^*(\SG[m]{E}))$.
\end{prp}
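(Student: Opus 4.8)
The plan is to establish the proposition in the three layers in which it is phrased: first the bimodule-homomorphism identities for $a \in C_0(\SG{E}^0)$, then their extension to multipliers and the resulting $C(\SS)$-intertwining formula, and finally the structural claim about $\iota_m$. The first layer is a direct computation on the orthonormal basis $\{h_\alpha\}$ using the formulas of Lemma~\ref{lem:psil,rhol defs}. First I would evaluate $\rho_m(a)\psi_m(\xi)h_\alpha$ and $\psi_m(a \cdot \xi)h_\alpha$ and observe that they coincide because $r(\beta\alpha) = r(\beta)$ for every $\beta \in \SG[m]{E}^1 r(\alpha)$, so the diagonal scalar inserted by $\rho_m(a)$ is exactly the weight $a(r(\beta))$ produced by $a \cdot \xi$. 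Symmetrically, $\psi_m(\xi)\rho_m(a)h_\alpha$ and $\psi_m(\xi \cdot a)h_\alpha$ agree because the composability constraint $\beta \in \SG[m]{E}^1 r(\alpha)$ forces $s(\beta) = r(\alpha)$, so the common scalar $a(s(\beta)) = a(r(\alpha))$ factors out of the sum. This gives $\rho_m(a)\psi_m(\xi) = \psi_m(a \cdot \xi)$ and $\psi_m(\xi)\rho_m(a) = \psi_m(\xi \cdot a)$.

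Next I would pass to multipliers. Since $\rho_m$ is nondegenerate (Lemma~\ref{lem:psil,rhol defs}), it extends uniquely to a strictly continuous unital homomorphism $\bar\rho_m : C_b(\SG{E}^0) = \Mm(C_0(\SG{E}^0)) \to \Bb(\ell^2(\SG[m]{E}^*))$, and approximating $a \in C_b(\SG{E}^0)$ strictly by elements of $C_0(\SG{E}^0)$ shows that the diagonal formula $\bar\rho_m(a)h_\alpha = a(r(\alpha))h_\alpha$ persists. Consequently the two basis computations above remain valid verbatim for $a \in C_b(\SG{E}^0)$, yielding $\bar\rho_m(a)\psi_m(\xi) = \psi_m(a \cdot \xi)$ and $\psi_m(\xi)\bar\rho_m(a) = \psi_m(\xi \cdot a)$. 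Setting $a = \varpi_*(g)$ and recalling that $\varpi_*(g) \cdot \xi = g \cdot \xi$ by definition, while Corollary~\ref{cor:C(S) action} gives $\xi \cdot g = g \cdot \xi$, this produces the central identity $\iota_m(g)\psi_m(\xi) = \psi_m(g \cdot \xi) = \psi_m(\xi)\iota_m(g)$.

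For the final statement I would proceed as follows. Injectivity: $\varpi^*$ is injective by Corollary~\ref{cor:C(S) action}, and for a vertex $\omega \in \SG{E}^0$ we have $\iota_m(g)h_\omega = g(\varpi(\omega))h_\omega$; since $\varpi$ maps $\SG{E}^0$ onto $\SS$ (Lemma~\ref{lem:surjection}, using that the no-sources hypothesis forces $E^1 \neq \emptyset$), the vanishing of $\iota_m(g)$ forces $g = 0$. Unitality is immediate from $\varpi_*(1) = 1$ and $\bar\rho_m(1) = \Id$. It then remains to see that each $\iota_m(g)$ is a central multiplier of $A := \Tt C^*(\SG[m]{E})$. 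Because $A$ acts nondegenerately on $\ell^2(\SG[m]{E}^*)$ (already $\rho_m$ does), $\Mm(A)$ may be identified with the idealiser $\{T \in \Bb(\ell^2(\SG[m]{E}^*)) : TA \cup AT \subseteq A\}$, so it suffices to test against the generators $\rho_m(a)$ and $\psi_m(\xi)$. The identity of the preceding paragraph shows $\iota_m(g)$ maps $\psi_m(\xi)$ into $A$ and commutes with it; and $\iota_m(g)$ commutes with the diagonal operator $\rho_m(a)$, their product being $\rho_m(\varpi_*(g)\,a) \in A$. Hence $\iota_m(g)A \cup A\iota_m(g) \subseteq A$ and $\iota_m(g)$ commutes with every generator of $A$, so $\iota_m(g) \in \Zz\Mm(A)$.

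The computations themselves are mechanical; the two steps demanding genuine care are the passage to the multiplier algebra---checking that the diagonal formula for $\bar\rho_m$ survives the strict extension, so that the bimodule relations hold for all $a \in C_b(\SG{E}^0)$ and in particular for $a = \varpi_*(g)$---and the use of nondegeneracy to identify $\Mm(A)$ with the idealiser, which is precisely what lets the multiplier and centrality verifications be reduced to the two classes of generators. I expect this identification of $\Mm(A)$ to be the one point a reader might stumble over if it is left implicit, so I would state it explicitly before running the generator-wise check.
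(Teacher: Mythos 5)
Your proposal is correct and follows essentially the same route as the paper's proof: basis-vector computations yield the bimodule identities, and the multiplier, centrality and unitality claims are verified generator by generator. The only cosmetic differences are that the paper avoids computing with $\bar\rho_m$ on general multipliers by first factoring $\xi = a\cdot\xi$ with $a \in C_c(\SG{E}^0)$ (Lemma~\ref{lem:nondegenerate actions}) rather than checking that the diagonal formula survives the strict extension, and that you make the injectivity of $\iota_m$ explicit where the paper leaves it tacit.
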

\begin{proof}
We just calculate with basis vectors: for $a,b \in C_0(\SG{E}^0)$ and $\xi \in
C_c(\SG[m]{E}^1)$, and for $\alpha \in \SG[m]{E}^*$, we have
\begin{align*}
\rho_m(a) \psi_m(\xi) \rho_m(b) h_{\alpha}
    &= \sum_{\beta \in \SG[m]{E}^1 r(\alpha)} a(r(\beta)) \xi(\beta) b(r(\alpha)) h_{\beta\alpha}\\
    &= \sum_{\beta \in \SG[m]{E}^1 r(\alpha)} (a \cdot \xi \cdot b)(\beta) h_{\beta\alpha}
     = \psi_m(a \cdot \xi \cdot b) h_{\alpha}.
\end{align*}
Taking $a$ such that $a \cdot \xi = \xi$ as in Lemma~\ref{lem:nondegenerate actions}
gives $\psi_m(\xi) \rho_m(b) = \psi_m(\xi \cdot b)$. Likewise, taking $b$ such that $\xi
\cdot b = \xi$ gives $\rho_m(a) \psi_m(\xi) = \psi_m(a \cdot \xi)$.

Now fix $\xi \in C_c(\SG[m]{E}^1)$ and $g \in C(\SS)$. Choose $a \in C_c(\SG{E}^0)$ such
that $a \cdot \xi = \xi = \xi \cdot a$. Then by definition of $\bar\rho$,
\begin{align*}
\iota_m(g)\psi_m(\xi)
    &= \iota_m(g) \psi_m(a\cdot \xi)
    = \bar\rho_m(\varpi^*(g)) \rho_m(a) \psi_m(\xi)\\
    &= \rho_m(\varpi_*(g)a) \psi_m(\xi)
    = \psi_m((\varpi_*(g)a)\cdot \xi)
    = \psi_m(g\cdot (a\cdot \xi))
    = \psi_m(g \cdot \xi).
\end{align*}
Likewise $\psi_m(\xi)\iota_m(g) = \psi_m(\xi \cdot g)$. Corollary~\ref{cor:C(S) action}
shows that $g \cdot \xi = \xi \cdot g$, so we obtain $\iota_m(g)\psi_m(\xi) =
\psi_m(g\cdot \xi) = \psi_m(\xi)\iota_m(g)$ as claimed.

Since $\psi_m(C_c(\SG{E}^1)$ and $\rho_m(C_0(\SG{E}^0)$ generate $\Tt C^*(\SG[m]{E})$, we
deduce that
\[
    \iota_m(C(\SS))\Tt C^*(\SG[m]{E}) \subseteq \Tt C^*(\SG[m]{E}),
\]
and taking adjoints gives $\Tt C^*(\SG[m]{E}) \iota_m(C(\SS)) \subseteq \Tt
C^*(\SG[m]{E})$ as well. So we can regard $\iota_m$ as a homomorphism of $C(\SS)$ into
$\Mm(\Tt C^*(\SG[m]{E}))$. Since $C(\SG{E}^0)$ is abelian, the elements of
$\iota_m(C(\SS))$ commute with the elements $\rho_m(a)$, and we have just established
that they commute with the elements of $\psi_m(C_c(\SG[m]{E}^1)$. Again, since the
$\rho_m(a)$ and the $\psi_m(\xi)$ generate $\Tt C^*(\SG[m]{E})$ we see that $\iota$ takes
values in $\Zz \Mm(\Tt C^*(\SG[m]{E}))$. Finally, the identity function $1 \in C(\SS))$
satisfies $1 \cdot \xi = \xi = \xi \cdot 1$ for all $\xi \in C_c(\SG[m]{E}^1)$. So we
obtain $\iota_m(1)\psi_m(\xi) = \psi_m(1 \cdot \xi) = \psi_m(\xi)$ for all $\xi$, and
clearly $\iota_m(1)\rho_m(a) = \rho_m(a)$ for all $a$ by definition of $\iota_m$ and
$\varpi^*$. Hence $\iota_m$ is unital.
\end{proof}

The general theory of $C(X)$-algebras (see Section~\ref{sec:C(X)-algebras}) now implies
that $\Tt C^*(\SG[m]{E})$ is isomorphic to the algebra of continuous sections of an
upper-semicontinuous bundle of $C^*$-algebras over $\SS$.

\begin{ntn}
Let $E$ be a locally finite graph with no sources and fix $m \in \NN \setminus \{0\}$.
For each $t \in \SS$ we write $J_t$ for the ideal of $\Tt C^*(\SG[m]{E})$ generated by
$\iota(\{g \in C(\SS) : g(t) = 0\})$. Following the standard conventions for
$C(X)$-algebras, we then write $\Tt C^*(\SG[m]{E})_t$ for the quotient $\Tt
C^*(\SG[m]{E})/J_t$. For each $a \in \Tt C^*(\SG[m]{E})$, we write $\gamma_a : \SS \to
\bigsqcup_{t \in \SS} \Tt C^*(\SG[m]{E})_t$ for the section given by $\gamma_a(t) := a +
J_t$.
\end{ntn}

We first show that for $t \in (0,1)$, the fibre $\Tt C^*(\SG[m]{E})_t$ is a copy of $\Tt
C^*(E(1,m))$, and describe standard representatives in $\Tt C^*(\SG[m]{E})$ of its
canonical generators.

The following notation will be helpful for the next few results.

\begin{ntn}\label{ntn:pi_t}
Let $E$ be a locally finite graph with no sources. By Remark~\ref{rmk:m path spaces} the
space $\SG[m]{E}^*_0$ can be identified with $E(0, m)^*$ via the map $[\mu] \mapsto \mu$.
Let $\pi_0$ be the representation of $\Tt C^*(E(0,m))$ on $\ell^2(\SG[m]{E}^*_0)$
obtained from the path-space representation of $\Tt C^*(E(0,m))$ and this identification;
so
\[
    \pi_0(Q_v) h_{[\nu]} = \delta_{v, r(\nu)} h_{[\nu]}
        \quad\text{ and }\quad
    \pi_0(T_\mu) h_{[\nu]} = \delta_{s(\mu),r(\nu)} h_{[\mu\nu]}.
\]
For $t \in \SS\setminus\{0\}$, the set $\SG[m]{E}^*_t$ can be identified with
$E(1,m+1)^*$ via the map $\SG[m]{E}^k_t \owns [\mu, t] \mapsto (\mu_1 \cdots
\mu_{m+1})(\mu_{m+1}\cdots\mu_{2m+1}) \cdots (\mu_{(k-1)m}\mu_{km+1}) \in E(1, m+1)^k$
for $k \in \NN$, $\mu \in E^{km+1}$ and $t \in (0,1)$. We write $\pi_t$ for the
representation of $\Tt C^*(E(1, m+1))$ on $\ell^2(\SG[m]{E}^*_t)$ obtained from this
identification and the path-space representation of $\Tt C^*(E(1, m+1))$. So
\[
    \pi_t(Q_e) h_{[\mu,t]} = \delta_{e, \mu_1} h_{[\mu,t]}
        \quad\text{ and }\quad
    \pi_t(T_{\mu}) h_{[\nu,t]} = \delta_{\mu_{|\mu|}, \nu_1} h_{[\mu\nu_2 \cdots \nu_{|\nu|}, t]}.
\]
\end{ntn}

\begin{lem}\label{lem:representatives}
Let $E$ be a locally finite graph with no sources, fix $m \in \NN \setminus \{0\}$, and
take $t \in [0,1)$. If $a, a' \in C_0(\SG{E}^0)$ satisfy $a|_{\SG{E}^0_t} =
a'|_{\SG{E}^0_t}$, then $\pi_m(a) + J_t = \pi_m(a') + J_t$ in $\Tt C^*(\SG[m]{E})_t$. If
$\xi,\xi' \in C_c(\SG[m]{E}^1)$ satisfy $\xi|_{C_c(\SG[m]{E}^1_t)} =
\xi'|_{C_c(\SG[m]{E}^1_t)}$, then $\psi_m(\xi) + J_t = \psi_m(\xi') + J_t$ in $\Tt
C^*(\SG[m]{E})_t$.
\end{lem}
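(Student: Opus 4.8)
The plan is to deduce both statements from a single principle: each asserts that a function which vanishes on the fibre over $t$ (on $\SG{E}^0_t$ in the first case, on $\SG[m]{E}^1_t$ in the second) is carried by $\rho_m$ (resp.\ $\psi_m$) into the ideal $J_t$. Two ingredients drive this. First, both $\rho_m$ and $\psi_m$ intertwine multiplication by the central elements $\iota_m(g)$ with the $C(\SS)$-actions: by the definition of $\iota_m$ we have $\iota_m(g)\rho_m(b)=\rho_m((g\circ\varpi)\,b)$, and by Proposition~\ref{prp:bimodule maps} together with Corollary~\ref{cor:C(S) action} we have $\iota_m(g)\psi_m(\xi)=\psi_m(g\cdot\xi)$. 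Second, I would use the elementary description of the fibre ideals of the commutative $C(\SS)$-algebras $C_0(\SG{E}^0)$ and $C_0(\SG[m]{E}^1)$: if $Y\to\SS$ is a continuous map into the compact space $\SS$, then $C_0(Y)$ is a $C(\SS)$-algebra, and its fibre ideal over $t$ is exactly the functions vanishing on the preimage of $t$, which is the closed span of the products $(g\circ p)\,f$ with $g(t)=0$.

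For the first claim I would put $b:=a-a'$ and regard $C_0(\SG{E}^0)$ as a $C(\SS)$-algebra via the map $\varpi$ of Lemma~\ref{lem:surjection}, so that $g$ acts by multiplication by $g\circ\varpi$. By hypothesis $b$ vanishes on $\SG{E}^0_t=\varpi^{-1}(t)$, so by the fibre-ideal description $b$ is a $\|\cdot\|_\infty$-limit of finite sums $\sum_j(g_j\circ\varpi)\,c_j$ with $g_j(t)=0$. Since $\rho_m$ is a $*$-homomorphism it is contractive, and $\rho_m((g_j\circ\varpi)c_j)=\iota_m(g_j)\rho_m(c_j)\in J_t$; as $J_t$ is closed this forces $\rho_m(b)\in J_t$, i.e.\ $\rho_m(a)+J_t=\rho_m(a')+J_t$.

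For the second claim the outline is identical, but $\psi_m$ is merely linear and is \emph{not} $\|\cdot\|_\infty$-contractive, and controlling its operator norm under approximation is the one genuine obstacle. I would set $\eta:=\xi-\xi'\in C_c(\SG[m]{E}^1)$, view $C_0(\SG[m]{E}^1)$ as a $C(\SS)$-algebra via $\varpi\circ r$ (continuous, and equal to $\varpi\circ s$ on edges since $m\in\ZZ$, so that $\SG[m]{E}^1_t=(\varpi\circ r)^{-1}(t)$), and write $\eta$ as a $\|\cdot\|_\infty$-limit of finite sums $\sigma=\sum_j(g_j\circ\varpi\circ r)\,\zeta_j$ with $g_j(t)=0$. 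The difficulty is that the estimate~\eqref{eq:psil norm est} carries a constant equal to the number of tubes $\{[\mu,\cdot]:\mu\in E^{m+1}\}$ met by the support, so $\|\cdot\|_\infty$-smallness alone does not control $\|\psi_m(\cdot)\|$. I would resolve this by fixing once and for all a function $\phi\in C_c(\SG[m]{E}^1)$ with $\phi\equiv 1$ on $\supp\eta$; then $\eta=\phi\eta$, and $\phi\sigma=\sum_j(g_j\circ\varpi\circ r)(\phi\zeta_j)$ has all terms supported in the fixed compact set $\supp\phi$, which meets only finitely many tubes. Applying~\eqref{eq:psil norm est} to $\eta-\phi\sigma=\phi(\eta-\sigma)$ yields $\|\psi_m(\eta)-\psi_m(\phi\sigma)\|\le\|\phi\|_\infty\,\|\eta-\sigma\|_\infty\cdot|\{\mu\in E^{m+1}:\phi([\mu,t'])\neq 0\text{ for some }t'\}|\to 0$. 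Finally, using $\varpi(r([\mu,t']))=t'$ to identify $(g_j\circ\varpi\circ r)(\phi\zeta_j)$ with the action $g_j\cdot(\phi\zeta_j)$ of Corollary~\ref{cor:C(S) action}, I get $\psi_m(\phi\sigma)=\sum_j\iota_m(g_j)\psi_m(\phi\zeta_j)\in J_t$; since $J_t$ is closed, $\psi_m(\eta)\in J_t$, which gives $\psi_m(\xi)+J_t=\psi_m(\xi')+J_t$.
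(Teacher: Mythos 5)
Your proposal is correct and follows essentially the same route as the paper: both parts reduce to showing that a function vanishing on the fibre over $t$ lies in the closure of products with central elements $\iota_m(g)$, $g(t)=0$, with the only delicate point being that $\psi_m$ is not $\|\cdot\|_\infty$-contractive. Your cut-off function $\phi$ plays exactly the role of the paper's constant $N = |\{\mu \in E^{m+1} : \xi([\mu,s]) \neq 0 \text{ or } \xi'([\mu,s]) \neq 0 \text{ for some } s\}|$, namely fixing once and for all a finite bound on the number of tubes met by the supports of all approximants so that the estimate~\eqref{eq:psil norm est} closes the argument.
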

\begin{proof}
Let $d$ be the quotient metric on $\SS$ induced by the usual metric on $\RR$. For each
$n$, fix a function $f_n \in C_0(\SS \setminus \{t\})$ such that $0 \le f_n \le 1$ and
$f_n(s) = 1$ whenever $d(s,t) \ge 1/n$.

For the first statement, note that $C_0(\SG{E}^0 \setminus \{[e,t] : e \in E^1\})$
belongs to the ideal generated by the $\varpi_*(f_n)$, and so $\rho(C_0(\SG{E}^0
\setminus \SG{E}^0_t)) \subseteq J_t$. Since $a - a' \in C_0(\SG{E}^0 \setminus
\SG{E}^0_t)$ this proves the first statement.

For the second statement, let
\[
    N := |\{\mu \in E(1,m+1)^1 :  \max\{|\xi([\mu,s])|, |\xi'([\mu,s])|\} > 0 \text{ for some } s \in [0,1)\}|.
\]
Since $\xi$ and $\xi'$ have compact support, $N$ is finite. Fix $\varepsilon > 0$. The
set $X_\varepsilon := r(\{\alpha \in \SG[m]{E}^1 : |(\xi - \xi')(\alpha)| \ge
\varepsilon/N\})$ is a compact subset of $\SG{E}^0 \setminus \SG{E}^0_t$ and so there
exists $n > 0$ such that $f_n|_{X_\varepsilon} \equiv 1$. For this $n$, we have
\[
\big\|(\xi - \xi') - f_n \cdot (\xi - \xi')\|\infty \le \varepsilon/N,
\]
and $\supp\big((\xi - \xi') - f_n \cdot (\xi - \xi')\big) \subseteq \supp(\xi) \cup
\supp(\xi')$.

For any $\eta \in C_c(\SG[m]{E}^1)$, we have, using the representations $\pi_t$ of
Notation~\ref{ntn:pi_t},
\begin{align*}
\|\psi_m(\eta)\|
    &= \sup_{s \in \SS} \|\psi_m(\eta)|_{\ell^2(\SG[m]{E}^*_s)}\|\\
    &\le \max\Big\{\sup_{s \in (0,1)} \Big\|\sum_{\mu \in E^{m+1}} \eta([\mu,s])
        \pi_s(T_{\mu})\Big\|, \Big\|\sum_{\nu \in E^m} \eta([\nu])\pi_0(T_\nu)\Big\|\Big\}\\
    &\le \sup_{s \in \SS} \sum_{\alpha \in \SG[m]{E}^1_s} |\eta(\alpha)|.
\end{align*}
Applying this to $\eta = (\xi - \xi') - f_n \cdot (\xi - \xi')$ and using the definition
of $N$, we deduce that $\big\|\psi_m\big((\xi - \xi') - f_n \cdot (\xi - \xi')\big)\big\|
\le \varepsilon$. Since the $f_n$ all vanish at $t$, it follows that $\xi - \xi' \in J_t$
as claimed.
\end{proof}

We can now prove that for each $t \in (0,1)$, the corresponding fibre $\Tt
C^*(\SG[m]{E})_t$ is isomorphic to $\Tt C^*(E(1, m+1))$.

\begin{prp}\label{prp:nonzero fibres}
Let $E$ be a locally finite graph with no sources and fix $m \in \NN \setminus \{0\}$.
Take $t \in (0,1)$. For each $e \in E^1$, fix a function $a_{e,t} \in C_0(\SG{E}^0,
[0,1])$ such that $\supp(a_{e,t}) \subseteq \{[e,s] : 0 < s < 1\}$ and $a_{e,t}([e,t]) =
1$. For each $\mu \in E^{m+1}$, fix a function $\xi_{\mu,t} \in C_c(\SG[m]{E}^1)$ such
that $\supp(\xi_{\mu,t}) \subseteq \{[\mu,s] : 0 < s < 1\}$ and $\xi_{\mu,t}([\mu,t]) =
1$. There is an isomorphism $\theta_t : \Tt C^*(E(1, m+1)) \to \Tt C^*(\SG[m]{E})_t$ such
that
\[
\theta_t(Q_e) = \rho_m(a_{e,t}) + J_t\quad\text{ for all $e \in E(1,m+1)^0 = E^1$,}
\]
and such that
\[
\theta_t(T_{\mu}) = \psi_m(\xi_{\mu,t}) + J_t\quad\text{ for all $\mu \in E(1, m+1)^1 = E^{m+1}$.}
\]
\end{prp}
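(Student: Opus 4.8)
The plan is to construct $\theta_t$ by hand out of the ideal decomposition of Lemma~\ref{lem:quotient of tensor}, rather than via the universal property of $\Tt C^*(E(1,m+1))$, and then to obtain injectivity essentially for free by comparing $\theta_t$ with the faithful path-space representation $\pi_t$ of Notation~\ref{ntn:pi_t}. Throughout write $q_t : \Tt C^*(\SG[m]{E}) \to \Tt C^*(\SG[m]{E})_t$ for the quotient map with kernel $J_t$, and let $J \cong C_0((0,1)) \otimes \Tt C^*(E(1,m+1))$ be the ideal of Lemma~\ref{lem:quotient of tensor}, with $\kappa_0(g \otimes Q_e) = \rho_m(1_e \times g)$ and $\kappa_0(g \otimes T_\mu) = \psi_m(1_\mu \times g)$.

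First I would record how the $C(\SS)$-structure given by $\iota_m$ and the $C_0((0,1))$-structure on $J$ coming from $\kappa_0$ interact. For $g \in C_0((0,1))$ the element $\iota_m(g) = \bar\rho_m(\varpi_*(g)) = \rho_m(\varpi_*(g))$ lies in $\rho_m(C_0(\SG{E}^0 \setminus E^0)) \subseteq J$, and using Proposition~\ref{prp:bimodule maps} (for the $T_\mu$ generators) and $\iota_m(g)\rho_m(a) = \rho_m(\varpi_*(g)a)$ (for the $Q_e$ generators) a short computation on generators, extended by the left-multiplier property, yields the compatibility identity
\[
\iota_m(g)\,\kappa_0(f \otimes x) = \kappa_0\big((g|_{(0,1)}\, f) \otimes x\big)
    \qquad (g \in C(\SS),\ f \in C_0((0,1)),\ x \in \Tt C^*(E(1,m+1))).
\]
In particular, if $h \in C_0((0,1))$ vanishes at $t$, then choosing $g_0 \in C_0((0,1))$ with $g_0 \equiv 1$ on $\supp h$ gives $\kappa_0(h \otimes x) = \iota_m(h)\kappa_0(g_0 \otimes x)$; regarding $h$ as an element of $C(\SS)$ vanishing at $t$, this shows $\kappa_0(h \otimes x) \in J_t$. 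Thus $\kappa_0$ carries the fibre ideal at $t$ of the constant bundle $C_0((0,1)) \otimes \Tt C^*(E(1,m+1))$ into $J_t$.

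Next I would define $\theta_t(x) := q_t(\kappa_0(g \otimes x))$ for any $g \in C_0((0,1))$ with $g(t)=1$. The previous paragraph makes this independent of $g$, and since $\kappa_0(g \otimes x)\kappa_0(g \otimes y) = \kappa_0(g^2 \otimes xy)$ with $g^2(t)=1$ it is a $*$-homomorphism. On generators $\theta_t(Q_e) = q_t(\rho_m(1_e \times g))$ and $\theta_t(T_\mu) = q_t(\psi_m(1_\mu \times g))$; as $1_e \times g$ agrees with $a_{e,t}$ on $\SG{E}^0_t$ and $1_\mu \times g$ agrees with $\xi_{\mu,t}$ on $\SG[m]{E}^1_t$, Lemma~\ref{lem:representatives} delivers the prescribed formulas $\theta_t(Q_e) = \rho_m(a_{e,t}) + J_t$ and $\theta_t(T_\mu) = \psi_m(\xi_{\mu,t}) + J_t$. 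For surjectivity, note that for any $b$ and such a $g$ we have $\iota_m(g) \in J$, hence $\iota_m(g)b \in J$, and $q_t(\iota_m(g)b) = q_t(b)$ (since $\iota_m(g) - \iota_m(1) \in J_t$); so $q_t|_J$ is onto. Writing a general element of $J$ as $\kappa_0$ of a limit of sums of elementary tensors and using $q_t(\kappa_0(f \otimes x)) = \theta_t(f(t)x)$ shows $q_t(J)$ equals the (closed) image of $\theta_t$, so $\theta_t$ is surjective.

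Finally, injectivity. Because each $\ell^2(\SG[m]{E}^* \omega)$ is invariant (Lemma~\ref{lem:psil,rhol defs}), restriction to $\ell^2(\SG[m]{E}^*_t) = \bigoplus_{\omega \in \SG{E}^0_t} \ell^2(\SG[m]{E}^*\omega)$ defines a representation $\mathrm{res}_t$ of $\Tt C^*(\SG[m]{E})$; since $\mathrm{res}_t(\iota_m(g)) = g(t)\,\Id$, we have $J_t \subseteq \ker \mathrm{res}_t$, so $\mathrm{res}_t$ factors as $\overline{\mathrm{res}_t} \circ q_t$. Matching the formulas of Notation~\ref{ntn:pi_t} with the actions of $\rho_m(a_{e,t})$ and $\psi_m(\xi_{\mu,t})$ on $\ell^2(\SG[m]{E}^*_t)$ gives $\overline{\mathrm{res}_t} \circ \theta_t = \pi_t$ on the generators $Q_e,T_\mu$, hence everywhere. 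As $E(1,m+1)$ is locally finite with no sources, $\pi_t$ is a unitary conjugate of the path-space representation and so is faithful; therefore $\theta_t$ is injective, and combined with surjectivity it is the desired isomorphism (and $\overline{\mathrm{res}_t}$ identifies $\Tt C^*(\SG[m]{E})_t$ with $\pi_t(\Tt C^*(E(1,m+1)))$). I expect the one genuinely delicate point to be the compatibility identity between the $C(\SS)$- and $C_0((0,1))$-actions on $J$ that lets the constant-bundle structure of $J$ be read off as the fibre of the ambient $C(\SS)$-algebra; everything else is bookkeeping with Lemma~\ref{lem:representatives} or an appeal to faithfulness of the path-space representation.
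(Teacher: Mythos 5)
Your argument is correct in substance but takes a genuinely different route from the paper's. The paper shows directly that $q_e := \rho_m(a_{e,t}) + J_t$ and $t_\mu := \psi_m(\xi_{\mu,t}) + J_t$ form a Toeplitz--Cuntz--Krieger $E(1,m+1)$-family (computing fibrewise with the representations of Notation~\ref{ntn:pi_t}), obtains $\theta_t$ from the universal property of $\Tt C^*(E(1,m+1))$, and proves injectivity by showing each gap projection $q_e - \sum_{e\mu} t_{e\mu}t^*_{e\mu}$ restricts to a nonzero (rank-one) projection on $\ell^2(\SG[m]{E}^*_t)$ and invoking \cite[Theorem~4.1]{FR}. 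You instead realise $\theta_t$ as evaluation at $t$ of the trivial bundle $J \cong C_0((0,1)) \otimes \Tt C^*(E(1,m+1))$ of Lemma~\ref{lem:quotient of tensor}; this buys you freedom from verifying (TCK1)--(TCK2) and from the universal property, at the cost of the compatibility identity between the $C(\SS)$-structure and the tensor factor, and your injectivity step (factoring restriction to $\ell^2(\SG[m]{E}^*_t)$ through $q_t$ and matching the composite with the faithful path-space representation) is essentially Corollary~\ref{cor:res=fibre map}, which the paper deduces \emph{from} the proposition --- you prove it en route instead. Both proofs lean equally on Lemma~\ref{lem:representatives} to land on the specific representatives $a_{e,t}$, $\xi_{\mu,t}$. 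Two small points want tightening. First, when $E^1$ is infinite, $\iota_m(g)$ for $g \in C_0((0,1))$ is only a multiplier, not an element of $J$; the correct statement is that $\iota_m(g)$ multiplies the generators $\rho_m(a)$ and $\psi_m(\xi)$ into $J$ (for $\psi_m(\xi)$ one should approximate $g$ by compactly supported functions so that $g \cdot \xi$ lands in $C_c(\SG[m]{E}^1 \setminus E^m)$), which still gives $\iota_m(g)b \in J$ for all $b$. Second, for general $h \in C_0((0,1))$ vanishing at $t$ the support need not be compact, so there may be no $g_0$ with $g_0 \equiv 1$ on $\supp h$; instead write $\kappa_0(h \otimes x) = \lim_n \kappa_0(hg_n \otimes x)$ with $(g_n)$ a compactly supported approximate unit and apply your factorisation to each $hg_n$. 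Neither repair affects the viability of the argument.
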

\begin{proof}
Lemma~\ref{lem:representatives} shows that the elements $\rho_m(a_{e,t}) + J_t$ and
$\psi_m(\xi_{ef,t}) + J_t$ generate $\Tt C^*(\SG[m]{E})_t$, so it suffices to construct
an injective homomorphism $\theta_t$ satisfying the given formulae.

For this, define $q_e := \rho_m(a_{e,t}) + J_t$ for each $e \in E^1$ and $t_{\mu} :=
\psi_m(\xi_{\mu,t}) + J_t$ for each $\mu \in E^{m+1}$. We will show that $(q, t)$ is a
Toeplitz--Cuntz--Krieger $E(1, m+1)$-family.

Since $a^2_{e,t}([f,t]) = \overline{a_{e,t}}([f,t]) = a_{e,t}([f,t])$ for all $f$,
Lemma~\ref{lem:representatives} shows that the $q_e$ are projections. We have $a_{e,t}
a_{f,t} = 0$ in $C_0(\SG{E}^0)$ for $e \not=f$, so the $q_e$ are mutually orthogonal.

Fix $\mu \in E^{m+1}$. Define $a'_{\mu, t} : \SG{E}^0 \to \CC$ by
\[
    a'_{\mu,t}([e,s]) = \begin{cases}
        |\xi_{\mu,t}(s)|^2 &\text{ if $\mu_{m+1} = e$}\\
        0 &\text{ otherwise.}
    \end{cases}
\]
Since $\supp(\xi_{\mu,t})$ is a compact subset of $\{[\mu, s] : 0 < s < 1\}$, we have
$a'_{\mu,t} \in C_c(\SG{E}^0)$, and by construction, $a'_{\mu,t}$ and $a_{\mu_{m+1},t}$
agree at $[g,t]$ for every $g \in E^1$. So Lemma~\ref{lem:representatives} implies that
$q_{\mu_{m+1}} = \rho_m(a'_{\mu,t}) + J_t$. Using the representations $\pi_t$ of
Notation~\ref{ntn:pi_t}, we have
\begin{align*}
\psi_m(\xi_{\mu,t})^* \psi(\xi_{\mu,t})
    &=  \Big(\sum_{\alpha,\beta \in E^m} \overline{\xi_{\mu,t}([\alpha])}\xi_{\mu,t}([\beta]) \pi_0(T_\alpha^*T_\beta)\Big)\\
    &\qquad\qquad \oplus
        \bigoplus_{0 < s < 1}
        \sum_{\eta, \zeta \in E^{m+1}} \overline{\xi_{\mu,t}}([\eta,s])\xi_{\mu,t}([\zeta,s]) \pi_s(T_\eta^*T_\zeta)\\
    &= 0 \oplus \bigoplus_{0 < s < 1} |\xi_{\mu,t}([\mu,s])|^2 \tilde\pi^\infty(Q_{\mu_{m+1}})
    = \rho_m(a'_{\mu, t}).
\end{align*}
Thus $t_\mu^* t_\mu = \rho(a'_{\mu,t}) + J_t = \rho(a_{\mu_{m+1},t}) + J_t  =
q_{s_{1,m+1}(\mu)}$.

Now fix $e \in E^1$. For each $\mu \in s(e) E^m$, define $\xi'_{e\mu, t}$ by
$\xi'_{e\mu,t}([\nu,s]) = \delta_{e\mu, \nu} \sqrt{a_{e, t}([e,s])}$ for $\nu \in
E^{m+1}$ and $s \in [0,1)$. Each $\xi'_{e\mu,t} \in C_c(\SG[m]{E}^1)$ because $a_{e, t}$
is supported on $\{[e,s] : 0 < s < 1\}$. Lemma~\ref{lem:representatives} shows that
$t_{e\mu} = \psi_m(\xi'_{e\mu,t})$ for each $\mu \in E(1, m+1)^1$. Arguing as above, we
see that
\begin{align}
\sum_{e\mu \in e E(1,m)^1} t_{e\mu} t^*_{e\mu}
    &= \sum_{\mu \in s(e)E^m} \psi(\xi'_{e\mu,t}) \psi(\xi'_{e\mu,t})^*\nonumber\\
    &= 0 \oplus \bigoplus_{0 < s < 1} \sum_{\mu \in s(e)E^m} |\xi'_{e\mu,t}(e\mu,s)|^2 \pi_s(T_{e\mu}T^*_{e\mu})\nonumber\\
    &= 0 \oplus \bigoplus_{0 < s < 1} a_{e,t}(e,s) 1_{\clsp\{h_{[\mu,s]} : \mu \in eE(1,m+1)^* \setminus \{e\}\}}.
    \label{eq:delta1}
\end{align}
Also,
\begin{equation}\label{eq:delta2}
\rho_m(a_{e,t}) = 0 \oplus \bigoplus_{0 < s < 1} a_{e,t}(e,s) 1_{\clsp\{h_{[\mu,s]} : \mu \in eE(1,m+1)^*\}}.
\end{equation}
We deduce that $\rho_m(a_{e,t}) > \sum_{e\mu \in eE(1,m+1)^1} \psi_m(\xi'_{e\mu,t})
\psi_m(\xi'_{e\mu,t})^*$. In particular, in the quotient, $q_e \ge \sum_{e\mu \in
eE(1,m+1)^1} t_{e\mu} t^*_{e\mu}$.

So $(q, t)$ is a Toeplitz--Cuntz--Krieger $E(1,m+1)$-family as claimed. The universal
property of $\Tt C^*(E(1, m+1))$ therefore yields a homomorphism $\theta_t : \Tt C^*(E(1,
m+1)) \to \Tt C^*(\SG[m]{E})_t$ such that $\theta_t(Q_e) = \rho_m(a_{e,t}) + J_t$ and
$\theta_t(T_{\mu}) = \psi_m(\xi_{\mu,t}) + J_t$.

It remains to prove that $\theta_t$ is injective. Since $J_t$ is contained in the kernel
of the restriction map $x \mapsto x|_{\ell^2(\SG[m]{E}^*_t)}$ on $\Tt C^*(\SG[m]{E})_t$,
we see that (with the functions $\xi'_{ef}$ used in the calculation~\eqref{eq:delta1}
above), each
\[
\|q_e - \sum_{e\mu \in eE(1,m+1)^1} t_{e\mu}t^*_{e\mu}\| \ge
    \Big\|\Big(\rho_m(a_{e,t}) - \sum_{e\mu \in eE(1,m+1)^1} \psi_m(\xi'_{e\mu,t})\psi_m(\xi'_{e\mu,t})\Big)\Big|_{\ell^2(\SG[m]{E}^*_t)}\Big\|.
\]
The calculations \eqref{eq:delta1}~and~\eqref{eq:delta2} therefore show that
\[
\Big\|q_e - \sum_{e\mu \in eE(1,m+1)^1} t_{e\mu}t^*_{e\mu}\Big\|
    \ge \|1_{\CC h_{e}}\| = 1.
\]
So each $q_e - \sum_{e\mu \in eE(1,m+1)^1} t_{e\mu}t^*_{e\mu} \not= 0$, and the
uniqueness theorem \cite[Theorem~4.1]{FR} shows that $\theta_t$ is injective.
\end{proof}

\begin{cor}\label{cor:res=fibre map}
Let $E$ be a locally finite graph with no sources and fix $m \in \NN \setminus \{0\}$.
Take $t \in (0,1)$. Let $U_t : \ell^2(E(1,m+1)^*) \to \ell^2(\SG[m]{E}^*_t)$ be the
unitary of Remark~\ref{rmk:m path spaces}. Let $\pi : \Tt C^*(E(1, m+1)) \to
\Bb(\ell^2(E(1, m+1)^*))$ be the path-space representation. Then the map $a + J_t \mapsto
\pi^{-1}(U_t^* a|_{\ell^2{\SG[m]{E}^*_t}} U_t)$ is an isomorphism of $\Tt
C^*(\SG[m]{E})_t$ onto $\Tt C^*(E(1, m+1))$.
\end{cor}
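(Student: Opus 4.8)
The plan is to recognise the displayed map as the inverse of the isomorphism $\theta_t \colon \Tt C^*(E(1,m+1)) \to \Tt C^*(\SG[m]{E})_t$ produced in Proposition~\ref{prp:nonzero fibres}. First I would let $\beta_t \colon \Tt C^*(\SG[m]{E}) \to \Bb(\ell^2(\SG[m]{E}^*_t))$ be the restriction homomorphism $a \mapsto a|_{\ell^2(\SG[m]{E}^*_t)}$; this is a well-defined $*$-homomorphism because $\ell^2(\SG[m]{E}^*_t)$ is invariant for $\Tt C^*(\SG[m]{E})$ by Lemma~\ref{lem:psil,rhol defs}. Composing with the $*$-isomorphism $\Ad_{U_t^*}$ yields a homomorphism $\Phi_0 := \Ad_{U_t^*} \circ \beta_t \colon \Tt C^*(\SG[m]{E}) \to \Bb(\ell^2(E(1,m+1)^*))$, and the task reduces to showing that $\Phi_0$ takes values in $\pi(\Tt C^*(E(1,m+1)))$, that $J_t \subseteq \ker \Phi_0$, and that the induced map on the fibre inverts $\theta_t$.

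Second, I would compute $\Phi_0$ on generators, reusing the calculations already made for Proposition~\ref{prp:nonzero fibres}. The key observation is that $\pi_t = \Ad_{U_t} \circ \pi$ by construction (Notation~\ref{ntn:pi_t}), so that $\Ad_{U_t^*} \circ \pi_t = \pi$. For $a \in C_0(\SG{E}^0)$ the operator $\rho_m(a)$ is diagonal, and restricting it to $\ell^2(\SG[m]{E}^*_t)$ (cf.\ \eqref{eq:delta2} at $s = t$) gives $\beta_t(\rho_m(a)) = \pi_t\big(\sum_{e \in E^1} a([e,t]) Q_e\big)$, where the $Q_e$ are mutually orthogonal so the sum lies in $\Tt C^*(E(1,m+1))$; for $\xi \in C_c(\SG[m]{E}^1)$ the norm-estimate display in the proof of Lemma~\ref{lem:representatives} gives $\beta_t(\psi_m(\xi)) = \pi_t\big(\sum_{\mu \in E^{m+1}} \xi([\mu,t]) T_\mu\big)$, a finite sum since $\SG[m]{E}^1_t$ is discrete and $\supp \xi$ is compact. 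Hence $\Phi_0(\rho_m(a)) = \pi\big(\sum_e a([e,t]) Q_e\big)$ and $\Phi_0(\psi_m(\xi)) = \pi\big(\sum_\mu \xi([\mu,t]) T_\mu\big)$, both of which lie in the closed subalgebra $\pi(\Tt C^*(E(1,m+1)))$. Since these elements generate $\Tt C^*(\SG[m]{E})$ and $\Phi_0$ is continuous, the whole image lies in $\pi(\Tt C^*(E(1,m+1)))$, so $\pi^{-1} \circ \Phi_0$ is defined (recall $\pi$ is faithful). Moreover $\iota_m(g)$ is the diagonal operator $h_{[\mu,s]} \mapsto g(s) h_{[\mu,s]}$, so $\beta_t(\iota_m(g)) = g(t)\,\Id$, which vanishes when $g(t) = 0$; as $\Phi_0$ is a homomorphism this forces $J_t \subseteq \ker \Phi_0$, and $\pi^{-1} \circ \Phi_0$ descends to a homomorphism $\Phi$ of $\Tt C^*(\SG[m]{E})_t$ given by exactly the formula $a + J_t \mapsto \pi^{-1}(U_t^* a|_{\ell^2(\SG[m]{E}^*_t)} U_t)$ in the statement.

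Finally, I would verify that $\Phi$ inverts $\theta_t$. Applying the generator formulas to the representatives chosen in Proposition~\ref{prp:nonzero fibres}, and using that $a_{e,t}([f,t]) = \delta_{e,f}$ and $\xi_{\mu,t}([\nu,t]) = \delta_{\mu,\nu}$ by the normalisation and support conditions there, gives $\Phi(\theta_t(Q_e)) = \pi^{-1}(\pi(Q_e)) = Q_e$ and $\Phi(\theta_t(T_\mu)) = \pi^{-1}(\pi(T_\mu)) = T_\mu$. Thus $\Phi \circ \theta_t$ is the identity on the generating set $\{Q_e, T_\mu\}$ of $\Tt C^*(E(1,m+1))$ and hence is the identity map; since $\theta_t$ is an isomorphism by Proposition~\ref{prp:nonzero fibres}, it follows that $\Phi = \theta_t^{-1}$ is an isomorphism of $\Tt C^*(\SG[m]{E})_t$ onto $\Tt C^*(E(1,m+1))$, as required. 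The only genuinely delicate point is the bookkeeping in the second step---checking that restriction to the fibre followed by $\Ad_{U_t^*}$ really reproduces the path-space representation $\pi$ rather than some twisted variant---but this is forced by the defining relation $\pi_t = \Ad_{U_t} \circ \pi$ of Notation~\ref{ntn:pi_t}, so no computation beyond that already carried out for Proposition~\ref{prp:nonzero fibres} is needed.
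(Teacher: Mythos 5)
Your proof is correct and follows essentially the same route as the paper: the paper's own argument simply checks that the displayed map agrees with $\theta_t^{-1}$ on the generators $\rho_m(a)+J_t$ and $\psi_m(\xi)+J_t$ of $\Tt C^*(\SG[m]{E})_t$ and concludes from there. Your extra verification that restriction to $\ell^2(\SG[m]{E}^*_t)$ followed by $\Ad_{U_t^*}$ lands in $\pi(\Tt C^*(E(1,m+1)))$ and kills $J_t$ makes explicit a well-definedness point the paper leaves implicit, but the substance is the same.
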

\begin{proof}
Consider the inverse $\theta_t^{-1}$ of the isomorphism described in
Proposition~\ref{prp:nonzero fibres}. It is straightforward to check that for $a \in
C_0(\SG{E}^0)$ and $\xi \in C_c(\SG[m]{E}^1)$, we have
\[
\theta_t^{-1}(\rho_m(a) + J_t) = \sum_{e \in E^1} a([e,t]) Q_e = \pi^{-1} (U_t^* \rho_m(a)|_{\ell^2(\SG[m]{E}^*_t)} U_t)
\]
and
\[
\theta_t^{-1}(\psi_m(\xi) + J_t) = \sum_{\mu \in E(1,m+1)^1} \xi([\mu,t]) T_{ef} = \pi^{-1} (U_t^* \psi_m(\xi)|_{\ell^2(\SG{E}^*_t)} U_t).
\]
Since the elements $\rho_m(a) + J_t$ and $\psi_m(\xi) + J_t$ generate $\Tt
C^*(\SG[m]{E})_t$, it follows that $x \mapsto \pi^{-1} (U_t^* x|_{\ell^2(\SG[m]{E}^*_t)}
U_t)$ agrees with $\theta_t^{-1}$, so is an isomorphism as claimed.
\end{proof}

We must now describe the fibre $\Tt C^*(\SG[m]{E})_0$. The idea is that for $a \in \Tt
C^*(\SG[m]{E}^0)$, using the unitaries $U_t : \ell^2(E(1, m+1)^*) \to
\ell^2(\SG[m]{E}^*_t)$ of Remark~\ref{rmk:m path spaces}, the function $t \mapsto U_t^*
a|_{\ell^2(\SG[m]{E}^*_t)} U_t$ from $(0,1)$ to $\Bb(\ell^2(E(1,m+1)^*))$ converges in
norm as $t \to 0$ and as $t \to 1$, and the limits $\varepsilon_0(a)$ and
$\varepsilon_1(a)$ belong to the image of $\Tt C^*(E(1, m+1))$ in its path-space
representation. We use these limits to construct an injective homomorphism of $\Tt
C^*(\SG[m]{E})_0$ into $\Tt C^*(E(1, m+1)) \oplus \Tt C^*(E(1, m+1))$.

\begin{lem}\label{lem:lim from right}
Let $E$ be a locally finite graph with no sources and fix $m \in \NN \setminus \{0\}$.
For $t \in (0,1)$, let $U_t : \ell^2(E(1,m+1)^*) \to \ell^2(\SG[m]{E}^*_t)$ be the
unitary of Remark~\ref{rmk:m path spaces}. Let $\pi : \Tt C^*(E(1, m+1)) \to
\Bb(\ell^2(E(1, m+1)^*))$ be the path-space representation. For $a \in C_0(\SG{E}^0)$ we
have
\[
\lim_{t \searrow 0} U_t^* \rho_m(a)|_{\ell^2(\SG[m]{E}^*_t)} U_t = \sum_{e \in E^1} a([r(e)]) \pi(Q_e).
\]
For $\xi \in C_c(\SG[m]{E}^1)$, we have
\[
\lim_{t \searrow 0} U_t^* \psi_m(\xi)|_{\ell^2(\SG[m]{E}^*_t)} U_t = \sum_{\mu \in E^m, e \in s(\mu)E^1} \xi{[\mu]} \pi(T_{\mu e}).
\]
For $a \in \Tt C^*(\SG[m]{E})$, the limit $\lim_{t \searrow 0} U_t^*
a|_{\ell^2(\SG[m]{E}^*_t)} U_t$ exists and belongs to $\pi(\Tt C^*(E(1, m+1)))$, and
$\varepsilon_0 : a \mapsto \pi^{-1}(\lim_{t \searrow 0} U_t^* a|_{\ell^2(\SG[m]{E}^*_t)}
U_t)$ is a homomorphism from $\Tt C^*(\SG[m]{E})$ to $\Tt C^*(E(1, m+1))$.
\end{lem}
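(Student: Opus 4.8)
The plan is to transport each operator to the fixed Hilbert space $\ell^2(E(1,m+1)^*)$ via the unitaries $U_t$ of Remark~\ref{rmk:m path spaces}, prove the two displayed limit formulas on the generators, and then bootstrap to all of $\Tt C^*(\SG[m]{E})$ using that conjugation into the fibre is a contractive $*$-homomorphism. The organising observation is that, by Lemma~\ref{lem:psil,rhol defs}, each $\ell^2(\SG[m]{E}^*_t)$ is invariant, so for every $t\in(0,1)$ the map $\Phi_t\colon a\mapsto U_t^*\,a|_{\ell^2(\SG[m]{E}^*_t)}\,U_t$ is a $*$-homomorphism of $\Tt C^*(\SG[m]{E})$ into $\Bb(\ell^2(E(1,m+1)^*))$ with $\|\Phi_t(a)\|\le\|a\|$, and that $U_t$ intertwines $\pi$ with the representation $\pi_t$ of Notation~\ref{ntn:pi_t}.

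For the generator formulas I would compute $\Phi_t$ on basis vectors. Since $r([\mu,t])=[\mu_1,t]$ and $\pi_t(Q_e)h_{[\mu,t]}=\delta_{e,\mu_1}h_{[\mu,t]}$, one gets the diagonal operator $\Phi_t(\rho_m(a))=\sum_{e\in E^1}a([e,t])\,\pi(Q_e)$; similarly, reading off the composition rule in $\SG[m]{E}$ and comparing with $\pi_t(T_\mu)$ gives $\Phi_t(\psi_m(\xi))=\sum_{\mu\in E^{m+1}}\xi([\mu,t])\,\pi(T_\mu)$, a \emph{finite} sum because $\xi\in C_c(\SG[m]{E}^1)$ has finite ``edge-support'' in the sense of the estimate \eqref{eq:psil norm est}. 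Now let $t\searrow0$. For $\psi_m(\xi)$, continuity of $\xi$ along each arc $s\mapsto[\mu,s]$ gives $\xi([\mu,t])\to\xi([\mu,0])$, and since $(\mu,0)\approx_m(\mu(0,m),0)$ the limit value is $\xi([\mu(0,m)])$; as the sum is finite and $\|\pi(T_\mu)\|\le1$, convergence is automatically in norm, and writing $\mu=\mu' e$ with $\mu'=\mu(0,m)\in E^m$ and $e=\mu_{m+1}\in s(\mu')E^1$ yields exactly $\sum_{\mu\in E^m,\,e\in s(\mu)E^1}\xi[\mu]\,\pi(T_{\mu e})$. For $\rho_m(a)$ the diagonal entries satisfy $a([e,t])\to a([r(e)])$ pointwise, so the candidate limit is $\sum_e a([r(e)])\pi(Q_e)$.

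The one genuinely delicate point—and the main obstacle—is promoting the $\rho_m(a)$ convergence from pointwise to \emph{norm} convergence, i.e. showing $\sup_{e\in E^1}|a([e,t])-a([r(e)])|\to0$, since $E^1$ may be infinite. I would first reduce to $a\in C_c(\SG{E}^0)$ by the density of $C_c$ in $C_0$ together with the contractivity $\|\Phi_t\|\le1$. For compactly supported $a$ the crux is that the support meets only finitely many of the disjoint open arcs $\{[e,s]:s\in(0,1)\}$: any sequence of points in distinct arcs lying in a compact set must, after passing to a subsequence, converge to a vertex $[v]$ (the arc-interiors being open and disjoint, the limit lies in $\varpi^{-1}(0)=\{[v]:v\in E^0\}$), whence by the vertex neighbourhood basis and local finiteness it eventually lies in the finitely many arcs $e\in vE^1$, a contradiction. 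Thus the relevant supremum is over finitely many $e$ and is controlled by continuity of $a$ along each arc, giving norm convergence.

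Finally I would assemble the homomorphism $\varepsilon_0$. On the dense $*$-subalgebra generated by the images of $\rho_m$ and $\psi_m$, each element is a finite sum of products of generators, so $\lim_{t\searrow0}\Phi_t$ exists in norm (norm limits of the uniformly bounded, norm-convergent generator images multiply and add), takes values in the closed algebra $\pi(\Tt C^*(E(1,m+1)))$ because the generator limits $\pi(Q_e)$ and $\pi(T_{\mu e})$ do, and is a contractive $*$-homomorphism there. For arbitrary $a\in\Tt C^*(\SG[m]{E})$, choosing $a_n\to a$ from this subalgebra, the bound $\|\Phi_t(a)-\Phi_t(a_n)\|\le\|a-a_n\|$ is uniform in $t$; hence $(\varepsilon_0(a_n))$ is Cauchy, its limit lies in $\pi(\Tt C^*(E(1,m+1)))$, and a standard three-epsilon argument shows $\lim_{t\searrow0}\Phi_t(a)$ exists and equals it. Therefore $\varepsilon_0=\pi^{-1}\circ\lim_{t\searrow0}\Phi_t$ is a well-defined contractive $*$-homomorphism $\Tt C^*(\SG[m]{E})\to\Tt C^*(E(1,m+1))$, as required.
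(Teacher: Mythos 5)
Your proposal is correct and follows essentially the same route as the paper: compute the conjugated operators on the generators, establish the two limit formulas there (with the only delicate point being uniformity over $E^1$ for $\rho_m(a)$), and then extend to all of $\Tt C^*(\SG[m]{E})$ by density together with the uniform contractivity of the fibre maps $\Phi_t$. The sole presentational difference is that you handle the uniformity for $\rho_m(a)$ by reducing to $C_c(\SG{E}^0)$ and showing a compact set meets only finitely many open arcs, whereas the paper works with $C_0$ directly via the finiteness of $\{e \in E^1 : |a([e,t])| \ge \varepsilon/2 \text{ for some } t\}$ --- both arguments rest on the same compactness fact.
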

\begin{proof}
Fix $a \in C_0(\SG{E}^0)$. Fix $\varepsilon > 0$, and let
\[
F := \{e \in E^1 : |a([e,t])| \ge \varepsilon/2\text{ for some } t \in [0,1]\}.
\]
Then $F$ is finite, and for each $e \in F$ there exists $\delta_e > 0$ such that $0 < t <
\delta_e \implies |a([e,t]) - a([e,0])| < \varepsilon$. Let $\delta = \min_{e \in F}
\delta_e$. Then for $e \in E^1$ and $0 < t < \delta$, if $e \in F$ then $|a([e,t]) -
a([e,0])| < \varepsilon$ by choice of $\delta$ and if $e \not\in F$, then $|a([e,t]) -
a([e,0])| \le |a([e,t])| + |a([e,0])| < \varepsilon$ by choice of $F$. By definition of
$\rho_m$, we have $U_t^* \rho_m(a)|_{\ell^2(\SG[m]{E}^*_t)}U_t = \sum_{e \in E^1} a([e,
t]) \pi(Q_e)$. Since $[e,0] = [r(e)]$ for each $e$ and since each $\|Q_e\| = 1$ it
follows that
\begin{equation}\label{eq:selim 1}
\lim_{t \searrow 0} U_t^*\rho_m(a)|_{\ell^2(\SG[m]{E}^*_t)} U_t = \sum_{e \in E^1} a([r(e)]) \pi(Q_e).
\end{equation}

Now fix $\xi \in C_c(\SG[m]{E}^1)$. Let $F = \{\mu \in E^{m+1} : \xi([\mu,t]) \not=
0\text{ for some } t \in [0,1]\}$. Fix $\varepsilon > 0$. For each $\mu \in F$ there
exists $\delta_{\mu} > 0$ such that $0 < t < \delta_{\mu}$ implies $|\xi([\mu,t]) -
\xi([\mu,0])| < \varepsilon/|F|$. Let $\delta := \min_{\mu \in F} \delta_{\mu}$. Fix $t
\in (0, \delta)$. We have
\begin{align*}
\Big\|U_t^* \psi_m(\xi)|&_{\ell^2(\SG[m]{E}^*_t)} U_t - \sum_{\mu \in E^m, e \in s(\mu)E^1} \xi{[\mu]} \pi(T_{\mu e})\Big\| \\
    &= \Big\|\sum_{\mu e \in F} (\xi{[\mu e, t]} - \xi([\mu])) \pi(T_{\mu e})\Big\|\\
    &\le \sum_{\mu e \in F} |\xi{[\mu e, t]} - \xi([\mu])| \|\pi(T_{\mu})\|
    < \varepsilon
\end{align*}
since each $\|\pi(T_{\mu})\| = 1$. Hence
\begin{equation}\label{eq:selim 2}
\lim_{t \searrow 0} U_t^* \psi_m(\xi)|_{\ell^2(\SG[m]{E}^*_t)} U_t = \sum_{\mu \in E^m, e \in s(\mu)E^1} \xi{[\mu]} \pi(T_{\mu e}).
\end{equation}

For the final statement, first consider a finite linear combination $x = \sum_i
\alpha_{i,1}\alpha_{i,2} \cdots \alpha_{i,k_i}$ where each $\alpha_{i,j} \in
\rho_m(C_0(\SG{E}^0)) \cup \psi_m(C_c(\SG[m]{E}^1)) \cup \psi_m(\SG[m]{E}^1)^*$. By the
first two statements, for each $i,j$ we have $\lim_{t \searrow 0} U_t^*
\alpha_{i,j}|_{\ell^2(\SG[m]{E}^*_t)} U_t = \pi(\beta_{i,j})$ for some $\beta_{i,j} \in
\Tt C^*(E(1, m+1))$, and it follows that $\lim_{t \searrow 0} U_t^*
x|_{\ell^2(\SG[m]{E}^*_t)} U_t = \pi\big(\sum_i \beta_{i,1} \dots \beta_{i, k_j}\big) \in
\pi(\Tt C^*(E(1, m+1)))$.

Now fix $x \in \Tt C^*(\SG[m]{E})$. Fix $\varepsilon > 0$. Fix a linear combination $a =
\sum_i \alpha_{i,1}\alpha_{i,2} \cdots \alpha_{i,k_i}$ where each $\alpha_{i,j} \in
\rho_m(C_0(\SG{E}^0)) \cup \psi_m(C_c(\SG[m]{E}^1)) \cup \psi_m(\SG[m]{E}^1)^*$ such that
$\|a - x\| < \varepsilon/4$. Then in particular, $\|(a - x)|_{\ell^2(\SG[m]{E}^*_t)}\| <
\varepsilon/4$ for all $t \in (0,1)$. By the preceding paragraph, $a_0 := \lim_{t
\searrow 0} U_t^*a|_{\ell^2(\SG[m]{E}^*_t)}U_t$ exists and belongs to $\Tt C^(E(1,
m+1))$, so there exists $\delta > 0$ such that $\|U_t^*a|_{\ell^2(\SG[m]{E}^*_t)}U_t -
a_0\| < \varepsilon/4$ for all $t < \delta$. In particular, there exists $\delta > 0$
such that $0 < s, t < \delta$ implies
\begin{align*}
\|U_s^*&x|_{\ell^2(\SG[m]{E}^*_s)}U_s - U_t^*x|_{\ell^2(\SG[m]{E}^*_t)}U_t\|\\
    &\le \|U_s^*(x|_{\ell^2(\SG[m]{E}^*_s)} - a|_{\ell^2(\SG[m]{E}^*_s)}U_s)\|
        + \|U_s^* a|_{\ell^2(\SG[m]{E}^*_s)} U_s - a_0\|\\
        &\qquad\qquad + \|a_0 - U_t^* a|_{\ell^2(\SG[m]{E}^*_t)} U_t\|
        + \|U_t^*(a|_{\ell^2(\SG[m]{E}^*_t)} - x|_{\ell^2(\SG[m]{E}^*_t)})U_t\|
     < \varepsilon.
\end{align*}
Hence $\big(U_{1/n}^* x|_{\ell^2(\SG[m]{E}^*_{1/n})}U_{1/n}\big)^\infty_{n=1}$ is a
Cauchy sequence, and therefore converges to some $x_0 \in \pi(\Tt C^*(E(1, m+1)))$.

We show that $\lim_{t \searrow 0} U_t^* x|_{\ell^2(\SG[m]{E}^*_t)} U_t = x_0$. Fix
$\varepsilon > 0$. Using that $U_{1/n}^* x|_{\ell^2(\SG[m]{E}^*_{1/n})} U_{1/n} \to x_0$
and also the preceding paragraph, we can choose $\delta > 0$ such that $\|U_{1/n}^*
x|_{\ell^2(\SG[m]{E}^*_{1/n})} U_{1/n} - x_0\| < \varepsilon/2$ whenever $n >
\delta^{-1}$, and such that $\|U_s^* x|_{\ell^2(\SG[m]{E}^*_s)} U_s - U_t^*
x|_{\ell^2(\SG[m]{E}^*_t)} U_t\| < \varepsilon/2$ whenever $s,t < \delta$. In particular,
for $t < \delta$, and any choice of $n > \delta^{-1}$, we have
\begin{align*}
\|U_t^* x&|_{\ell^2(\SG[m]{E}^*_t)} U_t - x_0\|\\
    &\le \|U_t^* x|_{\ell^2(\SG[m]{E}^*_t)} U_t - U_{1/n}^* x|_{\ell^2(\SG[m]{E}^*_{1/n})}U_{1/n}\|
        + \|U_{1/n}^* x|_{\ell^2(\SG[m]{E}^*_{1/n})} U_{1/n} - x_0\|
    < \varepsilon.
\end{align*}
Hence $U_t^* x|_{\ell^2(\SG[m]{E}^*_t)} U_t \to x_0 \in \pi(\Tt C^*(E(1, m+1)))$ as
claimed.

Since $\pi$ is injective, we deduce that the map $\varepsilon_0$ exists. It is a
homomorphism because each $a \mapsto a|_{\ell^2(\SG[m]{E}^*_t)}$ is a homomorphism and
the algebraic operations in $\Tt C^*(\SG[m]{E})$ are continuous.
\end{proof}

\begin{lem}\label{lem:lim from left}
Let $E$ be a locally finite graph with no sources and fix $m \in \NN \setminus \{0\}$.
For $t \in (0,1)$, let $U_t : \ell^2(E(1, m+1)^*) \to \ell^2(\SG[m]{E}^*_t)$ be the
unitary of Remark~\ref{rmk:m path spaces}. Let $\pi : \Tt C^*(E(1, m+1)) \to
\Bb(\ell^2(E(1, m+1)^*))$ be the path-space representation. For $a \in C_0(\SG{E}^0)$ we
have
\[
\lim_{t \nearrow 1} U_t^* \rho_m(a)|_{\ell^2(\SG[m]{E}^*_t)} U_t = \sum_{e \in E^1} a([s(e)]) \pi(Q_e).
\]
For $\xi \in C_c(\SG[m]{E}^1)$, we have
\[
\lim_{t \nearrow 1} U_t^* \psi_m(\xi)|_{\ell^2(\SG[m]{E}^*_t)} U_t = \sum_{\mu \in E^m, e \in E^1r(\mu)} \xi{[\mu]} \pi(T_{e\mu}).
\]
For $a \in \Tt C^*(\SG[m]{E})$, the limit $\lim_{t \nearrow 1} U_t^*
a|_{\ell^2(\SG[m]{E}^*_t)} U_t$ exists and belongs to $\pi(\Tt C^*(E(1, m+1)))$, and
$\varepsilon_1 : a \mapsto \pi^{-1}(\lim_{t \nearrow 1} U_t^* a|_{\ell^2(\SG[m]{E}^*_t)}
U_t)$ is a homomorphism from $\Tt C^*(\SG[m]{E})$ to $\Tt C^*(E(1, m+1))$.
\end{lem}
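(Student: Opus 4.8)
The plan is to run the proof of Lemma~\ref{lem:lim from right} essentially verbatim, replacing the one-sided limit $t \searrow 0$ by $t \nearrow 1$ throughout. The only genuinely new ingredient is to identify the correct limiting values of the representatives as $t$ approaches $1$ from below; these differ from the $t \searrow 0$ case precisely because the suspension glues $(e\mu,1) \sim \mu$ rather than $(\mu f,0) \sim \mu$. Everything else is a transcription of the earlier argument with the roles of $0$ and $1$ interchanged, so I expect no new obstacle beyond this bookkeeping.

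First I would record the fibrewise formulas, which are the same ones already used in Lemma~\ref{lem:lim from right} and hold for every $t \in (0,1)$ regardless of which endpoint we approach: via the unitary $U_t$ of Remark~\ref{rmk:m path spaces} one has $U_t^* \rho_m(a)|_{\ell^2(\SG[m]{E}^*_t)} U_t = \sum_{e \in E^1} a([e,t]) \pi(Q_e)$ for $a \in C_0(\SG{E}^0)$, and $U_t^* \psi_m(\xi)|_{\ell^2(\SG[m]{E}^*_t)} U_t = \sum_{\eta \in E^{m+1}} \xi([\eta,t]) \pi(T_\eta)$ for $\xi \in C_c(\SG[m]{E}^1)$. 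The key step is then to let $t \nearrow 1$. By the same finiteness-of-support argument as before, each of these is a finite sum near the limit, so I may pass the limit inside and reduce to the pointwise limits $\lim_{t \nearrow 1} a([e,t]) = a([e,1])$ and $\lim_{t \nearrow 1} \xi([\eta,t]) = \xi([\eta,1])$. Here I invoke the $s=1$ clauses of Lemma~\ref{lem:normal form}, which give $[e,1] = [s(e)]$ and $[\eta,1] = [\eta_2 \cdots \eta_{m+1}]$ for $\eta \in E^{m+1}$. The first yields the stated formula $\sum_{e \in E^1} a([s(e)]) \pi(Q_e)$; the second gives $\sum_{\eta \in E^{m+1}} \xi([\eta_2 \cdots \eta_{m+1}]) \pi(T_\eta)$, and reindexing by $e := \eta_1 \in E^1 r(\eta_2 \cdots \eta_{m+1})$ and renaming $\mu := \eta_2 \cdots \eta_{m+1} \in E^m$ produces the claimed $\sum_{\mu \in E^m,\, e \in E^1 r(\mu)} \xi([\mu]) \pi(T_{e\mu})$. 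This is exactly where the argument diverges from Lemma~\ref{lem:lim from right}: approaching from the left drops the \emph{first} edge of $\eta$ rather than the last, so the limit involves $T_{e\mu}$ in place of $T_{\mu e}$.

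Finally, the passage from generators to an arbitrary $a \in \Tt C^*(\SG[m]{E})$, and the conclusion that $\varepsilon_1$ is a homomorphism, is identical to the corresponding portion of Lemma~\ref{lem:lim from right}. On a finite sum of products of generators the limit exists and lies in $\pi(\Tt C^*(E(1,m+1)))$ by the first two parts together with the multiplicativity of $x \mapsto U_t^* x|_{\ell^2(\SG[m]{E}^*_t)} U_t$; an $\varepsilon/4$ estimate then shows that $\bigl(U_{1-1/n}^* a|_{\ell^2(\SG[m]{E}^*_{1-1/n})} U_{1-1/n}\bigr)_{n}$ is Cauchy, and that its limit is independent of the mode of approach, so it lies in the complete subalgebra $\pi(\Tt C^*(E(1,m+1)))$. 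Injectivity of $\pi$ then defines $\varepsilon_1$, and it is a homomorphism because each restriction map is one and the operations on $\Tt C^*(\SG[m]{E})$ are continuous. The hard part, such as it is, is purely the normal-form accounting of the previous paragraph; the analytic backbone is shared with the already-established right-hand limit.
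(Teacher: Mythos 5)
Your proposal is correct and matches the paper, whose entire proof of this lemma is the remark that it is essentially identical to that of Lemma~\ref{lem:lim from right}; you have simply made explicit the one piece of bookkeeping that changes, namely that $[\eta,1]=[\eta_2\cdots\eta_{m+1}]$ so the limit drops the first edge and produces $T_{e\mu}$ rather than $T_{\mu e}$. The remaining analytic steps (the $\varepsilon/2$ approximation for $a \in C_0(\SG{E}^0)$, the Cauchy-sequence argument, and the homomorphism property) transcribe verbatim as you indicate.
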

\begin{proof}
The proof is essentially identical to that of Lemma~\ref{lem:lim from right}.
\end{proof}

\begin{prp}\label{prp:zero evaluation}
Let $E$ be a locally finite graph with no sources and fix $m \in \NN \setminus\{0\}$.
There is an injective homomorphism $\eta : \Tt C^*(\SG[m]{E})_0 \to \Tt C^*(E(1, m+1))
\oplus \Tt C^*(E(1, m+1))$ such that, for any $a \in C_0(\SG{E}^0)$ we have
\[
\eta(\rho_m(a)_0) = \sum_{v \in E^0} a([v]) \Big(\Big(\sum_{e \in E^1 v} Q_e\Big) \oplus \Big(\sum_{f \in v E^1} Q_f\Big)\Big),
\]
and such that for any $\xi \in C_c(\SG[m]{E}^1)$ we have
\[
\eta(\psi_m(\xi)_0) = \sum_{\mu \in E^m} \xi([\mu]) \Big(\Big(\sum_{e \in E^1 r(\mu)} T_{e\mu}\Big)
                                            \oplus \Big(\sum_{f \in s(\mu)E^1} T_{\mu f}\Big)\Big).
\]
\end{prp}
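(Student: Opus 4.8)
The plan is to build $\eta$ as the factorisation through the quotient map $q_0 : \Tt C^*(\SG[m]{E}) \to \Tt C^*(\SG[m]{E})_0$ of the homomorphism $\varepsilon := \varepsilon_1 \oplus \varepsilon_0$, where $\varepsilon_0,\varepsilon_1$ are the homomorphisms into $\Tt C^*(E(1,m+1))$ produced by Lemmas~\ref{lem:lim from right} and~\ref{lem:lim from left}. Writing $\Phi_t(x) := \pi^{-1}\big(U_t^* x|_{\ell^2(\SG[m]{E}^*_t)} U_t\big) \in \Tt C^*(E(1,m+1))$ for $t \in (0,1)$, so that $\Phi_t$ implements the fibre isomorphism of Corollary~\ref{cor:res=fibre map}, those two lemmas say exactly that $\varepsilon_0(x) = \lim_{t\searrow 0}\Phi_t(x)$ and $\varepsilon_1(x) = \lim_{t\nearrow 1}\Phi_t(x)$. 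Grouping the generator formulas of the two lemmas according to the range and source vertices of the summation index shows that $\varepsilon_1\oplus\varepsilon_0$ sends $\rho_m(a)$ and $\psi_m(\xi)$ to the right-hand sides in the statement, so it remains to verify that $\varepsilon$ kills $J_0$ and that $\ker\varepsilon = J_0$. Since $\iota_m(g)$ acts as the scalar $g(t)$ on $\ell^2(\SG[m]{E}^*_t)$, we have $\Phi_t(\iota_m(g)x) = g(t)\Phi_t(x)$, whose one-sided limits at $0$ are $g(0)\varepsilon_0(x)$ and $g(0)\varepsilon_1(x)$ (recall $0=1$ in $\SS$); these vanish when $g(0)=0$, so $J_0 \subseteq \ker\varepsilon$ and $\eta$ is well defined with the asserted formulas.

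The substance is to show $\ker\varepsilon \subseteq J_0$. Fix $x$ with $\varepsilon_0(x) = \varepsilon_1(x) = 0$. I would first note that $t\mapsto\Phi_t(x)$ is norm-continuous on $(0,1)$: this is clear on polynomials in the generators from the explicit formulas $\Phi_t(\rho_m(a)) = \sum_e a([e,t])Q_e$ and $\Phi_t(\psi_m(\xi)) = \sum_\mu \xi([\mu,t])T_\mu$, and the general case follows since each $\Phi_t$ is contractive. By hypothesis this function extends continuously to $[0,1]$ with value $0$ at both ends, so $\phi_x := (t\mapsto\Phi_t(x))$ lies in $C_0((0,1))\otimes\Tt C^*(E(1,m+1))$. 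Then $b := \kappa_0(\phi_x) \in J_0$ by Lemma~\ref{lem:quotient of tensor}, and since $\Phi_t\circ\kappa_0$ is evaluation at $t$, the element $c := x-b$ satisfies $\Phi_t(c)=0$, equivalently $c|_{\ell^2(\SG[m]{E}^*_t)}=0$, for every $t\in(0,1)$. Thus it suffices to prove the \emph{no-spike} statement: an element of $\Tt C^*(\SG[m]{E})$ that vanishes on every fibre over $\SS\setminus\{0\}$ is zero; granting this, $c=0$ and $x=b\in J_0$.

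I expect the no-spike step to be the main obstacle, and it is where Bates' Toeplitz embedding does the work. The $0$-fibre $\ell^2(\SG[m]{E}^*_0)$ is invariant and the generators act on it through the faithful path-space representation $\pi_0$ of $\Tt C^*(E(0,m))$ of Notation~\ref{ntn:pi_t}, so $x \mapsto x|_{\ell^2(\SG[m]{E}^*_0)}$ corestricts to $\pi_0(\Tt C^*(E(0,m)))$ and defines a homomorphism $\beta_0 := \pi_0^{-1}\big(\,\cdot\,|_{\ell^2(\SG[m]{E}^*_0)}\big) : \Tt C^*(\SG[m]{E}) \to \Tt C^*(E(0,m))$. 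Comparing the formula for $\varepsilon_0$ in Lemma~\ref{lem:lim from right} with the identities $\jmath_{1,m+1}(Q_v) = \sum_{e\in vE^1} Q_e$ and $\jmath_{1,m+1}(T_\mu) = \sum_{\nu\in s(\mu)E^1} T_{\mu\nu}$ of Lemma~\ref{lem:mth Toeplitz injection}, I would check that the homomorphisms $\varepsilon_0$ and $\jmath_{1,m+1}\circ\beta_0$ agree on the generators $\rho_m(a)$ and $\psi_m(\xi)$, whence $\varepsilon_0 = \jmath_{1,m+1}\circ\beta_0$. If $c$ vanishes on every fibre over $\SS\setminus\{0\}$, then $\Phi_t(c)=0$ for $t\in(0,1)$ and hence $\varepsilon_0(c)=0$; since $\jmath_{1,m+1}$ is injective by Lemma~\ref{lem:mth Toeplitz injection}, this forces $\beta_0(c)=0$, that is $c|_{\ell^2(\SG[m]{E}^*_0)}=0$. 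As $c$ already vanishes on all other fibres, $c=0$. This gives $\ker\varepsilon = J_0$ and so the injectivity of $\eta$. The crux is precisely that the injectivity of $\jmath_{1,m+1}$ prevents a nonzero operator in $\Tt C^*(\SG[m]{E})$ from being supported entirely on the fibre over $0$.
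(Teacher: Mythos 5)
Your construction of $\eta$ and the verification of the two displayed formulas coincide with the paper's proof: the paper also builds $\eta$ from the one-sided limit homomorphisms of Lemmas \ref{lem:lim from right}~and~\ref{lem:lim from left} (it writes $\tilde\varepsilon_0\oplus\tilde\varepsilon_1$; your ordering $\varepsilon_1\oplus\varepsilon_0$ is the one that actually matches the displayed formulas). Where you genuinely diverge is the injectivity step. The paper argues abstractly: from $\varepsilon_0(x)=\varepsilon_1(x)=0$ and Corollary~\ref{cor:res=fibre map} it deduces $\lim_{t\to 0}\|x_t\|=0$ and then invokes the general properties of upper-semicontinuous $C^*$-bundles (equation~\eqref{eq:norm limit zero} together with continuity of the section $t\mapsto x_t$) to conclude $x_0=0$. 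You instead work concretely in $\Bb(\ell^2(\SG[m]{E}^*))$: you subtract the explicit element $b=\kappa_0(\phi_x)\in J_0$ supplied by Lemma~\ref{lem:quotient of tensor}, reduce to an element vanishing on every fibre $\ell^2(\SG[m]{E}^*_t)$ with $t\neq 0$, and then use the identity $\varepsilon_0=\jmath_{1,m+1}\circ\beta_0$ --- which is precisely Lemma~\ref{lem:res vs fibre map at zero}, proved in the paper only \emph{after} this proposition --- together with the injectivity of $\jmath_{1,m+1}$ from Lemma~\ref{lem:mth Toeplitz injection} to rule out an element supported on the fibre over $0$ alone. Both routes are sound. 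Yours is longer but more self-contained: it bypasses the bundle-theoretic limit argument and isolates exactly which structural input (injectivity of Bates' Toeplitz embedding) forbids a ``spike'' at $0$, and it establishes Lemma~\ref{lem:res vs fibre map at zero} along the way. Two small points are left implicit: (i) the identification of the ideal $J$ of Lemma~\ref{lem:quotient of tensor} with $J_0$, which needs the observation that every $a\in C_0(\SG{E}^0\setminus E^0)$ is a norm limit of elements $\varpi^*(g_n)\,a$ with $g_n(0)=0$ (the paper makes the same identification tacitly in Theorem~\ref{thm:TC*(SGE) isomorphism}); and (ii) that an element of $\Tt C^*(\SG[m]{E})$ restricting to zero on every invariant summand $\ell^2(\SG[m]{E}^*_t)$ of $\ell^2(\SG[m]{E}^*)$ is zero. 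Neither affects correctness.
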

\begin{proof}
Let $\varepsilon_0, \varepsilon_1 : \Tt C^*(\SG[m]{E}) \to \Tt C^*(E(1, m+1))$ be the
homomorphisms of Lemmas \ref{lem:lim from right}~and~\ref{lem:lim from left}. Since
$\varepsilon_0, \varepsilon_1$ vanish on $\rho_m(C_0(\SG{E}^0 \setminus E^0))$, they
descend to homomorphisms $\tilde\varepsilon_0, \tilde\varepsilon_1 : \Tt C^*(\SG[m]{E})_0
\to \Tt C^*(E(1, m+1))$. The homomorphism $\eta := \tilde\varepsilon_0 \oplus
\tilde\varepsilon_1$ satisfies the formulae above, so it suffices to show that this
homomorphism $\eta$ is injective.

For this, fix $x \in \Tt C^*(\SG[m]{E})$ such that $\eta(x_0) = 0$; so $\varepsilon_0(x)
= \varepsilon_1(x) = 0$. We must show that $x_0 = 0$. We have
\begin{equation}\label{eq:res is norm}
0 = \|\varepsilon_0(x)\| = \lim_{t \searrow 0} \|x|_{\ell^2(\SG[m]{E}^*_t)}\|,
    \qquad\text{ and }\qquad
0 = \|\varepsilon_1(x)\| = \lim_{t \nearrow 1} \|x|_{\ell^2(\SG[m]{E}^*_t)}\|.
\end{equation}
Corollary~\ref{cor:res=fibre map} implies that $\|x|_{\ell^2(\SG[m]{E}^*_t)}\| = \|x_t\|$
for $x \in \Tt C^*(\SG[m]{E})$ and $t \in (0,1)$, and so~\eqref{eq:res is norm} implies
that $\lim_{t \to 0} \|x_t\| = 0$. It now follows from the properties of upper
semicontinuous $C^*$-bundles---see equation~\ref{eq:norm limit zero}---that $x_t \to 0
\in C^*(\SG{E})_0$. Since $t \mapsto x_t$ is a continuous section, we deduce that $x_0 =
0$.
\end{proof}

We will show that the image of $\eta$ is isomorphic to $\Tt C^*(E(1, m+1)) \oplus \Tt
C^*(E(0,m))$ provided that enough vertices in $E$ admit at least two edges.

\begin{prp}\label{prp:0ev isomorphism}
Let $E$ be a locally finite graph with no sources and fix $m \in \NN \setminus \{0\}$.
Suppose that for every $v \in E^0$ there exist $n \ge 1$ and $\mu \in E^{nm} v$ such that
$|E^1 r(\mu)| \ge 2$. Let $\eta : \Tt C^*(\SG[m]{E})_0 \to \Tt C^*(E(1, m+1)) \oplus \Tt
C^*(E(1, m+1))$ be the homomorphism of Proposition~\ref{prp:zero evaluation}. Then the
range of $\eta$ is $\Tt C^*(E(1, m+1)) \oplus \jmath_{1, m+1}(\Tt C^*(E(0,m)))$, and
$(\operatorname{id} \oplus \jmath_{1,m+1}^{-1})\circ \eta$ is an isomorphism of $\Tt
C^*(\SG[m]{E})_0$ onto $\Tt C^*(E(1, m+1)) \oplus \Tt C^*(E(0,m))$.
\end{prp}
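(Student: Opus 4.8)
The plan is to prove that $\image\eta = \Tt C^*(E(1,m+1))\oplus\jmath_{1,m+1}(\Tt C^*(E(0,m)))$; granted this, the map $(\id\oplus\jmath_{1,m+1}^{-1})\circ\eta$ is the composite of the injection $\eta$ of Proposition~\ref{prp:zero evaluation} with the isomorphism $\id\oplus\jmath_{1,m+1}^{-1}$ of $\Tt C^*(E(1,m+1))\oplus\jmath_{1,m+1}(\Tt C^*(E(0,m)))$ onto $\Tt C^*(E(1,m+1))\oplus\Tt C^*(E(0,m))$, and so is itself an isomorphism. First I would record that $\image\eta\subseteq\Tt C^*(E(1,m+1))\oplus\jmath_{1,m+1}(\Tt C^*(E(0,m)))$: the first coordinate lands in $\Tt C^*(E(1,m+1))$ trivially, while the formula for $\eta$ together with Lemma~\ref{lem:mth Toeplitz injection} (which identifies $\sum_{f\in vE^1}Q_f$ with $\jmath_{1,m+1}(Q_v)$ and $\sum_{f\in s(\mu)E^1}T_{\mu f}$ with $\jmath_{1,m+1}(T_\mu)$) shows the second coordinate equals $\jmath_{1,m+1}\circ\beta$, where $\beta\colon\Tt C^*(\SG[m]{E})_0\to\Tt C^*(E(0,m))$ is the surjection with $\beta(\rho_m(a)_0)=\sum_v a([v])Q_v$ and $\beta(\psi_m(\xi)_0)=\sum_\mu\xi([\mu])T_\mu$. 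Since the second coordinate is thereby surjective onto $\jmath_{1,m+1}(\Tt C^*(E(0,m)))$, it suffices to prove $\Tt C^*(E(1,m+1))\oplus 0\subseteq\image\eta$: then, given $(b,c)\in\image\eta$, we get $(b,0)\in\image\eta$, hence $(0,c)\in\image\eta$, and equality follows.

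To produce elements of $\Tt C^*(E(1,m+1))\oplus 0$ I would use the generators $q_v:=\rho_m(a_v)_0$ (with $a_v([w])=\delta_{v,w}$) and $t_\mu:=\psi_m(\xi_\mu)_0$ (with $\xi_\mu([\nu])=\delta_{\mu,\nu}$ for $\nu\in E^m$). For $\mu\in E^m$ the relation $T_\mu^*T_\mu=Q_{s(\mu)}$ in $\Tt C^*(E(0,m))$ gives $\beta(t_\mu^*t_\mu-q_{s(\mu)})=0$, so this element lies in the kernel of the second coordinate; evaluating the first coordinate, and using $(\sum_{e\in E^1r(\mu)}T_{e\mu})^*(\sum_{e\in E^1r(\mu)}T_{e\mu})=|E^1r(\mu)|\,Q_{\mu_m}$, yields $\eta(t_\mu^*t_\mu-q_{s(\mu)})=(w_\mu,0)$ with $w_\mu=|E^1r(\mu)|\,Q_{\mu_m}-\sum_{e\in E^1 s(\mu)}Q_e$. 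Writing $\mathcal{I}:=\{b:(b,0)\in\image\eta\}$, a (norm-closed) $C^*$-subalgebra of $\Tt C^*(E(1,m+1))$, we thus have $w_\mu\in\mathcal{I}$ for all $\mu$. Now fix $w$ with $|E^1w|\ge2$ and distinct $g,g'\in E^1w$, and choose $\mu,\mu'\in E^m$ with $\mu_m=g$, $\mu'_m=g'$ and $s(\mu)=s(\mu')=w$ (possible as $E$ has no sources). The terms $\sum_{e\in E^1 w}Q_e$ cancel in $w_\mu-w_{\mu'}=|E^1r(\mu)|\,Q_g-|E^1r(\mu')|\,Q_{g'}\in\mathcal{I}$, and since $E$ has no sinks the two coefficients are strictly positive; hence $Q_g$ and $Q_{g'}$ are spectral projections of the self-adjoint $w_\mu-w_{\mu'}$ for the isolated, oppositely signed eigenvalues $|E^1r(\mu)|$ and $-|E^1r(\mu')|$, and continuous functional calculus inside $\mathcal{I}$ places $Q_g,Q_{g'}\in\mathcal{I}$. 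So $Q_g\in\mathcal{I}$ whenever $s(g)$ emits at least two edges.

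It remains to propagate. From $(Q_g,0)\in\image\eta$ and $\mu\in E^m$ with $r(\mu)=s(g)$, multiplying by $\eta(t_\mu)$ gives $(Q_g\sum_{e\in E^1r(\mu)}T_{e\mu},0)=(T_{g\mu},0)\in\image\eta$, whence $(T_{g\mu}^*T_{g\mu},0)=(Q_{\mu_m},0)\in\image\eta$. Thus $H:=\{g\in E^1:Q_g\in\mathcal{I}\}$ is hereditary in $E(1,m+1)$ and contains $\{g:s(g)\in W\}$, where $W=\{v:|E^1v|\ge2\}$. The crux is that the hypothesis---that the hereditary closure of $W$ in $E(0,m)$ is all of $E^0$---forces $H=E^1$: iterating the propagation, the source set $\{s(g):g\in H\}$ grows by adjoining sources of length-$m$ paths whose range already lies in it, so it contains the hereditary closure of $W$ in $E(0,m)$, namely all of $E^0$; once every vertex is the source of an edge of $H$, the no-sources hypothesis realises each $g\in E^1$ as the last edge $\mu_m$ of some $\mu\in E^m$ with $r(\mu)\in\{s(g'):g'\in H\}$, so $g\in H$. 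With all $Q_g\in\mathcal{I}$, the products $(Q_{\nu_1},0)\,\eta(t_{\nu_2\cdots\nu_{m+1}})=(T_\nu,0)$ give $(T_\nu,0)\in\image\eta$ for every $\nu\in E^{m+1}$, so $\mathcal{I}$ contains all generators of $\Tt C^*(E(1,m+1))$ and $\Tt C^*(E(1,m+1))\oplus 0\subseteq\image\eta$, completing the argument. The main obstacle is exactly this final combinatorial step---matching the hereditary structure of $E(1,m+1)$ against the hypothesis on $E(0,m)$, together with the functional-calculus extraction of the individual $Q_g$, which is where the assumption that enough vertices emit two edges (and the absence of sinks) is used.
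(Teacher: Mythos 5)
Your argument is correct, and it shares the paper's basic strategy---manipulate the generators $\eta(q_v)=\big(\sum_{e\in E^1v}Q_e,\ \jmath_{1,m+1}(Q_v)\big)$ and $\eta(t_\mu)=\big(\sum_{e\in E^1r(\mu)}T_{e\mu},\ \jmath_{1,m+1}(T_\mu)\big)$ to manufacture elements supported in a single summand, then spread them across the graph using the hereditary-closure hypothesis---but it attacks the two summands in the opposite order and uses a different splitting trick. The paper first forms $(t_\mu^*t_\mu)^2-t_\mu^*t_\mu$, which equals $\big((|E^1r(\mu)|^2-|E^1r(\mu)|)Q_{\mu_m},0\big)$ because the second coordinate of $t_\mu^*t_\mu$ is a projection while the first is $|E^1r(\mu)|$ times one; it then subtracts to get $(0,\jmath_{1,m+1}(Q_{s(\mu)}))$, conjugates this along paths to reach every vertex, obtains the whole second summand, and recovers the first by subtraction. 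You instead form $t_\mu^*t_\mu-q_{s(\mu)}$, cancel the common term between two length-$m$ paths ending in distinct edges with the same source, and apply functional calculus to $|E^1r(\mu)|Q_g-|E^1r(\mu')|Q_{g'}$ to isolate $(Q_g,0)$; your hereditary-set bookkeeping (heredity of $H$ in $E(1,m+1)$ pushing forward to heredity of $\{s(g):g\in H\}$ in $E(0,m)$) then yields the entire first summand, and the second follows from surjectivity of the second coordinate of $\eta$ onto $\jmath_{1,m+1}(\Tt C^*(E(0,m)))$. Both routes are of comparable length; yours makes the role of the hereditary closure of $W=\{v:|E^1v|\ge2\}$ in $E(0,m)$ particularly transparent, at the cost of a slightly more elaborate extraction of the initial projections. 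One small correction: producing $\mu\in E^m$ with $\mu_m=g$ means prepending $m-1$ edges at the \emph{range} end of $g$, which uses that $E$ has no \emph{sinks} (this follows from the standing hypothesis, since every $v$ admits some $\mu\in E^{nm}v$), not that it has no sources; the same misattribution recurs in your final propagation step, but it does not affect the validity of the argument.
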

\begin{proof}
By Proposition~\ref{prp:zero evaluation}, we just need to show that the range of $\eta$
is $\Tt C^*(E(1,m+1)) \oplus \jmath(\Tt C^*(E(0,m)))$.

For each $v \in E^0$, let $w_v := \sum_{e \in E^1 v} Q_e \in \Tt C^*(E(1,m+1))$, and let
$x_v := \sum_{f \in v E^1} Q_f \in \Tt C^*(E(1,m+1))$. For each $\mu \in E^m$, let $y_\mu
:= \sum_{e \in E^1 r(\mu)} T_{e\mu} \in \Tt C^*(E(1,m+1))$ and let $z_\mu := \sum_{f \in
s(\mu)E^1} T_{\mu f} \in \Tt C^*(E(1,m+1))$.

We first show that the elements $w_v$ and $y_\mu$ generate $\Tt C^*(E(1,m+1))$. For this,
let
\[
A := C^*(\{w_v : v \in E^0\} \cup \{y_\mu : \mu \in E^m\}) \subseteq \Tt C^*(E(1,m+1)).
\]
We must show that $\Tt C^*(E(1,m+1)) \subseteq A$. Fix $\mu \in E^m$. For $e, f \in E^1
r(\mu)$, we have $T_{e\mu}^* T_{f\mu} = \delta_{e,f} Q_{\mu_m}$. Therefore
\begin{equation}\label{eq:ye*ye}
y_\mu^* y_\mu = \sum_{e,f \in E^1 r(\mu)} T_{e\mu}^* T_{f\mu} = |E^1 r(\mu)| Q_{\mu_m}.
\end{equation}
Since $E$ has no sinks, $|E^1 r(\mu)| \not=0$, so $Q_{\mu_m} \in A$. Again since $E$ has
no sinks, for each $e \in E^1$, the set $E^{m-1}r(e)$ is nonempty, so for any $e \in
E^1$, we have $Q_e = y^*_{\lambda e} y_{\lambda e}\in A$ for any $\lambda \in
E^{m-1}r(e)$. Since the $Q_e$ are mutually orthogonal projections, for $e \in E^1$ and
$\mu \in s(e) E^m$ we have
\[
T_{e\mu} = Q_e T_{e\mu} = Q_e \sum_{f \in E^1 r(\mu)} T_{f\mu} = Q_e y_\mu \in A.
\]
We have now established that all the generators of $\Tt C^*(E(1, m+1))$ belong to $A$,
and so $\Tt C^*(E(1, m+1)) \subseteq A$.

Next note that we have $x_v = \jmath_{1,m+1}(Q_v)$ for $v \in E^0$ and $z_\mu =
\jmath_{1, m+1}(T_\mu)$ for $\mu \in E^m$, so $(x, z)$ is a Toeplitz--Cuntz--Krieger
$E(0,m)$-family, and $C^*(\{x_v : v \in E^0\} \cup \{z_\mu : \mu \in E^m\}) =
\jmath_{1,m+1}(\Tt C^*(E(0,m)))$.

We show next that $(0, x_v) \in \eta(\Tt C^*(\SG{E})_0)$ for each $v \in E^0$. First, fix
$v \in E^0$ and $\mu \in E^m$ satisfying $|E^1 r(\mu)| \ge 2$. Equation~\eqref{eq:ye*ye}
and that $(x,z)$ is a Toeplitz--Cuntz--Krieger family show that
\begin{align*}
\eta(\Tt C^*(\SG{E})_0) &\owns ((y_{\mu}, z_{\mu})^*(y_{\mu}, z_{\mu}))^2 - (y_{\mu}, z_{\mu})^*(y_{\mu}, z_{\mu})\\
    &= (|E^1r(\mu)|^2 Q_{\mu_m},  x_{v}) - (|E^1r(\mu)| Q_{\mu_m}, x_{v})\\
    &= \big((|E^1r(\mu)|^2 - |E^1r(\mu)|) Q_{\mu_m}, 0\big).
\end{align*}
Since $|E^1 r(\mu) \ge 2|$, we have $|E^1r(\mu)|^2 - |E^1r(\mu)| > 0$, and so
$(Q_{\mu_m}, 0) \in \eta(\Tt C^*(\SG{E})_0)$. We then obtain
\[
\eta(\Tt C^*(\SG{E})_0) \owns (y_e, z_e)^*(y_e, z_e) - |E^1r(\mu)|(Q_{\mu_m}, 0) = (0, x_{v}).
\]

Now take any $v \in E^0$. By hypothesis, there exists $n \in \NN$ and $\mu \in
E^{nm}r(e)$ such that $|E^1 r(\mu)| \ge 2$. Write $\mu = \mu_1 \cdots \mu_{n}$ where each
$\mu_i \in E^m$. We have
\[
(0, x_{s(\mu)})
    = (y_{\mu_{n}}, z_{\mu_{n}})^* \cdots (y_{\mu_2}, z_{\mu_2})^* (0, x_{s(\mu_1)}) (y_{\mu_2}, z_{\mu_2}) \cdots (y_{\mu_n}, z_{\mu_n})
    \in \eta(\Tt C^*(\SG{E})_0).
\]
This shows that $(0, x_v) \in B$ for every $v \in E^0$ as claimed.

It follows that $(0, z_e) = (0, x_{r(e)})(y_e, z_e) \in \eta(\Tt C^*(\SG{E})_0)$ for each
$e \in E^1$. Since each $x_v = \jmath(Q_v)$ and each $z_e = \jmath(T_e)$, we deduce that
$0 \oplus \jmath(\Tt C^*(E(0,m))) \subseteq \eta(\Tt C^*(\SG{E})_0)$.

It now suffices to show that $\Tt C^*(E(1, m+1)) \oplus 0 \subseteq \eta(\Tt
C^*(\SG{E})_0)$ as well. Since we have already proved that $0 \oplus \jmath_{1,m}(\Tt
C^*(E(0,m))) \subseteq \eta(\Tt C^*(\SG{E})_0)$, we know that each $(w_v, 0) = (w_v, x_v)
- (0, x_v)$ and each $(y_\mu, 0) = (y_\mu, z_\mu) - (0, z_\mu)$ belongs to $\eta(\Tt
C^*(\SG{E})_0)$. We saw above that the elements $w_v$ and $y_\mu$ generate $\Tt
C^*(E(1,m+1))$. This completes the proof.
\end{proof}

If $x \in \Tt C^*(\SG[m]{E})$ belongs to the ideal generated by $C_0(\SS \setminus
\{0\})$, then $x|_{\ell^2(\SG[m]{E}^*_0)} = 0$. Consequently, there is a homomorphism
$\Tt C^*(\SG[m]{E})_0 \to \Bb(\ell^2(\SG[m]{E}^*_0))$ such that $x_0 \mapsto
x|_{\ell^2{\SG[m]{E}^*_0}}$ for $x \in \Tt C^*(\SG[m]{E})$. It therefore follows from
Proposition~\ref{prp:zero evaluation} that $\big\|x|_{\ell^2{\SG[m]{E}^*_0}}\big\| \le
\|\eta(x_0)\|$ for all $x \in \Tt C^*(\SG[m]{E})$. The following result gives a direct
proof of this by showing that in fact we can use the injection $\jmath_{1, m+1}$ to see
that the $x \mapsto x_{\ell^2(\SG[m]{E}^*_0)}$ can be identified with the map obtained by
following $\eta$ with the second-coordinate projection $\Tt C^*(E(1, m+1)) \oplus \Tt
C^*(E(1, m+1)) \to 0 \oplus \Tt C^*(E(1, m+1))$. We will also make use of this
identification in our analysis of $C^*(\SG[m]{E})$ in Section~\ref{sec:C* analysis}.

\begin{lem}\label{lem:res vs fibre map at zero}
Let $E$ be a locally finite graph with no sources. Let $U_0 : \ell^2(E(0,m)^*) \to
\ell^2(\SG[m]{E}^*_0)$ and, for $0 < t < 1$, $U_t : \ell^2(E(1,m+1)^*) \to
\ell^2(\SG[m]{E}^*_t)$ be the unitaries of Remark~\ref{rmk:m path spaces}. Let
$\pi_{E(0,m)} : \Tt C^*(E(0,m))) \to \Bb(\ell^2(E(0,m)^*))$ and $\pi_{E(1,m+1)} : \Tt
C^*(E(1,m+1))) \to \Bb(\ell^2(E(1,m+1)^*))$ be the path-space representations. For $x \in
\Tt C^*(\SG{E})$, we have
\[
\lim_{t \searrow 0} U_t^* x|_{\ell^2{(\SG[m]{E}^*_t)}} U_t = \pi_{E(1,m+1)}(\jmath_{1, m+1}(\pi_{E(0,m)}^{-1}(U_0^* x|_{\ell^2(\SG[m]{E}^*_0)}U_0))).
\]
\end{lem}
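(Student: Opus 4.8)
The plan is to exhibit both sides of the claimed identity as the images, under the faithful path-space representation $\pi_{E(1,m+1)}$, of two homomorphisms $\Tt C^*(\SG[m]{E}) \to \Tt C^*(E(1,m+1))$ that I can compute on the generators $\rho_m(a)$ and $\psi_m(\xi)$. Since Lemma~\ref{lem:lim from right} gives $\pi_{E(1,m+1)}(\varepsilon_0(x)) = \lim_{t \searrow 0} U_t^* x|_{\ell^2(\SG[m]{E}^*_t)} U_t$, and $\pi_{E(1,m+1)}$ is injective, it suffices to prove the algebra-level identity
\[
\varepsilon_0(x) = \jmath_{1,m+1}\big(\pi_{E(0,m)}^{-1}(U_0^* x|_{\ell^2(\SG[m]{E}^*_0)} U_0)\big) \qquad\text{for all } x \in \Tt C^*(\SG[m]{E}).
\]

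First I would set up the map on the right. The subspace $\ell^2(\SG[m]{E}^*_0) = \bigoplus_{\omega \in \SG[m]{E}^0_0} \ell^2(\SG[m]{E}^*\omega)$ is invariant for $\Tt C^*(\SG[m]{E})$ by the last assertion of Lemma~\ref{lem:psil,rhol defs}, so $\Phi_0 : x \mapsto x|_{\ell^2(\SG[m]{E}^*_0)}$ is a homomorphism. A direct computation with the basis vectors $h_{[\nu]}$, using that (because $m$ is an integer) the edges of $\SG[m]{E}$ with source at a parameter-$0$ vertex are exactly the $0$-fibre edges $\SG[m]{E}^1_0 = \{[\mu] : \mu \in E^m\}$ of Remark~\ref{rmk:m path spaces}, yields
\[
\Phi_0(\rho_m(a)) = \pi_0\Big(\sum_{v \in E^0} a([v]) Q_v\Big) \qquad\text{and}\qquad \Phi_0(\psi_m(\xi)) = \pi_0\Big(\sum_{\mu \in E^m} \xi([\mu]) T_\mu\Big),
\]
where $\pi_0 = \Ad_{U_0} \circ \pi_{E(0,m)}$ is the faithful representation of Notation~\ref{ntn:pi_t}. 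As $\Phi_0$ is a $*$-homomorphism and its images on the generators lie in the $C^*$-algebra $\pi_0(\Tt C^*(E(0,m)))$, the whole of $\Tt C^*(\SG[m]{E})$ maps into $\pi_0(\Tt C^*(E(0,m)))$; hence $\Psi := \pi_{E(0,m)}^{-1} \circ \Ad_{U_0^*} \circ \Phi_0$ is a well-defined homomorphism with $\Psi(\rho_m(a)) = \sum_v a([v]) Q_v$ and $\Psi(\psi_m(\xi)) = \sum_\mu \xi([\mu]) T_\mu$, and $\Psi(x) = \pi_{E(0,m)}^{-1}(U_0^* x|_{\ell^2(\SG[m]{E}^*_0)} U_0)$.

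Next I would compose with $\jmath_{1,m+1}$ and match against $\varepsilon_0$. Substituting the formulas $\jmath_{1,m+1}(Q_v) = \sum_{e \in vE^1} Q_e$ and $\jmath_{1,m+1}(T_\mu) = \sum_{e \in s(\mu)E^1} T_{\mu e}$ from Lemma~\ref{lem:mth Toeplitz injection} into the values of $\Psi$ gives $\jmath_{1,m+1}(\Psi(\rho_m(a))) = \sum_{e} a([r(e)]) Q_e$ and $\jmath_{1,m+1}(\Psi(\psi_m(\xi))) = \sum_{\mu,\, e \in s(\mu)E^1} \xi([\mu]) T_{\mu e}$, which are precisely the values of $\varepsilon_0$ recorded in Lemma~\ref{lem:lim from right}. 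Thus the homomorphisms $\jmath_{1,m+1}\circ\Psi$ and $\varepsilon_0$ agree on the generating set $\rho_m(C_0(\SG{E}^0)) \cup \psi_m(C_c(\SG[m]{E}^1))$, hence everywhere, establishing the displayed identity; applying $\pi_{E(1,m+1)}$ and invoking Lemma~\ref{lem:lim from right} then yields the statement.

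The main obstacle is the $0$-fibre computation of $\Phi_0(\psi_m(\xi))$: one must check that, because $m$ is an integer, every edge $\beta \in \SG[m]{E}^1$ with $s(\beta) = r([\nu])$ lies in $\SG[m]{E}^1_0$ and so equals some $[\mu]$ with $\mu \in E^m$ and $s(\mu) = r(\nu)$, whence $\beta[\nu] = [\mu\nu]$ and $\Phi_0(\psi_m(\xi))$ reproduces $\pi_0(T_\mu)$ without stray contributions from edges entering at nonzero parameter. Once that identification is in hand, the remainder is bookkeeping comparing the generator formulas for $\Psi$, $\jmath_{1,m+1}$, and $\varepsilon_0$.
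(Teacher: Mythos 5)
Your proposal is correct and follows essentially the same route as the paper: the paper's proof also verifies the identity on the generators $\rho_m(a)$ and $\psi_m(\xi)$ via the explicit limit formulas of Lemma~\ref{lem:lim from right} (Equations \eqref{eq:selim 1} and~\eqref{eq:selim 2}) and then extends to all of $\Tt C^*(\SG[m]{E})$ by multiplicativity. Your additional care in checking that $x \mapsto U_0^* x|_{\ell^2(\SG[m]{E}^*_0)} U_0$ lands in $\pi_{E(0,m)}(\Tt C^*(E(0,m)))$, so that the right-hand side really is a well-defined homomorphism, is a detail the paper leaves implicit but which is needed for the generators-only argument to close.
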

\begin{proof}
For $x \in \rho_m(C_0(\SG{E}^0))$, this follows from Equation~\ref{eq:selim 1} in the
proof of Lemma~\ref{lem:lim from right}, and for $x \in \psi_m(C_c(\SG[m]{E}^1))$, it
follows from Equation~\ref{eq:selim 2} in the same proof. Since $\Tt C^*(\SG[m]{E})$ is
generated by $\rho_m(C_0(\SG{E}^0)) \cup \psi_m(C_c(\SG[m]{E}^1))$, the result follows.
\end{proof}

We are now able to give an explicit description of $\Tt C^*(\SG[m]{E})$ provided that
enough vertices of $E$ emit at least two edges.

\begin{thm}\label{thm:TC*(SGE) isomorphism}
Let $E$ be a locally finite graph with no sources and fix $m \in \NN \setminus\{0\}$.
Suppose that for every $v \in E^0$ there exist $n \ge 1$ and $\mu \in E^{nm} v$ such that
$|E^1 r(\mu)| \ge 2$. Let $\jmath_{1,m+1} : \Tt C^*(E(0,m)) \hookrightarrow \Tt
C^*(E(1,m+1))$ be the injective homomorphism of Lemma~\ref{lem:mth Toeplitz injection}.
There is an isomorphism
\[
\kappa_m : \Tt C^*(\SG[m]{E}) \to \{f \in C([0,1], \Tt C^*(E(1, m+1))) : f(0) \in \jmath_{1,m+1}(\Tt C^*(E(0,m)))\}
\]
such that for $a \in C_0(\SG{E}^0)$ and $\xi \in C_c(\SG[m]{E}^1)$, we have
\[
\kappa_m(\rho_m(a))(t)
    = \begin{cases}
        \sum_{v \in E^0} a([v]) \jmath_{1,m+1}(Q_v) &\text{ if $t = 0$}\\
        \sum_{e \in E^1} a([e,t]) Q_e &\text{ if $t \in (0,1)$}\\
        \sum_{v \in E^0} a([v]) \sum_{e \in E^1 v} Q_e &\text{ if $t = 1$}
    \end{cases}
\]
and
\[
\kappa_m(\psi_m(\xi))(t)
    = \begin{cases}
        \sum_{\mu \in E^m} \xi([\mu]) \jmath(T_\mu) &\text{ if $t = 0$}\\
        \sum_{\nu \in E^{m+1}} \xi([\nu,t]) T_{\nu} &\text{ if $t \in [0,1)$}\\
        \sum_{\mu \in E^m} \xi([\mu]) \sum_{e \in E^1 r(\mu)} T_{e\mu} &\text{ if $t = 1$.}
    \end{cases}
\]
\end{thm}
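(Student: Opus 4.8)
The plan is to define $\kappa_m$ fibrewise over $\SS = \RR/\ZZ$, using the interior-fibre isomorphisms of Corollary~\ref{cor:res=fibre map} together with the two one-sided limits of Lemmas~\ref{lem:lim from right}~and~\ref{lem:lim from left}, and then to recognise $\kappa_m$ as an isomorphism of $C(\SS)$-algebras. Concretely, for $x \in \Tt C^*(\SG[m]{E})$ I would set $\kappa_m(x)(t) := \pi^{-1}\big(U_t^* x|_{\ell^2(\SG[m]{E}^*_t)} U_t\big)$ for $t \in (0,1)$, and $\kappa_m(x)(0) := \varepsilon_0(x)$, $\kappa_m(x)(1) := \varepsilon_1(x)$, with $U_t$ and $\pi$ as in Remark~\ref{rmk:m path spaces} and Notation~\ref{ntn:pi_t}. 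For $t \in (0,1)$ this is exactly the isomorphism $\Tt C^*(\SG[m]{E})_t \cong \Tt C^*(E(1,m+1))$ of Corollary~\ref{cor:res=fibre map} precomposed with the quotient $x \mapsto x + J_t$; each evaluation $x \mapsto \kappa_m(x)(t)$ is therefore a homomorphism, and a direct check on the generators (using $U_t^*\rho_m(a)|_{\ell^2(\SG[m]{E}^*_t)}U_t = \sum_e a([e,t])Q_e$ and $U_t^*\psi_m(\xi)|_{\ell^2(\SG[m]{E}^*_t)}U_t = \sum_\nu \xi([\nu,t])T_\nu$) recovers the asserted interior formulas. The endpoint formulas are read off from Lemmas~\ref{lem:lim from right}~and~\ref{lem:lim from left} after regrouping and applying the identities $\jmath_{1,m+1}(Q_v) = \sum_{e\in vE^1}Q_e$ and $\jmath_{1,m+1}(T_\mu) = \sum_{e \in s(\mu)E^1} T_{\mu e}$ of Lemma~\ref{lem:mth Toeplitz injection}.

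Next I would verify that $\kappa_m(x)$ is a continuous $\Tt C^*(E(1,m+1))$-valued function lying in the target algebra $B$. For $x$ in the dense $*$-algebra of finite sums of products of generators, $t \mapsto \kappa_m(x)(t)$ is norm-continuous on $(0,1)$, being a finite sum of products of the continuous functions $t \mapsto \sum_e a([e,t])Q_e$ and $t \mapsto \sum_\nu \xi([\nu,t])T_\nu$, and Lemmas~\ref{lem:lim from right}~and~\ref{lem:lim from left} show it extends continuously to $[0,1]$ with the stated endpoint values. For general $x$ I would approximate by such finite sums $x_n$: since $\|\kappa_m(x_n)(t) - \kappa_m(x_{n'})(t)\| = \|(x_n - x_{n'})|_{\ell^2(\SG[m]{E}^*_t)}\| \le \|x_n - x_{n'}\|$ on $(0,1)$, and the same bound holds at the endpoints because $\varepsilon_0,\varepsilon_1$ are contractive, the $\kappa_m(x_n)$ are uniformly Cauchy and converge uniformly to a continuous function agreeing with the pointwise definition. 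Finally, Lemma~\ref{lem:res vs fibre map at zero} gives $\kappa_m(x)(0) = \varepsilon_0(x) = \jmath_{1,m+1}\big(\pi_{E(0,m)}^{-1}(U_0^* x|_{\ell^2(\SG[m]{E}^*_0)}U_0)\big) \in \jmath_{1,m+1}(\Tt C^*(E(0,m)))$, so $\kappa_m(x) \in B$; being a pointwise homomorphism into an algebra with pointwise operations, $\kappa_m$ is a $*$-homomorphism.

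To prove $\kappa_m$ is an isomorphism I would exhibit it as a $C(\SS)$-algebra isomorphism. Here $\Tt C^*(\SG[m]{E})$ is a $C(\SS)$-algebra via $\iota_m$ (Proposition~\ref{prp:bimodule maps}), and $B$ is a $C(\SS)$-algebra via pointwise multiplication by $C(\SS) = \{g \in C([0,1]) : g(0) = g(1)\}$. Since $\iota_m(g)$ acts on $\ell^2(\SG[m]{E}^*_t)$ as the scalar $g(t)$, one has $\kappa_m(\iota_m(g)x) = g\cdot \kappa_m(x)$, so $\kappa_m$ is $C(\SS)$-linear. I would then identify the fibre maps: over $t \in \SS \setminus \{0\} = (0,1)$ the induced map $\Tt C^*(\SG[m]{E})_t \to B_t = \Tt C^*(E(1,m+1))$ is precisely the isomorphism of Corollary~\ref{cor:res=fibre map}; over $0 \in \SS$ one has $B_0 \cong \jmath_{1,m+1}(\Tt C^*(E(0,m))) \oplus \Tt C^*(E(1,m+1))$ via $f \mapsto (f(0), f(1))$, and under this identification the induced fibre map $\Tt C^*(\SG[m]{E})_0 \to B_0$ is exactly the map $\eta$ of Proposition~\ref{prp:zero evaluation}, which Proposition~\ref{prp:0ev isomorphism} shows is an isomorphism onto this range (this is where the standing hypothesis on vertices emitting two edges enters). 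Since a $C(\SS)$-linear $*$-homomorphism inducing an isomorphism on every fibre is itself an isomorphism \cite[Appendix~C]{Williams}, $\kappa_m$ is an isomorphism.

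The genuinely substantive content has already been isolated in the preparatory results, in particular Proposition~\ref{prp:0ev isomorphism}, which pins down the fibre over $0$. Consequently the main obstacle is organisational rather than computational: one must match the two boundary points $t = 0$ and $t = 1$, which collapse to the single point $0 \in \SS$, correctly with the two coordinates of $\eta$ — the $t \searrow 0$ limit $\varepsilon_0$ carrying the constrained, $\jmath_{1,m+1}$-valued coordinate and the $t \nearrow 1$ limit $\varepsilon_1$ carrying the free coordinate — and one must confirm that the $C(\SS)$-algebra $B$ really has the claimed fibres. The only points needing genuine care before the fibrewise criterion applies are verifying $I_0 = \{f \in B : f(0) = f(1) = 0\}$ and $I_t = \{f \in B : f(t) = 0\}$ for $t \in (0,1)$ (by the standard cutoff argument, approximating $f$ with $f(t)=0$ by $g_n f$ for $g_n$ supported away from $t$), so that the fibre of $B$ over each point is the quotient I have used above.
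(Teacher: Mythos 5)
Your construction of $\kappa_m$ and your verification of the generator formulas and of continuity coincide with the paper's (the paper gets interior continuity from Lemma~\ref{lem:quotient of tensor} where you use a density argument on finite sums of products of generators, but both are routine), and your proof is correct. Where you genuinely diverge is in how the bijectivity is concluded. The paper proves injectivity from $\|a\| = \sup_t\|q_t(a)\|$ together with Proposition~\ref{prp:0ev isomorphism}, and proves surjectivity by a two-step correction: first hit the prescribed boundary values $f(0), f(1)$ using Proposition~\ref{prp:0ev isomorphism}, then absorb the remaining $C_0((0,1), \Tt C^*(E(1,m+1)))$-part using the ideal isomorphism of Lemma~\ref{lem:quotient of tensor}. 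You instead package everything as the statement that a $C(\SS)$-linear $*$-homomorphism of $C(\SS)$-algebras inducing isomorphisms on all fibres is an isomorphism, with the fibre maps identified via Corollary~\ref{cor:res=fibre map} over $(0,1)$ and via $\eta$ of Proposition~\ref{prp:zero evaluation} over $0$. This is sound: injectivity again comes from the sup-norm formula, and surjectivity from the fact that a closed $C(\SS)$-invariant subalgebra with full fibres is everything (\cite[Proposition~C.24]{Williams}, which the paper itself invokes for the analogous step in Theorem~\ref{thm:C*SE structure}). What your route buys is a cleaner conceptual statement and no second appeal to Lemma~\ref{lem:quotient of tensor}; what it costs is the extra verification, which you correctly flag as the one point needing care, that the fibres of the target algebra $B$ really are $\Tt C^*(E(1,m+1))$ over interior points and $\jmath_{1,m+1}(\Tt C^*(E(0,m))) \oplus \Tt C^*(E(1,m+1))$ over $0 \in \SS$ (via the cutoff argument identifying $\overline{J_tB}$ with the functions vanishing at the relevant point or points). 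Your matching of the two boundary limits with the two coordinates of $\eta$ --- $\varepsilon_0$ constrained to $\jmath_{1,m+1}(\Tt C^*(E(0,m)))$, $\varepsilon_1$ surjective --- is the correct one.
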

\begin{proof}
For $t \in \SS$ let $q_t : \Tt C^*(\SG[m]{E}) \to \Tt C^*(\SG[m]{E})_t$ be the quotient
map $a \mapsto a + J_t$. For $t \in (0,1)$, let $\theta_t : \Tt C^*(E(1,m+1)) \to \Tt
C^*(\SG[m]{E})_t$ be the isomorphism of Proposition~\ref{prp:nonzero fibres}, and define
$\varepsilon_t := \theta^{-1}_t \circ q_t : \Tt C^*(\SG[m]{E}) \to \Tt C^*(E(1,m+1))$.
Let $\varepsilon_1 : \Tt C^*(\SG[m]{E}) \to \Tt C^*(E(1,m+1))$ be the map of
Lemma~\ref{lem:lim from left}, and let $\varepsilon_0 : \Tt C^*(\SG[m]{E}) \to \Tt
C^*(E(1,m+1))$ be the homomorphism of Lemma~\ref{lem:lim from right}.
Proposition~\ref{prp:0ev isomorphism} shows that $\varepsilon_1$ is surjective and that
the range of $\varepsilon_0$ is $\jmath_{1,m+1}(\Tt C^*(E(0,m)))$.

Fix $a \in C_0(\SG{E}^0)$ and $\xi \in C_c(\SG[m]{E}^1)$. Lemma~\ref{lem:quotient of
tensor} implies that for any $a \in \Tt C^*(\SG[m]{E})$ the function $t \mapsto
\varepsilon_t(a)$ is continuous at each $t \in (0,1)$; Lemmas \ref{lem:lim from
right}~and~\ref{lem:lim from left} show that it is continuous at $0$ and $1$ as well.
Hence there is a homomorphism $\kappa_m : \Tt C^*(\SG[m]{E}) \to C([0,1], \Tt
C^*(E(1,m+1)))$ given by $\kappa_m(a)(t) = \varepsilon_t(a)$ for all $a \in \Tt
C^*(\SG[m]{E})$ and $t \in [0,1]$.

To see that $\kappa$ is injective, suppose that $\kappa(a) = 0$. We must show that $a =
0$. Proposition~C.10(c) of \cite{Williams} shows that, $\|a\| = \sup_{t \in \SS}
\|q_t(a)\|$, so it suffices to show that each $q_t(a) = 0$. Since $\kappa(a) = 0$, we
have $\varepsilon_t(a) = 0$ for all $t$. Since $\theta_t$ is an isomorphism for $t \in
(0,1)$, we deduce that $q_t(a) = 0$ for $t \not= 0$, and Proposition~\ref{prp:0ev
isomorphism} shows that $\|q_0(a)\| = \max\{\|\varepsilon_0(a)\|, \|\varepsilon_1(a)\|\}
= 0$.

It remains to show that
\begin{equation}\label{eq:kappa image}
\kappa(\Tt C^*(\SG[m]{E})) = \{f \in C([0,1], \Tt C^*(E(1,m+1))) : f(0) \in \jmath(\Tt C^*(E(0,m)))\}.
\end{equation}
The containment $\subseteq$ follows from Proposition~\ref{prp:0ev isomorphism}. For the
reverse containment, fix an element $f$ of the right-hand side of~\eqref{eq:kappa image}.
Proposition~\ref{prp:0ev isomorphism} shows that there exists $a \in \Tt C^*(\SG[m]{E})$
such that $\eta_0(a) = f(0)$ and $\eta_1(a) = f(1)$. Hence $f - \kappa(a) \in C_0((0,1),
\Tt C^*(E(1, m+1)))$. Consequently Lemma~\ref{lem:quotient of tensor} shows that there
exists $b \in \Tt C^*(\SG[m]{E})$ such that $\kappa(b) = f - \kappa(a)$. Hence $f =
\kappa(a + b) \in \kappa(\Tt C^*(\SG[m]{E}))$.
\end{proof}

We deduce that under the hypotheses of the preceding theorem, $\Tt C^*(\SG[m]{E})$ is
homotopy equivalent to $\Tt C^*(E(0,m))$, and hence compute its $K$-theory.

For this, recall from \cite[Definition~3.2.5]{RLL} that $C^*$-homomorphisms $\varphi_0,
\varphi_1 : A \to B$ are \emph{homotopic} if there is a homomorphism $\varphi : A \to
C([0,1], B)$ such that $\varphi(a)(0) = \varphi_0(a)$ and $\varphi(a)(1) = \varphi_1(a)$
for all $a \in A$. Also recall that $C^*$-algebras $A$ and $B$ are \emph{homotopy
equivalent} if there are homomorphisms $\varphi : A \to B$ and $\psi : B \to A$ such that
$\varphi \circ \psi$ is homotopic to $\id_B$ and $\psi \circ \varphi$ is homotopic to
$\id_A$. We use the following elementary lemma.

\begin{lem}\label{lem:homotopy}
Let $A$ and $B$ be $C^*$-algebras, and let $\iota : B \to A$ be an injective
homomorphism. Then the $C^*$-algebra $C_\iota := \{f \in C([0,1], A) : f(0) \in
\iota(B)\}$ is homotopy equivalent to $B$.
\end{lem}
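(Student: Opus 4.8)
The plan is to exhibit explicit homomorphisms in both directions and a reparametrisation homotopy. First I would observe that, since $\iota$ is an injective $*$-homomorphism of $C^*$-algebras, it is automatically isometric; hence $\iota(B)$ is a closed $*$-subalgebra of $A$ and $\iota^{-1} : \iota(B) \to B$ is a well-defined isometric $*$-homomorphism. I then define $\varphi : B \to C_\iota$ by letting $\varphi(b)$ be the constant path $t \mapsto \iota(b)$; this lands in $C_\iota$ because $\varphi(b)(0) = \iota(b) \in \iota(B)$, and it is visibly a $*$-homomorphism. I define $\psi : C_\iota \to B$ by $\psi(f) = \iota^{-1}(f(0))$; this makes sense precisely because $f(0) \in \iota(B)$ for every $f \in C_\iota$, and it is a $*$-homomorphism, being the composite of evaluation at $0$ (which maps $C_\iota$ into $\iota(B)$) with $\iota^{-1}$.

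The first composite is immediate: $\psi(\varphi(b)) = \iota^{-1}(\iota(b)) = b$, so $\psi \circ \varphi = \id_B$, which is in particular homotopic to $\id_B$. The content of the lemma is therefore that $\varphi \circ \psi$ is homotopic to $\id_{C_\iota}$. Here $(\varphi \circ \psi)(f)$ is the constant path $t \mapsto f(0)$, so I want a homotopy that contracts each path $f$ to its initial value by sliding.

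To construct it I would set $\Phi : C_\iota \to C([0,1], C_\iota)$ by $\Phi(f)(s)(t) := f(st)$ for $s,t \in [0,1]$. For each fixed $f$ and $s$, the map $t \mapsto f(st)$ is continuous and satisfies $f(s\cdot 0) = f(0) \in \iota(B)$, so $\Phi(f)(s)$ really is an element of $C_\iota$. At the endpoints, $\Phi(f)(0)$ is the constant path at $f(0)$, that is $(\varphi\circ\psi)(f)$, while $\Phi(f)(1) = f = \id_{C_\iota}(f)$. Because the algebraic operations on $C_\iota$ are pointwise, $\Phi$ is linear, multiplicative and $*$-preserving, hence a $*$-homomorphism as soon as it is known to take values in $C([0,1], C_\iota)$.

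The one point that actually needs checking — and the step I would flag as the main (if minor) obstacle — is that for each fixed $f$ the assignment $s \mapsto \Phi(f)(s)$ is norm-continuous from $[0,1]$ into $C_\iota$. This is a uniform-continuity argument: since $f$ is continuous on the compact interval $[0,1]$ it is uniformly continuous, and $\|\Phi(f)(s) - \Phi(f)(s')\| = \sup_{t\in[0,1]} \|f(st) - f(s't)\|$ with $|st - s't| \le |s - s'|$, so this supremum tends to $0$ as $s' \to s$. Granting this, $\Phi$ is a $*$-homomorphism implementing the homotopy $\varphi\circ\psi \simeq \id_{C_\iota}$, and together with the identity $\psi\circ\varphi = \id_B$ this exhibits $B$ and $C_\iota$ as homotopy equivalent.
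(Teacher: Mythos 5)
Your proposal is correct and follows essentially the same strategy as the paper: the same pair of maps (evaluation at $0$ composed with $\iota^{-1}$, and the constant-path inclusion), with a reparametrisation homotopy contracting each path to its initial value. The only difference is the formula for the homotopy — you rescale via $f(st)$ where the paper slides via $t \mapsto f(\max\{t-s,0\})$ — and your explicit uniform-continuity check of $s \mapsto \Phi(f)(s)$ is a welcome detail that the paper leaves implicit.
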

\begin{proof}
Define $\varphi : C_\iota \to B$ by $\psi(f) = \iota^{-1}(f(0))$ and define $\psi : B \to
C_\iota$ by $\psi(b)(t) = \iota(b)$ for all $t \in [0,1]$. Then $\varphi \circ \psi$ is
equal, and in particular homotopic, to $\id_B$. Define $\rho : C_\iota \to C([0,1],
C_\iota)$ by
\[
\big(\rho(f)(s)\big)(t)
    = \begin{cases}
        f(0) &\text{ if $t \le s$}\\
        f(t - s) &\text{ if $t > s$.}
    \end{cases}
\]
Then $\rho$ is a homomorphism, and $\rho(f)(0) = f = \id_{C_\iota}(f)$ and $\rho(f)(1) =
\psi(\varphi(f))$ for all $f \in C_\iota$. So $\psi \circ \varphi$ is homotopic to
$\id_{C_\iota}$.
\end{proof}

\begin{cor}
Let $E$ be a locally finite graph with no sources and fix $m \in \NN \setminus\{0\}$.
Suppose that for every $v \in E^0$ there exist $n \ge 1$ and $\mu \in E^{nm} v$ such that
$|E^1 r(\mu)| \ge 2$. Then $\Tt C^*(\SG[m]{E})$ is homotopy equivalent to $\Tt
C^*(E(0,m))$, and we have $K_0(\Tt C^*(\SG[m]{E})) \cong \ZZ E^0$, and $K_1(\Tt
C^*(\SG[m]{E})) = 0$.
\end{cor}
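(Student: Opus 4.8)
The plan is to deduce both assertions directly from the structural description already obtained in Theorem~\ref{thm:TC*(SGE) isomorphism}, combined with Lemma~\ref{lem:homotopy} and the homotopy invariance of $K$-theory. First I would record that Theorem~\ref{thm:TC*(SGE) isomorphism} identifies $\Tt C^*(\SG[m]{E})$, via the isomorphism $\kappa_m$, with the algebra
\[
C_{\jmath_{1,m+1}} = \{f \in C([0,1], \Tt C^*(E(1,m+1))) : f(0) \in \jmath_{1,m+1}(\Tt C^*(E(0,m)))\},
\]
where $\jmath_{1,m+1} : \Tt C^*(E(0,m)) \hookrightarrow \Tt C^*(E(1,m+1))$ is the injective homomorphism of Lemma~\ref{lem:mth Toeplitz injection}. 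Applying Lemma~\ref{lem:homotopy} with $A := \Tt C^*(E(1,m+1))$, $B := \Tt C^*(E(0,m))$ and $\iota := \jmath_{1,m+1}$ then shows that $C_{\jmath_{1,m+1}}$ is homotopy equivalent to $\Tt C^*(E(0,m))$. Transporting this homotopy equivalence across the isomorphism $\kappa_m$ yields the first assertion, namely that $\Tt C^*(\SG[m]{E})$ is homotopy equivalent to $\Tt C^*(E(0,m))$.

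For the $K$-theory I would invoke homotopy invariance of the $K$-functor \cite{RLL}: homotopic homomorphisms induce equal maps on $K$-theory, so homotopy-equivalent $C^*$-algebras have isomorphic $K$-groups. Hence $K_*(\Tt C^*(\SG[m]{E})) \cong K_*(\Tt C^*(E(0,m)))$, and it remains only to compute the $K$-theory of the Toeplitz algebra of the graph $E(0,m)$. For this I would use the graph-algebra instance of Pimsner's theorem that, for a Hilbert bimodule $X$ over a coefficient algebra $A$, the canonical inclusion $A \hookrightarrow \Tt_X$ is a $KK$-equivalence \cite{Pimsner}. For a row-finite graph $F$ with no sources the graph correspondence has coefficient algebra $C_0(F^0)$, so $K_0(\Tt C^*(F)) \cong K_0(C_0(F^0)) = \ZZ F^0$, generated by the classes $[Q_v]$ of the vertex projections, and $K_1(\Tt C^*(F)) \cong K_1(C_0(F^0)) = 0$. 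Taking $F = E(0,m)$ and using that $E(0,m)^0 = E^0$ then gives $K_0(\Tt C^*(\SG[m]{E})) \cong \ZZ E^0$ and $K_1(\Tt C^*(\SG[m]{E})) = 0$.

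Almost all of the argument is immediate from the results already assembled; the only substantive external input is the computation of $K_*(\Tt C^*(E(0,m)))$, and this is where I expect the (mild) obstacle to lie. The delicate points are to confirm that Pimsner's $KK$-equivalence applies in the nonseparable, possibly infinite, setting considered here, and to verify that the generators of $K_0$ are precisely the classes indexed by the vertex set $E^0$. Should one prefer to avoid appealing to the $KK$-equivalence directly, an alternative route is to run the six-term exact sequence associated to the Toeplitz extension $0 \to \bigoplus_{v} \Kk \to \Tt C^*(E(0,m)) \to C^*(E(0,m)) \to 0$, for which the relevant connecting maps split so that the inclusion of the diagonal $C_0(E^0)$ again induces the isomorphism $(K_0, K_1) = (\ZZ E^0, 0)$.
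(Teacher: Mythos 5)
Your proposal is correct and follows essentially the same route as the paper: identify $\Tt C^*(\SG[m]{E})$ with the mapping-cylinder-type algebra via Theorem~\ref{thm:TC*(SGE) isomorphism}, apply Lemma~\ref{lem:homotopy} with $\iota = \jmath_{1,m+1}$, invoke homotopy invariance of $K$-theory, and compute $K_*(\Tt C^*(E(0,m)))$ via the Fowler--Raeburn identification of $\Tt C^*(E(0,m))$ as the Toeplitz algebra of a Hilbert bimodule over $C_0(E^0)$ together with Pimsner's $KK$-equivalence. The paper's proof does exactly this, citing \cite[Proposition~3.2.6]{RLL}, \cite[Proposition~8.2.2(vi)]{RLL}, \cite[Theorem~4.1]{FR} and \cite[Theorem~4.4]{Pimsner} for the respective steps.
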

\begin{proof}
By Theorem~\ref{thm:TC*(SGE) isomorphism}, for the first statement we just have to show
that the algebra $A := \{f \in C([0,1], \Tt C^*(E(1,m+1))) : f(0) \in \jmath(\Tt
C^*(E(0,m)))\}$ is homotopy equivalent to $\Tt C^*(E(0,m))$. This follows from
Lemma~\ref{lem:homotopy} applied to $\iota = \jmath_{1, m+1}$.

Now \cite[Proposition~3.2.6]{RLL} shows that $K_0(A) \cong K_0(\Tt C^*(E(0,m)))$, and
\cite[Proposition~8.2.2(vi)]{RLL} shows that $K_1(A) \cong K_1(\Tt C^*(E(0,m)))$. By
\cite[Theorem~4.1]{FR}, the algebra $\Tt C^*(E(0,m))$ is isomorphic to the Toeplitz
algebra of a Hilbert bimodule over $C_0(E(0,m)^0) = C_0(E^0)$. Theorem~4.4 of
\cite{Pimsner} therefore implies that $\Tt C^*(E(0,m))$ is $KK$-equivalent to $C_0(E^0)$,
and hence $K_*(\Tt C^*(E(0,m))) \cong K_*(C_0(E^0)) \cong (\ZZ E^0, 0)$.
\end{proof}

\section{Analysis of \texorpdfstring{$C^*(\SG[m]{E})$}{C*(S[m]E)}}\label{sec:C*
analysis}

In this section we analyse the quotient $C^*(\SG[m]{E})$, using the analysis of $\Tt
C^*(\SG[m]{E})$ from the previous section. The quotient map $\Tt C^*(\SG[m]{E}) \to
C^*(\SG[m]{E})$ induces a homomorphism $\pi_t : \Tt C^*(\SG[m]{E})_t \to
C^*(\SG[m]{E})_t$ on each fibre. We show that under the isomorphisms $\Tt
C^*(\SG[m]{E})_t \cong \Tt C^*(E(0,m+1))$ (for $t \not= 0$) and $\Tt C^*(\SG[m]{E})_0
\cong \Tt C^*(E(1, m+1)) \oplus \Tt C^*(E(0,m))$ described in the preceding section,
these homomorphisms $\pi_t$ become the canonical quotient maps $\Tt C^*(E(1,m+1)) \to
C^*(E(1,m+1))$ (for $t \not= 0$), and $\Tt C^*(E(0,m)) \oplus \Tt C^*(E(1,m+1)) \to
C^*(E(0,m)) \oplus C^*(E(1,m+1))$ (for $t = 0$).

Recall that given any graph $E$, we denote by $I_E$ the ideal of $\Tt C^*(E)$ generated
by the projections $\Delta_v := Q_v - \sum_{e \in vE^1} T_e T^*_e$ indexed by $v \in
E^0$. The path-space representation $\pi_E : \Tt C^*(E) \to \Bb(\ell^2(E^*))$ restricts
to an isomorphism of $I_E$ onto $\oplus_{v \in E^0} \Kk(\ell^2(E^*v)) \subseteq
\Kk(\ell^2(E^*))$, and in particular satisfies $\pi(T_\mu \Delta_{s(\mu)} T^*_\nu) =
\theta_{\mu,\nu} \in \Kk(\ell^2(E^* s(\mu)))$ for all $\mu,\nu \in E^*$ with $s(\mu) =
s(\nu)$.

\begin{lem}\label{lem:image of K}
Let $E$ be a locally finite graph with no sources and fix $m \in \NN \setminus \{0\}$.
Let $K \lhd \Tt C^*(\SG[m]{E})$ be the ideal
\[
K := \{x \in \Tt C^*(\SG[m]{E}) : x|_{\ell^2(\SG[m]{E}^*_t)} \in \Kk(\ell^2(\SG[m]{E}^*_t))\text{ for all }t \in \SS\}.
\]
Let $\kappa_m : \Tt C^*(\SG[m]{E}) \to \{f \in C([0,1], \Tt C^*(E(1,m+1))) : f(0) \in
\jmath_{1,m+1}(\Tt C^*(E(0,m)))\}$ be the isomorphism of Theorem~\ref{thm:TC*(SGE)
isomorphism}. Then $\kappa_m(K) = \{f \in \kappa_m(\Tt C^*(\SG[m]{E})) : f(t) \in
I_{E(1,m+1)}\text{ for all }t\}$.
\end{lem}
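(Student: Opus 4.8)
The plan is to exploit that $\kappa_m$ is an isomorphism onto its range, so that the claimed equality is equivalent to the assertion that, for every $x \in \Tt C^*(\SG[m]{E})$, one has $x \in K$ if and only if $\kappa_m(x)(t) \in I_{E(1,m+1)}$ for all $t \in [0,1]$. I would prove this by comparing the two conditions fibrewise, using throughout the background fact from Section~\ref{sec:graph algs} that for any row-finite graph $F$ the path-space representation $\pi_F$ carries $I_F$ onto $\pi_F(\Tt C^*(F)) \cap \Kk(\ell^2(F^*))$; since $\pi_F$ is faithful, this says exactly that an element $a \in \Tt C^*(F)$ lies in $I_F$ if and only if $\pi_F(a)$ is compact. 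I will apply this with $F = E(1,m+1)$ and with $F = E(0,m)$.

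For an interior parameter $t \in (0,1)$, Corollary~\ref{cor:res=fibre map} identifies $\kappa_m(x)(t) = \varepsilon_t(x)$ with $\pi_{E(1,m+1)}^{-1}(U_t^*\, x|_{\ell^2(\SG[m]{E}^*_t)}\, U_t)$, so that $\pi_{E(1,m+1)}(\kappa_m(x)(t)) = U_t^*\, x|_{\ell^2(\SG[m]{E}^*_t)}\, U_t$. As $U_t$ is unitary, this operator is compact if and only if $x|_{\ell^2(\SG[m]{E}^*_t)}$ is, so the background fact gives the equivalence $\kappa_m(x)(t) \in I_{E(1,m+1)} \iff x|_{\ell^2(\SG[m]{E}^*_t)} \in \Kk(\ell^2(\SG[m]{E}^*_t))$ on the whole open interval.

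The remaining work, and the step I expect to be the main obstacle, is the degenerate fibre over $0 \in \SS$, where $\SG[m]{E}^*_0 \cong E(0,m)^*$ rather than $E(1,m+1)^*$, so that the above unitary comparison is not directly available. Here Lemma~\ref{lem:res vs fibre map at zero} is the crucial input: writing $y := \pi_{E(0,m)}^{-1}(U_0^*\, x|_{\ell^2(\SG[m]{E}^*_0)}\, U_0) \in \Tt C^*(E(0,m))$, combining that lemma with Lemma~\ref{lem:lim from right} gives $\kappa_m(x)(0) = \varepsilon_0(x) = \jmath_{1,m+1}(y)$. Now $x|_{\ell^2(\SG[m]{E}^*_0)}$ is compact if and only if $\pi_{E(0,m)}(y) = U_0^*\, x|_{\ell^2(\SG[m]{E}^*_0)}\, U_0$ is, hence (by the background fact for $E(0,m)$) if and only if $y \in I_{E(0,m)}$. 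To transfer this across $\jmath_{1,m+1}$ I would invoke the final clause of Lemma~\ref{lem:mth Toeplitz injection}: since $\jmath_{1,m+1}$ descends to the \emph{isomorphism} $\tilde\jmath_{1,m+1} : C^*(E(0,m)) \to C^*(E(1,m+1))$, we have $\tilde\jmath_{1,m+1}(y + I_{E(0,m)}) = \jmath_{1,m+1}(y) + I_{E(1,m+1)}$, and injectivity of $\tilde\jmath_{1,m+1}$ yields $\jmath_{1,m+1}(y) \in I_{E(1,m+1)} \iff y \in I_{E(0,m)}$. Chaining the three equivalences gives $x|_{\ell^2(\SG[m]{E}^*_0)} \in \Kk(\ell^2(\SG[m]{E}^*_0)) \iff \kappa_m(x)(0) \in I_{E(1,m+1)}$.

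Finally I would assemble the two inclusions. If $x \in K$, the interior and $t = 0$ equivalences give $\kappa_m(x)(t) \in I_{E(1,m+1)}$ for all $t \in [0,1)$; since $\kappa_m(x)$ is continuous and $I_{E(1,m+1)}$ is a closed ideal, letting $t \nearrow 1$ shows $\kappa_m(x)(1) \in I_{E(1,m+1)}$ as well, so $\kappa_m(x)$ lies in the right-hand set. Conversely, if $\kappa_m(x)(t) \in I_{E(1,m+1)}$ for all $t$, then the same two equivalences (for $t \in (0,1)$ and for the single fibre at $0 \in \SS$) force $x|_{\ell^2(\SG[m]{E}^*_t)}$ to be compact on every fibre, i.e.\ $x \in K$; the value at $t = 1$ is irrelevant here, since it corresponds to no fibre of $\SS$ beyond the one at $0$. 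This establishes $\kappa_m(K) = \{f \in \kappa_m(\Tt C^*(\SG[m]{E})) : f(t) \in I_{E(1,m+1)}\text{ for all }t\}$.
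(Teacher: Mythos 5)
Your proposal is correct and follows essentially the same route as the paper's proof: Corollary~\ref{cor:res=fibre map} plus the fact that $\pi_F(I_F) = \pi_F(\Tt C^*(F)) \cap \Kk(\ell^2(F^*))$ handles the fibres over $t \in (0,1)$, Lemma~\ref{lem:res vs fibre map at zero} together with the quotient isomorphism $\tilde\jmath_{1,m+1}$ from Lemma~\ref{lem:mth Toeplitz injection} handles the fibre at $0$, and a limit/closedness argument handles the endpoint $t=1$. The only cosmetic difference is that you organise the argument as a single fibrewise equivalence rather than two separate inclusions.
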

\begin{proof}
Fix $x \in K$. Corollary~\ref{cor:res=fibre map} shows that for $t \not\in \{0,1\}$, we
have
\[
\kappa_m^{-1}(x)_t = \pi_{E(1,m+1)}^{-1}(U_t^* x|_{\ell^2(\SG[m]{E}^*_t)} U_t).
\]
So the discussion preceding this lemma shows that $\kappa_m(x)_t \in
\Kk(\ell^2(E(1,m+1)))$ for $t \not\in \{0,1\}$. Since $\Kk(\ell^2(E(1,m+1)))$ is closed,
we deduce from the definitions of $\varepsilon_0$ and $\varepsilon_1$ that
$\kappa_m^{-1}(x)_0$ and $\kappa_m^{-1}(x)_1$ belong to $\Kk(\ell^2(E(1,m+1)))$ as well.
So $\kappa_m(K) \subseteq \{f \in \kappa_m(\Tt C^*(\SG[m]{E})) : f(t) \in
I_{E(1,m+1)}\text{ for all }t\}$.

For the reverse inclusion, suppose that $f \in \kappa_m(\Tt C^*(\SG[m]{E}))$ and that
$f(t) \in I_{E(1,m+1)}$ for all $t$. Let $x := \kappa_m^{-1}(f)$.
Corollary~\ref{cor:res=fibre map} shows that for $t \not= 0$ we have
$x|_{\ell^2(\SG[m]{E}^*_t)} \in \Kk(\ell^2(\SG[m]{E}^*_t)$, and Lemma~\ref{lem:res vs
fibre map at zero} shows that
\[
\pi_{E(1,m+1)}(\jmath_{1, m+1}(\pi_{E(0,m)}^{-1}(U_0^* x|_{\ell^2(\SG[m]{E}^*_0)}U_0)))
    = \lim_{t \searrow 0} U_t^* x|_{\ell^2{(\SG[m]{E}^*_t)}} U_t.
\]
Since each $U_t^* x|_{\ell^2{(\SG[m]{E}^*_t)}} U_t \in \Kk(\ell^2(E(1,m+1)^*))$, we
deduce that
\begin{equation}\label{eq:jmath on K}
\pi_{E(1,m+1)}(\jmath_{1, m+1}(\pi_{E(0,m)}^{-1}(U_0^* x|_{\ell^2(\SG[m]{E}^*_0)}U_0))) \in \Kk(\ell^2(E(1,m+1)^*)).
\end{equation}
Lemma~\ref{lem:mth Toeplitz injection} shows that $\jmath_{1, m+1}^{-1}(I_{E(1,m+1)}) =
I_{E(0,m)}$. Since $\pi_{E(1,m+1)}$ carries $I_{E(1,m+1)}$ into $\Kk(\ell^2(E(1,m+1)^*))$
and $\pi_{E(0,m)}$ carries $I_{E(0,m)}$ to $\Kk(\ell^2(E(0,m)^*))$, we deduce that
\[
\pi_{E(0,m)}(\jmath_{1, m+1}^{-1}(\pi_{E(1,m+1)}^{-1}(\Kk(\ell^2(E(1,m+1)^*))) \subseteq \Kk(\ell^2(E(0,m)^*)).
\]
So~\eqref{eq:jmath on K} gives $U_0^* x|_{\ell^2(\SG[m]{E}^*_0)}U_0 \in
\Kk(\ell^2(E(0,m)^*))$ and hence $x|_{\ell^2(\SG[m]{E}^*_0)} \in
\Kk(\ell^2(\SG[m]{E}^*_0))$. So we have $x|_{\ell^2(\SG[m]{E}^*_t)} \in
\Kk(\ell^2(\SG[m]{E}^*_t))$ for all $t \in \SS$, and therefore $f = \kappa_m(x) \in
\kappa_m(K)$.
\end{proof}

\begin{thm}\label{thm:C*SE structure}
Let $E$ be a locally finite graph with no sources and fix $m \in \NN \setminus \{0\}$.
Suppose that for every $v \in E^0$ there exist $n \ge 1$ and $\mu \in E^{nm} v$ such that
$|E^1 r(\mu)| \ge 2$. There is an isomorphism $\tilde\kappa_m : C^*(\SG[m]{E}) \to
C([0,1], C^*(E(1,m+1)))$ such that for $a \in C_0(\SG{E}^0)$ and $\xi \in
C_c(\SG[m]{E}^1)$, we have
\[
\tilde{\kappa}_m(\tilde{\rho}_m(a))(t)
    = \begin{cases}
        \sum_{v \in E^0} a([v]) \sum_{e \in vE^1} p_e &\text{ if $t = 0$}\\
        \sum_{e \in E^1} a([e,t]) p_e &\text{ if $t \in (0,1)$}\\
        \sum_{v \in E^0} a([v]) \sum_{e \in E^1 v} p_e &\text{ if $t = 1$}
    \end{cases}
\]
and
\[
\tilde{\kappa}_m(\tilde{\psi}_m(\xi))(t)
    = \begin{cases}
        \sum_{\mu \in E^m} \xi([\mu]) \sum_{e \in s(\mu)E^1} s_{\mu e} &\text{ if $t = 0$}\\
        \sum_{\nu \in E^{m+1}} \xi([\nu,t]) s_\nu &\text{ if $t \in [0,1)$}\\
        \sum_{\mu \in E^m} \xi([\mu]) \sum_{e \in E^1 r(\mu)} s_{e\mu} &\text{ if $t = 1$.}
    \end{cases}
\]
\end{thm}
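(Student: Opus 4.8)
The plan is to present $C^*(\SG[m]{E})$ as a quotient of $\Tt C^*(\SG[m]{E})$ by the ideal $K$ of Lemma~\ref{lem:image of K}, to transport the picture through the isomorphism $\kappa_m$ of Theorem~\ref{thm:TC*(SGE) isomorphism}, and then to identify the resulting quotient with $C([0,1], C^*(E(1,m+1)))$. The standing hypothesis on $E$ enters only through its use in Theorem~\ref{thm:TC*(SGE) isomorphism}.

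First I would check that the ideal defining $C^*(\SG[m]{E})$ is exactly $K$. By construction $C^*(\SG[m]{E}) = \Tt C^*(\SG[m]{E})/N$, where $N = \{x : x|_{\ell^2(\SG[m]{E}^*\omega)} \in \Kk(\ell^2(\SG[m]{E}^*\omega))\text{ for all }\omega\}$ is the kernel of the map into $\bigoplus_\omega \Qq(\ell^2(\SG[m]{E}^*\omega))$. The inclusion $K \subseteq N$ is immediate, since each $\ell^2(\SG[m]{E}^*\omega)$ is invariant. For the reverse inclusion I would fix $x \in N$ and $t \in \SS$; by Corollary~\ref{cor:res=fibre map} (for $t \not= 0$) and Lemma~\ref{lem:res vs fibre map at zero} (for $t = 0$) the operator $x|_{\ell^2(\SG[m]{E}^*_t)}$ is unitarily equivalent to $\pi(a)$ for some $a$ in $\Tt C^*(E(1,m+1))$ or $\Tt C^*(E(0,m))$, and the assumption $x \in N$ says precisely that the image of $a$ in each Calkin algebra $\Qq(\ell^2(\cdot\, v))$ vanishes. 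Since the Calkin representation of the relevant graph algebra is faithful (Section~\ref{sec:graph algs}), this forces $a$ into the corresponding ideal $I_{E(1,m+1)}$ or $I_{E(0,m)}$, so that $\pi(a)$, and hence $x|_{\ell^2(\SG[m]{E}^*_t)}$, is compact. Thus $N = K$.

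Next, Theorem~\ref{thm:TC*(SGE) isomorphism} identifies $\kappa_m$ with an isomorphism onto $C_\jmath := \{f \in C([0,1], \Tt C^*(E(1,m+1))) : f(0) \in \jmath_{1,m+1}(\Tt C^*(E(0,m)))\}$, and Lemma~\ref{lem:image of K} gives $\kappa_m(K) = \{f \in C_\jmath : f(t) \in I_{E(1,m+1)}\text{ for all }t\}$. Writing $q \colon \Tt C^*(E(1,m+1)) \to C^*(E(1,m+1))$ for the quotient map, I would consider the $*$-homomorphism $\Phi \colon C_\jmath \to C([0,1], C^*(E(1,m+1)))$ given by $\Phi(f)(t) = q(f(t))$. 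Its kernel is exactly $\kappa_m(K)$, so once $\Phi$ is shown to be surjective, the composite $\Phi \circ \kappa_m$ descends (using $N = K$) to the desired isomorphism $\tilde\kappa_m$.

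The surjectivity of $\Phi$ is where the real work lies. Given $g \in C([0,1], C^*(E(1,m+1)))$, nuclearity of $C([0,1])$ makes $C([0,1], \Tt C^*(E(1,m+1))) \to C([0,1], C^*(E(1,m+1)))$ a surjection, since its image is a $C^*$-subalgebra, hence closed, and contains the dense span of elementary tensors; so $g$ lifts to some $\tilde f$. The difficulty is the boundary constraint $f(0) \in \jmath_{1,m+1}(\Tt C^*(E(0,m)))$: I would use Bates's isomorphism $\tilde\jmath_{1,m+1} \colon C^*(E(0,m)) \to C^*(E(1,m+1))$ of Lemma~\ref{lem:mth Toeplitz injection} to choose $\hat b \in \Tt C^*(E(0,m))$ lifting $\tilde\jmath_{1,m+1}^{-1}(g(0))$ and set $f_0 := \jmath_{1,m+1}(\hat b)$, so that $q(f_0) = g(0)$ and therefore $f_0 - \tilde f(0) \in I_{E(1,m+1)}$. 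Then $f(t) := \tilde f(t) + (1-t)(f_0 - \tilde f(0))$ lies in $C_\jmath$ and satisfies $\Phi(f) = g$. I expect this endpoint-matching lift to be the main obstacle, as it is the one step that is not purely formal. Finally, the displayed formulae for $\tilde\kappa_m(\tilde\rho_m(a))$ and $\tilde\kappa_m(\tilde\psi_m(\xi))$ would follow by applying $q$ to the formulae of Theorem~\ref{thm:TC*(SGE) isomorphism}, using that $q \circ \jmath_{1,m+1}$ agrees with $\tilde\jmath_{1,m+1}$ followed by the quotient of $\Tt C^*(E(0,m))$, together with $\tilde\jmath_{1,m+1}(p_v) = \sum_{e \in vE^1} p_e$ and $\tilde\jmath_{1,m+1}(s_\mu) = \sum_{e \in s(\mu)E^1} s_{\mu e}$ to treat the fibre at $t = 0$.
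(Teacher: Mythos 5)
Your proposal is correct, and its overall skeleton coincides with the paper's: both pass to the quotient of $\Tt C^*(\SG[m]{E})$ by the ideal $K$ of Lemma~\ref{lem:image of K}, transport through $\kappa_m$, and then realise the result as $C([0,1], C^*(E(1,m+1)))$ via the fibrewise quotient maps, using $\jmath_{1,m+1}^{-1}(I_{E(1,m+1)}) = I_{E(0,m)}$ and the formulas $\tilde\jmath_{1,m+1}(p_v) = \sum_{e \in vE^1} p_e$, $\tilde\jmath_{1,m+1}(s_\mu) = \sum_{e \in s(\mu)E^1} s_{\mu e}$ to handle the fibre at $0$. The two genuine points of divergence are both to your credit. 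First, you explicitly verify that the kernel $N$ of the defining representation of $C^*(\SG[m]{E})$ in $\bigoplus_\omega \Qq(\ell^2(\SG[m]{E}^*\omega))$ coincides with $K$ (which is phrased fibrewise over $t \in \SS$ rather than over vertices $\omega$); the paper elides this, simply asserting that $\kappa_m$ ``descends,'' and your argument via faithfulness of the Calkin representations of $\Tt C^*(E(1,m+1))$ and $\Tt C^*(E(0,m))$ is exactly the right way to fill that in. Second, for surjectivity of the induced map onto $C([0,1], C^*(E(1,m+1)))$, the paper checks fibrewise surjectivity and invokes the $C(X)$-algebra density result \cite[Proposition~C.24]{Williams}, whereas you give a direct two-step lift: lift $g$ through the exact sequence for $C([0,1], -)$, then correct the endpoint by adding $(1-t)\big(f_0 - \tilde f(0)\big)$ with $f_0 = \jmath_{1,m+1}(\hat b)$ a lift of $\tilde\jmath_{1,m+1}^{-1}(g(0))$. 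Your correction term lies in $I_{E(1,m+1)}$ at every $t$, so it does not disturb the image under $\Phi$, and the modified lift lands in $C_\jmath$; this is a perfectly sound and more elementary substitute for the bundle-theoretic argument, at the cost of being special to the base space $[0,1]$ (the paper's route works verbatim for any fibrewise-surjective $C(X)$-module map). Either way the hypotheses on $E$ enter only through Theorem~\ref{thm:TC*(SGE) isomorphism} and Proposition~\ref{prp:0ev isomorphism}, as you note.
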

\begin{proof}
Let
\begin{align*}
A &:= \{f \in C([0,1], \Tt C^*(E(1,m+1))) : f(0) \in \jmath_{1,m+1}(\Tt C^*(E(0,m)))\},\quad\text{ and}\\
I &:= \{f \in A : f(t) \in I_{E(1,m+1)}\text{ for all } t\}.
\end{align*}

Lemma~\ref{lem:image of K} and the definition of $C^*(\SG[m]{E})$ show that the
isomorphism
\[
\kappa_m : \Tt C^*(\SG[m]{E}) \to \{f \in C([0,1], \Tt C^*(E(1,m+1))) : f(0)
\in \jmath(\Tt C^*(E(0,m)))\}
\]
of Theorem~\ref{thm:TC*(SGE) isomorphism} descends to an isomorphism $\kappa' :
C^*(\SG[m]{E}) \to A/I$.

The last statement of Lemma~\ref{lem:image of K} shows that $I = \{f : f(t) \in
I_{E(1,m+1)}\text{ for }t \not=0\text{ and }f(0) \in I_{E(0,m)}\}$. It follows that there
is an injective homomorphism $\kappa'' : A/I \to C([0,1], C^*(E(1,m+1)))$ that carries $x
+ I$ to the function
\[
t \mapsto \begin{cases}
    x(t) + I_{E(1,m+1)} &\text{ if $t \not= 0$}\\
    \jmath_{1, m+1}^{-1}(x(0)) + I_{E(0,m)} &\text{ if $t = 0$.}
    \end{cases}
\]
We claim that $\kappa''$ is surjective. For each $t \in (0,1]$, we have $\{\kappa''(x)(t)
: x \in A/I\} = C^*(E(1,m+1))$ because $\{x(t) : x \in A\} = \Tt C^*(E(1,m+1))$. At $t =
0$ we have $\kappa''(\{x(0) : x \in A/I\}) = \tilde\jmath_{1, m+1}(C^*(E)) =
C^*(E(1,m+1))$ because $\{x(0) : x \in A\} = \jmath_{1, m+1}(\Tt C^*(E(0,m)))$. The
extension $\widetilde{\kappa}''$ of $\kappa''$ to $\Mm(A/I)$ carries the canonical copy
of $C([0,1])$ in $\Mm(A/I)$ to the canonical copy of $C([0,1]) \in \Mm(C([0,1], C^*(E(1,
m+1))))$. Thus for $f \in C([0,1])$ and $x \in A$ we have $f \cdot \kappa''(a + I) =
\kappa''((f \cdot a) + I) \in \kappa''(A/I)$. It therefore follows from
\cite[Proposition~C.24]{Williams} that $\kappa''(A/I)$ is dense in $C([0,1],
C^*(E(1,m+1)))$, and therefore all of $C([0,1], C^*(E(1,m+1)))$ because the range of a
$C^*$-homomorphism is closed.
\end{proof}

To finish this section, we use our earlier results to describe, up to Morita equivalence,
the $C^*$-algebras $C^*(\SG[l]{E})$ for rational values of $l$ and for locally finite
graphs $E$ with no sources or sinks such that for every $v$ and every $m$ there exist
$p,q \ge 1$, $\mu \in vE^{pm}$ and $\nu \in E^{qm} v$ such that $|s(\mu)E^1| \ge 2$ and
$|E^1r(\nu)| \ge 2$. In particular, we show that this applies to any finite, strongly
connected graph with period $1$ (in the sense of Perron--Frobenius theory).

We first need the following elementary result about the Cuntz--Krieger algebras of the
delayed graphs associated to a graph $E$.

\begin{lem}\label{lem:Dn(E) analysis}
Let $E$ be a locally finite graph with no sinks or sources, and let $m,n$ be coprime
positive integers. Then $C^*(D_n(E)(1, m+1))$ is Morita equivalent to $C^*(E(0,m))$.
\end{lem}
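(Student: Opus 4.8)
The plan is to reduce the statement to a full-corner computation. First I would invoke Bates's isomorphism from Section~\ref{sec:dual graph}, which gives $C^*(D_n(E)(1,m+1)) \cong C^*(D_n(E)(0,m))$; so it suffices to prove that $C^*(D_n(E)(0,m))$ is Morita equivalent to $C^*(E(0,m))$. I would write $F := D_n(E)(0,m)$ and let $(s,p)$ denote the universal Cuntz--Krieger $F$-family generating $C^*(F)$, noting that $F$ is row-finite with no sinks or sources because $E$ is.

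The key structural device is a $\ZZ/n$-grading. I would define a level map $\ell : D_n(E)^0 \to \ZZ/n$ by $\ell(v) = 0$ for $v \in E^0$ and $\ell(w_{e,j}) = n-j$, arranged so that every edge of $D_n(E)$ raises the level by $1$; consequently every edge of $F$, being a path of length $m$ in $D_n(E)$, sends its source at level $\ell$ to its range at level $\ell + m$. The level-$0$ vertices are exactly $E^0$. Since $\gcd(m,n)=1$, the translation $\ell \mapsto \ell+m$ is a single $n$-cycle on $\ZZ/n$, so a path in $F$ between two level-$0$ vertices necessarily has $F$-length a multiple of $n$. I would record the crucial combinatorial bijection: for $v \in E^0$ the map $\alpha \mapsto D_n^*(\alpha)$, read as a path of $n$ edges of $F$, is a bijection from $vE^m$ onto $vF^n$, since reading a length-$nm$ path of $D_n(E)$ back from its level-$0$ range $v$ forces the decomposition into blocks $D_n^*(a_1),\dots,D_n^*(a_m)$.

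Next I would set $P := \sum_{v \in E^0} p_v \in \Mm(C^*(F))$ and prove the corner $P\,C^*(F)\,P$ is full and isomorphic to $C^*(E(0,m))$. For fullness, given a vertex $w$ of $F$, coprimality lets me pick $k$ with $\ell(w) + km \equiv 0$, and since $F$ has no sinks there is $\gamma \in F^k$ with $s(\gamma) = w$ and $r(\gamma) \in E^0$; then $p_w = s_\gamma^* s_\gamma$ lies in the ideal generated by $P$, forcing that ideal to be all of $C^*(F)$. To identify the corner I would define a Cuntz--Krieger $E(0,m)$-family by $q_v := p_v$ and $t_\alpha := s_{D_n^*(\alpha)}$ for $\alpha \in E^m = E(0,m)^1$. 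The relation $t_\alpha^* t_\alpha = q_{s(\alpha)}$ is immediate, and the Cuntz--Krieger relation $q_v = \sum_{\alpha \in vE^m} t_\alpha t_\alpha^*$ follows from the iterated relation $p_v = \sum_{\delta \in vF^n} s_\delta s_\delta^*$ together with the bijection $vE^m \cong vF^n$. The universal property then gives a homomorphism $\Phi : C^*(E(0,m)) \to P\,C^*(F)\,P$.

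Finally I would show $\Phi$ is surjective onto the corner and injective. Since $P\,C^*(F)\,P = \clsp\{s_\mu s_\nu^* : r(\mu),r(\nu) \in E^0,\ s(\mu)=s(\nu)\}$, for such $\mu,\nu$ with common source $u$ I would choose $j$ with $\ell(u) - jm \equiv 0$ and insert $p_u = \sum_{\rho \in uF^j} s_\rho s_\rho^*$ to get $s_\mu s_\nu^* = \sum_\rho s_{\mu\rho} s_{\nu\rho}^*$; each $\mu\rho,\nu\rho$ now runs between level-$0$ vertices, hence equals $D_n^*$ of an $E(0,m)$-path, so each summand is a $t_\alpha t_\beta^*$ in the range of $\Phi$. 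For injectivity I would apply the gauge-invariant uniqueness theorem (Section~\ref{sec:graph algs}): the $F$-gauge action $\gamma$ restricts to the $\gamma$-invariant corner with $\gamma_z(t_\alpha) = z^n t_\alpha$, and because $\gamma_\zeta$ fixes every $t_\alpha$ and $q_v$ whenever $\zeta^n = 1$, the rule $\beta_w := \gamma_z$ for any $z$ with $z^n = w$ is a well-defined continuous circle action satisfying $\beta_w(t_\alpha) = w\,t_\alpha$ on edge generators; as each $q_v \neq 0$, the theorem yields injectivity. Thus $C^*(E(0,m)) \cong P\,C^*(F)\,P$ is a full corner of $C^*(F) \cong C^*(D_n(E)(1,m+1))$, giving the Morita equivalence. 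I expect the identification of the corner to be the main obstacle: verifying the Cuntz--Krieger relation through the forced block decomposition, and above all manufacturing the honest (degree-one) gauge action needed for injectivity, are the delicate points, and it is precisely the grading and the coprimality hypothesis that make both of them work.
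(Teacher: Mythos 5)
Your proposal is correct and follows essentially the same route as the paper: reduction via Bates's isomorphism to $C^*(D_n(E)(0,m))$, the full corner determined by $P = \sum_{v \in E^0} p_v$ (with fullness from coprimality and the level/partition argument), and the Cuntz--Krieger $E(0,m)$-family $v \mapsto p_v$, $\alpha \mapsto s_{D_n^*(\alpha)}$ identified with the corner by the same block-decomposition of paths between level-zero vertices. The only real difference is the injectivity step: the paper builds an auxiliary circle action on all of $C^*(D_n(E)(0,m))$ via the universal property, scaling only those edges whose source lies in $E^0$, whereas you reparametrise the restriction of the standard gauge action to the corner by taking $n$th roots (legitimate, since the $n$th roots of unity act trivially on the generators of the corner subalgebra); both produce the action required by the gauge-invariant uniqueness theorem.
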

\begin{proof}
By \cite[Theorem~3.1]{Bates}, we have $C^*(D_n(E)(1, m+1)) \cong C^*(D_n(E)(0,m))$, so it
suffices to show that the latter is Morita equivalent to $C^*(E(0,m))$.

To see this, observe that, by \cite[Lemma~1.1]{BPRS}, the series $\sum_{v \in E^0} p_v$
converges to a multiplier projection $P \in \Mm C^*(D_n(E)(0,m))$.

We claim that $P$ is full. For this, first partition $D_n(E)^0$ as $D_n(E)^0 = \bigcup_{j
\in \ZZ/n\ZZ} V_j$ by setting
\[
V_j := \begin{cases}
        E^0 &\text{ if $j = 0$}\\
        \{w_{e, j} : e \in E^1\} &\text{ if $j \not= 0$.}
    \end{cases}
\]
Then for $\alpha \in D_n(E)^1$, we have $r(\alpha) \in V_j$ if and only if $s(\alpha) \in
V_{j+1}$, and it follows that for $\lambda \in D_n(E)^*$, we have $r(\lambda) \in V_j$ if
and only if $s(\lambda) \in V_{j + [|\lambda|]_n}$. Now fix $u \in D_n(E)^0$, say $u \in
V_j$. Since $m,n$ are coprime, there exists $k \in \NN$ such that $km \equiv j\
(\operatorname{mod}\ n)$. Since $E$ has no sinks, there exists $\lambda \in E^{km} u$,
and since $s(\lambda) \in V_j$, it follows that $r(\lambda) \in V_{j - [km]} = V_0$, and
therefore $p_{r(\lambda)} \le P$. Hence $p_u = s_{\lambda}^* p_{r(\lambda)} s_\lambda$
belongs to the ideal generated by $P$. Now for $\alpha \in D_n(E)^1$, the generator
$s_\alpha = s_\alpha p_{s(\alpha)}$ also belongs to the ideal generated by $P$, and it
follows that $P$ is full.

To complete the proof, it suffices to show that $P C^*(D_n(E)(0,m)) P \cong C^*(E(0,m))$.
We begin by constructing a Cuntz--Krieger $E(0,m)$-family in $P C^*(D_n(E)(0,m)) P$.
First, for $\mu \in E^m$, we define $\alpha(\mu) \in D_n(E)(0,m)^n$ by
\[
\alpha(\mu) = f_{\mu_1, 1} \cdots f_{\mu_1, n} f_{\mu_2, 1} \cdots f_{\mu_2, n} \cdots f_{\mu_m, 1}
\cdots f_{\mu_m, n}.
\]
For $v \in E(0,m)^0 = E^0$, we define $P_v := p_v \in P C^*(D_n(E)(0,m)) P$, and for $\mu
\in E(0,m)^1 = E^m$, we define $S_\mu := s_{\alpha(\mu)} \in P C^*(D_n(E)(0,m)) P$. It is
routine to see that $(P, S)$ is a Cuntz--Krieger $E(0,m)$-family, so the universal
property of $C^*(E(0,m))$ implies that there is a homomorphism $\pi : C^*(E(0,m)) \to P
C^*(D_n(E)(0,m)) P$ such that $\pi(p_v) = P_v$ for all $v \in E^0$ and $\pi(s_\mu) =
S_\mu$ for all $\mu \in E(0,m)^1$. We have $\alpha(E^m) = (D_n(E)(0,m)^n)E^0 \subseteq
D_n(E)(0,m)^n$. The universal property of $C^*(D_n(E)(0,m))$ shows that there is an
action $\beta$ of $\TT$ on $C^*(D_n(E)(0,m))$ such that $\beta_z(s_\nu) = s_\nu$ for all
$\nu \in (D_n(E)(0,m)^1) \setminus (D_n(E)(0,m)^1)E^0$, and such that $\beta_z(s_\nu) = z
s_\nu$ for all $\nu \in (D_n(E)(0,m)^1)E^0$. Since $\gcd(m,n) = 0$, for each $\mu \in
D_n(E)(0,m)^n$, if we factor $\alpha(\mu) = \alpha_1 \cdots \alpha_n$ with each $\alpha_i
\in D_n(E)(0,m)^1$, we have $s(\alpha_n) \in E^0$ and $s(\alpha_i) \not\in E^0$ for $i <
n$. Consequently $\beta_z(s_{\alpha(\mu)}) = s_{\alpha_1} s_{\alpha_2} \cdots
s_{\alpha_{n-1}} (zs_{\alpha_n}) = z s_{\alpha(\mu)}$. Hence, writing $\gamma$ for the
gauge action on $C^*(E(0,m))$ we have $\pi \circ \gamma_z = \beta_z \circ \pi$. The
gauge-invariant uniqueness theorem \cite[Theorem~2.1]{BPRS} therefore implies that $\pi$
is injective.

It now suffices to show that the range of $\pi$ is $P C^*(D_n(E)(0,m)) P$. We have
\[
P C^*(D_n(E)(0,m)) P =
    \clsp\{s_\eta s^*_\zeta : \eta,\zeta \in V_0 D_n(E)(0,m)^*, s(\eta) = s(\zeta)\}.
\]
Fix $\eta, \zeta \in V_0(D_n(E)(0,m)^*)$ such that $s(\eta) = s(\zeta)$, say $\eta \in
D_n(E)(0,m)^k = D_n(E)^{km}$ and $\zeta \in D_n(E)(0,m)^l = D_n(E)^{lm}$. Then $s(\eta)
\in V_{[km]}$ and $s(\zeta) \in V_{[lm]}$ forcing $km \equiv lm\ (\operatorname{mod}\
n)$. Since $\gcd(m,n) = 1$, we deduce that $k \equiv l\ (\operatorname{mod}\ n)$. Fix $p$
such that $k+p \in n\ZZ$. Then $l+p \in n\ZZ$ too, and the Cuntz--Krieger relation forces
\[
s_\eta s^*_\zeta = \sum_{\xi \in s(\eta)D_n(E)^{pm}} s_{\eta\xi}s^*_{\zeta\xi}
    = \sum_{\xi \in s(\eta)D_n(E)(0,m)^p} s_{\eta\xi}s^*_{\zeta\xi}
\]
By construction, each $\eta\xi$ has the form $\alpha(\mu_1) \cdots \alpha(\mu_{k+p})$ for
some $\mu_i \in E(0,m)^1$, and so each $s_{\eta\xi} \in \pi(C^*(E(0,m)))$, and similarly
for each $s_{\zeta\xi}$. So $s_\eta s^*_\zeta \in \pi(C^*(E(0,m)))$. Thus $\pi$ is an
isomorphism of $C^*(E(0,m))$ onto $P C^*(D_n(E)(0,m)) P$ as required.
\end{proof}

\begin{cor}\label{cor:summary}
Let $E$ be a locally finite graph with no sinks or sources. Suppose that for every $v \in
E^0$ and every $m \in \ZZ \setminus \{0\}$, there exist $n \ge 1$ and $\mu \in E^{nm} v$
such that $|E^1 r(\mu)| \ge 2$. For $n \in \NN \setminus \{0\}$ and $m \in \ZZ$ such that
$\gcd(m,n) = 1$,
\begin{align}
C^*(\SG[\frac{m}{n}]{E}) &\sim_{\Me} C([0,1], C^*(E(0,m))) &&\text{if $m > 0$,}\nonumber\\
C^*(\SG[\frac{m}{n}]{E}) &\cong C_0(\SG{E}^0) \otimes C(\TT) &&\text{if $m = 0$, and}\label{eq:C*-structure}\\
C^*(\SG[\frac{m}{n}]{E}) &\sim_{\Me} C([0,1], C^*(E^{\op}(0,-m))) &&\text{if $m < 0$.}\nonumber
\end{align}
In particular,
\[
K_*(C^*(\SG[\frac{m}{n}]{E}))
    \cong\begin{cases}
        \big(\coker(1 - (A^t_E)^m), \ker(1 - (A^t_E)^m\big) &\text{ if $m > 0$,}\\
        \big(K^0(\SG{E}^0) \oplus K^1(\SG{E}^0), K^0(\SG{E}^0) \oplus K^1(\SG{E}^0)\big) &\text{ if $m = 0$, and}\\
        \big(\coker(1 - A_E^m), \ker(1 - A_E^m)\big) &\text{ if $m < 0$.}\\
    \end{cases}
\]
\end{cor}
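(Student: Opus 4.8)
The plan is to reduce each $C^*(\SG[\frac{m}{n}]{E})$ to an algebra of the form $C([0,1], C^*(F))$ for a concrete graph $F$ built from $E$, using the structural results of Sections~\ref{sec:SG[l]E}~and~\ref{sec:C* analysis}, and then to read off the Morita equivalences and the $K$-groups. The case $m=0$ is immediate: since $\gcd(0,n)=1$ forces $n=1$ we have $\SG[\frac{0}{n}]{E}=\SG[0]{E}$, and part~(2) of Theorem~\ref{thm:C* at 1,0,-1} gives $C^*(\SG[0]{E})\cong C_0(\SG{E}^0)\otimes C(\TT)$ at once.

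For $m>0$ I would chain together three identifications. First, Corollary~\ref{cor:rational C*-identification} gives $C^*(\SG[\frac{m}{n}]{E})\cong C^*(\SG[m]{D_n(E)})$. Second, Theorem~\ref{thm:C*SE structure} applied to $D_n(E)$ gives $C^*(\SG[m]{D_n(E)})\cong C([0,1],C^*(D_n(E)(1,m+1)))$. Third, Lemma~\ref{lem:Dn(E) analysis} (valid since $\gcd(m,n)=1$) gives $C^*(D_n(E)(1,m+1))\sim_{\Me}C^*(E(0,m))$; because Morita equivalence is preserved under $A\mapsto C([0,1],A)=C([0,1])\otimes A$ (tensor an imprimitivity bimodule by $C([0,1])$), this upgrades to $C([0,1],C^*(D_n(E)(1,m+1)))\sim_{\Me}C([0,1],C^*(E(0,m)))$, which is the asserted conclusion. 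The case $m<0$ reduces to this: Lemma~\ref{lem:opposite} gives $C^*(\SG[\frac{m}{n}]{E})\cong C^*(\SG[\frac{|m|}{n}]{E^{\op}})$, and running the $m>0$ argument for $E^{\op}$ with exponent $|m|$ yields $C^*(\SG[\frac{m}{n}]{E})\sim_{\Me}C([0,1],C^*(E^{\op}(0,|m|)))$.

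The step requiring real work, and the one I expect to be the main obstacle, is verifying the hypothesis of Theorem~\ref{thm:C*SE structure} for the graph $D_n(E)$: for every vertex $u$ of $D_n(E)$ there must be $p\ge1$ and a path $\mu\in D_n(E)^{pm}u$ whose range emits at least two edges. The only two-emitting vertices of $D_n(E)$ are the original vertices $v\in E^0$ with $|E^1v|\ge2$, so for $u=v\in E^0$ the hypothesis on $E$ applies directly via the length-multiplying inclusion $D_n^*$. For an inserted vertex $w_{e,j}$ the first $j$ edges of any path are forced (they lead to $r(e)$), and any continuation reaching an original vertex is an image $D_n^*(\nu)$ of an $E$-path $\nu$, of length $n|\nu|$; hence the total length is $j+n|\nu|$, and demanding that it be a multiple of $m$ forces $|\nu|\equiv -jn^{-1}\pmod m$. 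This is precisely where $\gcd(m,n)=1$ enters, making $n$ invertible modulo $m$. I would then realise an $E$-path $\nu$ of the prescribed length-class modulo $m$ ending at a two-emitter by prepending a forward path of the correct length (available since $E$ has no sinks) to a path supplied by the standing hypothesis. For $m<0$ the same verification is carried out for $E^{\op}$, using the receiving half of the hypothesis (two-receiving vertices of $E$ become two-emitting vertices of $E^{\op}$).

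Finally the $K$-theory follows since $C([0,1],A)$ is homotopy equivalent to $A$ and both homotopy equivalence and Morita equivalence preserve $K$-theory. For $m>0$ this gives $K_*(C^*(\SG[\frac{m}{n}]{E}))\cong K_*(C^*(E(0,m)))$; the graph $E(0,m)$ has adjacency matrix $A_{E(0,m)}=A_E^m$, so the standard computation $K_0(C^*(F))\cong\coker(1-A_F^t)$ and $K_1(C^*(F))\cong\ker(1-A_F^t)$ yields $\big(\coker(1-(A_E^t)^m),\ker(1-(A_E^t)^m)\big)$. For $m<0$ the same applied to $E^{\op}(0,|m|)$ produces one extra transpose, since $A_{E^{\op}}=A_E^t$, giving $\big(\coker(1-A_E^{|m|}),\ker(1-A_E^{|m|})\big)$. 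For $m=0$, the Pimsner--Voiculescu sequence for $C_0(\SG{E}^0)\otimes C(\TT)$ (equivalently the K\"unneth theorem, using $K_*(C(\TT))=(\ZZ,\ZZ)$) splits and identifies both $K_0$ and $K_1$ with $K^0(\SG{E}^0)\oplus K^1(\SG{E}^0)$.
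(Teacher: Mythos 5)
Your proposal follows the same route as the paper's proof: reduce via Corollary~\ref{cor:rational C*-identification} to $C^*(\SG[m]{D_n(E)})$, apply Theorem~\ref{thm:C*SE structure} to $D_n(E)$ followed by Lemma~\ref{lem:Dn(E) analysis}, handle $m<0$ via Lemma~\ref{lem:opposite} and $m=0$ via Theorem~\ref{thm:C* at 1,0,-1}(2), and obtain the $K$-groups from homotopy invariance of $C([0,1],\cdot)$ together with the standard graph-algebra computation. Your explicit check that $D_n(E)$ inherits the hypothesis of Theorem~\ref{thm:C*SE structure} --- using $\gcd(m,n)=1$ to realise the forced residue class of path lengths modulo $m$ at an inserted vertex $w_{e,j}$ --- is correct and fills in a step the paper's proof leaves implicit.
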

\begin{proof}
For $m > 0$, Theorem~\ref{thm:C*SE structure} combined with Lemma~\ref{lem:Dn(E)
analysis} shows that $C^*(\SG[\frac{m}{n}]) \sim_{\Me} C([0,1], C^*(E(0,m)))$, and then
for $m < 0$, it follows from Lemma~\ref{lem:opposite} that $C^*(\SG[\frac{m}{n}])
\sim_{\Me} C([0,1], C^*(E^{\op}(0,-m)))$. Combining this with Theorem~\ref{thm:C* at
1,0,-1}(\ref{it:C* at 1,0,-1 ii}) proves the first statement.

For the second statement, since $K_*(C([0,1], C^*(E(0,m)))) \cong K_*(C^*(E(0,m)))$ (for
example, apply Lemma~\ref{lem:homotopy} to $\iota = \id_{C^*(E(0,m))}$, and then use
\cite[Proposition~3.2.6 and Theorem~8.2.2(vi)]{RLL}), the computation
\cite[Theorem~3.2]{RS} of $K$-theory for graph $C^*$-algebras gives
\[
K_*(C([0,1], C^*(E(0,m)))) \cong \big(\coker(1 - (A_E^t)^m), \ker(1 - A_E^t)^m\big)\qquad\text{ for $m > 0$.}
\]
A similar argument gives
\[
K_*(C([0,1], C^*(E(0,m)))) \cong \big(\coker(1 - A_E^m), \ker(1 - A_E^m)\big)\qquad\text{ for $m < 0$}
\]
because $A_{E^{\op}}^t = A_E$ and our hypotheses are symmetrical in $E$ and $E^{\op}$.
Finally, the K\"unneth theorem and that operator $K$-theory agrees with topological
$K$-theory for commutative $C^*$algebras implies that $K_i(C_0(\SG{E}^0) \otimes C(\TT))
\cong K^0(C_0(\SG{E}^0)) \oplus K^1(C_0(\SG{E}^0))$ for $i = 1,2$.
\end{proof}

We finish by applying Corollary~\ref{cor:summary} to strongly connected finite graphs
with period 1.

We say that a graph $E$ is strongly connected if for all $v,w \in E^0$ the set $v E^* w
\setminus E^0$ is nonempty (we take the convention that a graph consisting of a single
vertex and no edges is not strongly connected). If $E$ is a strongly connected finite
graph, then the \emph{period} of $E$ is defined as $P(E) := \gcd\{|\mu| : \mu \in E^*
\setminus E^0, r(\mu) = s(\mu)\}$.

Also recall that if $E$ is a graph, then there is a map $\partial : \ZZ E^0 \to \ZZ E^1$
given by $\partial(a)(e) = a(r(e)) - a(s(e))$. The $0$\textsuperscript{th} and
$1$\textsuperscript{st} homology groups of $E$ are defined by $H_1(E) = \ZZ E^1 /
\partial_1(\ZZ E^0)$, and $H_0(E) = \ker(\partial_1)$. The higher homology groups
$H_n(E)$, $n \ge 2$ are trivial (see, for example, \cite[Remark~3.6]{KPS3}). The group
$H_0(E)$ is isomorphic to the free abelian group generated by the connected components of
$E$.

We say that a finite graph $E$ is a \emph{simple cycle} if, putting $n = |E^0|$, there
are bijections $i \mapsto v_i$ and $i \mapsto e_i$ of $\ZZ/n\ZZ$ onto $E^0$ and $E^1$
respectively such that $r(e_i) = v_i$ and $s(e_i) = v_{i+1}$ for all $i$.

\begin{cor}
Let $E$ be a finite strongly connected graph that is not a simple cycle, and suppose that
$P(E) = 1$. Then for every $v \in E^0$ and every $m \in \ZZ \setminus \{0\}$, there exist
$n \ge 1$ and $\mu \in E^{nm} v$ such that $|E^1 r(\mu)| \ge 2$. For $n > 0$ and $m \in
\ZZ$ with $m,n$ coprime,
\begin{align*}
C^*(\SG[\frac{m}{n}]{E}) &\sim_{\Me} C([0,1], C^*(E(0,|m|))) &&\text{if $m \not= 0$, and}\nonumber\\
C^*(\SG[\frac{m}{n}]{E}) &\cong C_0(\SG{E}^0) \otimes C(\TT) &&\text{if $m = 0$.}
\end{align*}
In particular,
\[
K_*(C^*(\SG[\frac{m}{n}]{E}))
    \cong\begin{cases}
        \big(\coker(1 - A^m_E), \ker(1 - A^m_E)\big) &\text{ if $m \not= 0$}\\
        \big(\ZZ \oplus H_1(E), \ZZ \oplus H_1(E)\big) &\text{ if $m = 0$.}
    \end{cases}
\]
\end{cor}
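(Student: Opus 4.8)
The plan is to derive both the graph-theoretic assertion and the $C^*$-algebraic ones from Corollary~\ref{cor:summary}: the bulk of the work is to check that $E$ (and symmetrically $E^{\op}$) satisfies the combinatorial hypothesis of that corollary, after which the structure statements are immediate and only the two $K$-theory computations need translating into the stated form. Since $E$ is finite and strongly connected it is connected and has neither sinks nor sources (every vertex lies on a nontrivial cycle), so it meets the standing hypotheses of Corollary~\ref{cor:summary}; the one thing requiring proof is that for every $v \in E^0$ and every $m \in \ZZ \setminus \{0\}$ there are $n \ge 1$ and $\mu \in E^{n|m|}v$ with $|E^1 r(\mu)| \ge 2$, together with the same statement for $E^{\op}$, which shares all the hypotheses ($E^{\op}$ is finite, strongly connected, not a simple cycle, with $P(E^{\op}) = P(E) = 1$).

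This verification is the heart of the argument. First I would note that $E$ has a vertex $w$ with $|E^1 w| \ge 2$: if every vertex emitted exactly one edge, then, as $E$ has no sinks or sources, strong connectivity would force $E$ to be a simple cycle, contrary to assumption. Next I would invoke Perron--Frobenius theory: a finite strongly connected graph with $P(E) = 1$ has primitive adjacency matrix $A_E$, so there is $N$ with $A_E^k(u,u') > 0$ for all $k \ge N$ and all $u, u' \in E^0$; equivalently, between any ordered pair of vertices there is a path of every sufficiently large length. Given $v$ and $m \ne 0$, I would then choose $n$ with $n|m| \ge N$ and take any $\mu \in w E^{n|m|} v$; then $\mu \in E^{n|m|} v$ and $r(\mu) = w$, so $|E^1 r(\mu)| = |E^1 w| \ge 2$, as required. \textbf{I expect this combinatorial step---securing a path whose length is divisible by $|m|$ and whose range vertex emits at least two edges---to be the main obstacle}, although it is routine once primitivity and the existence of $w$ are in hand.

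With the hypothesis verified, the structural conclusions follow from Corollary~\ref{cor:summary}. For $m > 0$ it gives $C^*(\SG[\frac{m}{n}]{E}) \sim_{\Me} C([0,1], C^*(E(0,|m|)))$ directly, and for $m = 0$ it gives $C^*(\SG[\frac{m}{n}]{E}) \cong C_0(\SG{E}^0) \otimes C(\TT)$ (via Theorem~\ref{thm:C* at 1,0,-1}). For $m < 0$, Corollary~\ref{cor:summary} instead yields $C^*(\SG[\frac{m}{n}]{E}) \sim_{\Me} C([0,1], C^*(E^{\op}(0,|m|)))$; since $A_{E^{\op}} = A_E^t$, the graphs $E^{\op}(0,|m|)$ and $E(0,|m|)$ have transpose adjacency matrices. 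Under the present hypotheses both $C^*(E(0,|m|))$ and $C^*(E^{\op}(0,|m|))$ are unital Kirchberg algebras in the UCT class (strong connectivity plus a vertex of out-degree $\ge 2$ gives simplicity and pure infiniteness, and $A_E^{|m|}$ is again primitive), and they have a common $K$-theory, so they are stably isomorphic; this identifies the target with $C([0,1], C^*(E(0,|m|)))$ and yields the uniform description for all $m \ne 0$.

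Finally I would compute the $K$-groups. For $m \ne 0$, homotopy invariance gives $K_*(C([0,1], C^*(E(0,|m|)))) \cong K_*(C^*(E(0,|m|)))$, and since $A_{E(0,|m|)} = A_E^{|m|}$, the graph-algebra computation \cite[Theorem~3.2]{RS} gives $\bigl(\coker(1 - (A_E^t)^{|m|}), \ker(1 - (A_E^t)^{|m|})\bigr)$; because a matrix and its transpose share a Smith normal form, $\coker$ and $\ker$ are unchanged on replacing $1 - (A_E^t)^{|m|}$ by $1 - A_E^m$, producing the stated $\bigl(\coker(1 - A_E^m), \ker(1 - A_E^m)\bigr)$. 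For $m = 0$, I would identify $\SG{E}^0$ with the geometric realisation of $E$, a compact connected finite graph, hence homotopy equivalent to a wedge of $b_1 = |E^1| - |E^0| + 1$ circles; thus $K^0(\SG{E}^0) \cong \ZZ$ and $K^1(\SG{E}^0) \cong \ZZ^{b_1}$, the latter identified with $H_1(E)$ through the cochain complex $\partial_1 : \ZZ E^0 \to \ZZ E^1$ that computes $H^*(\SG{E}^0; \ZZ)$. The K\"unneth theorem (with $C(\TT)$ in the UCT class and having free $K$-theory) then gives $K_0(C_0(\SG{E}^0) \otimes C(\TT)) \cong K_1(C_0(\SG{E}^0) \otimes C(\TT)) \cong K^0(\SG{E}^0) \oplus K^1(\SG{E}^0) \cong \ZZ \oplus H_1(E)$, completing the computation.
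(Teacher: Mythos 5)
Your argument is correct and follows the same overall skeleton as the paper's proof (verify the hypothesis of Corollary~\ref{cor:summary} for $E$ and for $E^{\op}$, then translate its conclusions), but three of the sub-steps are carried out by genuinely different means. For the combinatorial hypothesis, the paper works directly with the period rather than with primitivity: it fixes $w$ with $|E^1w|\ge 2$, picks $\alpha \in wE^*v$ and cycles $\lambda,\nu \in vE^*v$ with $|\lambda|-|\nu| = m - |\alpha|$ (possible since $P(E)=1$), and checks that $\mu := \alpha\lambda\nu^{m-1}$ has length $m(|\nu|+1)$; your appeal to primitivity of $A_E$ --- paths of every sufficiently large length between any ordered pair of vertices --- reaches the same conclusion and is a standard equivalent formulation of aperiodicity, arguably cleaner. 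For the Morita equivalence $C^*(E^{\op}(0,|m|)) \sim_{\Me} C^*(E(0,|m|))$ needed when $m<0$, the paper cites R{\o}rdam's classification of simple Cuntz--Krieger algebras by $K$-theory, whereas you invoke the Kirchberg--Phillips theorem; both rest on the same verifications that these algebras are simple and purely infinite (every cycle has an entrance because the graphs are strongly connected and not simple cycles) and that $\ker$ and $\coker$ are transpose-invariant. Finally, for $K^*(\SG{E}^0)$ the paper goes through Thomas's theorem expressing the $K$-theory of a low-dimensional CW-complex in terms of its homology, together with the computation $H_*(\SG{E}^0)\cong H_*(E)$ from \cite{KPS3}, while you observe directly that $\SG{E}^0$ is the topological realisation of a connected finite graph and hence homotopy equivalent to a wedge of $|E^1|-|E^0|+1$ circles; your route is more elementary and makes the identification $K^1(\SG{E}^0)\cong H_1(E)$ transparent, since $H_1(E)=\ZZ E^1/\partial(\ZZ E^0)$ is free of exactly that rank when $E$ is connected.
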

\begin{proof}
First observe that the opposite graph $E^{\op}$ is also strongly connected with
$P(E^{\op}) = 1$, and is also not a simple cycle. To prove the first statement, it
therefore suffices to consider $m > 0$ (the case $m = 0$ follows from the second line
of~\eqref{eq:C*-structure}).

So fix $m \ge 1$ and $v \in E^0$. Since $E$ is strongly connected, we have $|E^1 v| \ge
1$ for all $v$, and since $E$ is not a simple cycle, a counting argument shows that there
exists $w \in E^0$ such that $|E^1 w| \ge 2$. Since $E$ is strongly connected, the set $w
E^* v$ is nonempty, say $\alpha \in w E^* v$. It is standard (see for example
\cite[Lemma~6.1]{LLNSW} applied with $k = 1$) that $P(E) = \{|\lambda| - |\nu| :
\lambda,\nu \in v E^* v\}$. In particular, there are cycles $\lambda,\nu \in vE^* v$ such
that $|\lambda| - |\nu| = m - |\alpha|$. It follows that $|\alpha\lambda\nu^{m-1}| =
|\alpha| + (|\lambda| - |\nu|) + m|\nu| = m(|\nu| + 1)$. So $n := |\nu| + 1$ and $\mu :=
\alpha\lambda\nu^{m-1}$ satisfy $\mu \in E^{nm} v$ and $|E^1 r(\mu)| = |E^1 w| \ge 2$.

By Corollary~\ref{cor:summary}, it now suffices to show that
\[
C([0,1], C^*(E^{\op}(0, m))) \sim_{\Me} C([0,1], C^*(E(0,m)))\quad\text{ for all $m \ge 1$,}
\]
and that $K_*(C(\SG{E}^0 \times \SS)) \cong \big(\ZZ \oplus H_1(E), \ZZ \oplus
H_1(E)\big)$.

Fix $m > 0$. Since $E$ has period 1, it is easy to see that $E(0,m)$ and $E^{\op}(0,m)$
are strongly connected finite graphs that are not simple cycles. Since
\[
\ker(A^t) \cong \ker(A)\quad\text{ and }\quad \coker(A^t) \cong \coker(A),
\]
(see, for example \cite[Section~6.2]{KamPut}), we have $K_*(C^*(E^{\op}(0,m))) \cong
K_*(C^*(E(0,m)))$, and so \cite[Theorem~6.1]{Rordam} shows that $C^*(E^{\op}(0, m))
\sim_{\Me} C^*(E(0, m))$. Thus the function algebras $C([0,1], C^*(E^{\op}(0, m)))$ and
$C([0,1], C^*(E(0,m)))$ are Morita equivalent as well.

Now take $m = 0$. We have $K^*(\SS) = (\ZZ, \ZZ)$. Thus, by the K\"unneth theorem in
K-theory, it suffices to show that $K^0(\SG{E}^0) \cong \ZZ$ and $K^1(\SG{E}^0) \cong
H_1(E)$. Since $\SG{E}^0$ and its suspension are finite CW-complexes of dimension at most
2, \cite[Theorem~1]{Thomas} shows that $K^0(\SG{E}^0) \cong \bigoplus_{n \ge 0}
H_{2n}(\SG{E}^0) = H_0(\SG{E}^0)$ and that $K^1(\SG{E}^0)$ is isomorphic to the direct
sum of the even homology groups of the suspension of $\SG{E}^0$, and hence to the direct
sum of the odd homology groups of $\SG{E}^0$. Theorem~6.3 of \cite{KPS3} gives
$H_*(\SG{E}^0) \cong H_*(E)$, and we have $H_0(E) \cong \ZZ$ because $E$ is
connected.\end{proof}


\begin{thebibliography}{00}
\bibitem{Bates} T. Bates, \emph{Applications of the gauge-invariant uniqueness
    theorem for graph algebras}, Bull. Austral. Math. Soc. \textbf{66} (2002), 57--67.

\bibitem{BPRS} T. Bates, D. Pask, I. Raeburn and W. Szyma{\'n}ski,
    \emph{The {$C\sp *$}-algebras of row-finite graphs}, New York J. Math. \textbf{6}
    (2000), 307--324.

\bibitem{BCW} N. Brownlowe, T.M. Carlsen and M.F.
    Whittaker, \emph{Graph algebras and orbit equivalence}, Ergodic Theory Dynam. Systems
    \textbf{37} (2017), 389--417.

\bibitem{CPR} A.L. Carey, J. Phillips and A. Rennie, \emph{Semifinite spectral triples
    associated with graph {$C^\ast$}-algebras}, Aspects Math., E38, Traces in number
    theory, geometry and quantum fields, 35--56, Friedr. Vieweg, Wiesbaden, 2008.

\bibitem{CK} J. Cuntz and W. Krieger, \emph{A class of {$C\sp{\ast}
    $}-algebras and topological {M}arkov chains}, Invent. Math. \textbf{56} (1980),
    251--268.

\bibitem{DT} D. Drinen and M. Tomforde, \emph{Computing {$K$}-theory
    and {${\rm Ext}$} for graph {$C\sp *$}-algebras}, Illinois J. Math. \textbf{46}
    (2002), 81--91.

\bibitem{ET} S. Eilers and M. Tomforde, \emph{On the classification
    of nonsimple graph {$C^*$}-algebras}, Math. Ann. \textbf{346} (2010), 393--418.

\bibitem{EW} M. Enomoto and Y. Watatani, \emph{A graph theory for
    {$C\sp{\ast} $}-algebras}, Math. Japon. \textbf{25} (1980), 435--442.

\bibitem{FR} N.J. Fowler and I. Raeburn, \emph{The {T}oeplitz algebra
    of a {H}ilbert bimodule}, Indiana Univ. Math. J. \textbf{48} (1999), 155--181.

\bibitem{HS} J.H. Hong and W. Szyma{\'n}ski, \emph{Quantum lens spaces
    and graph algebras}, Pacific J. Math. \textbf{211} (2003), 249--263.

\bibitem{aHR} A. an Huef and I. Raeburn, \emph{The ideal structure of
    {C}untz--{K}rieger algebras}, Ergodic Theory Dynam. Systems \textbf{17} (1997),
    611--624.

\bibitem{KKQS} S. Kaliszewski, A. Kumjian, J. Quigg and A. Sims,
    \emph{Topological realizations and fundamental groups of higher-rank
 graphs}, Proc. Edinb. Math. Soc. (2) \textbf{59} (2016), 143--168.

\bibitem{KamPut} J. Kaminker andI. Putnam, \emph{$K$-theoretic duality for shifts of
    finite
    type}, Commun. Math. Phys. \textbf{187} (1997), 509--522.

\bibitem{Kat1} T. Katsura, \emph{A class of {$C\sp \ast$}-algebras generalizing
    both graph algebras and homeomorphism {$C\sp \ast$}-algebras {I}. {F}undamental
    results}, Trans. Amer. Math. Soc. \textbf{356} (2004), 4287--4322.

\bibitem{Kat2} T. Katsura, \emph{A class of {$C\sp *$}-algebras generalizing
    both graph algebras and homeomorphism {$C\sp *$}-algebras {II}. {E}xamples},
    Internat. J. Math. \textbf{17} (2006), 791--833.

\bibitem{Kat3} T. Katsura, \emph{A class of {$C\sp *$}-algebras generalizing
    both graph algebras and homeomorphism {$C\sp *$}-algebras {III}. {I}deal structures},
    Ergodic Theory Dynam. Systems \textbf{26} (2006), 1805--1854.

\bibitem{Kat4} T. Katsura, \emph{A class of {$C\sp *$}-algebras generalizing
    both graph algebras and homeomorphism {$C\sp *$}-algebras. {IV}. {P}ure
    infiniteness}, J. Funct. Anal. \textbf{254} (2008), 1161--1187.

\bibitem{KPRR} A. Kumjian, D. Pask, I. Raeburn and J. Renault,
    \emph{Graphs, groupoids, and {C}untz--{K}rieger algebras}, J. Funct. Anal.
    \textbf{144} (1997), 505--541.

\bibitem{KPR} A. Kumjian, D. Pask and I. Raeburn, \emph{Cuntz--{K}rieger
    algebras of directed graphs}, Pacific J. Math. \textbf{184} (1998), 161--174.

\bibitem{KPS3} A. Kumjian, D. Pask and A. Sims, \emph{Homology for
    higher-rank graphs and twisted {$C^\ast$}-algebras}, J. Funct. Anal. \textbf{263}
    (2012), 1539--1574.

\bibitem{LLNSW} M. Laca, N.S. Larsen, S. Neshveyev, A. Sims and S.B.G. Webster, \emph{Von
    {N}eumann algebras of strongly connected higher-rank graphs}, Math. Ann. \textbf{363}
    (2015), 657--678.

\bibitem{LPS} H. Li, D. Pask and A. Sims, \emph{An elementary approach to
    {$C^*$}-algebras associated to
 topological graphs}, New York J. Math. \textbf{20} (2014), 447--469.

\bibitem{MT} P.S. Muhly and M. Tomforde, \emph{Topological quivers},
    Internat. J. Math. \textbf{16} (2005), 693--755.

\bibitem{Pimsner} M.V. Pimsner, \emph{A class of {$C\sp *$}-algebras generalizing
    both {C}untz--{K}rieger algebras and crossed products by {${\bf Z}$}}, Fields Inst.
    Commun., 12, Free probability theory (Waterloo, ON, 1995), 189--212, Amer. Math.
    Soc., Providence, RI, 1997.

\bibitem{CBMSbook} I. Raeburn, Graph algebras, Published for the
    Conference Board of the Mathematical Sciences, Washington, DC, 2005, vi+113.

\bibitem{RS} I. Raeburn and W. Szyma{\'n}ski, \emph{Cuntz--{K}rieger
    algebras of infinite graphs and matrices}, Trans. Amer. Math. Soc. \textbf{356}
    (2004), 39--59.

\bibitem{Rordam} M. R{\o}rdam, \emph{Classification of {C}untz--{K}rieger
    algebras}, $K$-Theory \textbf{9} (1995), 31--58.

\bibitem{RLL} M. R{\o}rdam, F. Larsen and N.
    Laustsen, An introduction to {$K$}-theory for {$C\sp *$}-algebras, Cambridge
    University Press, Cambridge, 2000, xii+242.

\bibitem{Si3} A. Sims, \emph{The co-universal {$C^\ast$}-algebra of a
    row-finite graph}, New York J. Math. \textbf{16} (2010), 507--524.

\bibitem{Sorensen} A.P.W. S{\o}rensen, \emph{Geometric classification of simple
    graph algebras}, Ergodic Theory Dynam. Systems \textbf{33} (2013), 1199--1220.

\bibitem{Thomas} A. Thomas, \emph{A relation between {$K$}-theory and cohomology},
    Trans. Amer. Math. Soc. \textbf{193} (1974), 133--142.

\bibitem{Williams} D.P. Williams, Crossed products of {$C{\sp
    \ast}$}-algebras, American Mathematical Society, Providence, RI, 2007, xvi+528.

\end{thebibliography}
\end{document}